\newcommand{\imod}[1]{\allowbreak\mkern4mu({\operator@font mod}\,\,#1)}
\renewcommand{\a}{\alpha}
\renewcommand{\b}{\beta}
\newcommand{\F}{\mathbb{F}_{q}}
 \newcommand{\e}{\epsilon}
 \renewcommand{\O}{\Omega}
 \renewcommand{\to}{\rightarrow}
 \newcommand{\s}{\sigma}
 \newcommand{\C}{\mathcal{C}}
\newcommand{\G}{\bar{G}}
\newcommand{\M}{\bar{M}}
\newcommand{\leqs}{\leqslant}
\newcommand{\geqs}{\geqslant}
\newcommand{\what}{\widehat} 
 \newcommand{\vs}{\vspace{2mm}}
\newtheorem{theorem}{Theorem}
\newtheorem{thm}{Theorem}[section]
\newtheorem{prop}[thm]{Proposition}
\newtheorem{lem}[thm]{Lemma}
\newtheorem{cor}[thm]{Corollary}
\theoremstyle{definition}
\newtheorem{definition}[theorem]{Definition}
\newtheorem{remark}[theorem]{Remark}
\newtheorem{defn}[thm]{Definition}
\newtheorem{remk}[thm]{Remark}
\begin{document}
 
 \author{S. Hassan Alavi}
 \address{S.H. Alavi, Department of Mathematics, Faculty of Science, Bu-Ali Sina University, Hamedan, Iran}
 \email{alavi.s.hassan@gmail.com}
  
 \author{Timothy C. Burness}
 \address{T.C. Burness, School of Mathematics, University of Bristol, Bristol BS8 1TW, UK}
 \email{t.burness@bristol.ac.uk}
 
 \title{Large subgroups of simple groups}
 
 \subjclass[2010]{Primary 20E32; Secondary 20E28}
 
 \keywords{Finite simple groups; large subgroups; maximal subgroups; simple algebraic groups; triple factorisations}
 
\begin{abstract}
Let $G$ be a finite group. A proper subgroup $H$ of $G$ is said to be large if the order of $H$ satisfies the bound $|H|^3 \geqs |G|$. In this note we determine all the large maximal subgroups of finite simple groups, and we establish an analogous result for simple algebraic groups  
(in this context, largeness is defined in terms of dimension). An application to triple factorisations of simple groups (both finite and algebraic) is discussed.
\end{abstract}

\thanks{Burness thanks the Centre for the Mathematics of Symmetry and Computation at the University of Western Australia for their generous hospitality, and both authors thank Professors Bill Kantor, Martin Liebeck and Cheryl Praeger for helpful comments.}

\date{\today}
\maketitle


\begin{center}
{\em Dedicated to the  memory of  \'Akos Seress.}
\end{center}

\vspace{5mm}

\section{Introduction}\label{s:intro}

Let $G$ be a group. A \emph{triple factorisation} of $G$ is a factorisation of the form $G=ABA$, where $A$ and $B$ are proper subgroups of $G$. Such factorisations arise naturally in several different contexts. For example, the Bruhat decomposition $G=BNB$ of a group of Lie type, where $B$ is a Borel subgroup and $N$ is the normaliser of a maximal torus, plays an important role in the study of such groups. In a different direction, a triple factorisation corresponds to a flag-transitive point-line incidence geometry in which each pair of points is incident with at least one line (see \cite[Lemma 3]{HigmanM}). 

Determining the triple factorisations of a given group is a difficult problem. For a finite group $G$, a starting point is the easy observation that $G=ABA$ only if 
\begin{equation}\label{e:max}
\max\{|A|^3,|B|^3\} \geqs |G|.
\end{equation} 
This motivates the following definition.

\begin{definition}\label{d:large}
Let $G$ be a finite group. A proper subgroup $H$ of $G$ is said to be \emph{large} if the order of $H$ satisfies the bound $|H|^3 \geqs |G|$.
\end{definition}

In \cite{AP}, Alavi and Praeger develop a general framework for studying triple factorisations of finite groups in terms of group actions. In particular,  a reduction strategy for classifying triple factorisations is presented in \cite[Section 1.1]{AP}, in which it is reasonable to assume that the subgroup $A$ appearing in a factorisation $G=ABA$ is maximal (and also core-free). Simultaneous triple factorisations of the form $G=ABA = BAB$ are particularly interesting from a geometric point of view, and in this situation it is reasonable to assume that both $A$ and $B$ are maximal subgroups of $G$. Factorisations of this form have been studied in two recent papers \cite{ABP, GJ}. In \cite{ABP}, $G={\rm GL}(V)$ and the subgroups $A$ and $B$ either stabilise a subspace of $V$, or stabilise a decomposition $V=V_1 \oplus V_2$ with $\dim V_1 = \dim V_2$. In  \cite{GJ}, the case where $G=S_n$ and $A,B$ are maximal conjugate subgroups is investigated.
 
The main aim of this paper is to determine the large maximal subgroups of finite simple groups, which can be viewed as a first step towards a general investigation of triple factorisations of simple groups. By the Classification of Finite Simple Groups, every nonabelian finite simple group is isomorphic to an alternating group $A_n$ of degree $n \geqs 5$, a group of Lie type defined over a finite field $\F$ (of classical or exceptional type), or one of $26$ sporadic groups. There is a vast literature on the subgroup structure of finite simple groups, and their maximal subgroups in particular. 

The problem of determining the ``large" maximal subgroups of finite simple groups has a long history, with many applications. For alternating groups, it is closely related to the following old question in permutation group theory (see \cite{Maroti} and the references therein): how large can a primitive group $G$ of degree $n$ be, assuming $G$ does not contain $A_n$? For groups of Lie type, some related results are established in \cite{L} and \cite{LieSax}. The main theorem of \cite{L} describes the maximal subgroups $H$ of a simple classical group with natural (projective) module of dimension $n$ over $\mathbb{F}_q$ that satisfy the bound $|H| \geqs q^{3n}$ (a special case of this result, Theorem \ref{lieb}, plays a key role in our analysis of almost simple irreducible subgroups of classical groups). In the same paper, the largest irreducible proper subgroups of simple classical groups are also determined (see \cite[Section 5]{L}). For a group $G$ of exceptional Lie type over $\mathbb{F}_q$, the main theorem of \cite{LieSax} records the maximal subgroups $H$ of $G$ such that $|H| \geqs q^{k(G)}$, where $k(G)$ is an integer specified in \cite[Table 1]{LieSax}. In particular, one can read off the subgroups with $|H|^2 \geqs |G|$. This theorem is used in the classification of the finite primitive permutation groups of rank three \cite{LSr3}, and a related result, \cite[Theorem 1.2]{LSh4}, plays an important role in the proof of Dixon's conjecture on the probabilistic generation of finite simple groups.

Let us now state our main results. Throughout this paper we adopt the standard notation of Kleidman and Liebeck \cite{KL}. Note that in Theorems \ref{t:main1} -- \ref{t:main4} below, we are stating that $H$ is large if and only if $H$ is isomorphic to one of the subgroups in the relevant lists.

\begin{theorem}\label{t:main1}
Let $G=A_n$ be an alternating group of degree $n \geqs 5$, and let $H$ be a maximal subgroup 
of $G$. Then $H$ is large if and only if $H$ is either intransitive or imprimitive on $\{1,\ldots, n\}$, or if $(n,H)$ is one of the following:
$$\begin{array}{lllll}
(5,D_{10}), & (6, {\rm L}_{2}(5)), & (7, {\rm L}_{2}(7)), & (8, {\rm AGL}_{3}(2)), & (9,3^2.{\rm SL}_{2}(3)), \\
 (9,{\rm P\Gamma L}_{2}(8)), & (10, {\rm M}_{10}), & (11, {\rm M}_{11}), & (12, {\rm M}_{12}), & (13,{\rm L}_{3}(3)), \\
 (15,A_8), & (16,{\rm AGL}_{4}(2)), & (24, {\rm M}_{24}). & & 
\end{array}$$
\end{theorem}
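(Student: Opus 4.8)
The plan is to invoke the known classification of maximal subgroups of $A_n$ (essentially the O'Nan–Scott theorem together with the classification of finite simple groups) and then apply the bound $|H|^3 \geqs |G| = \frac{1}{2}n!$ case by case. The maximal subgroups of $G=A_n$ fall into the familiar families: the intransitive ones, which are of shape $(S_k \times S_{n-k}) \cap A_n$ with $1 \leqs k < n/2$; the imprimitive ones, of shape $(S_a \wr S_b) \cap A_n$ with $n=ab$, $a,b>1$; the primitive affine, diagonal, product and almost simple types; plus finitely many ``exotic'' primitive examples for small $n$. I would first dispose of the two ``generic'' families by showing they are always large. For an intransitive subgroup $H = (S_k \times S_{n-k})\cap A_n$ with $k \leqs n/2$ we have $|G:H| = \binom{n}{k} \leqs \binom{n}{\lfloor n/2\rfloor}$, and a crude estimate (e.g.\ $\binom{n}{k} \leqs 2^n$ together with $|H| \geqs \frac{1}{2}k!(n-k)! \geqs \frac{1}{2}(\lfloor n/2\rfloor !)^2$) shows $|H|^3 \geqs |G|$ comfortably for all $n \geqs 5$; a similar elementary count handles the imprimitive family, where $|G:H| \leqs n!/(a!^b b!)$ is bounded by the number of partitions of an $n$-set into $b$ blocks of size $a$, which one checks is less than $|H|^2$.

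The substantive part is the primitive case, where the aim is to show largeness forces one of the thirteen exceptional pairs listed. Here I would quote Mar\'oti's theorem (from \cite{Maroti}, referenced in the text): with a short explicit list of exceptions, a primitive group of degree $n$ not containing $A_n$ has order at most $n^{1+\lfloor \log_2 n\rfloor} < n^{\log_2 n + 1}$. Feeding this into $|H|^3 \geqs \frac12 n!$ and using $n! > (n/e)^n$ gives $3(\log_2 n)(\log_2 n + 1) \gtrsim n\log_2(n/e)$, which fails for all sufficiently large $n$; a clean computation pins down a bound like $n \leqs 24$ (or a similarly small constant). For the finitely many remaining degrees one runs through the maximal primitive subgroups directly — using the tables in \cite{KL} and the ATLAS for the small almost simple and affine cases — computing $|H|$ and comparing with $\frac12 n!$. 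This is where the explicit list $(5,D_{10}),\dots,(24,{\rm M}_{24})$ is read off: e.g.\ ${\rm M}_{24} < A_{24}$ has index $\binom{24}{4}\cdot\frac{7\cdot6\cdot5}{24}$-type size that one checks satisfies the cube bound, while ${\rm M}_{23} < A_{23}$ narrowly fails, and so on.

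I would organise the small-$n$ computation by subgroup type rather than degree, since the affine groups ${\rm AGL}_d(p) \cap A_n$ with $n=p^d$, the diagonal-type groups, and the product-type groups each admit a uniform order formula that makes the inequality transparent; only the almost simple primitive groups of small degree require genuine case-checking, and for those the relevant data is entirely tabulated. The main obstacle is not conceptual but one of bookkeeping: ensuring the small-degree analysis is exhaustive, in particular that every maximal subgroup of $A_n$ for $n$ up to the Mar\'oti cutoff has been considered and that no pair has been missed or wrongly included (the borderline cases, where $|H|^3$ is close to $|G|$, need exact arithmetic rather than estimates). A secondary subtlety is the standing convention about novelties and about when a primitive group such as ${\rm P\Gamma L}_2(8)$ or ${\rm AGL}_4(2)$ is actually maximal in $A_n$ as opposed to $S_n$; this is handled by appeal to the maximal subgroup classification but must be stated carefully so that the ``if and only if'' in the theorem is correct.
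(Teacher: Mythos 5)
Your approach mirrors the paper's essentially exactly: dispose of the intransitive and imprimitive families by direct factorial estimates, invoke Mar\'oti's theorem together with the O'Nan--Scott theorem to cut the primitive case down to a bounded range of degrees (the paper's Proposition \ref{p:maroti} and Corollary \ref{c:alt} reduce to $n \leqs 23$ plus the exceptional pair $(24,{\rm M}_{24})$), and then settle the remaining small degrees by direct inspection of the known maximal subgroups. One small caveat: the crude estimate $|H| \geqs \tfrac{1}{2}(\lfloor n/2\rfloor!)^2$ does \emph{not} ``comfortably'' yield $|H|^3 \geqs |G|$ for all $n \geqs 5$ --- the paper uses this bound only to handle $n \geqs 17$ and checks $5 \leqs n < 17$ by hand --- but since you already flag that borderline cases require exact arithmetic, this is a matter of presentation rather than a genuine gap.
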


\begin{theorem}\label{t:main2}
Let $G$ be a sporadic simple group, and let $H$ be a maximal subgroup 
of $G$. 
\begin{itemize}\addtolength{\itemsep}{0.2\baselineskip}
\item[{\rm (i)}] If $G \in \{{\rm J}_{4}, {\rm Co}_{1}, {\rm Fi}_{24}', {\rm Th}, {\rm Ru}, \mathbb{B}, \mathbb{M}\}$ then $H$ is large if and only if $(G,H)$ is one of the cases listed in Table \ref{tab:spor}.
\item[{\rm (ii)}] In the remaining cases, $H$ is large unless $(G,H)$ is one of the following:
$$\begin{array}{lllll}
({\rm M}_{24}, {\rm L}_{2}(7)), & ({\rm J}_{1}, 7{:}6), & ({\rm J}_{2}, A_5), & ({\rm Co}_{2}, 5^{1+2}{:}4S_4), &  ({\rm Co}_{3}, A_4 \times S_5), \\
({\rm Fi}_{23}, {\rm L}_{2}(23)), & ({\rm Suz},A_7), & ({\rm He}, 5^2{:}4A_4), & ({\rm HN}, 3^{1+4}{:}4.A_5), & ({\rm O'N},A_7), \\
({\rm Ly}, 67{:}22), & ({\rm Ly}, 37{:}18). & & & 
\end{array}$$
\end{itemize}
\end{theorem}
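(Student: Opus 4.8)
The plan is to argue directly from the known classification of the maximal subgroups of the $26$ sporadic simple groups. For each such group $G$, the conjugacy classes of maximal subgroups, together with their orders, are recorded in the \textsc{Atlas} of Finite Groups and the online \textsc{Atlas}, supplemented by the subsequent literature dealing with the largest sporadic groups; and since $|G|$ is also known, deciding whether a given maximal subgroup $H$ is large reduces to verifying the single numerical inequality $|H|^{3} \geqs |G|$. Each sporadic group has only relatively few conjugacy classes of maximal subgroups (at most a few dozen), so this is a finite and completely explicit check, which can be done by hand for the smaller groups and with the assistance of \textsc{Gap} or \textsc{Magma} for the larger ones. For the seven groups in part~(i) it is more convenient to tabulate the large maximal subgroups themselves, and this is recorded in Table~\ref{tab:spor}; for the remaining sporadic groups all but a few of the maximal subgroups turn out to be large, so there we instead list the (short) collection of exceptions, as in part~(ii).

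The one point requiring care is that, for the very largest sporadic groups --- most notably $\mathbb{M}$, and to a lesser extent $\mathbb{B}$ and $\mathrm{Fi}_{24}'$ --- the classification of maximal subgroups is complete only up to a small number of possible almost simple maximal subgroups whose socle is a simple group of Lie type of very small order (for instance $\mathrm{L}_{2}(q)$ with $q$ small, or groups such as $\mathrm{U}_{3}(4)$ or $\mathrm{Sz}(8)$). However, for every such putative subgroup $H$ one has $|H|^{3} < |G|$ by a wide margin, so none of them is large; hence the list of large maximal subgroups obtained above is complete and correct irrespective of whether these remaining cases are ever resolved. Likewise, any maximal subgroup whose existence or maximality was only settled in work that postdates the \textsc{Atlas} is easily incorporated, since in each case its order is known.

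The bulk of the work is thus bookkeeping rather than mathematics, and it is worth building in some redundancy. For instance, a maximal subgroup of index $n$ is large precisely when $n^{3} \leqs |G|$, which makes it easy to cross-check the output against the well-documented low-degree permutation representations of the sporadic groups, and in particular to pin down the borderline cases where $|H|^{3}$ is close to $|G|$. The main obstacle is therefore purely organisational: one must assemble, for each of the $26$ groups, the correct and complete list of maximal subgroups (with due attention to the incomplete cases above) and then check several hundred inequalities without error. There is no structural shortcut, since the maximal subgroups of sporadic groups do not fall into uniform families; the verification simply has to be carried out group by group.
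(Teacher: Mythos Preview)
Your approach is essentially identical to the paper's: the authors note that the proof is ``an entirely straightforward exercise'' since the maximal subgroups of all sporadic groups are known (with the sole exception of the Monster, where any undetermined maximal subgroup has socle ${\rm L}_2(13)$, ${\rm U}_3(4)$, ${\rm U}_3(8)$ or ${\rm Sz}(8)$, hence is clearly non-large), and one simply checks the inequality $|H|^3 \geqs |G|$ case by case. One small slip: your index criterion is misstated --- a subgroup of index $n$ is large precisely when $n^3 \leqs |G|^2$, not $n^3 \leqs |G|$.
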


\begin{table}
\renewcommand{\arraystretch}{1.2}
$$\begin{array}{ll} \hline
G & H \\ \hline 
{\rm J}_{4} & 2^{11}{:}{\rm M}_{24},\; 2^{1+12}.3.{\rm M}_{22}{:}2,\; 2^{10}{:}{\rm L}_{5}(2),\; 2^{3+12}.(S_5 \times {\rm L}_{3}(2)),\; {\rm U}_{3}(11){:}2 \\
{\rm Co}_{1} & \mbox{all except $A_9 \times S_3$, $(A_7 \times {\rm L}_{2}(7)){:}2$, $(D_{10}\times (A_5 \times A_5).2).2$, $5^{1+2}{:}{\rm GL}_{2}(5)$} \\
& \mbox{$5^3{:}(4 \times A_5).2$, $7^2{:}(3 \times 2S_4)$, $5^2{:}2A_5$} \\
{\rm Fi}_{24}' & \mbox{all except $(A_9 \times A_5){:}2$, ${\rm L}_{2}(8){:}3 \times A_6$, $7{:}6 \times A_7$, ${\rm U}_{3}(3){:}2$, ${\rm L}_{2}(13){:}2$, $29{:}14$} \\
{\rm Th} & \mbox{all except $3^5{:}2S_6$, $5^{1+2}{:}4S_4$, $5^2{:}{\rm GL}_{2}(5)$, $7^2{:}(3 \times 2S_4)$, ${\rm L}_{2}(19){:}2$, ${\rm L}_{3}(3)$, ${\rm M}_{10}$, $31{:}15$, $S_5$} \\
{\rm Ru} & \mbox{all except $3.A_6.2^2$, $5^{1+2}{:}[2^5]$, ${\rm L}_{2}(13){:}2$, $A_6.2^2$, $5{:}4 \times A_5$} \\ 
\mathbb{B} & 2.{}^2E_6(2){:}2, \; 2^{1+22}.{\rm Co}_{2}, \; {\rm Fi}_{23}, \; 2^{9+16}.{\rm Sp}_{8}(2), \; {\rm Th}, \; (2^2 \times F_4(2)){:}2,\; 2^{2+10+20}.({\rm M}_{22}{:}2 \times S_3) \\
& [2^{30}].{\rm L}_{5}(2), \; S_3 \times {\rm Fi}_{22}.2, \; [2^{35}].(S_5 \times {\rm L}_{3}(2)),\; {\rm HN}.2, \; {\rm P\O}_{8}^{+}(3).S_4 \\
\mathbb{M} & 2.\mathbb{B},\; 2^{1+24}.{\rm Co}_{1},\; 3.{\rm Fi}_{24},\; 2^2.{}^2E_6(2){:}S_3,\; 2^{10+16}.\O_{10}^{+}(2),\; 2^{2+11+22}.({\rm M}_{24} \times S_3) \\
&  3^{1+12}.2{\rm Suz}.2,\; 2^{5+10+20}.(S_{3} \times {\rm L}_{5}(2)) \\ \hline
\end{array}$$
\caption{Some large maximal subgroups of sporadic groups}
\label{tab:spor}
\renewcommand{\arraystretch}{1}
\end{table}

In the statement of our next theorem, $G$ is a finite simple classical group with natural module $V$. As we will recall in Section \ref{s:class}, any maximal subgroup of $G$ belongs to one of two subgroup collections, denoted by $\C(G)$ and $\mathcal{S}(G)$. The subgroups $H \in \C(G)$ are \emph{geometric}, in the sense that they are defined in terms of the underlying geometry of $V$ (for example, $H$ may be the stabiliser of a subspace of $V$, or an appropriate tensor product decomposition of $V$). The remaining subgroups in $\mathcal{S}(G)$ are almost simple and act irreducibly on $V$ (see Definition \ref{sdef} for several other conditions that these subgroups satisfy). Following \cite{KL}, in Theorem \ref{t:main3} (and also in Theorems \ref{t:main4} and \ref{t:main5}), we refer to the \emph{type of $H$}, which provides an approximate description of the group-theoretic structure of $H$.

\begin{theorem}\label{t:main3}
Let $G$ be a finite simple classical group, and let $H \in \C(G) \cup \mathcal{S}(G)$ be a subgroup of $G$. Then $H$ is large if and only if one of the following holds:
\begin{itemize}\addtolength{\itemsep}{0.2\baselineskip}
\item[{\rm (i)}] $H \in \C(G)$ is a geometric subgroup recorded in Proposition \ref{p:psl} (for $G$ linear), \ref{p:psu} ($G$ unitary), \ref{p:psp} ($G$ symplectic) and \ref{p:pso} ($G$ orthogonal).
\item[{\rm (ii)}] $H \in \mathcal{S}(G)$ is an almost simple irreducible subgroup and $(G,H)$ is one of the cases in Table \ref{tab:sg} (see Section \ref{ss:nongeom}).
\end{itemize}
\end{theorem}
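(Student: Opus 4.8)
The plan is to deal with the two subgroup collections $\C(G)$ and $\ms(G)$ separately.

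Part (i) is essentially a consequence of Propositions \ref{p:psl}, \ref{p:psu}, \ref{p:psp} and \ref{p:pso}, so the real content lies in those results. To prove them, one works through the Aschbacher classes $\C_1,\ldots,\C_8$ in turn, for each of the linear, unitary, symplectic and orthogonal families. For a subgroup $H$ lying in a given class, \cite{KL} provides either an exact expression or a sharp estimate for $|H|$, and $|G|$ is a standard product formula in $n$ and $q$; the largeness condition $|H|^3 \geqs |G|$ then translates into an explicit inequality in $n$ and $q$. For all but finitely many pairs $(n,q)$ this inequality either always holds---yielding an infinite family in the conclusion, as happens for most reducible subgroups in $\C_1$---or never holds, so each class reduces to a uniform argument together with a bounded number of small cases to be checked by hand.

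For part (ii), let $H \in \ms(G)$ have socle $S$, acting irreducibly on the natural module $V$ of dimension $n$ over $\F$ (or $\mathbb{F}_{q^2}$ when $G$ is unitary). I would first dispose of the case where $n$ is large. The smallest of the four relevant order formulas is $|G| \sim q^{n(n-1)/2}$, in the orthogonal case, and with it the largeness bound $|H|^3 \geqs |G|$ forces $|H| > q^{3n}$ as soon as $n$ exceeds an explicit constant (of the order of $20$). For such $H$, Theorem \ref{lieb} applies and confines $(G,H)$ to an explicit finite list; one then tests each member of that list against $|H|^3 \geqs |G|$ and keeps those that pass, which accounts for the higher-dimensional entries of Table \ref{tab:sg}.

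For the remaining, small, values of $n$ I would invoke the detailed classification of the maximal subgroups of the low-dimensional classical groups available in the literature. For each relevant $G$ this furnishes an explicit list of the members of $\ms(G)$, each described by the isomorphism type of the socle $S$, the dimension of the representation of a quasisimple cover of $S$ that affords the embedding, and the field over which that representation is realised---the last point mattering because of the subfield condition in Definition \ref{sdef}. For each such $H$ one computes $|H|$ exactly and compares it with $|G|^{1/3}$. I expect this to be the main obstacle: there are a great many $\ms$-subgroups to consider in low dimensions, several borderline cases have to be decided by exact order computations rather than estimates, and some care is needed to verify that a given candidate genuinely satisfies all of the conditions defining membership of $\ms(G)$---in particular that it is truly non-geometric and that its module is not realisable over a proper subfield. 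It is natural to organise this casework according to the type of $S$---alternating, sporadic, of Lie type in the defining characteristic, or of Lie type in the cross characteristic---since the structural information one has, and the crude bounds available for discarding most candidates quickly, differ among these cases.
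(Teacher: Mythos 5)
Your overall strategy for Theorem~\ref{t:main3} is the same as the paper's: handle $\C(G)$ by working through the $\C_i$ classes with the order formulae of \cite[Ch.~4]{KL}, and handle $\mathcal{S}(G)$ by first using Liebeck's bound (Theorem~\ref{lieb}) to eliminate large $n$ and then doing low-dimensional casework. That said, there are two points where the plan as written would not quite go through as stated.

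First, Theorem~\ref{lieb} bounds $|H|$ only for $H \in \mathcal{S}(G) \setminus \mathcal{A}$; the subcollection $\mathcal{A}$ (alternating socles $A_d$ acting on the fully deleted permutation module) is exempt and must be treated separately. Your phrase ``confines $(G,H)$ to an explicit finite list'' seems to gesture at this, but it should be said explicitly, since essentially all of the highest-dimensional entries of Table~\ref{tab:sg} (up to $\Omega_{22}^+(2)$ with $A_{24}$) come from $\mathcal{A}$ and are only pinned down by comparing $(d!)^3$ against a lower bound for $|G|$ as in Proposition~\ref{p:ad}. Second, and more substantively, your plan to ``invoke the detailed classification of the maximal subgroups of the low-dimensional classical groups'' (i.e.\ \cite{BHR}) covers only $n \leqs 12$, whereas the crossover at which Theorem~\ref{lieb} becomes decisive is around $n \approx 20$. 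No complete classification of $\mathcal{S}(G)$-subgroups is available for $13 \leqs n \leqs 19$; the paper instead closes this gap by bounding $|H|$ via the representation-degree tables of Hiss--Malle \cite{HM,HM2} in cross characteristic and L\"ubeck \cite{Lu} in defining characteristic, together with the partition of $\mathcal{S}(G) \setminus \mathcal{A}$ into the subcollections $\mathcal{B}$ (socle not in ${\rm Lie}(p)$) and $\mathcal{C}$ (socle in ${\rm Lie}(p)$). This degree-bound ingredient is what your outline omits, and is the step that would actually block execution of the plan in the middle range of $n$.
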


\begin{theorem}\label{t:main4}
Let $G$ be a finite simple group of exceptional Lie type, and let $H$ be a maximal subgroup of $G$. Then $H$ is large if and only if $H$ is a parabolic subgroup of $G$, or 
$(G,H)$ is one of the cases listed in Table \ref{tab:fin_ex}.
\end{theorem}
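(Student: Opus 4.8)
The plan is to reduce Theorem \ref{t:main4} to a finite check governed by the dimension/order estimates already developed in the literature on exceptional groups, in particular the main theorem of \cite{LieSax}. Recall that a maximal subgroup $H$ of $G$ of exceptional Lie type over $\F$ is either a parabolic, a reductive subgroup of maximal rank, an exotic local subgroup, an almost simple subgroup of the same Lie type over a subfield, an exceptional subgroup in a short explicit list, or an almost simple subgroup not of the preceding shapes (the Liebeck--Seitz classification). First I would dispose of parabolic subgroups: these are always large, since if $P$ is a parabolic of $G$ then $|G:P|$ is the number of points of the associated flag variety, which is a polynomial in $q$ of degree at most $\dim G - \dim B \leqs \frac{1}{2}(\dim G - \mathrm{rank}\,G) < \frac{2}{3}\dim G$ for the relevant ranks, so $|P|^3 > |G|$; this is a short uniform calculation using the known root-system data for $G_2, F_4, E_6, {}^2E_6, E_7, E_8$ and the Suzuki/Ree families. (Borel subgroups themselves need checking in the small-rank twisted cases, but these are finitely many and handled directly.)

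Next I would treat the non-parabolic maximal subgroups. The key reduction is that largeness $|H|^3 \geqs |G|$ forces $|H| \geqs |G|^{1/3}$, and since $|G| = q^{\dim G}(1+o(1))$ while $\dim G \in \{14,52,78,78,133,248\}$ for the untwisted types, we get $|H| \gtrsim q^{\dim G/3}$, which in every case exceeds the threshold $q^{k(G)}$ appearing in \cite[Table~1]{LieSax} once $q$ is not tiny (indeed $\dim G/3 > k(G)$ in all cases, as $k(G)$ is roughly $\dim G/4$ there). Hence for $q$ above an explicit small bound, every large $H$ appears in the Liebeck--Seitz list of ``large'' subgroups, and one simply runs through that finite list, computing $|H|$ (as a polynomial in $q$) and checking the inequality $|H|^3 \geqs |G|$ termwise; this yields the entries of Table \ref{tab:fin_ex} together with their precise conditions on $q$. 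For the finitely many remaining small values of $q$ (and the small-rank families ${}^2B_2(q)$, ${}^2G_2(q)$, ${}^2F_4(q)$, $G_2(q)$ with $q$ small, plus the generic exceptional near-simple subgroups from the Liebeck--Seitz theorem), I would appeal to the explicit lists of maximal subgroups available in the literature and the \textsc{Atlas}, checking each order directly, and absorbing any genuinely sporadic large examples into Table \ref{tab:fin_ex}.

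The main obstacle will be the bookkeeping for the reductive maximal-rank subgroups and the subfield subgroups, where $|H|$ is itself close to $q^{\dim G/3}$ and the inequality can go either way depending on low-order terms and on $q$ — so the membership in the table is conditional on inequalities like $q \geqs q_0$ that must be pinned down exactly rather than asymptotically. In particular, subgroups of type $A_1^{\,r}$, $A_2^{\,r}$, ${}^3D_4$, ${}^2E_6$ inside $E_7$ or $E_8$, and subfield subgroups $G(q_0)$ with $q = q_0^2$, all have order roughly a fixed fractional power of $|G|$, and one must carefully compare the leading polynomial coefficients and all torus-order cyclotomic factors to decide the strict inequality; a secondary but comparable difficulty is making sure the near-simple subgroups $H$ coming from \cite{LieSax} with $|H| \geqs q^{k(G)}$ are all accounted for, since a handful of these (e.g.\ $L_2(q)$-type or small sporadic subgroups in $E_8(q)$) sit right at the boundary. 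I expect the rest of the argument — parabolics, the asymptotic reduction to the Liebeck--Seitz list, and the small-$q$ casework — to be essentially routine given the cited results.
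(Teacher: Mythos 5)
Your overall architecture is reasonable (parabolics always large; reduce to a classification of subgroups of large order; handle small cases explicitly), but the central reduction step rests on a claim you have not verified and which I believe is false for several of the target groups, and this is precisely the gap that forces the paper onto a different set of tools. You assert that $k(G) < \dim G/3$ for the threshold in \cite[Table~1]{LieSax}, ``as $k(G)$ is roughly $\dim G/4$ there.'' The paper itself only claims that the Liebeck--Saxl list captures subgroups with $|H|^2 \geqs |G|$ (threshold $\approx \dim G/2$), not $|H|^3 \geqs |G|$; for $E_7(q)$ one needs $|H| \gtrsim q^{44}$ while the 1987 threshold is substantially higher, so there is a nontrivial window of orders $|H|$ which satisfy $|H|^3 \geqs |G|$ but fall below $q^{k(G)}$ and hence are not visible in the Liebeck--Saxl classification. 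Similar gaps occur for $F_4$ and $E_6$. Your proposed fallback --- ``explicit lists of maximal subgroups available in the literature and the Atlas'' for small $q$ --- does not close this, because no complete classification of maximal subgroups exists for $F_4(q)$ with $q>2$, for $E_6^\e(q)$ with $q>2$, or for $E_7(q)$ or $E_8(q)$ for any $q$; the window in question is present for all $q$, not just small ones.

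The paper fills exactly this gap by a different route: it invokes the Liebeck--Seitz structural reduction theorem \cite[Theorem~2]{LS10} (stated as Theorem~\ref{t:ls}) to constrain the shape of any maximal non-parabolic $H$, then uses the much sharper order thresholds from \cite[Lemmas~4.2, 4.7, 4.14, 4.23]{BLS} to pin down the regime $|G|^{1/3} \leqs |H| \leqs q^{c}$ for each exceptional type, together with the refined order bound for small Lie-rank subgroups in \cite[Theorem~1.2]{LSh4} and primitive prime divisor arguments (following the method, not the table, from \cite{LieSax}) to eliminate the remaining near-boundary cases. In particular, Lemmas~\ref{l:e8}--\ref{l:f4} each handle a narrow interval of orders where membership can genuinely go either way, and this requires case analysis (including representation-theoretic checks on Jordan forms of involutions via \cite{Lawther} and \cite{LS3}) that a direct appeal to the 1987 table cannot replace. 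You should either verify the precise $k(G)$ values and, if they are too large, replace the Liebeck--Saxl table with the \cite{LS10}+\cite{BLS}+\cite{LSh4} machinery, or restrict your argument to the low-rank families $G_2$, ${}^2B_2$, ${}^2G_2$, ${}^3D_4$, ${}^2F_4$, $F_4(2)$, $E_6^\e(2)$, where complete maximal subgroup lists do exist and your direct-check strategy is sound.
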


Note that in Table \ref{tab:fin_ex} we adopt the standard Lie notation for groups of Lie type, so for example we write $A_{n-1}^{-}(q)$ in place of ${\rm U}_{n}(q)$, $D_n^{-}(q)$ instead of  ${\rm P\O}_{2n}^{-}(q)$, and $E_6^{-}(q)$ for ${}^2E_6(q)$. Also note that we may assume $q>2$ if $G=G_2(q)$ since $G_2(2)' \cong {\rm U}_{3}(3)$. In this paper we view the Tits group ${}^2F_4(2)'$ as a sporadic group, so it is covered by Theorem \ref{t:main2} (in particular, every maximal subgroup of ${}^2F_4(2)'$ is large).

\begin{table}
\renewcommand{\arraystretch}{1.2}
$$\begin{array}{lll} \hline
G & \mbox{Type of $H$} & \mbox{Conditions} \\ \hline 
E_8(q) & A_1(q)E_7(q),\, D_8(q),\, A_2^{\e}(q)E_6^{\e}(q),\, E_8(q^{1/2}) & \e=\pm \\
E_7(q) & (q-\e)E_{6}^{\e}(q),\, A_1(q)D_6(q),\, A_7^{\e}(q),\, A_1(q)F_4(q),\, E_7(q^{1/2}) & \e = \pm \\
E_6^{\e}(q) &  A_1(q)A_5^{\e}(q),\, F_4(q) & \\
& (q-\e)D_5^{\e}(q) & \e=- \\
 & C_4(q) & p \neq 2 \\
 &  E_6^{\pm}(q^{1/2})  & \e=+ \\
& E_6^{\e}(q^{1/3}) & q^{1/3} \equiv \e \imod{3} \\
& (q-\e)^2.D_4(q) & (\e,q) \neq (+,2) \\
&  (q^2+\e q+1).{}^3D_4(q) & (\e,q) \neq (-,2) \\
& {\rm Fi}_{22} & (\e,q)=(-,2) \\
F_4(q) & B_4(q),\, D_4(q),\, {}^3D_4(q),\,  F_4(q^{1/2}) & \\
& A_1(q)C_3(q) & p \ne 2 \\
& C_4(q) & p = 2 \\
& {}^2F_4(q) & q=2^{2n+1} \geqs 2 \\
& {}^3D_4(2) & q=3 \\
& A_3(3) & q=2 \\
G_2(q) & A_2^{\pm}(q),\, A_1(q)^2,\, G_2(q^{1/2}) &  \\
(q>2) & {}^2G_2(q) & q=3^{2n+1} \geqs 3 \\
& {\rm J}_{1} & q=11 \\
& G_2(2) & q=5,7 \\
& A_{1}(13),\, {\rm J}_{2} & q =4 \\
& A_1(13), \, 2^{3}.{\rm SL}_{3}(2)  & q=3 \\
{}^2F_4(q) & C_2(q),\, {}^2B_2(q)^2  & \\
{}^2G_2(q) & A_1(q) & \\
{}^2B_2(q) & 13{:}4 & q=8 \\
{}^3D_4(q) & A_1(q^3)A_1(q),\, (q^2+\e q+1)A_2^{\e}(q),\, {}^3D_4(q^{1/2}),\, G_2(q)  & \e = \pm \\ 
& 7^2{:}{\rm SL}_{2}(3) & q=2 \\ \hline
\end{array}$$
\caption{Large maximal non-parabolic subgroups of finite exceptional groups}
\label{tab:fin_ex}
\renewcommand{\arraystretch}{1}
\end{table}

The proof of Theorem $i$ will be given in Section $i$ for $2 \leqs i \leqs 5$. 
For alternating groups, a bound of Mar\'{o}ti \cite{Maroti}, combined with the O'Nan-Scott theorem, essentially reduces the problem to intransitive and imprimitive subgroups, where we can compute directly with the appropriate order formulae. The proof for sporadic groups is 
an easy calculation since a complete list of maximal subgroups is available (apart from a handful of very small candidate maximals of the Monster).

The analysis for a classical group $G$ is partitioned into two cases, according to whether or not $H$ is in $\C(G)$ or $\mathcal{S}(G)$. In the former case, the precise structure of $H$ is given in \cite[Chapter 4]{KL}, so it is relatively straightforward to determine all the large subgroups, working systematically through the various subcollections comprising $\C(G)$. (Some cases require special attention; see Remark \ref{r:special}.) To determine the large subgroups in $\mathcal{S}(G)$ we first apply a theorem of Liebeck \cite{L}, which implies that the order of such a subgroup is rather small, in general. In particular, the problem is quickly reduced to classical groups of low dimension, at which point a short list of possibilities can be obtained by combining work of L\"{u}beck \cite{Lu} and Hiss, Malle \cite{HM} on the degrees of irreducible representations of quasisimple groups, and we also appeal to the recent book \cite{BHR} on the subgroup structure of the low-dimensional classical groups.

For groups of exceptional Lie type, the maximal subgroups of the low-rank groups have been determined, so the proof of Theorem \ref{t:main4} in these cases is an easy exercise. For the remaining groups, our starting point is a reduction theorem of Liebeck and Seitz \cite[Theorem 2]{LS10}, which essentially allows us to reduce to the case where $H$ is almost simple, with socle $H_0$, say. At this point there are two possibilities, which we consider separately. Write ${\rm Lie}(p)$ for the set of simple groups of Lie type in characteristic $p$, and suppose $G \in {\rm Lie}(p)$ has untwisted Lie rank $n$. If $H_0 \in {\rm Lie}(p)$ has untwisted Lie rank $r$, then the possibilities with $r>n/2$ are given by Liebeck and Seitz \cite{LS6}, but more work is needed to determine the large subgroups with $r \leqs n/2$ (an upper bound on $|H|$ given in \cite[Theorem 1.2]{LSh4} is useful here). Finally, if $H_0 \not\in {\rm Lie}(p)$ then the possibilities for $H$ are determined in \cite{LS3}, and it is straightforward to read off the large examples. 

\begin{remark}
In a short appendix we also give a complete list of the large maximal subgroups of the general linear group ${\rm GL}_{n}(q)$; see Theorem \ref{t:gln} in Appendix \ref{a:gln}.
\end{remark}

Naturally, one can extend the analysis to almost simple groups, using entirely similar methods. Let $G$ be a finite almost simple group with socle $G_0$, so we have 
$$G_0 \leqs G \leqs {\rm Aut}(G_0).$$ 
If $G_0$ is an alternating or sporadic group, then it is straightforward to determine all the large maximal subgroups of $G$; for symmetric groups, the argument in Section \ref{s:alt} goes through essentially unchanged, and one can inspect complete lists of maximal subgroups of almost simple sporadic groups (see Propositions \ref{p:almostan} and \ref{p:almostspor}). The following result for groups of Lie type may be useful in applications; a short proof is given in Section \ref{s:almost}.

\begin{theorem}\label{t:almost}
Let $G$ be a finite almost simple group of Lie type with socle $G_0$, and let $H$ be a maximal subgroup of $G$ such that $G=HG_0$. Then $|H \cap G_0|^3 \geqs |G_0|$ only if one of the following holds:
\begin{itemize}\addtolength{\itemsep}{0.2\baselineskip}
\item[{\rm (i)}] $(G_0,H \cap G_0)$ is one of the cases arising in Theorem \ref{t:main3} or Theorem \ref{t:main4};
\item[{\rm (ii)}] $H \cap G_0$ is a non-maximal parabolic subgroup of $G_0$;
\item[{\rm (iii)}] $(G_0,H \cap G_0)$ is one of the cases listed in Table \ref{tab:almost}.
\end{itemize}
\end{theorem}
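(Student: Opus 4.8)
The plan is to reduce the almost simple case to the simple case already handled in Theorems~\ref{t:main3} and~\ref{t:main4}, exploiting the fact that $|G/G_0|$ is tightly controlled: for $G_0$ of Lie type, $|\mathrm{Aut}(G_0)/G_0| = d f g$, where $d$ is the order of the diagonal automorphism group, $f$ is the order of the field automorphism group (roughly $\log_p q$), and $g \leqs 3$ comes from graph automorphisms. Write $H_0 = H \cap G_0$; since $G = HG_0$ we have $|H|/|H_0| = |G|/|G_0|$, so the hypothesis $|H_0|^3 \geqs |G_0|$ is equivalent to $|H|^3 \geqs |G_0| \cdot (|G|/|G_0|)^3 = |G|^3/|G_0|^2$. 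The first step is therefore to observe that $H_0$ is either maximal in $G_0$, or lies in a maximal subgroup $M$ of $G_0$ with $H_0 = M \cap G$ of relatively small index in $M$; in the standard theory of maximal subgroups of almost simple groups (the framework of \cite{KL} for classical groups and the Liebeck--Seitz analysis for exceptional groups), a maximal $H$ of $G$ with $G = HG_0$ has $H_0$ obtained from a $G$-invariant member of a known collection, and $[M:H_0]$ divides $|G/G_0|$.

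Next I would split into the two families. For $G_0$ classical with natural module of dimension $n$, if $H_0 \in \C(G_0) \cup \mathcal{S}(G_0)$ is already maximal in $G_0$ then either it is large — giving case~(i) — or it is one of the non-large geometric or $\mathcal{S}$-type subgroups, and one checks directly whether the extra factor from $|G/G_0|$ can push $|H_0|^3$ past $|G_0|$; these borderline cases are exactly what populates Table~\ref{tab:almost}. The one genuinely new phenomenon is case~(ii): a parabolic subgroup $P$ of $G_0$ that is \emph{not} maximal in $G_0$ (because it is swapped with another parabolic by a graph automorphism, e.g. the two end-node parabolics of $\mathrm{L}_n(q)$, or the relevant parabolics in $\mathrm{P\Omega}_{2n}^+(q)$ and $D_4$-type groups) but whose normaliser in $G$ \emph{is} maximal; such $P$ have order comparable to a genuine parabolic and hence are frequently large, so they must be listed separately. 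I would enumerate these using the Borel--Tits description of parabolics and the orders in \cite{KL}.

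For $G_0$ of exceptional Lie type the argument is shorter: by Theorem~\ref{t:main4} the large maximal subgroups of $G_0$ itself are the parabolics and the entries of Table~\ref{tab:fin_ex}, and the only maximal subgroups of $G = \mathrm{Aut}(G_0)$-type overgroups not arising from maximal subgroups of $G_0$ are again certain non-maximal parabolics (for $D_4$ under triality-type phenomena within $F_4$ and $E_6$ one must be a little careful, but the Tits group and very small fields aside, $|G/G_0|$ is at most $\mathrm{O}(\log q)$ and cannot compete with the index of a non-large maximal subgroup), so case~(ii) together with case~(i) suffices, with possibly a handful of small-$q$ exceptions to be checked by hand against the explicit maximal subgroup lists of the low-rank groups. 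Throughout, the key quantitative input is that $|G/G_0| \leqs d \cdot f \cdot g$ with $f = \log_p q$ grows only logarithmically in $|G_0|^{1/3}$, so for all but finitely many configurations the cube condition for $H_0$ in $G_0$ and for $H$ in $G$ differ negligibly; the finitely many exceptions — small fields, small rank — are precisely Table~\ref{tab:almost}.

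The main obstacle I anticipate is the bookkeeping in case~(iii): systematically verifying, for each non-large maximal subgroup $H_0$ appearing in the proofs of Theorems~\ref{t:main3} and~\ref{t:main4}, whether multiplying by the largest possible $|G/G_0|$ (which depends delicately on $H_0$, since $H$ must normalise $H_0$ and this constrains which diagonal, field, and graph automorphisms are available) can make $|H_0|^3 \geqs |G_0|$. This is not deep but requires care, especially for classical groups in low dimension where the $\mathcal{S}$-type subgroups and the $\C_1$--$\C_8$ collections both contribute, and where the exact value of $d$ (e.g. $\gcd(n,q-1)$ for linear groups, which can be large) matters. I would organise this case-by-case parallel to Sections~\ref{s:class} and the exceptional-groups section, tabulating the outcome — which is exactly Table~\ref{tab:almost}.
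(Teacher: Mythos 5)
Your opening algebraic manipulation (rewriting $|H\cap G_0|^3 \geqs |G_0|$ as $|H|^3 \geqs |G|^3/|G_0|^2$) is true but a distraction, and it leads you into a genuine misunderstanding of where Table~\ref{tab:almost} comes from. The condition in the theorem is stated entirely inside $G_0$: it compares $|H_0|$ to $|G_0|$ for the subgroup $H_0 = H\cap G_0 \leqs G_0$, and nothing in this inequality can be ``pushed past'' by a factor coming from $|G/G_0|$. In particular, your plan of ``systematically verifying, for each non-large maximal subgroup $H_0$ appearing in the proofs of Theorems~\ref{t:main3} and~\ref{t:main4}, whether multiplying by the largest possible $|G/G_0|$ \ldots can make $|H_0|^3 \geqs |G_0|$'' cannot work: if $H_0$ is maximal in $G_0$, then Theorems~\ref{t:main3} and~\ref{t:main4} already decide whether $|H_0|^3 \geqs |G_0|$ holds, and no outer factor changes $|H_0|$. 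The case ``$H_0$ maximal in $G_0$'' therefore contributes only to clause~(i) and nothing to Table~\ref{tab:almost}.

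What actually populates Table~\ref{tab:almost} (case~(iii)) are \emph{non-parabolic novelty subgroups}: subgroups $H_0 < G_0$ that are \emph{not} maximal in $G_0$, yet are large and have $N_G(H_0)$ maximal in $G$. You correctly identify this phenomenon for parabolics (case~(ii)) — the end-node parabolics of ${\rm L}_n(q)$ fused by a graph automorphism, and so on — but you then treat it as the ``one genuinely new phenomenon,'' when in fact every entry of Table~\ref{tab:almost} is a non-parabolic instance of exactly the same thing. For example, ${\rm GL}_m(q)\times{\rm GL}_{n-m}(q)$ with $m<n/2$ is the reducible-but-not-parabolic $\C_1$-type stabiliser that is non-maximal in ${\rm L}_n(q)$ but becomes maximal once a graph automorphism is present; the $G_2(q)$, ${\rm GL}_3^{\e}(q)\times{\rm GL}_1^{\e}(q)$ and $[2^9].{\rm SL}_3(2)$ entries for ${\rm P\Omega}_8^+(q)$ are triality novelties read off from Kleidman's classification; $(q-1)D_5(q) < E_6(q)$, and the $C_2(q)^2$, $C_2(q^2)$ subgroups of $F_4(q)$ in characteristic $2$, are maximal-rank novelties that only become maximal in the presence of a graph (or exceptional graph) automorphism; and the sporadic entries ${\rm M}_{12} < \Omega_{10}^-(2)$, ${\rm L}_2(7) < {\rm U}_3(5)$, ${}^3D_4(2)$ and $B_3(3)$ inside ${}^2E_6(2)$ are $\mathcal{S}$-type novelties. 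The correct structure of the proof is: reduce to the case $H_0$ non-maximal and non-parabolic in $G_0$ (maximal gives (i), non-maximal parabolic gives (ii)); then enumerate these novelties using the extended Aschbacher set-up for almost simple classical groups — with the three special situations that must be treated separately, namely the extra $\C_1'$ family for ${\rm L}_n(q)$ under graph automorphisms, the modified collections for ${\rm PSp}_4(q)$ with $q$ even under the graph automorphism, and Kleidman's full list for ${\rm P\Omega}_8^+(q)$ under triality — and the Liebeck--Seitz reduction together with the explicit maximal-subgroup lists for the low-rank exceptionals. Your proposal gestures at the right machinery and correctly flags the parabolic novelties, but without the reduction to non-maximal $H_0$ and without recognising that the table consists of non-parabolic novelties, the strategy as written would not produce case~(iii).
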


\begin{table}
\renewcommand{\arraystretch}{1.2}
$$\begin{array}{lll} \hline
G_0 & \mbox{Type of $H \cap G_0$} & \mbox{Conditions} \\ \hline 
{\rm L}_n(q) & {\rm GL}_{m}(q) \times {\rm GL}_{n-m}(q) & 1 \leqs m < n/2 \\
\O_{10}^{-}(q) & {\rm M}_{12} & q = 2 \\
{\rm P\O}_{8}^{+}(q) & G_2(q) & \\
 & {\rm GL}_{3}^{\e}(q) \times {\rm GL}_{1}^{\e}(q) & (q,\e) \neq (3,+) \\
 & [2^9].{\rm SL}_{3}(2) & q = 3 \\
{\rm Sp}_{4}(q) & O_2^{-}(q) \wr S_2 & q = 4 \\
{\rm U}_{3}(q) & {\rm L}_{2}(7) & q = 5 \\
E_6(q) & (q-1)D_5(q) & \\
{}^2E_6(q) & {}^3D_4(q) & q = 2 \\
& B_3(3) & q = 2 \\
F_4(q) & C_2(q)^2 & p = 2 \\
& C_2(q^2) & p = 2 \\ \hline
\end{array}$$
\caption{Some large subgroups of almost simple groups of Lie type}
\label{tab:almost}
\renewcommand{\arraystretch}{1}
\end{table}


We can use Theorems \ref{t:main1} -- \ref{t:main4} to investigate the simple groups $G$ with the property that \emph{every} maximal subgroup of $G$ is large. By Theorem \ref{t:main2}, the sporadic simple groups with this property are as follows:
$${\rm M}_{11}, {\rm M}_{12}, {\rm M}_{22}, {\rm M}_{23}, {\rm J}_{3}, {\rm HS}, {\rm McL}, {\rm Fi}_{22}, {}^2F_4(2)'.$$
If $G=A_n$ is an alternating group with $n \geqs 25$ then Corollary \ref{c:alt} (see Section \ref{s:alt}) implies that $G$ has the desired property if and only if there are no primitive permutation groups of degree $n$ (other than $A_n$ and $S_n$, of course). By the main theorem of \cite{CNT}, almost every positive integer $n$ has this property (that is, the relevant set of integers has density $1$ in $\mathbb{N}$), so there are infinitely many alternating groups such that every maximal subgroup is large (one can check that $\{5, 6, 7, 8, 9, 10, 11, 12, 15, 16, 24, 34, 39, 46\}$ are the relevant integers in the range $5 \leqs n \leqs 50$). The analogous problem for simple groups of Lie type can be studied via Theorems \ref{t:main3} and \ref{t:main4}. For example, using Theorem \ref{t:main4} we deduce that if $G$ is a simple group of exceptional Lie type then every maximal subgroup of $G$ is large if and only if $G=G_2(q)$ and $q=p^{2^i}$ ($p$ prime), where either $p \leqs 3$ and $i \geqs 0$, or $p=5$ and $i \geqs 2$.

\vs

We also extend our results to algebraic groups. Let $G$ be a connected linear algebraic group over an algebraically closed field of characteristic $p \geqs 0$. If $G$ admits a triple factorisation $G = ABA$, where $A,B$ are closed subgroups of $G$, then it is straightforward to show that 
$$\max\{3\dim A , 3\dim B\} \geqs \dim G$$
(see Proposition \ref{p:ta}) which is a natural algebraic group analogue of the condition in \eqref{e:max}. Therefore, in this context we will say that a proper closed subgroup $H$ of $G$ is \emph{large} if $3\dim H \geqs \dim G$. We classify all the large closed maximal subgroups of simple algebraic groups. 

\begin{theorem}\label{t:main5}
Let $G$ be a simple algebraic group over an algebraically closed field of characteristic $p \geqs 0$ and let $H$ be a maximal closed subgroup of $G$. Then $H$ is large if and only if one of the following holds:
\begin{itemize}\addtolength{\itemsep}{0.2\baselineskip}
\item[{\rm (i)}] $G=Cl(V)$ is a classical group and either $H$ acts reducibly on $V$, or $(G,H)$ is one of the cases listed in Table \ref{tab:alg_class}.
\item[{\rm (ii)}] $G$ is an exceptional group and either $H$ is a parabolic subgroup, or $(G,H)$ is one of the cases listed in Table \ref{tab:alg_ex}.
\end{itemize}
\end{theorem}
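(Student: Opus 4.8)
The plan is to treat the classical and exceptional cases separately, mirroring the strategy used in the finite case but replacing order estimates by dimension estimates. For a simple algebraic group $G$, the maximal closed subgroups are described (up to finitely many exceptions) by the algebraic-group analogue of Aschbacher's theorem and the Liebeck--Seitz reduction theorem: either $H$ is reducible (a parabolic subgroup, or in the classical case a stabiliser of a non-degenerate or totally singular subspace), or $H$ lies in one of the geometric families $\mathcal{C}_i$ ($i \geqs 2$), or $H$ is almost simple acting irreducibly and tensor-indecomposably on $V$ in the classical case, or $H$ is one of the known maximal-rank, exotic, or generic positive-dimensional subgroups in the exceptional case. The dimension $\dim G$ is known explicitly for each type, and the condition $3\dim H \geqs \dim G$ is a single numerical inequality, so in each family the task reduces to computing $\dim H$ and checking when this inequality holds.

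For $G = Cl(V)$ a classical group, first I would observe that every reducible $H$ is automatically large: a maximal parabolic $P_k$ has codimension roughly $k(n-k)$ (or half that, adjusted for form type), which for $n = \dim V$ is at most about $\dim G / 3$ by an easy optimisation over $k$, and similarly for non-degenerate subspace stabilisers $Cl_k \times Cl_{n-k}$; this is where part (i) gets its ``reducible'' clause. For the irreducible subgroups I would split into the geometric $\mathcal{C}_i$-families and the almost simple irreducible subgroups $\mathcal{S}$. The $\mathcal{C}_i$-families are handled directly: for imprimitive subgroups $Cl_m \wr S_t$ with $mt = n$, tensor products $Cl_a \otimes Cl_b$, and subfield/normaliser-of-symplectic-type subgroups, $\dim H$ is a simple function of the parameters, and $3\dim H \geqs \dim G$ cuts these down to a short list (typically $t=2$, or one small tensor factor). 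For the almost simple irreducible case, the key input is that if $H^0$ is simple of rank $r$ then $\dim V$ is bounded below in terms of $r$ (the minimal nontrivial irreducible representation has dimension growing at least linearly, indeed quadratically for the classical families), while $\dim H = O(r^2)$ and $\dim G = O((\dim V)^2)$; hence $3\dim H \geqs \dim G$ forces $\dim V$ small, and one finishes by inspecting the list of small-dimensional irreducible representations of simple algebraic groups (as in L\"ubeck's tables), exactly as in Theorem \ref{t:main3}(ii). For $G$ exceptional I would start from the Liebeck--Seitz classification of positive-dimensional maximal closed subgroups: these are the parabolics (all large, by the same codimension computation), the reductive maximal-rank subgroups (read off from the extended Dynkin diagram), a small explicit list of maximal-rank-zero or ``exotic'' subgroups, and the generic subgroups such as $A_1 G_2 \subset F_4$, $G_2 C_4 \subset E_8$, etc., from \cite{LS3}-type lists; in each case $\dim H$ and $\dim G$ are tabulated, so checking $3\dim H \geqs \dim G$ is a finite verification yielding Table \ref{tab:alg_ex}.

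The main obstacle, I expect, will be the almost simple irreducible subgroups of classical groups: one must be careful that the ``minimal dimension of a faithful irreducible representation'' bound is applied in the right form (it differs for the defining characteristic and cross-characteristic situations, and for the various Lie types), and that self-dual versus non-self-dual modules are correctly matched to the orthogonal/symplectic/linear type of $G$ so that no genuine large example is missed and no spurious one is included. A secondary subtlety is the borderline cases where $3\dim H$ is exactly $\dim G$ or just barely exceeds it, and the handful of exceptional-maximal-subgroup configurations not covered by the generic reduction theorems (the finitely many ``small $p$'' or ``small rank'' exceptions), which must be checked individually against the literature. Once these are dispatched, assembling Tables \ref{tab:alg_class} and \ref{tab:alg_ex} is routine bookkeeping.
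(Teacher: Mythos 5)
Your proposal is correct and follows essentially the same route as the paper: Liebeck--Seitz's analogue of Aschbacher's theorem for classical algebraic groups (reducing to the geometric families $\mathcal{C}_i$ and an irreducible almost simple class $\mathcal{S}$, with a dimension bound $\dim H \leqs 3n$ for $H \in \mathcal{S}$ established via L\"ubeck's tables), and Liebeck--Seitz's explicit classification of positive-dimensional maximal subgroups for exceptional groups, in each case checking $3\dim H \geqs \dim G$ numerically. One minor over-caution: for simple algebraic groups every representation of $H^0$ arising here is rational, so there is no ``cross-characteristic'' case to worry about.
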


\begin{table}
$$\begin{array}{lll} \hline
G & \mbox{Type of $H$} & \mbox{Conditions} \\ \hline
{\rm SL}_{n} & {\rm GL}_{n/2} \wr S_2 & \mbox{$n$ even} \\
(n \geqs 2) & 
  {\rm Sp}_{n} & \mbox{$n \geqs 4$ even} \\
 & {\rm SO}_{n} & n \geqs 2, \, p \neq 2 \\
{\rm Sp}_{n} & {\rm Sp}_{n/t} \wr S_t & \mbox{$t=2,3$ or $(n,t) = (8,4)$} \\
(n \geqs 4) & {\rm GL}_{n/2} & p \ne 2 \\
& O_{n} & p=2 \\
& G_2 & (n,p)=(6,2) \\
{\rm SO}_{n} & O_{n/2} \wr S_2 & \mbox{$n$ even} \\
(n \geqs 7) & {\rm GL}_{n/2} & \\
& {\rm Sp}_{a} \otimes {\rm Sp}_{n/a} & (n,a)=(8,2), (12,2) \\
& G_2 & n=7, \, p \neq 2 \\
& B_3 & n=8 \\
& C_3 & (n,p)=(8,2) \\ \hline
\end{array}$$
\caption{Large irreducible maximal subgroups of classical algebraic groups}
\label{tab:alg_class}
\end{table}

\begin{table}
$$\begin{array}{ll} \hline
G & H  \\ \hline
E_8 & D_8,\, A_1E_7,\, A_2E_6.2   \\
E_7 & T_1E_6.2,\, A_1D_6,\, A_7.2,\, A_1F_4  \\
E_6 & A_1A_5,\,  T_2D_4.S_3,\, F_4,\, C_4\, (p\neq 2) \\
F_4 & B_4,\, D_4.S_3,\, C_4\, (p=2),\,  A_1C_3\, (p \neq 2) \\ 
G_2 & A_2.2,\, A_1A_1 \\ \hline
\end{array}$$
\caption{Large non-parabolic maximal subgroups of exceptional algebraic groups}
\label{tab:alg_ex}
\end{table}

For classical algebraic groups, our main tool is a reduction theorem of Liebeck and Seitz \cite{LS4}, which provides an algebraic group analogue of Aschbacher's theorem for finite classical groups. 
For exceptional algebraic groups, the positive-dimensional maximal closed subgroups are determined in \cite{LS2}, and it is straightforward to read off the large examples.

\vs

Finally, some brief comments on the notation used in this paper. Our group-theoretic notation is standard, and it is consistent with the notation in \cite{KL}. In particular, we write ${\rm L}_{n}(q) = {\rm L}_{n}^{+}(q) = {\rm PSL}_{n}(q)$ and ${\rm U}_{n}(q) = {\rm L}_{n}^{-}(q) = {\rm PSU}_{n}(q)$, and the simple orthogonal groups are denoted by ${\rm P\O}_{n}^{\pm}(q)$ ($n$ even), and $\O_n(q)$ ($n$ odd); we also use the notation $\O_n^{\circ}(q)$ in the latter case. As previously noted, when working with exceptional groups (both finite and algebraic), it is convenient to adopt the standard Lie notation.
In addition, if $a,b$ are positive integers then we write $(a,b)$ for the greatest common divisor of $a$ and $b$, and $a_b$ denotes the largest $b$-power dividing $a$. 
We refer the reader to \cite[pp.170--171]{KL} for a convenient list of formulae for the orders of all finite simple groups.

\section{Alternating groups}\label{s:alt}

Let $G=A_n$ be an alternating group of degree $n \geqslant 5$, and let $H$ be a maximal subgroup of $G$. Then one of the following holds:
\begin{itemize}\addtolength{\itemsep}{0.2\baselineskip}
\item[{\rm (a)}] $H=(S_k \times S_{n-k}) \cap G$ is intransitive on $\{1, \ldots, n\}$, where $1 \leqslant k <n/2$;
\item[{\rm (b)}] $H=(S_k \wr S_{n/k}) \cap G$ is transitive but imprimitive on $\{1, \ldots, n\}$, where $k$ divides $n$ and $2 \leqslant k \leqslant n/2$;
\item[{\rm (c)}] $H$ is primitive on $\{1, \ldots, n\}$.
\end{itemize}

It is easy to check that all subgroups of type (a) or (b) are large. For example, if 
$H$ is of type (a) then
$$|H| = \frac{1}{2}k!(n-k)!>\frac{1}{2}(\lfloor n/2 \rfloor!)^2>\frac{1}{2}((n-1)/2e)^{n-1}$$
and this bound implies that $|H|^3>n^n>|G|$ for all $n \geqslant 17$. The cases with $5\leqs n<17$ can be checked directly. A similar argument applies in (b).

The possibilities in case (c) are described by the O'Nan-Scott theorem (see \cite[p.366]{LPS}, for example). By combining this result with a theorem of Mar\'{o}ti \cite[Theorem 1.1]{Maroti}, we get the following:

\begin{prop}\label{p:maroti}
Let $H$ be a maximal primitive subgroup of $G=A_n$. Then one of the following holds:
\begin{itemize}\addtolength{\itemsep}{0.2\baselineskip}
\item[{\rm (i)}] $H = (S_a \wr S_b) \cap G$ and $n=a^b$, with $a \geqs 5$ and $b \geqs 2$;
\item[{\rm (ii)}] $H = S_a \cap G$ and $n = \binom{a}{b}$, with $a \geqs 5$ and $2 \leqs b \leqs a-2$;
\item[{\rm (iii)}] $(n,H) = (11,{\rm M}_{11}), (12,{\rm M}_{12}), (23, {\rm M}_{23})$ or $(24, {\rm M}_{24})$;
\item[{\rm (iv)}] $|H| < n^{1+\lfloor \log_{2}n \rfloor}$.
\end{itemize}
\end{prop}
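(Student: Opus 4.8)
The plan is to apply Mar\'oti's theorem \cite[Theorem 1.1]{Maroti} directly to the primitive group $H$. That theorem asserts that a primitive permutation group of degree $n$ satisfies one of the following: (a) it is contained in $S_m \wr S_r$, contains $A_m^r$, and acts in the product action of degree $n = \binom{m}{k}^r$ with the base factor $S_m$ acting on the $k$-element subsets of an $m$-set; (b) it is one of ${\rm M}_{11}, {\rm M}_{12}, {\rm M}_{23}, {\rm M}_{24}$ in its $4$-transitive action of degree $11, 12, 23, 24$; or (c) its order is at most $n\prod_{i=0}^{\lfloor \log_2 n\rfloor-1}(n-2^i)$, which is strictly less than $n^{1+\lfloor \log_2 n\rfloor}$. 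Alternative (c) is precisely conclusion (iv) of the proposition, and alternative (b) gives conclusion (iii) once one observes that each of the four Mathieu groups concerned is maximal in the corresponding alternating group. So all the work lies in alternative (a).

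Assume we are in case (a). Then $H \leqs (S_m \wr S_r)\cap A_n$, and for $n \geqs 5$ this overgroup is a proper subgroup of $A_n$: the only configuration in which $S_m \wr S_r$ contains $A_n$ is $r=1$ with $k \in \{1, m-1\}$ (the natural action, $n = m$), and there $H \supseteq A_m^r = A_n$ contradicts the properness of $H$. Since $H$ is maximal we obtain $H = (S_m \wr S_r)\cap A_n$, and it remains to determine the parameters, using the O'Nan--Scott description of the product-action maximal subgroups of $A_n$ (see \cite{LPS}). After discarding the degenerate values $k \in \{0,m\}$ and noting that $k = m-1$ gives the same permutation action as $k=1$, the surviving cases are $k=1$ (with $r$ arbitrary) and $2 \leqs k \leqs m-2$ (with $r$ arbitrary). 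If $k=1$ then $H$ acts in the standard product action of degree $n = m^r$; here $r=1$ would force $H \supseteq A_n$, so $r \geqs 2$, and $m \leqs 4$ is impossible because then $S_m$ has a regular normal subgroup and $H$ lies inside a strictly larger proper affine subgroup of $A_n$, against maximality. This gives conclusion (i) with $a = m \geqs 5$ and $b = r \geqs 2$. If instead $2 \leqs k \leqs m-2$ and $r=1$, then $H = S_m \cap A_n$ acting on the $k$-subsets of an $m$-set and $n = \binom{m}{k}$; here $m \leqs 4$ is excluded because its only instance $(m,k)=(4,2)$ makes $S_4$ act imprimitively on the $6$ pairs, contrary to $H$ being primitive, and we reach conclusion (ii) with $a = m \geqs 5$ and $2 \leqs b = k \leqs a-2$ (when $2k=m$ this action is itself imprimitive, so such $b$ in fact never occur, but that is harmless for an ``only if'' statement).

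The one remaining possibility within case (a) is the mixed one, $2 \leqs k \leqs m-2$ together with $r \geqs 2$, and here I would show that $H$ cannot be maximal. The $k$-subset action faithfully embeds the base group as a proper subgroup $S_m < S_\ell$ with $\ell = \binom{m}{k} \geqs 6$, and this embedding is compatible with the product actions, so
\[
H = (S_m \wr S_r)\cap A_n \leqs (S_\ell \wr S_r)\cap A_n,
\]
where the larger group acts on $\ell^r = n$ points in the product action with base the \emph{natural} action of $S_\ell$. Since $\ell \geqs 6$ and $r \geqs 2$, the group $(S_\ell \wr S_r)\cap A_n$ is a proper subgroup of $A_n$, and it properly contains $H$ because $S_m$ is a proper subgroup of $S_\ell$; hence $H$ is not maximal, a contradiction. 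Therefore conclusions (i)--(iv) are exhaustive. I expect the main obstacle --- though it is essentially routine --- to be exactly this maximality bookkeeping throughout case (a): the reduction to $H = (S_m \wr S_r)\cap A_n$, the elimination of the mixed parameters, and the careful handling of the degenerate and small parameter values so that the stated ranges hold. The appeal to Mar\'oti's theorem and the elementary estimate $n\prod_{i<\lfloor \log_2 n\rfloor}(n-2^i) < n^{1+\lfloor \log_2 n\rfloor}$ are immediate.
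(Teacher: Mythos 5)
Your proof is correct, and it uses essentially the same ingredients the paper indicates (Mar\'oti's theorem combined with maximality/O'Nan--Scott reasoning about product-action subgroups); the paper only gestures at these, whereas you fill in the bookkeeping. The only remark worth making is that the ``mixed'' case ($2 \leqs k \leqs m-2$ and $r \geqs 2$) and the small-parameter eliminations you do explicitly are exactly the content one is expected to read off from the O'Nan--Scott list of product-action maximal subgroups of $A_n$ in \cite{LPS}, so you could have appealed to that list directly rather than re-deriving the containment $S_m \wr S_r < S_\ell \wr S_r$; but your direct argument is sound.
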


\begin{cor}\label{c:alt}
Let $H$ be a large primitive maximal subgroup of $G=A_n$, where $n \geqs 24$. Then $(n,H) = (24, {\rm M}_{24})$.
\end{cor}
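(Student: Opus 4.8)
The plan is to run through the four cases of Proposition~\ref{p:maroti} and, in each, test the largeness bound $|H|^3 \geqs |G| = \tfrac12 n!$, the goal being to show that when $n \geqs 24$ only $(n,H) = (24,{\rm M}_{24})$ survives.

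Case (iii) is immediate: since $n \geqs 24$ the only option there is $(24,{\rm M}_{24})$, and one checks from $|{\rm M}_{24}| = 244823040$ that $|{\rm M}_{24}|^3 > \tfrac12\cdot 24!$, so ${\rm M}_{24}$ is indeed large. For case (iv) I would use the bound $|H| < n^{1+\lfloor \log_2 n\rfloor}$ directly. At $n=24$ this gives $|H|^3 < 24^{15}$, which is smaller than $\tfrac12\cdot 24!$ by several orders of magnitude; and since $n!$ grows like $(n/e)^n$, far faster than $n^{O(\log n)}$, a routine estimate (for instance, comparing $\ln(n!) > n\ln n - n$ with $(3\log_2 n + 3)\ln n$, together with a direct check of the handful of small degrees) shows $|H|^3 < \tfrac12 n!$ for every $n \geqs 24$. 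So case (iv) contributes nothing.

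It remains to treat the ``product'' cases (i) and (ii). In case (i) we have $n = a^b$ with $a \geqs 5$ and $b \geqs 2$, so $n \geqs 25$ and $a \leqs n^{1/2}$; here $|H| \leqs (a!)^{b}\,b!$, and Stirling's bound on $a!$ gives $\ln|H| = O(n^{1/2}\ln n) = o(n\ln n) = o(\ln|G|)$. A direct computation shows largeness already fails at the smallest admissible degree $n=25$ (where $|H| = 14400$ while $\tfrac12\cdot 25!$ has order $10^{24}$), and the discrepancy only grows with $n$. Case (ii) is similar: there $n = \binom{a}{b} \geqs \binom{a}{2} > \tfrac12(a-1)^2$, so again $a = O(n^{1/2})$ and $\ln|H| \leqs \ln(a!) = O(n^{1/2}\ln n) = o(\ln|G|)$; the smallest admissible degree with $n \geqs 24$ is $n = 28$ (from $a=8$, $b=2$), where largeness fails comfortably, and once more the gap widens as $n$ increases.

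None of the four cases presents a genuine obstacle; the only point requiring mild care is making the estimates in cases (i), (ii) and (iv) uniform in $n$. In each of these cases, however, $|H|$ is bounded above by a function of the shape $n^{O(\log n)}$ (or smaller) while $|G|$ grows like $(n/e)^n$, so after verifying the few small degrees by hand the required inequalities are clear, and $(24,{\rm M}_{24})$ is left as the only large primitive maximal subgroup with $n \geqs 24$.
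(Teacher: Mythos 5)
Your proof is correct and follows essentially the same route as the paper: run through the four cases of Proposition~\ref{p:maroti}, compare $|H|^3$ with $|G|$ using Stirling-type estimates, and verify the finitely many small degrees directly. The paper's published proof only writes out case~(ii) explicitly (via a monotone function $f(a)$) and leaves the remaining cases to the reader, so your more uniform treatment is a valid elaboration rather than a different argument.
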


\begin{proof}
This is entirely straightforward. For example, in case (ii) we have
$$|H|^3<a^{3a},\;\; |G| \geqs \frac{1}{2}\binom{a}{2}!>\frac{1}{2}\left(\frac{a(a-1)}{2e}\right)^{\frac{1}{2}a(a-1)}$$
so $|H|^3<|G|$ if
$$f(a):=\frac{1}{2}a(a-1)\left(\log(a(a-1))-\log(2e)\right)-3a\log a -\log 2 > 0.$$
For $a \geqs 5$, it is easy to check that $f$ is increasing, and we have $f(7)>0$. The cases $a=5,6$ can be checked directly.
\end{proof}

Finally, the remaining cases with $n \leqslant 23$ can be checked directly, using the complete list of maximal subgroups of $A_n$ available in {\sc Magma} \cite{magma}, for example. In this way we obtain the list of large subgroups presented in Theorem \ref{t:main1}.

\section{Sporadic groups}\label{s:spot}

The proof of Theorem \ref{t:main2} is an entirely straightforward exercise. Indeed, all the maximal subgroups of sporadic groups have been determined up to conjugacy, with the exception of the Monster (see \cite{WebAt} for a convenient list of these subgroups). At present, $44$ conjugacy classes of maximal subgroups of the Monster have been determined, and it is known that any additional maximal subgroup is almost simple, with socle ${\rm L}_{2}(13)$, ${\rm U}_{3}(4)$, ${\rm U}_{3}(8)$ or ${\rm Sz}(8)$ (see \cite{N2}), which is clearly non-large.

\section{Finite classical groups}\label{s:class}

Let $G$ be a finite simple classical group over $\F$, with natural (projective) module $V$ of dimension $n$. Write $q=p^a$ for a prime $p$. The possibilities for $G$ are as follows:
$${\rm L}_{n}(q),\, n \geqs 2;\;\; {\rm U}_{n}(q),\, n \geqs 3;\;\; {\rm PSp}_{n}(q)',\,n \geqs 4;\;\; {\rm P\O}_{n}^{\e}(q),\, n \geqs 7.$$
Due to the existence of isomorphisms between certain low-dimensional classical groups (see \cite[Proposition 2.9.1]{KL}, for example), we may assume that $n$ satisfies the stated lower bounds (in addition, we will also assume that $q$ is odd if $G$ is an odd-dimensional orthogonal group). We begin by recording some useful preliminary results. 

\subsection{Preliminaries}\label{s:cprel}

\begin{lem}\label{l:ord2}
Let $q$ be a prime power.
\begin{itemize}\addtolength{\itemsep}{0.2\baselineskip}
\item[{\rm (i)}] If $a \geqs 2$ then
$$1-q^{-1}-q^{-2}<\prod_{i=1}^a(1-q^{-i}) \leqs (1-q^{-1})(1-q^{-2}).$$
\item[{\rm (ii)}] If $a \geqs 3$ then
$$1<(1+q^{-1})(1-q^{-2})<\prod_{i=1}^a(1-(-q)^{-i})\leqs (1+q^{-1})(1-q^{-2})(1+q^{-3}).$$
\end{itemize}
\end{lem}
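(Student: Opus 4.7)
The lemma is a pair of elementary inequalities about truncations of the two natural products $\prod_i(1-q^{-i})$ and $\prod_i(1-(-q)^{-i})$ that control the orders of the classical groups. Both parts can be proved by extracting a small initial segment of the product explicitly and controlling the remaining tail by an easy pairing argument. Nothing is delicate; the point is to get the boundary cases (parity of $a$) right.

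\emph{Part (i).} The upper bound is immediate: each factor $1-q^{-i}$ with $i\geqs 3$ lies in $(0,1)$, so appending such factors only decreases the product below $(1-q^{-1})(1-q^{-2})$. For the lower bound, write
\[
\prod_{i=1}^{a}(1-q^{-i})=(1-q^{-1})(1-q^{-2})\prod_{i=3}^{a}(1-q^{-i}).
\]
The tail satisfies the Weierstrass-type estimate
\[
\prod_{i=3}^{a}(1-q^{-i})\geqs 1-\sum_{i=3}^{\infty}q^{-i}=1-\frac{1}{q^{2}(q-1)}.
\]
Expanding $(1-q^{-1}-q^{-2}+q^{-3})\bigl(1-\tfrac{1}{q^{2}(q-1)}\bigr)$ and comparing with $1-q^{-1}-q^{-2}$ reduces the claim to $q^{-2}>q^{-3}$, which is trivial for $q\geqs 2$.

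\emph{Part (ii).} Split the product as
\[
\prod_{i=1}^{a}(1-(-q)^{-i})=(1+q^{-1})(1-q^{-2})\prod_{i\geqs 3}(1-(-q)^{-i}),
\]
so the factors beyond the second come in blocks $(1+q^{-(2k-1)})(1-q^{-2k})$ (odd index first). For the lower bound, each such block satisfies
\[
(1+q^{-(2k-1)})(1-q^{-2k})=1+q^{-(2k-1)}-q^{-2k}-q^{-(4k-1)}>1,
\]
since $q^{2k}-q^{2k-1}-1>0$ for $q\geqs 2$, $k\geqs 2$, and a possible unpaired trailing factor $1+q^{-a}$ ($a$ odd) is itself $>1$. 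Hence for $a\geqs 3$ the tail strictly exceeds $1$, giving the strict inequality. Finally $(1+q^{-1})(1-q^{-2})=1+q^{-1}-q^{-2}-q^{-3}>1$ because $q^{2}-q-1>0$ for $q\geqs 2$.

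For the upper bound in part (ii), group the factors from $i=4$ onwards into blocks $(1-q^{-2k})(1+q^{-(2k+1)})$ (even index first). Each such block satisfies
\[
(1-q^{-2k})(1+q^{-(2k+1)})=1+q^{-(2k+1)}-q^{-2k}-q^{-(4k+1)}<1,
\]
because $q^{-(2k+1)}<q^{-2k}$, and an unpaired trailing factor $1-q^{-a}$ ($a$ even) is also $<1$. Therefore appending the factors with $i\geqs 4$ to $(1+q^{-1})(1-q^{-2})(1+q^{-3})$ can only decrease the product, yielding the stated upper bound.

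The only possible pitfall is bookkeeping the parity of $a$ in the pairing, but in every case the stray unpaired factor has the correct sign to preserve the inequality, so no separate argument is needed.
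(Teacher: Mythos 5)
Your argument is correct and, for part (ii), is exactly the pairing argument the paper uses (the paper records the same two block inequalities $(1+q^{-2m+1})(1-q^{-2m})>1$ and $(1-q^{-2m})(1+q^{-2m-1})<1$ and leaves the parity bookkeeping implicit). For part (i) the paper simply cites \cite[Lemma~3.5]{NP}, so your Weierstrass-tail argument is a self-contained alternative.

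One small inaccuracy in part (i): after writing the product as $(1-q^{-1})(1-q^{-2})\bigl(1-\tfrac{1}{q^{2}(q-1)}\bigr)$ and comparing with $1-q^{-1}-q^{-2}$, the difference is not controlled by ``$q^{-2}>q^{-3}$''. If you actually expand, using $(1-q^{-1})(1-q^{-2})=\tfrac{(q-1)^{2}(q+1)}{q^{3}}$, you find
\[
(1-q^{-1})(1-q^{-2})\Bigl(1-\tfrac{1}{q^{2}(q-1)}\Bigr)-(1-q^{-1}-q^{-2})
= q^{-3}-\frac{q^{2}-1}{q^{5}} = q^{-5}>0,
\]
so the claim reduces to $q^{2}>q^{2}-1$, not $q^{-2}>q^{-3}$. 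The conclusion is the same, but the stated reduction should be fixed. Also worth noting: the slack is genuinely only $q^{-5}$, so the bound $\tfrac{1}{q^{2}(q-1)}$ cannot be weakened to $q^{-2}$ — doing so makes the inequality fail already at $q=3$ — so the exact geometric-tail sum you used is actually necessary, not just convenient.
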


\begin{proof}
Part (i) follows from \cite[Lemma 3.5]{NP}. For (ii), observe that $(1+q^{-2m+1})(1-q^{-2m})>1$ and $(1-q^{-2m})(1+q^{-2m-1})<1$ for all $m \geqs 1$.
\end{proof}

\begin{lem}\label{l:ord4}
\mbox{ }
\begin{itemize}\addtolength{\itemsep}{0.2\baselineskip}
\item[{\rm (i)}] If $a,q \geqs 2$ then
\begin{align*}
(1-q^{-1}-q^{-2})q^{a^2} &< |{\rm GL}_{a}(q)| \leqs (1-q^{-1})(1-q^{-2})q^{a^2} \\
(1+q^{-1})(1-q^{-2})q^{a^2} &\leqs  |{\rm GU}_{a}(q)| \leqs (1+q^{-1})(1-q^{-2})(1+q^{-3})q^{a^2}
\end{align*}
\item[{\rm (ii)}] If $a \geqs 4$ and $q \geqs 2$ then
\begin{align*}
(1-q^{-2}-q^{-4})q^{\frac{1}{2}a(a+1)} &< |{\rm Sp}_{a}(q)| \leqs (1-q^{-2})(1-q^{-4})q^{\frac{1}{2}a(a+1)}
\end{align*}
\item[{\rm (iii)}] If $a \geqs 5$ and $q \geqs 2$ then
\begin{align*}
(1-q^{-2}-q^{-4})q^{\frac{1}{2}a(a-1)} &< \a^{-1}|{\rm SO}_{a}^{\circ}(q)| \leqs (1-q^{-2})(1-q^{-4})q^{\frac{1}{2}a(a-1)} \\
(1-q^{-2}-q^{-4})(1-q^{-a/2})q^{\frac{1}{2}a(a-1)} &< \a^{-1}|{\rm SO}_{a}^{+}(q)| \leqs (1-q^{-2})(1-q^{-4})q^{\frac{1}{2}a(a-1)}  \\
(1-q^{-2}-q^{-4})q^{\frac{1}{2}a(a-1)} &< \a^{-1}|{\rm SO}_{a}^{-}(q)| \leqs (1-q^{-2})(1-q^{-4})(1+q^{-a/2})q^{\frac{1}{2}a(a-1)}  
\end{align*}
where $\a=(2,q)$.
\end{itemize}
\end{lem}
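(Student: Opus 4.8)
The plan is to read each estimate straight off the relevant exact order formula. Recalling the standard formulae (see \cite[pp.~170--171]{KL}), one has
$$|{\rm GL}_{a}(q)| = q^{a^2}\prod_{i=1}^{a}(1-q^{-i}), \qquad |{\rm GU}_{a}(q)| = q^{a^2}\prod_{i=1}^{a}(1-(-q)^{-i}),$$
$$|{\rm Sp}_{a}(q)| = q^{\frac{1}{2}a(a+1)}\prod_{i=1}^{a/2}(1-q^{-2i}), \qquad \a^{-1}|{\rm SO}_{a}^{\circ}(q)| = q^{\frac{1}{2}a(a-1)}\prod_{i=1}^{(a-1)/2}(1-q^{-2i}),$$
$$\a^{-1}|{\rm SO}_{a}^{\e}(q)| = q^{\frac{1}{2}a(a-1)}(1-\e q^{-a/2})\prod_{i=1}^{a/2-1}(1-q^{-2i}) \qquad (\e=\pm),$$
where in the last line $\e=+$ is read as $+1$ and $\e=-$ as $-1$. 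Note that the standing convention that $q$ is odd when $a$ is odd makes $\a=(2,q)=1$ in the formula for $\a^{-1}|{\rm SO}_{a}^{\circ}(q)|$, while the factor $\a=(2,q)$ is precisely what makes the formula for $\a^{-1}|{\rm SO}_{a}^{\e}(q)|$ uniform in the parity of $q$. Given these identities, each assertion of the lemma reduces to bounding a product of the shape $\prod(1\pm q^{-i})$ using Lemma \ref{l:ord2}.

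For part (i), the bounds on $|{\rm GL}_{a}(q)|$ are exactly Lemma \ref{l:ord2}(i) applied to $\prod_{i=1}^{a}(1-q^{-i})$, which is legitimate since $a\geqs 2$; for $a\geqs 3$, the bounds on $|{\rm GU}_{a}(q)|$ are exactly Lemma \ref{l:ord2}(ii) applied to $\prod_{i=1}^{a}(1-(-q)^{-i})$. The one value not covered by Lemma \ref{l:ord2}(ii) is $a=2$, where $|{\rm GU}_{2}(q)|=q^{4}(1+q^{-1})(1-q^{-2})$ exactly; here the lower bound holds with equality and the upper bound follows from $1+q^{-3}>1$, so this case is settled by inspection.

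For parts (ii) and (iii), the key observation is that all of the remaining products run over \emph{even} powers of $q$, so they are controlled by Lemma \ref{l:ord2}(i) with $q$ replaced by $q^{2}$: this yields $1-q^{-2}-q^{-4}<\prod_{i=1}^{r}(1-q^{-2i})\leqs(1-q^{-2})(1-q^{-4})$ whenever $r\geqs 2$. For ${\rm Sp}_{a}(q)$ one has $r=a/2\geqs 2$ (as $a\geqs 4$), and for ${\rm SO}_{a}^{\circ}(q)$ one has $r=(a-1)/2\geqs 2$ (as $a\geqs 5$), which immediately gives (ii) and the first line of (iii). For ${\rm SO}_{a}^{\e}(q)$ with $a$ even, the product runs only to $r=a/2-1$, but $a\geqs 5$ together with $a$ even forces $a\geqs 6$, so again $r\geqs 2$ and the same estimate for $\prod_{i=1}^{a/2-1}(1-q^{-2i})$ applies; multiplying through by $1-\e q^{-a/2}$ and using $0<1-q^{-a/2}<1<1+q^{-a/2}$ produces the asymmetric bounds of (iii) --- in type $+$ the factor $1-q^{-a/2}$ is kept in the lower bound and replaced by $1$ in the upper bound, and in type $-$ the factor $1+q^{-a/2}$ is replaced by $1$ in the lower bound and kept in the upper bound.

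There is no genuine obstacle here: once the orders are written in the displayed form, the argument is pure bookkeeping with Lemma \ref{l:ord2}. The only points requiring a little care are (a) checking that each product has at least two terms (three for the product $\prod(1-(-q)^{-i})$) so that Lemma \ref{l:ord2} applies --- this is exactly why the stated lower bounds on $a$ are needed --- (b) the separate, trivial treatment of ${\rm GU}_{2}(q)$, and (c) correctly handling the extra factor $1-\e q^{-a/2}$ and the normalising factor $\a=(2,q)$ in the even-dimensional orthogonal order formulae.
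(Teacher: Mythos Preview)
Your proof is correct and follows exactly the same approach as the paper: write down the standard order formulae and invoke Lemma~\ref{l:ord2} (with $q$ replaced by $q^{2}$ for the symplectic and orthogonal products). The paper's own proof is a single sentence to this effect; you have simply spelled out the bookkeeping in more detail, including the harmless boundary case $a=2$ for ${\rm GU}_{a}(q)$ and the handling of the extra factor $1\mp q^{-a/2}$ in the even-dimensional orthogonal cases.
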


\begin{proof}
We have
\renewcommand{\arraystretch}{1.2}
$$\begin{array}{ll}
\displaystyle |{\rm GL}_{a}(q)| = q^{a^2}\prod_{i=1}^{a}(1-q^{-i}) & 
\displaystyle |{\rm GU}_{a}(q)|  = q^{a^2}\prod_{i=1}^{a}(1-(-q)^{-i}) \\
\displaystyle |{\rm Sp}_{a}(q)|  = q^{\frac{1}{2}a(a+1)}\prod_{i=1}^{a/2}(1-q^{-2i}) & 
\displaystyle |{\rm SO}_{a}^{\circ}(q)|  = \a q^{\frac{1}{2}a(a-1)}\prod_{i=1}^{(a-1)/2}(1-q^{-2i}) \\ 
\multicolumn{2}{l}{\displaystyle |{\rm SO}_{a}^{\pm}(q)|  = \a q^{\frac{1}{2}a(a-1)}(1\mp q^{-a/2})\prod_{i=1}^{a/2-1}(1-q^{-2i})} \\
\end{array}$$
\renewcommand{\arraystretch}{1}
so all of the bounds follow immediately from Lemma \ref{l:ord2}.
\end{proof}

\begin{cor}\label{l:ord}
\mbox{ }
\begin{itemize}\addtolength{\itemsep}{0.2\baselineskip}
\item[{\rm (i)}] If $n \geqs 2$ then 
$$q^{n^2-2}<|{\rm L}_{n}(q)| \leqs (1-q^{-2})q^{n^2-1}$$
\item[{\rm (ii)}] If $n \geqs 3$ then 
$$(1-q^{-1})q^{n^2-2} < |{\rm U}_{n}(q)| \leqs (1-q^{-2})(1+q^{-3})q^{n^2-1}<q^{n^2-1}$$
\item[{\rm (iii)}] If $n \geqs 4$ then 
$$\frac{1}{2\a}q^{\frac{1}{2}n(n+1)}<|{\rm PSp}_{n}(q)| < q^{\frac{1}{2}n(n+1)}$$
with $\a=(2,q-1)$.
\item[{\rm (iv)}] If $n \geqs 7$ then
$$\frac{1}{4\b}q^{\frac{1}{2}n(n-1)}<|{\rm P\O}_{n}^{\e}(q)| < q^{\frac{1}{2}n(n-1)}$$
with $\b=(2,n)$.
\end{itemize}
In particular, $|G|>\frac{1}{8}q^{\frac{1}{2}n(n-1)}$ for every finite simple classical group $G$ of dimension $n$ over $\F$.
\end{cor}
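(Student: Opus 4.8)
\emph{Proof plan.}
The plan is to obtain each bound from Lemma~\ref{l:ord4} by writing the order of the simple group as the order of the relevant matrix group divided by the order of the central scalar subgroup one factors out, and then reading off elementary estimates on the resulting products of the form $\prod(1\pm q^{-i})$ via Lemma~\ref{l:ord2}. Concretely, I will use
$$|{\rm L}_n(q)| = \frac{|{\rm GL}_n(q)|}{(q-1)(n,q-1)},\qquad |{\rm U}_n(q)| = \frac{|{\rm GU}_n(q)|}{(q+1)(n,q+1)},\qquad |{\rm PSp}_n(q)| = \frac{|{\rm Sp}_n(q)|}{(2,q-1)},$$
together with $|{\rm P\O}_n(q)| = \tfrac12|{\rm SO}_n^{\circ}(q)|$ for $n$ odd (recall $q$ is odd here), and $|{\rm P\O}_n^{\e}(q)| = |{\rm SO}_n^{\e}(q)| \big/ \big((2,q)(4,q^{n/2}-\e)\big)$ for $n$ even. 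Every \emph{upper} bound is then immediate: the denominator is at least $1$, and the upper estimates in Lemma~\ref{l:ord4}, combined with trivial inequalities such as $(1-q^{-2})(1+q^{-3})<1$ and $(1-q^{-2})(1-q^{-4})<1$, give the stated bounds. So all the work is in the lower bounds, where one must bound the denominator above by an explicit power of $q$.

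For ${\rm L}_n(q)$, estimate the denominator by $(q-1)(n,q-1) \leqs (q-1)^2 = q^2(1-q^{-1})^2$; writing $\prod_{i=1}^n(1-q^{-i}) = (1-q^{-1})\prod_{i=2}^n(1-q^{-i})$ and using $\prod_{i\geqs 2}(1-q^{-i}) > 1-q^{-1}$ (for $q\geqs 3$ this follows from $\sum_{i\geqs 2}q^{-i} = 1/(q(q-1)) < 1/q$; for $q=2$ the denominator is already $1$ and the bound is immediate from Lemma~\ref{l:ord2}(i)), the lower estimate in Lemma~\ref{l:ord4}(i) yields $|{\rm L}_n(q)| > q^{n^2-2}$. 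For ${\rm U}_n(q)$, estimate the denominator by $(q+1)^2 = q^2(1+q^{-1})^2$; since $n\geqs 3$, Lemma~\ref{l:ord2}(ii) gives $\prod_{i=1}^n(1-(-q)^{-i}) > (1+q^{-1})(1-q^{-2})$ strictly, and division produces exactly $|{\rm U}_n(q)| > \frac{1-q^{-2}}{1+q^{-1}}\,q^{n^2-2} = (1-q^{-1})q^{n^2-2}$. For ${\rm PSp}_n(q)$, the denominator $\a=(2,q-1)$ is at most $2$, and since $1-q^{-2}-q^{-4} > \tfrac12$ for all $q\geqs 2$, Lemma~\ref{l:ord4}(ii) gives $|{\rm PSp}_n(q)| > \frac{1}{2\a}q^{\frac12 n(n+1)}$.

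The orthogonal case needs a little more bookkeeping, and this is the only step I expect to require attention. For $n$ odd we have $\b=(2,n)=1$, and from $|{\rm P\O}_n(q)| = \tfrac12|{\rm SO}_n^{\circ}(q)|$, the lower estimate in Lemma~\ref{l:ord4}(iii) (with $\a=(2,q)=1$), and $1-q^{-2}-q^{-4}>\tfrac12$, we get $|{\rm P\O}_n(q)| > \tfrac14 q^{\frac12 n(n-1)} = \frac{1}{4\b}q^{\frac12 n(n-1)}$. For $n$ even we have $\b=2$; here the factor $(2,q)$ in the denominator of $|{\rm P\O}_n^{\e}(q)|$ cancels the $\a=(2,q)$ occurring in Lemma~\ref{l:ord4}(iii), while $(4,q^{n/2}-\e)\leqs 4$, so $|{\rm P\O}_n^{\e}(q)| > \tfrac14(1-q^{-2}-q^{-4})q^{\frac12 n(n-1)}$ when $\e=-$, with an extra factor $(1-q^{-n/2})$ present when $\e=+$. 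Since $n\geqs 8$ we have $1-q^{-n/2}\geqs 1-q^{-4}$, and $(1-q^{-2}-q^{-4})(1-q^{-4})>\tfrac12$ for all $q\geqs 2$, so $|{\rm P\O}_n^{\e}(q)| > \tfrac18 q^{\frac12 n(n-1)} = \frac{1}{4\b}q^{\frac12 n(n-1)}$ for both signs; the matching upper bound for $\e=-$ uses $(1-q^{-2})(1+q^{-n/2})<1$, valid since $n/2>2$. Finally, the closing ``in particular'' follows by comparing the four lower bounds just established: the orthogonal one, $\tfrac18 q^{\frac12 n(n-1)}$, is the smallest, and a direct check shows that $q^{n^2-2}$ (for $n\geqs 2$), $\tfrac12 q^{n^2-2}$ (for $n\geqs 3$), and $\tfrac14 q^{\frac12 n(n+1)}$ (for $n\geqs 4$) each exceed $\tfrac18 q^{\frac12 n(n-1)}$, giving $|G| > \tfrac18 q^{\frac12 n(n-1)}$ for every finite simple classical group $G$ of dimension $n$ over $\F$.
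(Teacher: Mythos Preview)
Your proof is correct and follows essentially the same approach as the paper: write each simple group order as the relevant matrix group order divided by the appropriate scalar factor, then apply the bounds from Lemma~\ref{l:ord4}. The only difference is that for the lower bound on $|{\rm L}_n(q)|$ the paper applies Lemma~\ref{l:ord4}(i) directly and checks the single inequality $(q^2-q-1)/(q-1)^2\geqs 1$ (valid for all $q\geqs 2$), which avoids your case split $q=2$ versus $q\geqs 3$; otherwise your argument and the paper's are the same.
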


\begin{proof}
This quickly follows from the bounds in Lemma \ref{l:ord4}. For example,
$$|{\rm L}_{n}(q)| = \frac{|{\rm GL}_{n}(q)|}{d(q-1)} > \frac{(1-q^{-1}-q^{-2})}{d(q-1)}q^{n^2} \geqslant \frac{q^2-q-1}{(q-1)^2}q^{n^2-2} \geqslant q^{n^2-2},$$
where $d=(n,q-1)$, and
$$|{\rm L}_{n}(q)| = \frac{|{\rm GL}_{n}(q)|}{d(q-1)} \leqs d^{-1}(1-q^{-2})q^{n^2-1} \leqs (1-q^{-2})q^{n^2-1}.$$
The other cases are entirely similar.
\end{proof}

We will also need some elementary bounds on factorials.

\begin{lem}\label{l:ord3}
The following bounds hold:
\begin{itemize}\addtolength{\itemsep}{0.2\baselineskip}
\item[{\rm (i)}] If $t \geqs 5$ then $t!<5^{(t^2-3t+1)/3}$.
\item[{\rm (ii)}] If $t \geqs 4$ then $t!<2^{4t(t-3)/3}$.
\end{itemize}
\end{lem}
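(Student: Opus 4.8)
The plan is to prove both inequalities by induction on $t$, in each case reducing the inductive step to an elementary ``linear beats logarithm'' estimate.

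For part (i) I would take $t=5$ as the base case, where $5! = 120 < 125 = 5^3 < 5^{11/3}$. For the inductive step, assuming $t! < 5^{(t^2-3t+1)/3}$ with $t \geqs 5$, I would multiply through by $t+1$ and use the identity $((t+1)^2-3(t+1)+1)-(t^2-3t+1) = 2t-2$, which reduces the claim to verifying $t+1 < 5^{(2t-2)/3}$, equivalently $3\log(t+1) < (2t-2)\log 5$. This last inequality follows from the fact that $g(t) := (2t-2)\log 5 - 3\log(t+1)$ has $g'(t) = 2\log 5 - 3/(t+1) > 0$ for all $t \geqs 1$, together with $g(5) = \log(5^8/6^3) > 0$.

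Part (ii) I would treat in exactly the same way, with base case $4! = 24 < 32 = 2^5 < 2^{16/3}$. In the inductive step the relevant exponent difference is $4(t+1)(t-2)/3 - 4t(t-3)/3 = 8(t-1)/3$, so the claim reduces to checking $t+1 < 2^{8(t-1)/3}$ for $t \geqs 4$; this holds since $h(t) := \tfrac{8}{3}(t-1)\log 2 - \log(t+1)$ satisfies $h'(t) = \tfrac{8}{3}\log 2 - 1/(t+1) > 0$ and $h(4) = \log(2^8/5) > 0$.

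I do not anticipate any genuine obstacle here: the only points requiring care are the bookkeeping of the quadratic exponents in the inductive steps and the two numerical base-case evaluations, all of which are routine. One could alternatively deduce these bounds from a Stirling-type estimate for $\log(t!)$, but the self-contained induction seems cleanest.
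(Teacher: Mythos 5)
Your proof is correct, and the arithmetic checks out: the exponent differences $2t-2$ and $8(t-1)/3$ are computed correctly, the base cases $5!<5^{11/3}$ and $4!<2^{16/3}$ hold, and the monotonicity arguments for $g$ and $h$ are sound. However, your route differs from the paper's. Rather than an induction on $t$, the paper first applies the crude upper bound $t!<t^t$ and then shows directly that the resulting logarithmic inequality $\tfrac{1}{3}(t^2-3t+1)\log 5 - t\log t>0$ holds for $t\geqs 7$ (by checking that this expression is increasing and positive at $t=7$), verifying the two leftover small cases $t=5,6$ by hand; part (ii) is handled analogously. Your induction is the tighter method in the sense that it uses the exact recursion $(t+1)!=(t+1)\,t!$ rather than the lossy estimate $t!<t^t$, which is why you need no extra small-case checks beyond the single base case. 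The paper's approach is shorter to state, at the cost of bounding from above more coarsely and therefore needing a couple of hand-checked values before the monotonicity kicks in. Both arguments are fully elementary and self-contained, so either is acceptable; the choice is purely a matter of taste.
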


\begin{proof}
First consider (i). If $t=5$ or $6$ then the bound can be checked directly, so let us assume $t \geqs 7$. Since $t!<t^t$, the desired bound holds if 
$f(t) = \frac{1}{3}(t^2-3t+1)\log 5 - t\log t>0$. For $t \geqs 7$ we have $f'(t) = (\frac{2}{3}t-1)\log 5-\log t - 1>0$, so $f(t) \geqs f(7)>0$ and the result follows. A similar argument applies in part (ii). 
\end{proof}

\subsection{Subgroup structure}\label{ss:substr}

The main theorem on the subgroup structure of a finite classical group $G$ is due to Aschbacher. In \cite{asch}, eight natural, or \emph{geometric}, subgroup collections of $G$ are defined, denoted $\C_i$, $1 \leqs i \leqs 8$. These collections include the stabilisers of subspaces of $V$, and the stabilisers of appropriate direct sum and tensor product decompositions of $V$. We follow \cite{KL} in labelling the various $\C_i$ collections, and we refer the reader to \cite[Chapter 4]{KL} for a detailed description of the relevant subgroups. We write
$$\mathcal{C}(G) = \bigcup_{i=1}^{8}\C_{i}.$$

Aschbacher's main theorem in \cite{asch} states that if $H$ is a maximal subgroup of $G$ then either $H$ belongs to $\C(G)$, or $H$ is almost simple, with socle $H_0$ say, and there exists an absolutely irreducible representation $\rho: \what{H}_{0} \to {\rm SL}(V)$, where $\what{H}_{0}$ is the full covering group of $H_0$. We write $\mathcal{S}(G)$ to denote this family of almost simple irreducible subgroups of $G$, and we refer the reader to Definition \ref{sdef} for some additional conditions satisfied by these subgroups (these conditions are imposed to ensure that a subgroup in $\mathcal{S}(G)$ is not contained in one of the geometric $\mathcal{C}_i$ collections). A rough description of the $\C_i$ families is given in Table \ref{t:max}.

\begin{table}[h]
$$\begin{array}{ll} \hline
\C_1 & \mbox{Stabilisers of subspaces of $V$} \\
\C_2 & \mbox{Stabilisers of decompositions $V=\bigoplus_{i=1}^{t}V_i$, where $\dim V_i  = a$} \\
\C_3 & \mbox{Stabilisers of prime index extension fields of $\F$} \\
\C_4 & \mbox{Stabilisers of decompositions $V=V_1 \otimes V_2$} \\
\C_5 & \mbox{Stabilisers of prime index subfields of $\F$} \\
\C_6 & \mbox{Normalisers of symplectic-type $r$-groups in absolutely irreducible representations} \\
\C_7 & \mbox{Stabilisers of decompositions $V=\bigotimes_{i=1}^{t}V_i$, where $\dim V_i  = a$} \\
\C_8 & \mbox{Stabilisers of non-degenerate forms on $V$} \\ \hline
\end{array}$$
\caption{The geometric subgroup collections}
\label{t:max}
\end{table}

In view of Aschbacher's theorem, it is natural to partition the proof of Theorem \ref{t:main3}  into two cases. In Section \ref{ss:geom} we deal with the geometric subgroups  $H \in \C(G)$, in which case the order of $H$ can be readily computed from the information in \cite[Chapter 4]{KL}. Next, in Section \ref{ss:nongeom}, we turn our attention to the almost simple irreducible subgroups in $\mathcal{S}(G)$. A key tool here is a theorem of Liebeck \cite{L}, which states that if $H \in \mathcal{S}(G)$ has socle $H_0$ then either $|H|<q^{3n}$, or $H_0$ is an alternating group and $V$ is the fully deleted permutation module for $H_0$ (see Theorem \ref{lieb}).

\begin{remk}\label{r:max}
In our analysis of the subgroups in $\C(G)$, we will adopt the precise definition of the $\C_i$ collections given in \cite{KL}. In particular, the subgroups in $\C(G)$ are listed in \cite[Tables 3.5.A -- 3.5.F]{KL}, and we adopt the conditions recorded in column IV of these tables. Moreover, since $G$ is simple, in our definition of the $\C_i$ collections we will also exclude any \emph{novelties} that are recorded in column VI of these tables. So for example, if $G={\rm L}_{n}(q)$ and $H \in \C_1$ then we will assume that $H=P_i$ for some $i$ (that is, we exclude the non-maximal subgroups of type $P_{i,n-i}$ and ${\rm GL}_{i}(q) \oplus {\rm GL}_{n-i}(q)$ listed in \cite[Table 3.5.A]{KL}). 
\end{remk}

\subsection{Geometric subgroups}\label{ss:geom}

We start by determining the large subgroups in $\C(G)$. 

\begin{remk}\label{r:special}
The following three cases will require special attention:
\begin{itemize}\addtolength{\itemsep}{0.2\baselineskip}
\item[{\rm (i)}] $H$ is a $\C_2$-subgroup stabilising a direct sum decomposition $V = V_1 \oplus V_2 \oplus V_3$;
\item[{\rm (ii)}] $H$ is a $\C_3$-subgroup corresponding to a degree-three field extension of $\F$;
\item[{\rm (iii)}] $H$ is a $\C_5$-subgroup corresponding to an index-three subfield of $\F$.
\end{itemize}
Indeed, in each of these cases, $|H|^3 \approx |G|$ and the corresponding conditions for largeness are rather delicate.
\end{remk}

We consider each of the classical groups in turn, beginning with the linear groups. In the statement of our main results, if $m$ is an integer and $p$ is a prime, then $m_p$ denotes the  largest power of $p$ dividing $m$.

\subsubsection{Linear groups}\label{sss:linear}

\begin{prop}\label{p:psl}
Let $G={\rm L}_{n}(q)$ and let $H \in \mathcal{C}(G)$ be a geometric subgroup of $G$. Then $H$ is large if and only if one of the following holds:
\begin{itemize}\addtolength{\itemsep}{0.2\baselineskip}
\item[{\rm (i)}] $H \in \C_1 \cup \C_8$;
\item[{\rm (ii)}] $H$ is a $\C_2$-subgroup of type ${\rm GL}_{n/t}(q)\wr S_t$, where $t=2$, or $t=3$ and either $q \in \{5,8,9\}$ and $(n,q-1)=1$, or $(n,q)=(3,11)$;
\item[{\rm (iii)}] $H$ is a $\C_3$-subgroup of type ${\rm GL}_{n/k}(q^k)$, where $k=2$, or $k=3$ and either $q \in \{2,3\}$, or $q=5$ and $n$ is odd.
\item[{\rm (iv)}] $H$ is a $\C_5$-subgroup of type ${\rm GL}_{n}(q_0)$ with $q=q_0^k$, and either $k=2$, or $k=3$ and $f>1$, where
\renewcommand{\arraystretch}{1.7}
$$f = \left\{\begin{array}{ll} 
\frac{27(n,q_0^3-1)}{(n,q_0-1)^3} & \mbox{if $(q_0^2+q_0+1)_3>1$ and $(q_0-1)_3 \geqslant n_3>1$} \\
\frac{(n,q_0^3-1)}{(n,q_0-1)^3} & \mbox{otherwise.}
\end{array}\right.$$
\renewcommand{\arraystretch}{1}
\item[{\rm (v)}] $H$ is a $\C_6$-subgroup and $(G,H)$ is one of the following:
$$({\rm L}_{4}(5), 2^4.A_6),\; ({\rm L}_{3}(4),3^2.Q_8),\; ({\rm L}_{2}(23),S_4), \; ({\rm L}_{2}(17),S_4),$$
$$({\rm L}_{2}(13),A_4), \; ({\rm L}_{2}(11),A_4), \; ({\rm L}_{2}(7),S_4), \; ({\rm L}_{2}(5),A_4).$$
\end{itemize}
\end{prop}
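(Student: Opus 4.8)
The plan is to prove Proposition~\ref{p:psl} by working systematically through the eight Aschbacher collections $\C_1,\dots,\C_8$ comprising $\C(G)$. For each type of geometric subgroup $H \leqs G = {\rm L}_n(q)$, the precise structure and order of $H$ can be read off from \cite[Chapter 4]{KL}, using the maximality conventions recorded in Remark~\ref{r:max} (in particular a $\C_2$-subgroup of type ${\rm GL}_1(q)\wr S_n$ is only maximal when $q \geqs 5$, and the non-maximal novelties are excluded). I would then express $|H|$ as an explicit product and compare $|H|^3$ with $|G|$ using Corollary~\ref{l:ord} and Lemma~\ref{l:ord4}; recall that $H$ is large precisely when $|G:H| \leqs |H|^2$.

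Most of the collections are dispatched by crude order estimates. If $H \in \C_1$ then $H = P_i$, and $|G:H|$ is a Gaussian binomial coefficient bounded above by a small multiple of $q^{i(n-i)} \leqs q^{n^2/4}$, whence $|H| = |G|/|G:H| > q^{3n^2/4-2}/c$ and $|H|^3 \geqs |G|$ (with a direct check for the smallest values of $n$). If $H \in \C_8$ then $|H|$ has order of magnitude $q^{n(n+1)/2}$, $q^{(n^2-1)/2}$ or $q^{n(n-1)/2}$ according as $H$ is of symplectic, unitary or orthogonal type, and in each admissible case the estimate already yields $3\log_q|H| > n^2-1$, so $H$ is large. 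The same argument — the relevant ratio of exponents being essentially $3/2$ — immediately settles the $\C_2$-subgroups of type ${\rm GL}_{n/2}(q)\wr S_2$, the $\C_3$-subgroups of type ${\rm GL}_{n/2}(q^2)$, and the $\C_5$-subgroups of type ${\rm GL}_n(q_0)$ with $q = q_0^2$. At the other extreme, for $H$ in $\C_4$ or $\C_7$ the determinant relations $\det(A\otimes B) = \det(A)^b\det(B)^a$ shrink $|H|$ enough that $|H|^3 < |G|$ for every maximal subgroup of this kind (for $\C_7$ one also uses that tensor-induced subgroups with $2$-dimensional factors are non-maximal, lying inside a $\C_8$-subgroup), so these collections contribute nothing. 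Finally, for $H \in \C_6$ of type $r^{1+2m}.{\rm Sp}_{2m}(r)$ with $n = r^m$ and $q = p$ subject to the usual congruence conditions, a bound of shape $|H| \leqs c\cdot r^{2m^2+3m}$ against $|G| > q^{n^2-2}$ leaves only finitely many possibilities of bounded dimension, which I would then pin down by direct computation, appealing to \cite{BHR} for the low-dimensional cases, to obtain the list in part (v).

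The main work — and, as indicated in Remark~\ref{r:special}, the only delicate point — lies with the $\C_2$-subgroups of type ${\rm GL}_{n/3}(q)\wr S_3$, the $\C_3$-subgroups of type ${\rm GL}_{n/3}(q^3)$, and the $\C_5$-subgroups of type ${\rm GL}_n(q_0)$ with $q = q_0^3$; in each of these $|H|^3$ and $|G|$ agree to leading order in $q$, so largeness is decided by the lower-order factors and must be determined exactly. Here I would compute $|H|$ and $|G|$ precisely, keeping careful track of the products $\prod_i(1-q^{-i})$ supplied by Lemma~\ref{l:ord4}, the factor $q-1$ from the determinant condition on passing to ${\rm SL}_n(q)$, the sign contributed by the block-permutation matrices in the $S_3$ case, the honest factor $|S_3|=6$ (respectively the field-extension element of order $3$) that sits inside ${\rm GL}_n(q)$, and — most importantly — the mismatch between the central factors $(n,q-1)$ and $(n,q_0-1)$ on passing to ${\rm PSL}$, noting that for a subfield $\C_5$-subgroup no field-automorphism factor arises since $G$ is simple. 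Working this through, the inequality $|H|^3 \geqs |G|$ reduces: for $\C_2$ to $(q-1)(n,q-1)$ being suitably small, which after the product corrections leaves exactly $q \in \{5,8,9\}$ with $(n,q-1)=1$ together with the borderline case $(n,q)=(3,11)$; for $\C_3$ to $(q-1)(n,q-1) \leqs 5$, i.e.\ $q \in \{2,3\}$, or $q = 5$ with $n$ odd; and for $\C_5$ to the condition $f > 1$, the two clauses in the definition of $f$ recording whether an additional $3$-adic factor of $27$ enters $|H|/|G|$ — which happens precisely when $q_0 \equiv 1 \imod{3}$ (so that $3 \,\|\, q_0^2+q_0+1$) and $3 \mid n$ with $(q_0-1)_3 \geqs n_3$. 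This final bookkeeping is routine but must be carried out with care; I expect it to be the only step that is not immediate.
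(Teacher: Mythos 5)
Your plan follows the paper's own proof essentially exactly: a systematic pass through $\C_1,\dots,\C_8$ using the precise orders from \cite[Chapter 4]{KL}, dispatching most collections by the crude estimates in Lemma~\ref{l:ord4} and Corollary~\ref{l:ord}, and then settling the three borderline families flagged in Remark~\ref{r:special} (the $\C_2$-subgroups with $t=3$, $\C_3$-subgroups with $k=3$, and $\C_5$-subgroups with $k=3$) by exact bookkeeping — and your claimed outcomes agree with the statement. One small inaccuracy in the exposition: the determinant relation $\det(A\otimes B)=\det(A)^{b}\det(B)^{a}$ is not what makes the $\C_4$ and $\C_7$ subgroups non-large (that only costs a factor of order $q-1$); the operative point is the raw exponent count $a^2+(n/a)^2-2 \ll n^2-2$ for $\C_4$, and likewise $t(a^2-1)\ll a^{2t}$ with a crude $t!$ bound for $\C_7$, which is what the paper uses. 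Since your plan already calls for comparing $|H|^3$ with $|G|$ via the order formulae, this misattribution does not affect the argument.
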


\begin{remk}\label{r:psl}
In part (v) of Proposition \ref{p:psl}, the subgroup $A_4<{\rm L}_{2}(11)$ is non-maximal.
\end{remk}

Set $d=(n,q-1)$. 
By Corollary \ref{l:ord}(i) we have
$$q^{n^2-2}<|G| \leqs (1-q^{-2})q^{n^2-1}<q^{n^2-1}.$$
We consider each $\C_i$ collection in turn. 

\begin{lem}\label{l:c1}
The conclusion to Proposition \ref{p:psl} holds if $H \in \C_1$.
\end{lem}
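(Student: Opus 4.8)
The plan is to prove Lemma \ref{l:c1} by exploiting the well-known structure of the $\C_1$-subgroups of $G={\rm L}_n(q)$ recorded in \cite[Table 3.5.A]{KL}, together with the order estimates from Corollary \ref{l:ord}. As explained in Remark \ref{r:max}, since $G$ is simple, a maximal $\C_1$-subgroup is a maximal parabolic subgroup $P_i$, the stabiliser of an $i$-dimensional subspace of $V$, for some $1 \leqs i \leqs n-1$ (with $P_i$ and $P_{n-i}$ conjugate in the relevant outer-automorphism sense, so we may take $1 \leqs i \leqs n/2$). The goal is therefore to show that \emph{every} such $P_i$ is large, which will establish the ``$\C_1$ part'' of conclusion (i) of Proposition \ref{p:psl}.

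First I would write down the order of $P_i$. Up to the central factor $d=(n,q-1)$ that is divided out in passing from ${\rm SL}_n(q)$ to ${\rm L}_n(q)$, we have
\[
|P_i| = \frac{1}{d}\,q^{i(n-i)}\,|{\rm GL}_i(q)|\,|{\rm GL}_{n-i}(q)|\cdot\frac{1}{q-1},
\]
the factor $q^{i(n-i)}$ coming from the unipotent radical and the $1/(q-1)$ reflecting the intersection of the Levi factor with ${\rm SL}_n(q)$. Using the lower bounds $|{\rm GL}_a(q)| > (1-q^{-1}-q^{-2})q^{a^2}$ from Lemma \ref{l:ord4}(i) and $d \leqs n \leqs q$-type crude bounds, one gets a clean lower estimate of the shape $|P_i| > c\, q^{n^2 - 1 - (n-1)}$ for an explicit constant $c$; more precisely $i^2 + (n-i)^2 + i(n-i) = n^2 - i(n-i)$, so the exponent of $q$ in $|P_i|$ is $n^2 - i(n-i) - 1$ (after the $1/d$ and $1/(q-1)$ factors, which together cost at most $q^{\,\log_q(d(q-1))} \leqs q^{\,1 + \log_q n}$). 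The function $i \mapsto i(n-i)$ is maximised at $i \approx n/2$, so the ``worst case'' for largeness is $i$ near $n/2$, where $i(n-i) \approx n^2/4$.

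Next I would compare with the upper bound $|G| \leqs (1-q^{-2})q^{n^2-1} < q^{n^2-1}$ from Corollary \ref{l:ord}(i). Cubing the lower bound for $|P_i|$, largeness $|P_i|^3 \geqs |G|$ will follow once
\[
3\bigl(n^2 - i(n-i) - 1\bigr) - O(\log_q n) \;\geqs\; n^2 - 1,
\]
i.e. roughly $2n^2 \geqs 3\,i(n-i) + O(\log_q n)$. Since $i(n-i) \leqs n^2/4$ this reads $2n^2 \geqs \tfrac34 n^2 + O(\log_q n)$, which holds with enormous room to spare for all $n \geqs 2$ and all $q$. I would make the constants explicit: track the $(1-q^{-1}-q^{-2})$ factors, bound $d(q-1) \leqs q^2$ and $d \leqs q$, and verify the resulting polynomial-in-$n$ inequality; the small cases ($n=2,3$, or $q=2$ where the crude bound $d(q-1)\leqs q^2$ is tightest) can be checked by hand or noted to follow since the slack $2n^2$ vs $\tfrac34 n^2$ dwarfs any logarithmic correction. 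This completes the proof.

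The only mildly delicate point — and hence the ``main obstacle,'' though it is a very mild one — is bookkeeping the central and Levi-intersection factors ($1/d$ and $1/(q-1)$) carefully enough that the inequality is genuinely valid for the smallest cases such as $({\rm L}_2(2))$ or $({\rm L}_3(2))$ rather than merely ``asymptotically''; but because the gap between $2n^2$ and $\tfrac34 n^2$ is so large, no case-by-case fuss is really needed beyond observing that $q^{i(n-i)} \geqs q \geqs d(q-1)/q$ already absorbs these factors. I expect the whole argument to occupy just a few lines in the final text.
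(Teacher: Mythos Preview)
Your proposal is correct and follows essentially the same approach as the paper: identify $H=P_i$, write down $|P_i|$ in terms of $q^{i(n-i)}$ times the Levi factor, bound it below via Lemma~\ref{l:ord4}(i), observe that the worst case is $i\approx n/2$ giving an exponent $\approx \tfrac{3}{4}n^2$, and compare with $|G|<q^{n^2-1}$. The paper carries this out with explicit constants, obtaining $|H|>\tfrac{1}{4}q^{\frac{3}{4}n^2-2}$ and checking $\alpha^3>q^{n^2-1}$ for $n\geqs 3$, then handling $n=2$ (with $q\geqs 4$) by the direct computation $|H|=q(q-1)/d$; your looser $O(\log_q n)$ bookkeeping reaches the same conclusion, though note that ${\rm L}_2(2)$ is not simple, so that case does not arise.
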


\begin{proof}
Here $H=P_i$ is a maximal parabolic subgroup of $G$, where $1 \leqslant i < n$ (see Remark \ref{r:max}). By \cite[Proposition 4.1.17]{KL} we have $|H|  = d^{-1}q^{i(n-i)}(q-1)|{\rm SL}_{i}(q)||{\rm SL}_{n-i}(q)|$, and by applying Lemma \ref{l:ord4}(i) we get
$$|H| > \frac{1}{4}q^{n^2-ni+i^2-2} \geqs \frac{1}{4}q^{\frac{3}{4}n^2-2}=: \a.$$
It is easy to check that 
\begin{equation}\label{e:al}
\a^3>q^{n^2-1}>|G|
\end{equation}
for all $n \geqs 3$. Finally, if $n=2$ then $|H|=q(q-1)/d$, $|G| = q(q^2-1)/d$ and thus $|H|^3>|G|$ for all $q \geqs 4$. 
\end{proof}

\begin{lem}\label{l:c2}
The conclusion to Proposition \ref{p:psl} holds if $H \in \C_2$.
\end{lem}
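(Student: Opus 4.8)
The plan is to work with the exact order of $H$ and reduce largeness to a single numerical inequality. Writing $n=at$, the subgroup $H$ has type ${\rm GL}_a(q)\wr S_t$ for some $t\geqs 2$, and (bearing Remark \ref{r:max} in mind) I only consider those triples $(n,q,t)$ for which this subgroup is genuinely maximal, as recorded in column IV of \cite[Table 3.5.A]{KL}. With $d=(n,q-1)$ we have
$$|H| = \frac{|{\rm GL}_a(q)|^t\, t!}{d(q-1)}, \qquad |G| = \frac{|{\rm GL}_n(q)|}{d(q-1)},$$
so, inserting the factorisation $|{\rm GL}_m(q)| = q^{m^2}\prod_{i=1}^{m}(1-q^{-i})$ recorded in the proof of Lemma \ref{l:ord4}, a short computation gives
$$\frac{|H|^3}{|G|} = \frac{(t!)^3\left(\prod_{i=1}^{a}(1-q^{-i})\right)^{3t}}{d^2(q-1)^2\,\prod_{i=1}^{at}(1-q^{-i})}\cdot q^{\,a^2t(3-t)}.$$
The whole problem then becomes: decide for which $(a,q,t)$ this quantity is at least $1$.

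First I would dispose of the two easy ranges. If $t=2$ the power of $q$ is $q^{2a^2}>1$, and using Lemma \ref{l:ord4}(i) to bound the products from below (together with $d\leqs q-1$ and $d\leqs n$) one checks directly that the displayed quantity exceeds $1$ for every $a\geqs 2$; the case $a=1$ is the dihedral subgroup of ${\rm L}_2(q)$, where an elementary computation gives $|H|^3/|G| = 8(q-1)^2/(d^2q(q+1))\geqs 1$ for all $q\geqs 4$. Hence every $\C_2$-subgroup with $t=2$ is large, as claimed. If $t\geqs 4$ then $a^2t(t-3)\geqs 4$, so the power of $q$ works strongly against largeness; feeding the crude bounds $|{\rm GL}_a(q)|<q^{a^2}$, $|G|>q^{n^2-2}$ (Corollary \ref{l:ord}(i)) and the factorial estimates of Lemma \ref{l:ord3} into $|H|^3<|G|$ eliminates $H$ whenever $q$ or $a$ is not very small, leaving only finitely many configurations — in practice $a=1$ with $q\in\{2,3,4\}$ and $t$ bounded — which I would clear by hand (for $q=2$ one compares $(t!)^3$ directly with the exact order of ${\rm SL}_t(2)$, and an easy induction gives $(t!)^3<q^{t^2-2}$ for all $t\geqs 4$).

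The substantial case, flagged in Remark \ref{r:special}(i), is $t=3$: here $q^{\,a^2t(3-t)}=1$ and the ratio collapses to
$$\frac{|H|^3}{|G|} = \frac{216\left(\prod_{i=1}^{a}(1-q^{-i})\right)^{9}}{d^2(q-1)^2\,\prod_{i=1}^{3a}(1-q^{-i})},$$
so $H$ is large precisely when $216\left(\prod_{i=1}^{a}(1-q^{-i})\right)^{9}\geqs d^2(q-1)^2\prod_{i=1}^{3a}(1-q^{-i})$. The left-hand side is smaller than $216$, while by Lemma \ref{l:ord2}(i) the product $\prod_{i=1}^{3a}(1-q^{-i})$ is bounded below by an absolute constant; this immediately forces $d(q-1)$ to be small, hence restricts $q$ to a short list, and — decisively — forces $d=(3a,q-1)=1$ as soon as $q-1$ is not tiny (for instance it rules out every $t=3$ candidate when $q=7$, since then $3\mid d$). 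For each surviving value of $q$ I would use the sharp two-sided estimates of Lemma \ref{l:ord2}(i) to pin down the products and reduce to finitely many pairs $(a,q)$ tested directly; this isolates exactly $q\in\{5,8,9\}$ with $(n,q-1)=1$, together with the single extra borderline case $(n,q)=(3,11)$, and after discarding the triples with $q\in\{2,3,4\}$ where the subgroup is not maximal one recovers the list in Proposition \ref{p:psl}(ii).

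The hard part is this $t=3$ analysis. There $|H|^3$ and $|G|$ coincide up to a bounded multiplicative factor, so no soft estimate separates the large subgroups from the rest; one is forced to combine the exact order formula, the sharp bounds of Lemma \ref{l:ord2}, careful bookkeeping of the greatest common divisor $d$, and the precise maximality conditions of \cite[Table 3.5.A]{KL} for small $n$ and $q$ in order to extract the exceptional list — exactly the delicacy anticipated in Remark \ref{r:special}.
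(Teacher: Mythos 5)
Your proposal follows the same strategy as the paper's proof: write down the exact order formula, reduce largeness to a single numerical inequality, and split into $t=2$ (easy, always large), $t\geqs 4$ (crude bounds plus factorial estimates), and the delicate borderline case $t=3$. The formula for $|H|^3/|G|$ is correct, and your handling of the $t=3$ case using the two-sided estimates of Lemma~\ref{l:ord2}(i) and the divisibility $3\mid n$ to control $d$ is exactly the mechanism the paper uses.

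Two slips are worth flagging, though neither changes the outcome because your numerical test would catch the relevant cases anyway. First, in the $t\geqs 4$ analysis you say the residual configurations are ``$a=1$ with $q\in\{2,3,4\}$''; but for $a=1$ (i.e.\ $n=t$) the subgroup is only maximal when $q\geqs 5$, so these triples simply do not arise, while the genuine borderline case is $n=t=4$ with $q\geqs 5$ (which the paper checks directly since the factorial bound of Lemma~\ref{l:ord3}(i) only kicks in at $t\geqs 5$). Second, and more substantively, at the end of the $t=3$ analysis you say you ``discard the triples with $q\in\{2,3,4\}$ where the subgroup is not maximal''. This is only correct for small $n$: for $n\geqs 9$ and $q=2$, or $n\geqs 6$ and $q\in\{3,4\}$, the subgroup \emph{is} maximal and must be excluded because it fails the largeness inequality $216\bigl(\prod_{i=1}^{a}(1-q^{-i})\bigr)^{9}\geqs d^2(q-1)^2\prod_{i=1}^{3a}(1-q^{-i})$ (for $q=2$ one needs the upper bound $g(q)$ on the ratio, not just the observation that $d=1$), not because of a maximality constraint. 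The paper's proof makes this explicit; your write-up conflates the two mechanisms.
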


\begin{proof}
Let $H \in \C_2$ be a subgroup of type ${\rm GL}_{n/t}(q) \wr S_t$, where $t \geqs 2$. Note that $q \geqs 5$ if $n=t$, and $q \geqs 3$ if $n=2t$ (see \cite[Table 3.5.A]{KL} and \cite[Proposition 2.3.6]{BHR}). According to \cite[Proposition 4.2.9]{KL} we have
$$|H| = d^{-1}(q-1)^{t-1}|{\rm SL}_{n/t}(q)|^{t}t!$$
If $t=2$ then Lemma \ref{l:ord4}(i) implies that $|H|>\frac{1}{2}q^{n^2/2-2}=:\a$ and we deduce that \eqref{e:al} holds if $n \geqs 4$ (note that $H$ is non-maximal if $(n,q) = (4,2)$).
If $n=2$ then $|H|=(2,q)(q-1)$ and once again we deduce that $H$ is large. 

Next suppose $t \geqs 4$. Since $|H|<q^{n^2/t-1}t!$ and $|G|>q^{n^2-2}$, it follows that $|H|^3<|G|$ if
\begin{equation}\label{e:22}
(t!)^3<q^{n^2\left(1-\frac{3}{t}\right)+1}.
\end{equation}
If $n=t$ then $q \geqs 5$, so Lemma \ref{l:ord3}(i) implies that \eqref{e:22} holds if $t \geqs 5$; the case $t=4$ can be checked directly. If $n \geqs 2t$ then \eqref{e:22} follows from the bound in Lemma \ref{l:ord3}(ii). We conclude that $H$ is non-large if $t \geqs 4$. 

To complete the proof of the lemma, we may assume that $t=3$. Here $|H|^3<|G|$ if and only if 
\begin{equation}\label{e:psl2}
\frac{|{\rm GL}_{n/3}(q)|^9}{|{\rm GL}_{n}(q)|} < \frac{(q-1)^2d^2}{6^3}.
\end{equation}
If $n=3$ then $q \geqs 5$ and we calculate that $H$ is large if and only if $q \in \{5,8,9,11\}$. Now assume $n \geqs 6$. In view of Lemma \ref{l:ord4}(i) we have 
$$f(q):=\frac{(1-q^{-1}-q^{-2})^9}{(1-q^{-1})(1-q^{-2})} < \frac{|{\rm GL}_{n/3}(q)|^9}{|{\rm GL}_{n}(q)|} \leqs \frac{(1-q^{-1})^9(1-q^{-2})^9}{1-q^{-1}-q^{-2}}=:g(q)$$
and it is easy to check that $g(q)<1$. In particular, $H$ is non-large if $q \geqs 13$ since 
$(q-1)^2d^2/6^3 \geqs 32/27$ (minimal if $q=17$ and $d=1$).
Similarly, $g(11)<(11-1)^2/6^3 = 25/54$, so $H$ is also non-large if $q=11$. In exactly the same way, we deduce that $|H|^3<|G|$ if $q \in \{2,3,4,7\}$ (note that $d \geqs 3$ if $q=4$ or $7$). Finally, suppose $q \in \{5,8,9\}$. If $d \geqs 2$ then $g(q)<(q-1)^2d^2/6^3$ and thus $H$ is non-large. However, if $d=1$ then $H$ is large since $f(q) > (q-1)^2/6^3$.
\end{proof}

\begin{lem}\label{l:c3}
The conclusion to Proposition \ref{p:psl} holds if $H \in \C_3$.
\end{lem}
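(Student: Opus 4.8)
The plan is to handle the single family in play: by \cite[Table 3.5.A]{KL} a subgroup $H \in \C_3$ of $G={\rm L}_n(q)$ is of type ${\rm GL}_{n/k}(q^k)$ for some prime divisor $k$ of $n$. Set $m=n/k$ and $d=(n,q-1)$. By \cite[Proposition 4.3.6]{KL} we have $|H| = \frac{k}{d(q-1)}|{\rm GL}_{m}(q^k)|$, and since $|G| = |{\rm GL}_n(q)|/(d(q-1))$ and $|{\rm GL}_a(q_0)| = q_0^{a^2}\prod_{i=1}^{a}(1-q_0^{-i})$, the condition $|H|^3 \geqs |G|$ is equivalent, after rearranging, to
$$k^3\, R(q,m) \,\geqs\, d^2(q-1)^2\, q^{km^2(k-3)}, \qquad R(q,m) := \frac{\prod_{i=1}^{m}(1-q^{-ki})^3}{\prod_{i=1}^{n}(1-q^{-i})}.$$
Applying Lemma \ref{l:ord2}(i) to the numerator (with base $q^k$; the case $m=1$ is trivial) and the denominator (with base $q$) shows that $R(q,m)$ lies strictly between two positive absolute constants, independent of $n$ and $q$. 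Since the exponent $km^2(k-3)$ is negative for $k=2$, zero for $k=3$, and at least $10$ for $k \geqs 5$, the inequality is controlled entirely by this exponent except when $k=3$.

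The two extreme cases are then immediate. If $k \geqs 5$, the right-hand side is at least $q^{10} \geqs 2^{10}$, dwarfing the bounded left-hand side $k^3 R(q,m)$, so no such $H$ is large (the smallest instance is $n=k=5$, $m=1$, where $H$ normalises a Singer cycle). If $k=2$, the power on the right is $q^{-n^2/2}$, so the right-hand side is tiny and $|H|^3>|G|$ for all $n \geqs 2$; the only case meriting a direct look is $n=2$, where $H$ is dihedral of order $q+1$ or $2(q+1)$ and one sees at once that $|H|^3>|G|$. This accounts for the $k=2$ entries, and the absence of $k \geqs 5$ entries, in Proposition \ref{p:psl}(iii).

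The substantive case is $k=3$, which is precisely the delicate situation flagged in Remark \ref{r:special}(ii): here $n=3m$, $d=(3m,q-1)$, and the criterion becomes $27\,R(q,m) \geqs d^2(q-1)^2$. The plan is to pin $R(q,m)$ into a narrow $q$-dependent interval using the two-sided bounds of Lemma \ref{l:ord2}(i) and then run through the prime powers $q$. For $q \geqs 7$ one obtains $27\,R(q,m) < 27/(1-q^{-1}-q^{-2}) < 33 \leqs (q-1)^2 \leqs d^2(q-1)^2$, so $H$ is never large. For $q=4$ one has $d=3$ (as $3 \mid q-1$ and $3 \mid n$), while $27\,R(4,m) < 27\cdot 16/11 < 81 = d^2(q-1)^2$, so $H$ is not large. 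For $q \in \{2,3\}$ the lower bound on $R$ gives $27\,R(q,m) > 40 > 16 \geqs d^2(q-1)^2$, so $H$ is always large. Finally, for $q=5$ we have $d=(3m,4)$ and the interval for $27\,R(5,m)$ lies strictly between $16$ and $64$; hence $H$ is large precisely when $d=1$ — that is, when $3m$, equivalently $n$, is odd — and is non-large when $d \in \{2,4\}$. This reproduces the list in Proposition \ref{p:psl}(iii).

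The only genuine obstacle is making the estimates on $R(q,m)$ sharp enough to decide the boundary values correctly: $q=5$, where the answer depends on the parity of $n$, and $q=7$, where $(q-1)^2$ only just exceeds the bound on $27\,R(q,m)$. Once the inequalities of Lemma \ref{l:ord2}(i) are invoked this is a short computation, and the finitely many pairs $(n,q)$ with $n$ small can be verified directly, which also guards against the handful of small cases excluded from maximality in \cite[Table 3.5.A]{KL}.
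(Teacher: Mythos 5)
Your proposal is correct and follows essentially the same strategy as the paper's proof: both use \cite[Proposition 4.3.6]{KL} to reduce the question to a comparison of the form $k^3\,|{\rm GL}_{n/k}(q^k)|^3 / |{\rm GL}_n(q)| \geqs d^2(q-1)^2$, invoke Lemma~\ref{l:ord2}(i) (equivalently Lemma~\ref{l:ord4}(i)) to trap the order ratio in a narrow interval, dispose of $k=2$ and $k\geqs 5$ quickly, and then carry out a careful $q$-by-$q$ analysis for $k=3$ with special attention to $q=4,5,7$. Your packaging via $R(q,m)$ and the exponent $km^2(k-3)$ is a clean reorganisation, but it is the same computation. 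One small caution: in your $k\geqs 5$ step, the phrase ``the bounded left-hand side $k^3 R(q,m)$'' is not literally accurate, since $k^3 R(q,m)$ grows cubically with $k$; the argument succeeds because the exponent $km^2(k-3)$ grows at least quadratically in $k$, so the right-hand side $q^{km^2(k-3)}$ overwhelms $k^3 R(q,m)$ for every $k\geqs 5$, not merely because it exceeds the fixed threshold $2^{10}$ (which only covers $k=5$, $m=1$ directly). The paper sidesteps this by splitting off $n=k$ and using the elementary bound $|H|<2nq^{n-1}$, which is perhaps slightly tidier for that subcase.
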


\begin{proof}
According to \cite[Proposition 4.3.6]{KL}, 
$$|H| = d^{-1}(q-1)^{-1}|{\rm GL}_{n/k}(q^k)|k,$$
where $k$ is a prime divisor of $n$, so $|H|^3<|G|$ if and only if 
$$\frac{|{\rm GL}_{n/k}(q^k)|^3}{|{\rm GL}_{n}(q)|} <\frac{(q-1)^2d^2}{k^3}.$$

First assume $n=k$. If $n=2$ then $|H|=(2,q)(q+1)$ and we quickly deduce that $H$ is large. Similarly, if $n=3$ then $H$ is large if and only if $q \in \{2,3,5\}$. If $n=k \geqs 5$ then $|H|<2nq^{n-1}$ and the bound $|H|^3<q^{n^2-2}<|G|$ follows.

For the remainder, we may assume that $n \geqs 2k$. If $k=2$ then $H$ is large since $|H|>q^{n^2/2-2}$. On the other hand, if $k \geqs 5$ then $|H|<5q^{n^2/5}$ and thus $|H|^3<|G|$. Finally, suppose $k=3$. By applying the bounds in Lemma \ref{l:ord4}(i) we get
$$f(q):=\frac{(1-q^{-3}-q^{-6})^3}{(1-q^{-1})(1-q^{-2})}<\frac{|{\rm GL}_{n/3}(q^3)|^3}{|{\rm GL}_{n}(q)|}<\frac{(1-q^{-3})^3(1-q^{-6})^3}{1-q^{-1}-q^{-2}}=:g(q).$$
If $q \geqs 7$ then $g(q) \leqs g(7)<4/3$ and thus $H$ is non-large. Similarly, if $q=4$ then $d=3$ and $g(4)<3$, so $|H|^3<|G|$. Next suppose $q=5$. If $n$ is even then $|H|^3<|G|$ since $d \geqs 2$ and $g(5)<64/27$. However, if $n$ is odd then $d=1$ and $f(5)>16/27$, so $H$ is large in this case. In exactly the same way, we deduce that $H$ is large if $q=2$ or $3$.
\end{proof}

\begin{lem}\label{l:c4}
The conclusion to Proposition \ref{p:psl} holds if $H \in \C_4$.
\end{lem}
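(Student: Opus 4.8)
The collection $\C_4$ does not appear in Proposition \ref{p:psl}, so the lemma is the assertion that no tensor-product subgroup of $G={\rm L}_n(q)$ is large; that is, the plan is to show $|H|^3<|G|$ for every $H\in\C_4$. Such an $H$ stabilises a decomposition $V=V_1\otimes V_2$ with $\dim V_1 = n_1 < n_2 = \dim V_2$ and $n = n_1n_2$, so in particular $n_1\geqs 2$, $n_2\geqs 3$ and $n\geqs 6$, and its precise order is given by \cite[Proposition 4.4.10]{KL}. The first step is to extract from that proposition (equivalently, from the observation that $H$ is a section of the stabiliser $\{A\otimes B\}\leqs{\rm GL}_n(q)$, which has order $|{\rm GL}_{n_1}(q)||{\rm GL}_{n_2}(q)|/(q-1)$) the clean bound
$$|H| \leqs \frac{1}{(q-1)^2}\,|{\rm GL}_{n_1}(q)|\,|{\rm GL}_{n_2}(q)|,$$
where the gcd factor $(n_1,n_2,q-1)$ occurring in the exact formula is harmless because it divides $(n,q-1)$.

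The second step is to feed in the order estimate $|{\rm GL}_a(q)| = q^{a^2}\prod_{i=1}^{a}(1-q^{-i}) \leqs (1-q^{-1})q^{a^2}$ (a weak form of Lemma \ref{l:ord4}(i)), which immediately gives $|H|\leqs q^{n_1^2+n_2^2-2}$ and hence $|H|^3 \leqs q^{3(n_1^2+n_2^2)-6}$. Since $|G|>q^{n^2-2}$ by Corollary \ref{l:ord}(i), the whole lemma then reduces to the elementary inequality $3(n_1^2+n_2^2)-6\leqs n^2-2$ for all integers $2\leqs n_1<n_2$, which rearranges to $(n_1^2-3)(n_2^2-3)\geqs 5$; this holds because $n_1^2-3\geqs 1$ and $n_2^2-3\geqs 6$, so the left-hand side is at least $6$.

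I do not expect a genuine obstacle. The one point demanding a little care is the smallest case $(n_1,n_2)=(2,3)$, where $n=6$ and $3(n_1^2+n_2^2)=39$ already exceeds $n^2=36$: here a crude estimate such as $|{\rm GL}_a(q)|<q^{a^2}$ is not good enough, and one genuinely needs to keep the $(1-q^{-1})$ factor from each general linear factor together with the $(q-1)^{-2}$ in the denominator. With these retained the inequality above holds with a little room to spare and uniformly in $q$, so no case-checking on $q$ is needed, and we conclude that every $H\in\C_4$ satisfies $|H|^3<|G|$, as Proposition \ref{p:psl} requires.
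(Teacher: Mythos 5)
Your argument is correct and follows essentially the same route as the paper's: both extract from \cite[Proposition~4.4.10]{KL} the bound $|H|\leqs|{\rm SL}_{n_1}(q)||{\rm SL}_{n_2}(q)|<q^{n_1^2+n_2^2-2}$ and compare $3(n_1^2+n_2^2)-6$ with $n^2-2$ using $|G|>q^{n^2-2}$. The only cosmetic difference is the final step, where the paper replaces $n_1^2+n_2^2$ by the cruder $n^2/4+4$ (taking $n_1=2$, $n_2=n/2$) and checks a one-variable inequality, while you keep both variables and observe the neat factorisation $(n_1^2-3)(n_2^2-3)\geqs 5$.
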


\begin{proof}
Here $H$ is a tensor product subgroup of type ${\rm GL}_{a}(q) \otimes {\rm GL}_{n/a}(q)$ with $2 \leqslant a <n/2$, and \cite[Proposition 4.4.10]{KL} states that
$$|H| = d^{-1}|{\rm SL}_{a}(q)||{\rm SL}_{n/a}(q)|(q-1,a,n/a).$$
Therefore, 
$|H|<q^{(n/a)^2+a^2-2} \leqs q^{n^2/4+2}$
and thus $|H|^3< q^{3n^2/4+6} < q^{n^2-2}<|G|$.
\end{proof}

\begin{lem}\label{l:c5}
The conclusion to Proposition \ref{p:psl} holds if $H \in \C_5$.
\end{lem}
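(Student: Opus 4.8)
The plan is to follow the pattern of Lemmas~\ref{l:c1}--\ref{l:c4}: pin down $|H|$ exactly and then compare $|H|^3$ with $|G|$ using Corollary~\ref{l:ord}(i) and Lemma~\ref{l:ord4}(i). Here $H$ is a $\C_5$-subgroup of type ${\rm GL}_{n}(q_0)$ with $q=q_0^k$ for a prime $k$, and from the description in \cite[Proposition 4.5.3]{KL} one computes
\[
|H| = \frac{|{\rm GL}_{n}(q_0)|}{(q_0-1)^2}\,\gcd\!\left(\frac{q-1}{(n,q-1)},\,q_0-1\right),
\]
so that $|H| \approx q_0^{n^2-1}=q^{(n^2-1)/k}$ up to a bounded arithmetic factor. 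In particular $|H|^3<q^{3n^2/k}$, so for $k \geqs 5$ we get $|H|^3<q^{3n^2/5}<q^{n^2-2}<|G|$ once $n \geqs 3$ (the case $n=2$ being immediate, since then $q \geqs 32$), and hence $H$ is non-large. For $k=2$, the lower bound of Lemma~\ref{l:ord4}(i) together with $(q_0-1)^2<q_0^2$ gives $|H|>c\,q^{(n^2-2)/2}$ for an absolute constant $c$, whence $|H|^3>|G|$ for all $n \geqs 4$; the few pairs with $n \in \{2,3\}$ are checked directly, and every such $H$ is large. This leaves only $k=3$.

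The case $k=3$ is the delicate one flagged in Remark~\ref{r:special}(iii): here $|H|^3 \approx |G|$, so the exact order matters. Write $d=(n,q-1)$ and $g=\gcd\!\big(\tfrac{q-1}{d},\,q_0-1\big)$, and use the factorisation $q-1=(q_0-1)(q_0^2+q_0+1)$. Dividing $|H|^3$ by $|G|=|{\rm GL}_{n}(q)|/(d(q-1))$ and cancelling, the condition $|H|^3 \geqs |G|$ becomes
\[
\frac{d\,(q_0^2+q_0+1)\,g^3}{(q_0-1)^5}\cdot\frac{\prod_{i=1}^{n}(1-q_0^{-i})^3}{\prod_{i=1}^{n}(1-q_0^{-3i})}\;\geqs\;1 .
\]
By Lemma~\ref{l:ord2}(i), applied with $q \mapsto q_0$ and with $q \mapsto q_0^3$, the product ratio lies between explicit constants (bounded away from $0$ and from $1$), so the inequality is essentially arithmetic. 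The one subtlety is the behaviour of $g$ at the prime $3$: since $3 \mid q_0^2+q_0+1$ precisely when $3 \mid q_0-1$, in which case $(q_0^2+q_0+1)_3=3$, the $3$-part of $g$ is larger than one would naively expect exactly in the regime $(q_0-1)_3 \geqs n_3>1$. Tracking this carefully turns the displayed inequality into $f>1$ with $f$ as in Proposition~\ref{p:psl}(iv), the factor $27=3^3$ in the first branch being precisely the extra $3$-power picked up by $|H|^3$. Since the product ratio is monotone in $n$ with an explicit limit, one then checks that $f>1$ holds for exactly the claimed pairs, the small values of $q_0$ and the marginal small $n$ being verified by hand.

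The main obstacle is this last step: getting the order of $H$ in $\C_5$ exactly right --- in particular the scalar and $\gcd$ corrections and their $3$-adic valuations --- and confirming that the resulting closed-form inequality is genuinely equivalent to the condition $f>1$ in Proposition~\ref{p:psl}(iv). By contrast, the cases $k=2$ and $k \geqs 5$ are routine order estimates of the kind already performed for the other $\C_i$ families.
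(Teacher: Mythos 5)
Your setup is correct and matches the paper's: the exact order formula for $H$ is equivalent to the paper's $|H| = (q_0-1)^{-1}\left(q_0-1,(q_0^k-1)d^{-1}\right)|{\rm SL}_n(q_0)|$, the cases $k\geqs 5$ and $k=2$ are disposed of exactly as in the paper, and your displayed inequality for $k=3$ is the reciprocal of the paper's criterion $\zeta > f$, where $\zeta = \frac{|{\rm GL}_n(q_0^3)|}{|{\rm GL}_n(q_0)|^3}\cdot\frac{(q_0-1)^3}{q_0^3-1}$ and $f = \frac{g^3 d}{(q_0-1)^3}$. Your observation that the factor $27$ arises as $e^3$ when $(q_0^2+q_0+1)_3 > 1$ and $(q_0-1)_3 \geqs n_3 > 1$ is also precisely what the paper extracts from \cite[Lemma B.6]{BG_book}.

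However, there is a genuine gap in the final step, which you partly flag but do not fill. The assertion to be proved is that largeness is \emph{equivalent} to $f>1$, and the two directions are very unequal in difficulty. The easy direction ($f \leqs 1 \Rightarrow$ non-large) follows immediately because $\zeta > 1$ for all $q_0$ (a one-line application of Lemma~\ref{l:ord4}(i)). The hard direction ($f > 1 \Rightarrow$ large) does \emph{not} follow from monotonicity of the product ratio in $n$. The difficulty is that $\zeta$ tends to $1$ from above as $q_0 \to \infty$, so when $f$ is only slightly larger than $1$ one must show $f$ nevertheless clears the upper bound $g(q_0)$ on $\zeta$. This requires exploiting the discrete arithmetic structure of $f = g^3 d /(q_0-1)^3$: writing $(n,q_0-1)=\ell$, $(n,q_0^3-1)=\ell m$ (resp. $3\ell$, $3\ell m$ when $e=3$) and $q_0 = a\ell\pm 1$, the paper shows that if $f>1$ then $f \geqs (\ell^2+1)/\ell^2 \geqs g(a\ell+1) \geqs g(q_0) > \zeta$, and separately that if $a=1$ then $m \notin\{\ell^2+1,\ell^2+2\}$ (so the marginal values cannot occur); the case $q_0=2$ also needs the sharper bound $|{\rm GL}_n(2)| > (1-2^{-1}-2^{-2}+2^{-5})2^{n^2}$. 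None of this is in the proposal, and without it the equivalence is not established. So the approach is right, but the substance of the $k=3$ case is missing.
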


\begin{proof}
Let $H$ be a subfield subgroup of $G$ of type ${\rm GL}_{n}(q_0)$, where $q=q_0^k$ for a prime $k$. According to \cite[Proposition 4.5.3]{KL} we have
$$|H| = (q_0-1)^{-1}\left(q_0-1, (q_0^k-1)d^{-1}\right)|{\rm SL}_{n}(q_0)| \geqs |{\rm L}_{n}(q_0)|.$$
If $k \geqs 5$ then $H$ is non-large since
$$|H|^3 \leqs |{\rm SL}_{n}(q_0)|^3 <q_0^{3(n^2-1)} \leqs q^{\frac{3}{5}(n^2-1)} <q^{n^2-2}<|G|.$$
On the other hand, if $k=2$ and $n \geqs 3$ then $|H|>\frac{1}{2}q^{n^2/2-1}=:\a$ and \eqref{e:al} holds. 
Similarly, if $n=k=2$ then $|H|^3=q^{3/2}(q-1)^3>q^3>|G|$. Therefore, $H$ is large if $k=2$.

Finally, let us assume $k=3$. By \cite[Proposition 4.5.3]{KL}, we have $H={\rm L}_{n}(q_0).e \leqs {\rm PGL}_{n}(q_0)$, where 
$$e = \left(q_0-1,\frac{q_0^3-1}{(n,q_0^3-1)}\right)(n,q_0-1)(q_0-1)^{-1}$$
and thus $|H|^3<|G|$ if and only if
$$\zeta := \frac{|{\rm GL}_{n}(q_0^3)|}{|{\rm GL}_{n}(q_0)|^3} \cdot \frac{(q_0-1)^3}{q_0^3-1}>e^3\frac{(n,q_0^3-1)}{(n,q_0-1)^3} =: f.$$
For an integer $m$, let $m_3$ denote the largest power of $3$ dividing $m$. Using \cite[Lemma B.6]{BG_book} we calculate that 
\renewcommand{\arraystretch}{1.7}
$$e = \left\{\begin{array}{ll} 
3 & \mbox{if $(q_0^2+q_0+1)_3>1$ and $(q_0-1)_3 \geqslant n_3>1$} \\
1 & \mbox{otherwise.}
\end{array}\right.$$
\renewcommand{\arraystretch}{1}

We claim that $H$ is large if and only if $f>1$. Applying Lemma \ref{l:ord4}(i) we deduce that 
$$\zeta >\frac{(1-q_0^{-3}-q_0^{-6})}{(1-q_0^{-1})^3(1-q_0^{-2})^3}\frac{(q_0-1)^3}{q_0^3-1} >1$$
for all $q_0 \geqs 2$, whence $|H|^3<|G|$ if $f \leqs 1$. It remains to show that $H$ is large if $f>1$. Again, using the usual bounds we deduce that
$$\zeta < \frac{(1-q_0^{-3})(1-q_0^{-6})}{(1-q_0^{-1}-q_0^{-2})^3}\frac{(q_0-1)^3}{q_0^3-1} =:g(q_0),$$
so $H$ is large if $f \geqs g(q_0)$. Note that $g$ is a decreasing function. 

First assume $q_0=2$, so $f=(n,7)$ and the condition $f>1$ implies that $f=7$, so $n$ is divisible by $7$. Then $|{\rm GL}_{n}(2)|>(1-2^{-1}-2^{-2}+2^{-5})2^{n^2}$ (see the proof of \cite[Lemma 3.5]{NP}), so 
$$\zeta = \frac{|{\rm GL}_{n}(8)|}{7|{\rm GL}_{n}(2)|^3} < \frac{(1-8^{-1})(1-8^{-2})}{7(1-2^{-1}-2^{-2}+2^{-5})^3}<7=f$$
and thus $H$ is large. Now suppose $q_0 \geqslant 3$. There are two cases to consider. First assume $e=3$. Then $(n,q_0-1) = 3\ell$ and $(n,q_0^3-1) = 3\ell m$ for some $\ell, m \geqs 1$, so $f = 3m/\ell^2$ and $q_0 \geqs 3\ell+1$. Since $f>1$ we deduce that 
\begin{equation}\label{e:pp}
f \geqs \frac{\ell^2+1}{\ell^2} \geqs g(3\ell+1) \geqs g(q_0)>\zeta
\end{equation}
for all $\ell \geqs 1$, and thus $H$ is large. Finally, let us assume $e=1$ and set $(n,q_0-1) = \ell$ and $(n,q_0^3-1) = \ell m$, so $f = m/\ell^2$ and $q_0 = a\ell+1$ for some $a \geqs 1$. Note that $m$ divides $q_0^2+q_0+1$. If $a \geqs 2$ then \eqref{e:pp} holds (with $g(3\ell+1)$ replaced by $g(2\ell+1)$). Finally, suppose $a=1$, so $\ell \geqs 2$ since we are assuming that $q_0 \geqslant 3$. If $m \geqs \ell^2+3$ then
$$f \geqs \frac{\ell^2+3}{\ell^2} \geqs g(\ell+1) \geqs g(q_0)>\zeta.$$
On the other hand, if $a=1$ and $m \in \{\ell^2+1, \ell^2+2\}$ then it is easy to check that $m$ does not divide $q_0^2+q_0+1$, so this situation does not arise. We conclude that $H$ is large if $f>1$.
\end{proof}

\begin{lem}\label{l:c6}
The conclusion to Proposition \ref{p:psl} holds if $H \in \C_6$.
\end{lem}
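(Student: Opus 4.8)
The plan is to determine when a $\C_6$-subgroup of $G = {\rm L}_n(q)$ is large by recalling the very restrictive structure of such subgroups from \cite[Proposition 4.6.5]{KL}. If $H \in \C_6$ then $n = r^m$ for some prime $r$, the field must satisfy $q = p$ with $q \equiv 1 \imod{r}$ (or $q \equiv 1 \imod 4$ when $r = 2$ and $m \geqs 2$), and $H$ has shape $(q-1,n)/d \cdot r^{2m}.{\rm Sp}_{2m}(r)$ when $r$ is odd, with a similar expression $2^{2m}.{\rm Sp}_{2m}(2)$ or $2^{2m}.O_{2m}^{\pm}(2)$ when $r = 2$. In all cases one gets the crude bound $|H| < r^{2m} \cdot |{\rm Sp}_{2m}(r)| \leqs r^{2m} \cdot r^{2m^2+m} = r^{2m^2 + 3m}$, which I will compare against $|G| > q^{n^2 - 2} = q^{r^{2m} - 2} \geqs 2^{r^{2m}-2}$ (using $q \geqs r+1 \geqs 3$, so $q \geqs 2$ trivially but in fact $q$ grows with $r$).

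First I would dispose of the case $r \geqs 3$ and/or $m \geqs 2$ with the growth estimate: since $n = r^m$, the exponent $r^{2m} - 2$ in the lower bound for $|G|$ dominates $3(2m^2 + 3m)\log_2 r$, the exponent coming from $|H|^3 < r^{6m^2 + 9m}$, once $r^m$ is not too small. A short case check handles the finitely many borderline pairs. This quickly forces $n = r$ prime (i.e.\ $m = 1$) and leaves only small values of $r$ to examine. For $n = r$ prime, $H$ has order roughly $(q-1,n)/d \cdot r^2 \cdot |{\rm Sp}_2(r)| = (q-1,n)/d \cdot r^2 \cdot r(r^2-1)$, while $|G| \approx q^{n^2-1}/(q-1)$; since $q \equiv 1 \imod r$ we have $q \geqs r+1$, and the bound $|H|^3 < |G|$ will hold for all but the smallest $r$ and the smallest admissible $q$.

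The bulk of the work is therefore the small-dimensional census: for $r = 2$ we need $n = 2$, $q \equiv 1 \imod 4$, giving $H$ of type $2^2.O_2^{\pm}(2)$, i.e.\ $A_4$ or $S_4$ inside ${\rm L}_2(q)$ (with $S_4$ occurring when $q \equiv \pm 1 \imod 8$), and one checks $|H|^3 \geqs |G|$ exactly for $q \in \{5, 7, 11, 13, 17, 23\}$ — matching the entries ${\rm L}_2(5), {\rm L}_2(7), {\rm L}_2(11), {\rm L}_2(13), {\rm L}_2(17), {\rm L}_2(23)$ in part (v); for $r = 2$, $m = 2$ we get $n = 4$ and the subgroup $2^4.A_6$ or $2^4.S_6$ in ${\rm L}_4(q)$ with $q \equiv 1 \imod 4$, surviving only for $q = 5$ (so ${\rm L}_4(5)$); for $r = 3$, $m = 1$ we get $n = 3$, $q \equiv 1 \imod 3$, $H$ of type $3^2.Q_8$ or $3^2.{\rm SL}_2(3)$, surviving only for the smallest case, yielding ${\rm L}_3(4)$ with $3^2.Q_8$ (one must be slightly careful: the relevant $\C_6$-subgroup of ${\rm L}_3(4)$ has the reduced shape $3^2{:}Q_8$ rather than $3^2{:}{\rm SL}_2(3)$ because of the centre). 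I would finish by consulting \cite{BHR} and the conditions in column IV of \cite[Table 3.5.A]{KL} to confirm maximality (and to record, as in Remark \ref{r:psl}, that $A_4 < {\rm L}_2(11)$ is in fact non-maximal, being contained in $A_5 \cong {\rm L}_2(11)$-subgroup $A_5$, so it appears in the list only because we are listing subgroups $H \in \C(G)$ rather than maximal subgroups).

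The main obstacle I anticipate is not the asymptotic estimate — that is routine — but the bookkeeping in the small cases: getting the exact isomorphism type of $H$ (including the precise central and outer structure, which depends delicately on $(n, q-1)$ and on congruences of $q$ modulo powers of $r$), correctly deciding maximality versus the novelty/non-maximality subtleties flagged in Remark \ref{r:max}, and making sure the $q \equiv 1 \imod r$ constraint is applied so that no spurious small $q$ slips in. Cross-checking against the tables of \cite{BHR} for $n \leqs 4$ will be essential to pin these down.
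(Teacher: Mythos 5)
Your approach is essentially the same as the paper's: use the crude bound $|H| < r^{2m}\,|{\rm Sp}_{2m}(r)| \leqs r^{2m^2+3m}$ against $|G| > q^{n^2-2}$ with $n = r^m$ to dispose of all but finitely many $(r,m,q)$, then check the small survivors by hand and record the same list as in part (v) of Proposition~\ref{p:psl} (with the $A_4<{\rm L}_2(11)$ non-maximality caveat). The paper organises this as three explicit cases (odd $r$; $r=2$, $m\geqs 2$; $n=2$ with $H$ of type $2^{1+2}_-.O_2^-(2)$), but the computations are the ones you describe.

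One slip to fix before this would stand alone: when you come to $r=2$, $n=2$ you write ``we need $q\equiv 1\imod 4$''. That condition applies only to the $C_6$-subgroups $2^{2m}.{\rm Sp}_{2m}(2)$ with $m\geqs 2$; for $n=2$ the subgroup is $2^{1+2}_-.O_2^-(2)$ and the constraint in \cite[Proposition~4.6.7]{KL} is simply $q=p$ odd (the congruence that matters is $p\equiv\pm 3\imod 8$ versus $p\equiv\pm 1\imod 8$, which decides $A_4$ versus $S_4$, as you later say). If $q\equiv 1\imod 4$ were genuinely required, it would wrongly exclude $q=7,11,23$, which you nonetheless (correctly) retain in your final list — so the slip is in the stated hypothesis, not the conclusion. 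Everything else, including the reduction to $n\in\{2,3,4\}$, the identification of $3^2.Q_8<{\rm L}_3(4)$ and $2^4.A_6<{\rm L}_4(5)$, and the maximality bookkeeping, matches the paper.
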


\begin{proof}
There are three cases to consider. First assume 
$H$ is of type $r^{2m}.{\rm Sp}_{2m}(r)$, where $n=r^m$, $m \geqs 1$ and $r$ is an odd prime divisor of $q-1$. By applying \cite[Proposition 4.6.5]{KL} we deduce that
$|H| \leqs r^{2m}|{\rm Sp}_{2m}(r)|$, so $|H|^3<r^{6m^2+9m}<q^{6m^2+9m}$. Now, if $m \geqs 2$ then $6m^2+9m \leqs 3^{2m}-2$ and thus
$$|H|^3<q^{6m^2+9m} \leqs q^{r^{2m}-2} = q^{n^2-2}<|G|.$$
Now assume $m=1$, so $n=r$ and $|H| \leqs r^3(r^2-1)$. Note that $q \geqs 4$ since $r$ divides $q-1$. If $r \geqs 5$ then 
$$|H|^3 \leqs r^9(r^2-1)^3<4^{r^2-2} \leqs q^{n^2-2}<|G|.$$
Finally, suppose $(m,r)=(1,3)$, so $n=3$. Here
$$|H|^3 \leqs 3^9(3^2-1)^3<|{\rm L}_{3}(q)| = \frac{1}{3}q^3(q^2-1)(q^3-1)$$
for $q>8$, so it remains to deal with the cases $q=4,7$. In this situation, $H=3^2.Q_8$ (see \cite[Proposition 4.6.5]{KL}) and thus $H$ is large if and only if $q=4$.

Next suppose $H$ is of type $2^{2m}.{\rm Sp}_{2m}(2)$, so $n=2^m$, $m \geqs 2$ and $q=p \equiv 1 \imod{4}$. By \cite[Proposition 4.6.6]{KL} we have $|H| \leqs 2^{2m}|{\rm Sp}_{2m}(2)|<2^{2m^2+3m}$. Now, if $m \geqs 3$ then $6m^2+9m \leqs 2^{2m+1}-4$ so
$$|H|^3 < 2^{6m^2+9m} \leqs 2^{2^{2m+1}-4} = 4^{n^2-2}<q^{n^2-2}<|G|.$$
If $m=2$ then $n=4$, $|H| \leqs 2^4|{\rm Sp}_{4}(2)|$ and $|G|=\frac{1}{4}|{\rm SL}_{4}(q)|$, so $|H|^3<|G|$ if $q>5$. However, if $q=5$ then $H=2^4.A_6$ (see \cite[Proposition 4.6.6]{KL}) and this subgroup is large.

Finally, suppose $n=2$ and $H$ is of type $2^{1+2}_{-}.O_{2}^{-}(2)$, so $q=p \geqslant 3$. Then $H=A_4$ if $p \equiv \pm 3 \imod{8}$, otherwise $H=S_4$ (see \cite[Proposition 4.6.7]{KL}).  
Now, if $p \geqslant 31$ then $24^3<\frac{1}{2}p(p^2-1)$, so we may assume $p<31$. It is easy to check that the only large examples $(G,H)$ that arise in this situation are the following:
$$({\rm L}_{2}(23),S_4), \; ({\rm L}_{2}(17),S_4), \; ({\rm L}_{2}(13),A_4), \; ({\rm L}_{2}(11),A_4), \; ({\rm L}_{2}(7),S_4), \; ({\rm L}_{2}(5),A_4).$$
(We note that if $H=A_4$ then the maximality of $H$ implies that $p \not\equiv \pm 1\imod{10}$. In particular, $A_4<{\rm L}_{2}(11)$ is non-maximal.)
\end{proof}

\begin{lem}\label{l:c7}
The conclusion to Proposition \ref{p:psl} holds if $H \in \C_7$.
\end{lem}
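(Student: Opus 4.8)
The plan is to bound $|H|$ crudely from above, compare with the lower bound $|G|>q^{n^2-2}$ from Corollary \ref{l:ord}(i), and conclude that $|H|^3<|G|$ in every case; since no $\C_7$-subgroup appears among the large subgroups listed in Proposition \ref{p:psl}, showing non-largeness for all such $H$ is exactly what ``the conclusion holds'' means here.

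First I would recall from \cite[Proposition 4.7.3]{KL} (see also \cite[Table 3.5.A]{KL}) that $H$ is the stabiliser of a tensor-induced decomposition $V=\bigotimes_{i=1}^{t}V_i$ with $\dim V_i=m$, so that $n=m^t$ with $m\geqs 3$ and $t\geqs 2$, and $H$ has type ${\rm GL}_m(q)\wr S_t$. I do not even need the precise order formula: $H$ is a section of ${\rm GL}_m(q)\wr S_t$, so
$$|H|\leqs |{\rm GL}_m(q)|^{t}\,t! < q^{tm^2}\,t!,$$
and hence $|H|^3<q^{3tm^2}(t!)^3$. Combined with $|G|>q^{m^{2t}-2}$, it therefore suffices to establish the arithmetic inequality
$$3tm^2+3\log_q(t!)+2\leqs m^{2t}\qquad(m\geqs 3,\ t\geqs 2).$$

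To verify this I would use $q\geqs 2$, so $\log_q(t!)\leqs\log_2(t!)$, which is at most $\log_2 6$ when $t\in\{2,3\}$ and at most $4t(t-3)/3$ when $t\geqs 4$ by Lemma \ref{l:ord3}(ii). Since $m^{2t}=(m^2)^t\geqs 9^{\,t-1}m^2$, the right-hand side dominates the linear-plus-logarithmic left-hand side with room to spare; the tightest instance is $m=3$, $t=2$ (that is, $n=9$), where the left-hand side is at most $54+3+2=59<81=3^4$, and all larger values of $m$ or $t$ are increasingly comfortable (a direct check of $m=3$ with $t\in\{3,4,5\}$, together with the evident monotonicity in $m$ and $t$, finishes the argument).

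I expect no genuine obstacle here, since tensor-induced subgroups are very small and this is the easiest of the $\C_i$ analyses. The only points requiring a little care are quoting a valid upper bound for $|H|$ from \cite[Proposition 4.7.3]{KL} — for which the crude estimate above more than suffices — and noting that the single boundary case $n=9$ already leaves ample slack.
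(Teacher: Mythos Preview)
Your approach is correct and essentially the same as the paper's: both bound $|H|$ by the order of the wreath product ${\rm GL}_m(q)\wr S_t$ (the paper uses the slightly sharper $|{\rm SL}_m(q)|^t t!$), compare with $|G|>q^{n^2-2}$, and verify the resulting elementary inequality for all $m\geqs 3$, $t\geqs 2$. The only difference is that the paper packages the arithmetic as a single uniform bound $(t!)^3<q^{m^{2t}-3t(m^2-1)-2}$ rather than checking the boundary case and invoking monotonicity, but this is purely cosmetic.
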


\begin{proof}
Here $H$ is a tensor product subgroup of type ${\rm GL}_{a}(q) \otimes \cdots \otimes {\rm GL}_{a}(q)$ (with $t \geqslant 2$ factors), so $n=a^t$ and $a \geqs 3$ (see \cite[Table 3.5.A]{KL}). By \cite[Proposition 4.7.3]{KL} we have $|H|<|{\rm SL}_{a}(q)|^tt!$, and it is easy to check that  
$$(t!)^3 < 2^{3^{2t}-24t-3} < q^{a^{2t}-3t(a^2-1)-2}$$ 
for all $t \geqslant 2$ and $a \geqslant 3$. Therefore 
$$|H|^3<q^{3t(a^2-1)}(t!)^3<q^{a^{2t}-2} = q^{n^2-2} <|G|$$
and thus $H$ is non-large.
\end{proof}

\begin{lem}\label{l:c8}
The conclusion to Proposition \ref{p:psl} holds if $H \in \C_8$.
\end{lem}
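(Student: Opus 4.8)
The plan is to handle the $\C_8$ subgroups of $G = {\rm L}_n(q)$ exactly as in the preceding lemmas: write down the order of $H$ from the relevant proposition in \cite[Chapter 4]{KL}, compare it with the lower bound $|G| > q^{n^2-2}$ from Corollary \ref{l:ord}(i), and read off when $|H|^3 \geqslant |G|$. Recall that a $\C_8$-subgroup of ${\rm L}_n(q)$ is the stabiliser of a non-degenerate form on $V$, so $H$ is of type ${\rm Sp}_n(q)$ (with $n$ even), ${\rm SO}_n^{\e}(q)$ (with $q$ odd), or ${\rm GU}_n(q^{1/2})$ (with $q$ a square); these are precisely the subgroups that, together with $\C_1$, appear in part (i) of Proposition \ref{p:psl}. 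So the goal is simply to confirm that \emph{every} such $H$ is large.

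First I would take the symplectic case: by \cite[Proposition 4.8.3]{KL} (or the analogous statement in \cite{KL}) and Lemma \ref{l:ord4}(ii), $|H|$ is of the form (a small constant)$\cdot q^{\frac12 n(n+1)}$ up to the $(q-1,2)$-type factors, and one checks $|H|^3 > q^{\frac32 n(n+1)} > q^{n^2-1} > |G|$ for all even $n \geqslant 2$; the handful of genuinely small cases ($n=2$, where $H$ would not be a proper subgroup, or the degenerate overlaps) are excluded by the maximality conditions in column IV of \cite[Table 3.5.A]{KL} and can be checked by hand. Next, the orthogonal case: using Lemma \ref{l:ord4}(iii), $|H| \approx q^{\frac12 n(n-1)}$, so $|H|^3 \gtrsim q^{\frac32 n(n-1)}$, and since $n \geqslant 3$ (indeed $n \geqslant 5$ for $\e = \circ$ once we discard isomorphisms, and $n \geqslant 6$ for $\e = \pm$, with small $n$ treated directly) one verifies $\frac32 n(n-1) \geqslant n^2 - 1$, giving $|H|^3 > |G|$. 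Finally, the unitary case: here $q = q_0^2$, $|H|$ is of the form (constant)$\cdot q_0^{n^2} = q^{n^2/2}$ up to bounded factors, by Lemma \ref{l:ord4}(i), so $|H|^3 > q^{3n^2/2 - c} > q^{n^2-1} > |G|$ once $n^2/2 > 1 + c$, i.e. for all $n \geqslant 3$ (and $n=2$ does not occur as a square-field case of the right type), with any small $n$ checked by direct computation of orders.

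I expect no real obstacle here. The only mild subtlety is bookkeeping: the precise index $|{\rm PGL}_n(q) : H|$-type constants and the $(n,q-1)$ and $(q-1,2)$ divisibility factors must be tracked carefully enough to be sure the inequality $|H|^3 \geqslant |G|$ really holds in the borderline small-dimensional cases (e.g. $n=2,3$ and $q$ small), but in every instance the exponent of $q$ in $|H|^3$ strictly exceeds $n^2-1$ by a margin that dwarfs the constants, so the conclusion is robust. This completes the analysis of all the $\C_i$ collections, and hence the proof of Proposition \ref{p:psl}.
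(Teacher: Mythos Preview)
Your proposal is correct and follows essentially the same route as the paper: split into the three $\C_8$ types (symplectic, orthogonal, unitary), read off $|H|$ from \cite[Propositions 4.8.3--4.8.5]{KL}, and verify $|H|^3>|G|$ via the exponent inequalities $\tfrac32 n(n+1)>n^2-1$, $\tfrac32 n(n-1)\geqslant n^2-1$, and $\tfrac32 n^2>n^2-1$ respectively. One small correction: the orthogonal $\C_8$-subgroup of ${\rm L}_n(q)$ arises already for $n\geqslant 2$ (with $q$ odd), not only $n\geqslant 5$ or $6$ as you suggest; the paper handles $n=3$, $q\in\{3,5,7\}$ by a direct check (there $|H|=q(q^2-1)$), which is exactly the ``small $n$ treated directly'' you anticipated.
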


\begin{proof}
There are three cases to consider. If $H$ is of type ${\rm Sp}_{n}(q)$ then $|H| \geqslant |{\rm PSp}_{n}(q)|>\frac{1}{4}q^{n(n+1)/2}=:\a$ (see \cite[Proposition 4.8.3]{KL}) and \eqref{e:al} holds. Similarly, if $H$ is of type $O_{n}^{\e}(q)$, where $n \geqslant 3$ and $q$ is odd, then $|H| = |{\rm SO}_{n}^{\e}(q)|>\frac{1}{2}q^{n(n-1)/2}=:\a$ (see \cite[Proposition 4.8.4]{KL}) and again we deduce that \eqref{e:al} holds, unless $n=3$ and $q \in \{3,5,7\}$. In each of these cases, $|H|=q(q^2-1)$ and it is easy to check that $|H|^3>|G|$. Finally, if $H$ is of type ${\rm U}_{n}(q_0)$, with $q=q_0^2$ and $n \geqslant 3$, then \cite[Proposition 4.8.5]{KL} implies that $|H| \geqs |{\rm U}_{n}(q_0)|>(1-q_0^{-1})q_0^{n^2-1}$ and we quickly deduce that $H$ is large.
\end{proof}

\subsubsection{Unitary groups}\label{sss:unitary}

\begin{prop}\label{p:psu}
Let $G={\rm U}_{n}(q)$ and let $H \in \C(G)$ be a geometric subgroup of $G$. Assume $n \geqs 3$ and set $d=(n,q+1)$. Then $H$ is large if and only if one of the following holds:
\begin{itemize}\addtolength{\itemsep}{0.2\baselineskip}
\item[{\rm (i)}] $H \in \C_1$;
\item[{\rm (ii)}] $H$ is a $\C_2$-subgroup of type ${\rm GU}_{n/t}(q)\wr S_t$ and one of the following holds:
\begin{itemize}\addtolength{\itemsep}{0.2\baselineskip}
\item[{\rm (a)}] $t=2$;
\item[{\rm (b)}] $t=3$ and either $q \in \{2,3,4\}$, or 
$$(q,d) \in \{ (5,3), (7,1), (7,2), (9,1), (9,2), (13,1),(16,1)\};$$
\item[{\rm (c)}] $4 \leqs n=t \leqs 11$ and either $q=2$, or 
$$(n,q) \in \{ (6,3),(5,3),(4,3),(4,4),(4,5)\}.$$
\end{itemize}
\item[{\rm (iii)}] $H$ is a $\C_2$-subgroup of type ${\rm GL}_{n/2}(q^2)$; 
\item[{\rm (iv)}] $H$ is a $\C_3$-subgroup of type ${\rm GU}_{n/k}(q^k)$, where $k=q=3$ and $n$ is odd;
\item[{\rm (v)}] $H$ is a $\C_5$-subgroup of type ${\rm GU}_{n}(q_0)$, where $q=q_0^3$ and $f>1$, where
\renewcommand{\arraystretch}{1.7}
$$f = \left\{\begin{array}{ll} 
\frac{27(n,q_0^3+1)}{(n,q_0+1)^3} & \mbox{if $(q_0^2-q_0+1)_3>1$ and $(q_0+1)_3 \geqslant n_3>1$} \\
\frac{(n,q_0^3+1)}{(n,q_0+1)^3} & \mbox{otherwise.}
\end{array}\right.$$
\renewcommand{\arraystretch}{1}
\item[{\rm (vi)}] $H$ is a $\C_5$-subgroup of type ${\rm Sp}_{n}(q)$ or $O_{n}^{\e}(q)$; 
\item[{\rm (vii)}] $H$ is a $\C_6$-subgroup and $(G,H)$ is one of the following:
$$({\rm U}_{4}(7), 2^4.{\rm Sp}_{4}(2)),\; ({\rm U}_{4}(3), 2^4.A_6),\; ({\rm U}_{3}(5), 3^2.Q_8).$$
\end{itemize}
\end{prop}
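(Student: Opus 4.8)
The plan is to argue exactly as in the proof of Proposition \ref{p:psl}, working through the Aschbacher collections $\C_1,\dots,\C_7$ in turn (recall that $\C_8$ is empty for unitary groups, the subgroups of type ${\rm Sp}_n(q)$ and $O_n^{\e}(q)$ appearing instead in $\C_5$). Throughout I set $d=(n,q+1)$ and use the estimate
$$(1-q^{-1})q^{n^2-2}<|G| \leqs (1-q^{-2})(1+q^{-3})q^{n^2-1}<q^{n^2-1}$$
from Corollary \ref{l:ord}(ii), together with the order formulae in \cite[Chapter 4]{KL} and the bounds in Lemmas \ref{l:ord4} and \ref{l:ord3}; as in Remark \ref{r:max} I take the $\C_i$ as defined by the relevant tables of \cite{KL} and discard novelties. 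For each collection I would state and prove a separate lemma, so the argument breaks into seven short lemmas with the same flavour as Lemmas \ref{l:c1}--\ref{l:c7}.

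Several collections succumb to crude estimates. If $H \in \C_1$ is the stabiliser of a totally singular or non-degenerate subspace of $V$, then a lower bound of the shape $|H|>\frac{1}{4} q^{\frac{3}{4} n^2-c}$ (obtained from \cite[Chapter 4]{KL} and Lemma \ref{l:ord4}, as in Lemma \ref{l:c1}) gives $|H|^3>q^{n^2-1}>|G|$ for all $n \geqs 3$, so every $\C_1$-subgroup is large; the same counting shows that the $\C_2$-subgroups of type ${\rm GL}_{n/2}(q^2)$ and ${\rm GU}_{n/2}(q)\wr S_2$ (both of order exceeding $q^{n^2/2-c}$) are large, and likewise the $\C_5$-subgroups of type ${\rm Sp}_n(q)$ and $O_n^{\e}(q)$, since $|H|\geqs|{\rm PSp}_n(q)|$, resp. $|{\rm SO}_n^{\e}(q)|$, forces $|H|^3>|G|$ for all $n$ (the handful of cases with $n=3$ being settled by direct calculation). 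At the other extreme, the tensor-product collections $\C_4$ and $\C_7$ contribute nothing: one has $|H|<q^{n^2/4+c}$, resp. $|H|<|{\rm SU}_a(q)|^tt!$ with $n=a^t$ and $a\geqs 3$, and then $|H|^3<q^{n^2-2}<|G|$ exactly as in Lemmas \ref{l:c4} and \ref{l:c7}. For $\C_6$ we have $n=r^m$ with $r$ prime, and the bound $|H|\leqs r^{2m}|{\rm Sp}_{2m}(r)|$ (or its $r=2$ analogue) forces largeness to fail unless $n \in \{3,4\}$; the residual cases — yielding $3^2.Q_8<{\rm U}_3(5)$, $2^4.A_6<{\rm U}_4(3)$ and $2^4.{\rm Sp}_4(2)<{\rm U}_4(7)$ — are checked directly, mirroring Lemma \ref{l:c6}.

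This leaves the three delicate families flagged in Remark \ref{r:special}, where $|H|^3\approx|G|$ and the outcome hinges on the precise value of $d$ (and, for $\C_5$, on a $3$-adic analysis). For $\C_3$-subgroups of type ${\rm GU}_{n/k}(q^k)$ with $k$ an odd prime, a crude order estimate disposes of $k\geqs 5$ and of $n=k$ prime, and for $k=3$ one reduces (using the order formula of \cite[Chapter 4]{KL} and Lemma \ref{l:ord4}) to showing that
$$\frac{|{\rm GU}_{n/3}(q^3)|^3}{|{\rm GU}_n(q)|}<\frac{(q+1)^2d^2}{3^3}$$
holds unless $q=3$ and $n$ is odd (so that $d=1$), with the base case $n=3$ done by hand; this is the unitary analogue of Lemma \ref{l:c3}, and since $3\mid n$ forces $3\mid d$ when $q=2$, and $2\mid d$ or $3\mid d$ when $q=5$, the values $q=2$ and $q=5$ occurring in the linear case are now excluded. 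For $\C_2$-subgroups of type ${\rm GU}_{n/t}(q)\wr S_t$ with $t\geqs 3$, the factorial bound in Lemma \ref{l:ord3}(ii) kills largeness whenever $t\geqs 4$ and $n\geqs 2t$ (as in Lemma \ref{l:c2}), leaving the case $n=t$ — of type ${\rm GU}_1(q)\wr S_n$ — which forces $4\leqs n\leqs 11$ and must be treated by a finite computation (appealing to \cite{BHR} for maximality in low dimensions), and the case $t=3$, which reduces to comparing $|{\rm GU}_{n/3}(q)|^9/|{\rm GU}_n(q)|$ with $(q+1)^2d^2/6^3$ and, after applying Lemma \ref{l:ord4}, leaves only the finitely many pairs $(q,d)$ recorded in part (ii). Finally, for $\C_5$-subgroups of type ${\rm GU}_n(q_0)$ with $q=q_0^r$, $r$ an odd prime, the argument is a near-verbatim transcription of Lemma \ref{l:c5}: $r\geqs 5$ is excluded by an order estimate, and for $r=3$ one sets
$$\zeta=\frac{|{\rm GU}_n(q_0^3)|}{|{\rm GU}_n(q_0)|^3}\cdot\frac{(q_0+1)^3}{q_0^3+1},$$
computes the index $e$ of ${\rm U}_n(q_0).e$ in ${\rm PGU}_n(q_0)$ by a power-of-$3$ analysis of $n_3$, $(q_0+1)_3$ and $(q_0^2-q_0+1)_3$ via \cite[Lemma B.6]{BG_book}, and shows that $H$ is large precisely when the quantity $f$ of part (v) exceeds $1$. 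The main obstacle throughout is this boundary bookkeeping in the three delicate families — carefully tracking the $d$- and $e$-factors and eliminating the finitely many small exceptions — rather than any conceptual point; the remaining collections are routine.
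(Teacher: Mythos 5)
Your proposal follows the paper's proof essentially verbatim: you correctly identify that $\C_8$ is empty in the unitary case with the symplectic/orthogonal subgroups migrating to $\C_5$, dispose of $\C_1$, $\C_4$, $\C_7$ and $\C_6$ by the same crude order estimates as Lemmas~\ref{l:c1}, \ref{l:c4}, \ref{l:c7}, \ref{l:c6}, and isolate exactly the three delicate families ($\C_2$ with $t\geqs 3$, $\C_3$ with $k=3$, and the $\C_5$-subfield subgroup with $r=3$) that the paper treats in detail in Lemmas~\ref{l:c2_u}--\ref{l:c5_u}, including the $3$-adic computation of $e$ via \cite[Lemma B.6]{BG_book} and the $\zeta$ vs.\ $f$ comparison. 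The only cosmetic difference is that your explanation of why $q\in\{2,5\}$ drop out of the $\C_3$ list uses the observation that $3\mid n$ forces $3\mid d$, whereas the paper subsumes $q=5$ under the uniform bound $g(q)<1$ for $q\geqs 4$; both routes are valid and reach the same finite list.
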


\begin{remk}\label{r:psu}
In part (vii) of Proposition \ref{p:psu}, the subgroup $3^2.Q_8<{\rm U}_{3}(5)$ is non-maximal.
\end{remk}

The proof of Proposition \ref{p:psu} is very similar to the proof of Proposition \ref{p:psl}, so for the sake of brevity we will only give details for subgroups in the $\C_2$, $\C_3$ and $\C_5$ collections. Set $d=(n,q+1)$ and recall that Corollary \ref{l:ord}(ii) gives 
$$(1-q^{-1})q^{n^2-2}<|G| \leqs (1-q^{-2})(1+q^{-3})q^{n^2-1}<q^{n^2-1}.$$
Note that $(n,q) \neq (3,2)$ since ${\rm U}_{3}(2) \cong 3^2.Q_8$ is not simple.

\begin{lem}\label{l:c2_u}
The conclusion to Proposition \ref{p:psu} holds if $H \in \C_2$.
\end{lem}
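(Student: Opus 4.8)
The plan is to mirror the proof of Lemma \ref{l:c2} for the unitary case, handling the two distinct flavours of $\C_2$-subgroup of $G={\rm U}_n(q)$ separately: the imprimitive type ${\rm GU}_{n/t}(q)\wr S_t$ with $t\geqs 2$, and the type ${\rm GL}_{n/2}(q^2)$ which arises when $n$ is even. In both cases the starting point is the order formula for $H$ from \cite[Propositions 4.2.9 and 4.2.4]{KL}, together with the bounds $(1-q^{-1})q^{n^2-2}<|G|<q^{n^2-1}$ from Corollary \ref{l:ord}(ii), and the estimates for $|{\rm GU}_a(q)|$ and $|{\rm GL}_a(q)|$ in Lemma \ref{l:ord4}(i).

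First I would dispose of the type ${\rm GL}_{n/2}(q^2)$ subgroups: here $|H| = d^{-1}(q-1,n/2)|{\rm GL}_{n/2}(q^2)|$, so $|H|>q^{n^2/2-2}$ by Lemma \ref{l:ord4}(i), and since $n\geqs 4$ one checks directly (as in \eqref{e:al}) that $(q^{n^2/2-2})^3>q^{n^2-1}>|G|$; thus all such $H$ are large, giving case (iii). For the imprimitive subgroups of type ${\rm GU}_{n/t}(q)\wr S_t$, I would split on $t$ exactly as in Lemma \ref{l:c2}. When $t=2$ one has $|H|>\tfrac12 q^{n^2/2-2}$, so $H$ is always large (case (ii)(a)), using $n\geqs 6$ here (noting the small exclusions from \cite[Table 3.5.B]{KL} and \cite[Proposition 2.3.6]{BHR}). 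When $t\geqs 4$ the crude bound $|H|<q^{n^2/t}t!$ together with $|G|>q^{n^2-2}$ reduces matters to the inequality $(t!)^3<q^{n^2(1-3/t)+1}$, which Lemma \ref{l:ord3} settles for $n\geqs 2t$ and also for $n=t$ once one records that $q\geqs 2$ (and checks $t=4$, and the borderline $n=t$, $q=2$ cases directly) — this yields the bounded list in (ii)(c). The genuinely delicate regime, and the main obstacle, is $t=3$: here $|H|^3<|G|$ is equivalent to a sharp inequality of the shape $|{\rm GU}_{n/3}(q)|^9/|{\rm GU}_n(q)| < (q^2-1)d^2/6^3$ (up to the precise index $(q+1,\ldots)$ factors), and because $|H|^3\approx|G|$ one must bracket the ratio between the lower and upper estimates of Lemma \ref{l:ord4}(i), then go prime-by-prime through the small values of $q$, tracking the exact value of $d=(n,q+1)$ (as in the linear case, $d\geqs 2$ often kills largeness while $d=1$ preserves it). The cases $n=3$ (where $|G|=\tfrac13 q^3(q^3+1)(q^2-1)/\gcd$ can be computed exactly) and $n\geqs 6$ are treated separately, producing the list of pairs $(q,d)$ in (ii)(b); one must also separately handle $q\in\{2,3,4\}$, where $H$ turns out to be large for all admissible $n$ divisible by $3$.

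The hard part will be exactly the $t=3$ bookkeeping: as with the $\C_5$ analysis in Lemma \ref{l:c5}, the inequalities are tight enough that the outcome genuinely depends on the $3$-part of $n$ and on $\gcd(n,q+1)$ versus $\gcd(n,q^2-q+1)$, so one cannot avoid a careful case division. I would structure it by first using the two-sided estimate
\[
f(q):=\frac{(1+q^{-1})^9(1-q^{-2})^9}{1+q^{-1}-q^{-2}-q^{-3}} < \frac{|{\rm GU}_{n/3}(q)|^9}{|{\rm GU}_n(q)|} < g(q)
\]
(with $f,g$ the appropriate lower/upper surrogates coming from Lemma \ref{l:ord4}(i)) to eliminate all $q$ above a small bound, and then checking the remaining finitely many $q$ by hand, in each case comparing against $(q^2-1)d^2/6^3$ for each possible value of $d$. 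The remaining $\C_2$ conclusion (type ${\rm GL}_{n/2}(q^2)$) and the small-$n$ exceptions are then read off, completing the verification that these are precisely the cases listed in parts (ii) and (iii) of Proposition \ref{p:psu}.
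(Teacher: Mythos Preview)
Your approach is essentially identical to the paper's: dispose of the ${\rm GL}_{n/2}(q^2)$ type first (always large), then split the ${\rm GU}_{n/t}(q)\wr S_t$ analysis on $t=2$, $t=3$, and $t\geqs 4$, using the order formula from \cite[Proposition 4.2.9]{KL} together with the estimates of Lemma \ref{l:ord4}(i), with direct checks for the borderline small-$(n,q)$ cases.

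Two small corrections to your formulas. For $t=3$ the exact non-largeness condition is
\[
\frac{|{\rm GU}_{n/3}(q)|^9}{|{\rm GU}_n(q)|} \;<\; \frac{(q+1)^2 d^2}{6^3},
\]
not $(q^2-1)d^2/6^3$; consequently the two-sided surrogates $f(q),g(q)$ carry exponent $8$ rather than $9$ (one factor of $(1+q^{-1})$ cancels). Second, the $t=3$ outcome depends only on $q$ and $d=(n,q+1)$---there is no r\^ole for $\gcd(n,q^2-q+1)$ here, so that part of your sketch can be dropped. With these adjustments the case-by-case bookkeeping for $q\leqs 16$ goes through exactly as you outline, and the paper treats $n<9$ separately by direct computation before invoking the $f,g$ bounds for $n\geqs 9$.
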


\begin{proof}
If $H$ is of type ${\rm GL}_{n/2}(q^2)$ then $|H|=2d^{-1}(q-1)|{\rm SL}_{n/2}(q^2)|$ (see \cite[Proposition 4.2.4]{KL}) and the bound $|H|^3>|G|$ quickly follows. For the remainder, let us assume $H$ is a $\C_2$-subgroup of type ${\rm GU}_{n/t}(q) \wr S_t$, so the order of $H$ can be read off from \cite[Proposition 4.2.9]{KL}. As in the proof of Lemma \ref{l:c2}, it is easy to check that $H$ is large if $t=2$. 

Next suppose $t \geqs 4$. We have
$$|H|<(1+q^{-1})^{t-1}q^{n^2/t-1}t!,\;\; |G|>(1-q^{-1})q^{n^2-2},$$
so $|H|^3<|G|$ if
\begin{equation}\label{e:uc3}
(t!)^3<(1-q^{-1})(1+q^{-1})^{3(1-t)}q^{n^2\left(1-\frac{3}{t}\right)+1}.
\end{equation}
If $n \geqs 2t$ then \eqref{e:uc3} holds if $t \geqs 5$ (set $n=2t$, $q=2$ and argue as in the proof of Lemma \ref{l:ord3}). Similarly, if $t=4$ and $n \geqs 8$ then \eqref{e:uc3} holds unless $(n,q) = (8,2)$; in this case it is easy to check the bound $|H|^3<|G|$ directly.

Now assume $n=t \geqs 4$. Here the reader can check that \eqref{e:uc3} holds unless $q=2$ and $n \leqs 11$, or $q=3$ and $n \leqs 6$, or $(n,q) = (5,4),(4,4),(4,5),(4,7)$ or $(4,8)$. In each of these cases, the desired result follows by directly computing $|H|$ and $|G|$.

To complete the proof, let us assume $t=3$. Here $H$ is non-large if and only if
\begin{equation}\label{e:psu2}
\frac{|{\rm GU}_{n/3}(q)|^9}{|{\rm GU}_{n}(q)|} < \frac{(q+1)^2d^2}{6^3}.
\end{equation}
For $n<9$ it is easy to check that $H$ is large if and only if
\begin{itemize}\addtolength{\itemsep}{0.2\baselineskip}
\item[(i)] $n=3$ and $q \in \{2,3,4,5,7,9,13,16\}$; or
\item[(ii)] $n=6$ and $q \in \{2,3,4,7,9,16\}$.
\end{itemize}
Now assume $n \geqs 9$. By applying Lemma \ref{l:ord4}(i) we have
$$f(q):=\frac{(1+q^{-1})^8(1-q^{-2})^8}{(1+q^{-3})}<\frac{|{\rm GU}_{n/3}(q)|^9}{|{\rm GU}_{n}(q)|} < (1+q^{-1})^8(1-q^{-2})^8(1+q^{-3})^9=:g(q)$$
and one can check that $g$ is a decreasing function. If $q \geqs 17$ then $g(q) \leqs g(17)< 50/27$ and thus $H$ is non-large since $(q+1)^2d^2/6^3 \geqs 50/27$ (minimal if $q=19$ and $d=1$).

If $q \in \{2,3,4\}$ then it is easy to check that $f(q)>(q+1)^2d^2/6^3$, so $H$ is large. Next suppose $q=5$, so $d=3$ or $6$. If $d=3$ then the value of $f(5)$ implies that $H$ is large. However, if $d=6$ then $g(5)<6 = (q+1)^2d^2/6^3$, so $H$ is non-large. The analysis of the remaining cases with $7 \leqs q \leqs 16$ is entirely similar.
\end{proof}

\begin{lem}\label{l:c3_u}
The conclusion to Proposition \ref{p:psu} holds if $H \in \C_3$.
\end{lem}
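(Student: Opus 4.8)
The plan is to follow exactly the template of Lemma~\ref{l:c3} for the linear case, adapting the order formulae to the unitary setting. Let $H \in \C_3$ be a $\C_3$-subgroup of $G={\rm U}_n(q)$; by \cite[Proposition 4.3.6]{KL} such a subgroup has type ${\rm GU}_{n/k}(q^k)$ with $k$ an odd prime divisor of $n$ (when $k=2$ the $\C_3$-subgroup is of type ${\rm GL}_{n/2}(q^2)$, which has already been handled in Lemma~\ref{l:c2_u} as a $\C_2$-subgroup, so we may assume $k$ is odd here). From \cite[Proposition 4.3.6]{KL} we read off
$$|H| = d^{-1}(q+1)^{-1}|{\rm GU}_{n/k}(q^k)|k,$$
so that $|H|^3 < |G|$ if and only if
$$\frac{|{\rm GU}_{n/k}(q^k)|^3}{|{\rm GU}_{n}(q)|} < \frac{(q+1)^2 d^2}{k^3}.$$

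First I would dispose of the case $n=k$. If $n=k=3$ then $|H| = d^{-1}(q+1)^{-1}|{\rm GU}_1(q^3)|\cdot 3 = 3(q^3+1)/((3,q+1)(q+1)) = 3(q^2-q+1)/(3,q+1)$, and comparing with $|G|=\frac1d q^3(q^2-1)(q^3+1)$ one checks directly that $H$ is large precisely when $q=3$ (for $q=3$, $d=1$ and $|H|=21$, $|G|=|{\rm U}_3(3)|$, and in fact $21^3 = 9261 > 6048 = |{\rm U}_3(3)|$; for $q=2$ the group ${\rm U}_3(2)$ is not simple so this case is excluded, and for $q \geqslant 4$ the bound fails). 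If $n=k \geqslant 5$ then $|H| < 2nq^{n-1}$ (crudely bounding $|{\rm GU}_1(q^n)|k(q+1)^{-1}d^{-1}$), whence $|H|^3 < 8n^3 q^{3n-3} < q^{n^2-2} < |G|$ using Corollary~\ref{l:ord}(ii).

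For the main range $n \geqslant 2k$ I would split on $k$. If $k \geqslant 5$ then $|H| < 5 q^{n^2/5}$ (since $|{\rm GU}_{n/k}(q^k)| < q^{k(n/k)^2} = q^{n^2/k}$), so $|H|^3 < 125\, q^{3n^2/5} < q^{n^2-2} < |G|$ and $H$ is non-large. This leaves $k=3$ with $n \geqslant 6$, which is the delicate case flagged in Remark~\ref{r:special}(ii). Here, exactly as in Lemma~\ref{l:c3}, I would apply the two-sided bounds of Lemma~\ref{l:ord4}(i) to sandwich the ratio: one gets
$$f(q) := \frac{(1+q^{-3})(1-q^{-6})^{?}\cdots}{(\cdots)} < \frac{|{\rm GU}_{n/3}(q^3)|^3}{|{\rm GU}_{n}(q)|} < g(q)$$
with explicit rational functions $f,g$ independent of $n$, and $g$ decreasing. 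Then $H$ is non-large whenever $g(q) \leqslant (q+1)^2 d^2/6^3$, which rules out all $q \geqslant 7$ immediately (and $q=5$ when $d=6$, etc.), while $H$ is large whenever $f(q) > (q+1)^2 d^2/6^3$; a short case check on the small values $q \in \{2,3,4,5\}$ together with the constraint $d \mid q+1$ then pins down exactly the surviving cases. I expect this to reduce to the single family $k=q=3$, $n$ odd (so that $d=(n,4)=1$), matching Proposition~\ref{p:psu}(iv).

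The only genuinely fiddly point is managing the divisor $d=(n,q+1)$ in the small-$q$ analysis when $k=3$: largeness depends on $d$, and one must track which residues of $n$ modulo $q+1$ are compatible with $3 \mid n$ and with the lower bound coming from $f(q)$. This is the same bookkeeping obstacle encountered in Lemma~\ref{l:c3} (and in the $\C_5$ analysis via \cite[Lemma B.6]{BG_book}), and it is handled by the same elementary means; there is no new idea required beyond carefully enumerating the finitely many small cases.
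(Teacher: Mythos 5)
Your proposal is correct and follows essentially the same route as the paper: you read off $|H|$ from \cite[Proposition 4.3.6]{KL}, reduce to the inequality $|{\rm GU}_{n/k}(q^k)|^3/|{\rm GU}_n(q)| < (q+1)^2d^2/k^3$, dispose of $n=k$ and $k\geqs 5$ by crude bounds, and then sandwich the ratio with $f(q)<g(q)$ via Lemma~\ref{l:ord4}(i) for the delicate case $k=3$, finishing with small-$q$ casework. Two small slips: in your final casework you write the threshold as $(q+1)^2d^2/6^3$, but with $k=3$ it should be $(q+1)^2d^2/k^3 = (q+1)^2d^2/27$ (you have confused the $\C_3$ factor $k$ with the $\C_2$ factor $t!$), and your inequality chain $|H|^3<q^{n^2-2}<|G|$ silently uses the linear-group lower bound; Corollary~\ref{l:ord}(ii) gives only $(1-q^{-1})q^{n^2-2}<|G|$ for ${\rm U}_n(q)$, so the constant needs a small adjustment, though the conclusion is unaffected.
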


\begin{proof}
Here $H$ is of type ${\rm GU}_{n/k}(q^k)$, where $k$ is an odd prime. By \cite[Proposition 4.3.6]{KL} we have
$|H|=d^{-1}(q+1)^{-1}|{\rm GU}_{n/k}(q^k)|k$,
so $H$ is non-large if and only if
\begin{equation}\label{e:psu3}
\frac{|{\rm GU}_{n/k}(q^k)|^3}{|{\rm GU}_{n}(q)|} < \frac{(q+1)^2d^2}{k^3}.
\end{equation}
It is easy to check that this inequality holds if $k>3$, so let us assume $k=3$. We claim that $H$ is large if and only if $q=3$ and $n$ is odd.

First assume $n=3$. Here \eqref{e:psu3} holds if and only if
$$f(q):=\frac{(q^3+1)^2}{q^3(q+1)^3(q^2-1)}<\frac{(3,q+1)^2}{27}.$$
This inequality holds if $q=2$, but not if $q=3$. Since $f(q) \leqs f(4)< 1/27$
if $q \geqs 4$, we conclude that $H$ is large if and only if $q=3$. An entirely similar argument  shows that $H$ is non-large if $n=6$. Now assume $n \geqs 9$. Using Lemma \ref{l:ord4}(i) we calculate that 
$$\frac{|{\rm GU}_{n/3}(q^3)|^3}{|{\rm GU}_{n}(q)|} \leqs \frac{(1+q^{-3})^3(1-q^{-6})^3(1+q^{-9})^3}{(1+q^{-1})(1-q^{-2})}=:g(q).$$
If $q \geqs 4$ then $g(q)<1$, so \eqref{e:psu3} holds if $(q,d) \neq (4,1)$. In fact, $g(4)<9/10<25/27$, so \eqref{e:psu3} holds for all $q \geqs 4$. Similarly, if $q=2$ then $d=3$ and $g(2)<3$. Finally, suppose $q=3$. If $n$ is even then $d \geqs 2$ and $g(3)<1$, so $H$ is non-large. However, if $n$ is odd then $d=1$ and 
$$\frac{|{\rm GU}_{n/3}(3^3)|^3}{|{\rm GU}_{n}(3)|}>\frac{(1+3^{-3})^3(1-3^{-6})^3}{(1+3^{-1})(1-3^{-2})(1+3^{-3})}>\frac{16}{27}$$
so $H$ is large in this case.
\end{proof}

\begin{lem}\label{l:c5_u}
The conclusion to Proposition \ref{p:psu} holds if $H \in \C_5$.
\end{lem}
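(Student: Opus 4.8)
The plan is to follow the template of the linear case (Lemma~\ref{l:c5}), partitioning the $\C_5$-subgroups of $G={\rm U}_n(q)$ into an easy family and one delicate family. Suppose first that $H$ has type ${\rm Sp}_n(q)$ or $O_n^{\e}(q)$. Then $|H|\geqs|{\rm PSp}_n(q)|$, respectively $|H|\geqs|{\rm P\O}_n^{\e}(q)|$, and combining the lower bounds of Corollary~\ref{l:ord}(iii),(iv) with the upper bound $|G|<q^{n^2-1}$ of Corollary~\ref{l:ord}(ii) reduces matters to the elementary exponent inequalities $\tfrac{3}{2}n(n+1)>n^2-1$ and $\tfrac{3}{2}n(n-1)>n^2-1$, both of which hold comfortably for all $n\geqs 3$; a short list of small cases (essentially $n\leqs 6$, and for the orthogonal type $n=3$ in particular, where $|H|$ equals $q(q^2-1)$ up to a bounded factor) is checked directly against the exact order of ${\rm U}_n(q)$. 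Hence every such $H$ is large, as recorded in Proposition~\ref{p:psu}(vi). Next, if $H$ has type ${\rm GU}_n(q_0)$ with $q=q_0^r$ and $r\geqs 5$ prime, then $|H|\leqs|{\rm GU}_n(q_0)|$ is a bounded multiple of $q_0^{n^2}=q^{n^2/r}\leqs q^{n^2/5}$, and a routine comparison of exponents gives $|H|^3<q^{n^2-2}<|G|$ for all $n\geqs 3$, so $H$ is non-large.

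This leaves the delicate case where $H$ has type ${\rm GU}_n(q_0)$ with $q=q_0^3$, which I would treat in close analogy with the $k=3$ analysis of Lemma~\ref{l:c5}. By \cite[Proposition~4.5.3]{KL} we may write $H={\rm U}_n(q_0).e\leqs{\rm PGU}_n(q_0)$, where $e=\bigl(q_0+1,\,\tfrac{q_0^3+1}{(n,q_0^3+1)}\bigr)(n,q_0+1)(q_0+1)^{-1}$, and then $|H|^3<|G|$ if and only if
$$\zeta:=\frac{|{\rm GU}_n(q_0^3)|}{|{\rm GU}_n(q_0)|^3}\cdot\frac{(q_0+1)^3}{q_0^3+1}>e^3\,\frac{(n,q_0^3+1)}{(n,q_0+1)^3}=:f.$$
Since $q_0^3+1=(q_0+1)(q_0^2-q_0+1)$ and $\gcd(q_0+1,q_0^2-q_0+1)\in\{1,3\}$, the same computation as in the linear case (via \cite[Lemma~B.6]{BG_book}) shows that $e=3$ precisely when $(q_0^2-q_0+1)_3>1$ and $(q_0+1)_3\geqs n_3>1$, and $e=1$ otherwise, which yields exactly the two-case formula for $f$ appearing in Proposition~\ref{p:psu}(v). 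Thus it remains to prove that $H$ is large if and only if $f>1$.

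For this, the bounds of Lemma~\ref{l:ord4}(i) for $|{\rm GU}_a(\cdot)|$ (valid since $n\geqs 3$) show that $\zeta>1$ for all $q_0\geqs 2$, so $f\leqs 1$ forces $\zeta>f$ and hence $H$ non-large; the same bounds also give $\zeta<g(q_0)$ for an explicit decreasing function $g$. The remaining work mirrors Lemma~\ref{l:c5}: when $q_0=2$ the condition $f>1$ forces $3\mid n$, $9\nmid n$ and $f=3$, and the sharp estimate $|{\rm GU}_n(2)|>\tfrac{9}{8}\cdot 2^{n^2}$ from Lemma~\ref{l:ord2}(ii) then yields $\zeta<3=f$ directly; and for $q_0\geqs 3$ one separates the cases $e=3$ and $e=1$, writes $(n,q_0+1)=\ell$, $(n,q_0^3+1)=\ell m$ with $m\mid q_0^2-q_0+1$ and $q_0+1=a\ell$, and concludes that $f\geqs(\ell^2+1)/\ell^2\geqs g(q_0)>\zeta$ in all cases except the configurations $a=1$ and $m\in\{\ell^2+1,\ell^2+2\}$, which do not occur since such $m$ cannot divide $q_0^2-q_0+1$. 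I expect the main obstacle to be precisely this last step: one has to check that every arithmetic inequality of Lemma~\ref{l:c5} survives the replacement of $q_0-1$ by $q_0+1$ (and of $q_0^2+q_0+1$ by $q_0^2-q_0+1$), and that $\zeta>1$ together with the monotonicity of $g$ persists when the ${\rm GU}$ order estimates are substituted for the ${\rm GL}$ ones. No genuinely new phenomenon should arise --- the unitary estimates are, if anything, slightly more favourable, so the $q_0=2$ subcase is actually easier than in the linear case --- but the bookkeeping must be done with care.
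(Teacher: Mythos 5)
Your overall plan matches the paper's: reduce to the delicate case $q=q_0^3$, introduce $\zeta$, $e$, $f$ via \cite[Lemma~B.6]{BG_book}, and prove $H$ is large iff $f>1$ by comparing $\zeta$ to the decreasing function $g$. The easy cases (types ${\rm Sp}_n(q)$, $O_n^\e(q)$, and $k\geqs 5$) are handled the same way, and your $q_0=2$ argument via the sharp bound $|{\rm GU}_n(2)|>\tfrac{9}{8}\cdot 2^{n^2}$ is a valid equivalent of evaluating $g(2)<3$.

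However, the endgame for $q_0\geqs 3$, $e=1$, $\ell=(n,q_0+1)\geqs 2$ has a genuine gap, and it is the one place where the unitary case is actually \emph{harder} than the linear case, not ``slightly more favourable'' as you predict. Writing $q_0+1=a\ell$ (so $q_0=a\ell-1$, compared to $q_0=a\ell+1$ in the linear case), the bound $f\geqs(\ell^2+1)/\ell^2\geqs g(q_0)>\zeta$ works for $a\geqs 3$, and for $a=2$ with $\ell\geqs 5$. But for $a=2$ and $\ell\in\{2,3,4\}$, i.e.\ $q_0\in\{3,5,7\}$, one finds $(\ell^2+1)/\ell^2<g(q_0)$ (e.g.\ $5/4<g(3)\approx 1.43$), so the generic estimate fails. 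The paper closes this with a sharper arithmetic fact: for these $q_0$, examining divisors $m$ of $q_0^2-q_0+1$ with $m>\ell^2$ shows $f\geqs 7/4>g(3)$. Your sketch does not anticipate this subcase, and the analogous step is absent from Lemma~\ref{l:c5} (there $a\geqs 2$ is handled uniformly) precisely because $q_0=a\ell+1$ is larger, making $g(q_0)$ smaller. Separately, your description of the $a=1$ exception (``$m\in\{\ell^2+1,\ell^2+2\}$ do not divide $q_0^2-q_0+1$'') transplants the linear-case arithmetic rather than noticing the cleaner unitary fact: if $a=1$ then $\ell=q_0+1$, so $f>1$ forces $m\geqs\ell^2+1=q_0^2+2q_0+2>q_0^2-q_0+1$, contradicting $m\mid q_0^2-q_0+1$; thus $a=1$ is impossible outright, not merely ruled out for two specific values of $m$.

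Finally, a smaller bookkeeping point: you write a single parametrization $(n,q_0+1)=\ell$, $q_0+1=a\ell$, but for $e=3$ one has $(n,q_0+1)=3\ell$, $f=3m/\ell^2$ and $q_0\geqs 3\ell-1$; the two sub-cases $e=1$ and $e=3$ need genuinely separate parametrizations and the cleaner $\ell=1$ observations ($f=3m\geqs 3$, resp.\ $f=m\geqs 2$) that the paper records.
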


\begin{proof}
If $H$ is of type ${\rm Sp}_{n}(q)$ or $O_{n}^{\e}(q)$ then it is easy to check that $|H|^3>|G|$, so let us assume $H$ is a subfield subgroup of type ${\rm GU}_{n}(q_0)$, where $q=q_0^k$ and $k$ is an odd prime. We proceed as in the proof of Lemma \ref{l:c5}. According to \cite[Proposition 4.5.3]{KL} we have
$$|H| = (q_0+1)^{-1}\left(q_0+1, (q_0^k + 1)d^{-1}\right)|{\rm SU}_{n}(q_0)| \geqs |{\rm U}_{n}(q_0)|.$$
If $k \geqslant 5$ then $H$ is non-large since
$$|H|^3 \leqslant |{\rm SU}_{n}(q_0)|^3 <q^{\frac{3}{k}(n^2-1)}<\frac{1}{2}q^{n^2-2}<|G|.$$

Now assume $k=3$. This case requires some careful analysis. First observe that $H={\rm PSU}_{n}(q).e \leqs {\rm PGU}_{n}(q)$, where 
$$e = \left(q_0+1,\frac{q_0^3+1}{(n,q_0^3+1)}\right)(n,q_0+1)(q_0+1)^{-1}$$
and thus $|H|^3<|G|$ if and only if
$$\zeta := \frac{|{\rm GU}_{n}(q_0^3)|}{|{\rm GU}_{n}(q_0)|^3} \cdot \frac{(q_0+1)^3}{q_0^3+1}>e^3\frac{(n,q_0^3+1)}{(n,q_0+1)^3} =: f.$$ 
By applying \cite[Lemma B.6]{BG_book} we calculate that 
\renewcommand{\arraystretch}{1.7}
$$e = \left\{\begin{array}{ll} 
3 & \mbox{if $(q_0^2-q_0+1)_3>1$ and $(q_0+1)_3 \geqslant n_3>1$} \\
1 & \mbox{otherwise.}
\end{array}\right.$$
\renewcommand{\arraystretch}{1}

We claim that $H$ is large if and only if $f>1$. Using Lemma \ref{l:ord4}(i) we have
$$\zeta >\frac{(1-q_0^{-6})(q_0+1)^3}{(1+q_0^{-1})^3(1-q_0^{-2})^3(1+q_0^{-3})^2(q_0^3+1)} >1$$
for all $q_0 \geqslant 2$, whence $H$ is non-large if $f \leqslant 1$. It remains to show that $H$ is large if $f>1$. Again, using Lemma \ref{l:ord4}(i) we deduce that
$$\zeta < \frac{(1+q_0^{-3})(1-q_0^{-6})(1+q_0^{-9})(q_0+1)^3}{(1+q_0^{-1})^3(1-q_0^{-2})^3(q_0^3+1)} =:g(q_0),$$
so $H$ is large if $f \geqslant g(q_0)$. Note that $g$ is a decreasing function. 

If $q_0=2$ then the condition $f>1$ implies that $e=3$, so $n_3=3$, $f=3$ and it is easy to check that $3>g(2)$. Now suppose $q_0 \geqslant 3$. There are two cases to consider. First assume $e=3$. Then $(n,q_0+1) = 3\ell$ and $(n,q_0^3+1) = 3\ell m$ for some $\ell, m \geqs 1$, so $f = 3m/\ell^2$ and $q_0 \geqs 3\ell-1$. If $\ell=1$ then $f=3m$ and $f>g(3) \geqs g(q_0)>\zeta$, so $H$ is large. Now assume $\ell \geqs 2$. Since $f>1$ we deduce that 
\begin{equation}\label{e:pp2}
f \geqs \frac{\ell^2+1}{\ell^2} \geqs g(3\ell-1) \geqs g(q_0)>\zeta
\end{equation}
for all $\ell \geqs 1$, and thus $H$ is large. Finally, let us assume $e=1$, with $(n,q_0+1) = \ell$ and $(n,q_0^3+1) = \ell m$, so $f = m/\ell^2$ and $q_0 = a\ell-1$ for some $a \geqs 1$. Note that $m$ divides $q_0^2-q_0+1$. If $\ell=1$ then $f=m \geqs 2$ and $H$ is large since $f > g(3) \geqs g(q_0)>\zeta$. Now assume $\ell \geqs 2$. If $a \geqs 3$ then \eqref{e:pp2} holds. Similarly, if $a=2$ and $\ell \geqs 5$ then \eqref{e:pp2} holds (with 
$g(3\ell-1)$ replaced by $g(2\ell-1)$). If $a=2$ and $2\leqs \ell \leqs 4$ then it is easy to check that the condition $f>1$ implies that $f \geqs 7/4 > g(3)$. Finally, if $a=1$ then
$m \geqs \ell^2+1 = q_0^2+2q_0+2>q_0^2-q_0+1$, which is a contradiction, so this situation does not arise. We conclude that $H$ is large if $f>1$.
\end{proof}

\subsubsection{Symplectic groups}\label{sss:symplectic}

\begin{prop}\label{p:psp}
Let $G={\rm PSp}_{n}(q)'$ and let $H \in \C(G)$ be a geometric subgroup of $G$. Assume $n \geqs 4$. Then $H$ is large if and only if one of the following holds:
\begin{itemize}\addtolength{\itemsep}{0.2\baselineskip}
\item[{\rm (i)}] $H \in \C_1 \cup \C_8$;
\item[{\rm (ii)}] $H$ is a $\C_2$-subgroup of type ${\rm Sp}_{n/t}(q)\wr S_t$, where $t \leqs 3$, or $(n,t)=(8,4)$, or $(n,t) = (10,5)$ and $q=3$;
\item[{\rm (iii)}] $H$ is a $\C_2$-subgroup of type ${\rm GL}_{n/2}(q)$;
\item[{\rm (iv)}] $H$ is a $\C_3$-subgroup of type ${\rm Sp}_{n/2}(q^2)$, ${\rm Sp}_{n/3}(q^3)$ or ${\rm GU}_{n/2}(q)$; 
\item[{\rm (v)}] $H$ is a $\C_5$-subgroup of type ${\rm Sp}_{n}(q_0)$ with $q=q_0^2$;
\item[{\rm (vi)}] $H$ is a $\C_6$-subgroup and $(G,H)$ is one of the following:
$$({\rm PSp}_{8}(3), 2^6.\O_{6}^{-}(2)),\; ({\rm PSp}_{4}(7), 2^4.O_{4}^{-}(2)),\; ({\rm PSp}_{4}(5), 2^4.\O_{4}^{-}(2)),\; ({\rm PSp}_{4}(3), 2^4.\O_{4}^{-}(2)).$$ 
\end{itemize}
\end{prop}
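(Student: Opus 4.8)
The plan is to mimic the proof of Proposition~\ref{p:psl} (and its abbreviated counterpart for Proposition~\ref{p:psu}), running through the Aschbacher collections $\C_1,\ldots,\C_8$ in turn, reading off $|H|$ from the relevant propositions in \cite[Chapter~4]{KL}, and comparing with the bounds $\tfrac{1}{2\a}q^{\frac12 n(n+1)}<|G|<q^{\frac12 n(n+1)}$ of Corollary~\ref{l:ord}(iii), where $\a=(2,q-1)$. As before, a crude exponent count will dispatch most of the collections. If $|H|\gtrsim q^{e}$ with $3e>\tfrac12 n(n+1)$ then $H$ is large (the constant factors being harmless for $n\geqs 4$), and this will cover $\C_1\cup\C_8$ (parabolic and non-degenerate subspace stabilisers, and the subgroups $O_n^{\e}(q)$ when $q$ is even), the $\C_2$-subgroups of type ${\rm GL}_{n/2}(q)$ ($e=n^2/4$), the $\C_3$-subgroups of type ${\rm Sp}_{n/2}(q^2)$, ${\rm Sp}_{n/3}(q^3)$ and ${\rm GU}_{n/2}(q)$ ($e=n^2/4+n/2$, $n^2/6+n/2$ and $n^2/4$ respectively, each giving $3e>\tfrac12 n(n+1)$ for $n\geqs 4$), and the $\C_5$-subgroups of type ${\rm Sp}_n(q_0)$ with $q=q_0^2$ ($e=n^2/4$). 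At the opposite extreme, I expect the tensor-type subgroups in $\C_4$ (type ${\rm Sp}_a(q)\otimes O_b^{\e}(q)$, $ab=n$) and $\C_7$ (type ${\rm Sp}_a(q)\wr S_t$, $n=a^t$) to be non-large in all cases: the exponent of $|H|$ is at most $\tfrac12 a(a+1)+\tfrac12 b(b-1)$ (respectively $\tfrac{t}{2}a(a+1)+\log_q t!$), and tripling this stays below $\tfrac12 n(n+1)$ for every admissible choice, the tightest-looking instance being $(a,b)=(2,3)$ in $\C_4$, where the shortfall is still a full power of $q$. So the substantive work will be confined to three families: the imprimitive $\C_2$-subgroups, the $\C_5$-subgroups of type ${\rm Sp}_n(q_0)$ with $q=q_0^k$ and $k$ odd, and the $\C_6$-subgroups; I would present only these in detail.

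The imprimitive $\C_2$-subgroups are where I expect the main obstacle to lie. For $H$ of type ${\rm Sp}_{n/t}(q)\wr S_t$ (with $n/t\geqs 2$ even and $t\geqs 2$), \cite[Chapter~4]{KL} gives $|H|\approx\a^{-1}|{\rm Sp}_{n/t}(q)|^t\,t!\approx\a^{-1}q^{n^2/2t+n/2}\,t!$, so the bound $|H|^3<|G|$ reduces to an inequality comparing $(t!)^3$ with a quantity of order $q^{\frac{n^2}{2}(1-\frac{3}{t})-n}$, up to bounded factors involving $\a$ and the products $\prod(1-q^{-2i})$. For $t=2,3$ the exponent $\frac{n^2}{2}(1-\frac3t)-n$ is non-positive while $(t!)^3\geqs 8$, so $H$ is always large; for $t\geqs 4$ the coefficient $1-\frac{3}{t}>0$ forces $n$ to be small, and after bounding $t$ (using $t!<2^{4t(t-3)/3}$, Lemma~\ref{l:ord3}(ii)) I would reduce to a short list of triples $(n,t,q)$ and check these against the exact order formulae. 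I expect the surviving large examples to be exactly $(n,t)=(8,4)$ for all admissible $q$ (which follows from a direct lower bound on $|H|$) and $(n,t,q)=(10,5,3)$; since the latter is large only by a slim margin, the exact values of the factors $\prod(1-q^{-2i})$ will be needed there, and a parallel computation will show that no other triple with $t=5$ is large. I would also need to record that the seeming cases with $q=2$ and $n/t=2$ do not arise: there ${\rm Sp}_2(2)\cong O_2^-(2)$, so ${\rm Sp}_2(2)\wr S_t$ lies inside an orthogonal subgroup $O_n^{\e}(2)\in\C_8$ and is not maximal. Getting this bookkeeping exactly right — in particular, pinning down $(10,5,3)$ as the only $t=5$ example — is the part I expect to be most delicate.

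Finally, for the $\C_5$-subgroups of type ${\rm Sp}_n(q_0)$ with $q=q_0^k$ and $k$ an odd prime, I would use the precise structure from \cite[Chapter~4]{KL} — in the simple group the subfield subgroup is just ${\rm PSp}_n(q_0)$ when $k$ is odd — together with the elementary inequality $(1-x)^3<1-x^3$ for $0<x<1$ applied with $x=q_0^{-2i}$; this gives $|{\rm Sp}_n(q_0)|^3<|{\rm Sp}_n(q_0^3)|$ when $k=3$ (and the bound is immediate from a crude exponent count when $k\geqs 5$), whence $|H|^3/|G|=(2,q_0-1)^{-2}\,|{\rm Sp}_n(q_0)|^3/|{\rm Sp}_n(q_0^3)|<1$ in every case, so no such subgroup is large. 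For the $\C_6$-subgroups one has $n=2^m$ and $q=p$ odd, with $|H|$ bounded above by a constant (independent of $q$) of the shape $2^{2m}|\O_{2m}^-(2)|$, so $|H|^3<|G|$ for all but finitely many small pairs $(p,m)$; checking these directly, using the precise structures in \cite[Chapter~4]{KL}, should leave exactly the $\C_6$-subgroups of ${\rm PSp}_8(3)$ and of ${\rm PSp}_4(q)$ for $q\in\{3,5,7\}$. Putting these three families together with the routine collections gives the statement of Proposition~\ref{p:psp}.
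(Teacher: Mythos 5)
Your proposal is correct and follows essentially the same route as the paper: work through the Aschbacher collections $\C_1,\ldots,\C_8$ using the order formulae from \cite[Chapter~4]{KL} and the bounds of Corollary~\ref{l:ord}(iii), with exponent counting dispatching most collections and detailed case analysis confined to $\C_2$ (with $t\geqs 4$), $\C_5$ (with $k$ odd) and $\C_6$; your identification of the surviving large cases matches the proposition, and the remark about ${\rm Sp}_2(2)\wr S_t$ lying inside $O_{2t}^{\e}(2)$ correctly explains why $(n,t,q)=(10,5,2)$ is absent from the list. One small refinement over the paper: in the $\C_5$ case with $k=3$ you prove $|{\rm Sp}_n(q_0)|^3<|{\rm Sp}_n(q_0^3)|$ directly from $(1-x)^3<1-x^3$, which handles $q$ odd and $q$ even uniformly, whereas the paper uses the cruder constant-factor bounds of Lemma~\ref{l:ord4}(ii) and separates the two parities.
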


The proof of this proposition is similar (and easier) to the proofs of Propositions \ref{p:psl} and \ref{p:psu}. For example, suppose $H$ is a $\C_2$-subgroup of type ${\rm Sp}_{n/3}(q) \wr S_3$. Then \cite[Proposition 4.2.10]{KL} gives $|H|=6d^{-1}|{\rm Sp}_{n/3}(q)|^3$, with $d=(2,q-1)$, and using Lemma \ref{l:ord4}(ii) we deduce that $H$ is large since
$$|H|^3> \left(\frac{3}{8}q^{\frac{1}{6}n^2+\frac{1}{2}n}\right)^3 > q^{\frac{1}{2}n(n+1)}>|G|.$$
Similarly, suppose $H$ is a $\C_5$-subgroup of type ${\rm Sp}_{n}(q_0)$ with $q=q_0^3$. Then \cite[Proposition 4.5.4]{KL} gives $H={\rm PSp}_{n}(q_0)$. In particular, if $q$ is odd then  
$$|H|^3< \left(\frac{1}{2}q_0^{\frac{1}{2}n(n+1)}\right)^3 = \frac{1}{8}q^{\frac{1}{2}n(n+1)}<|G|$$
and thus $H$ is non-large. By using the more accurate bounds in Lemma \ref{l:ord4}(ii), 
we see that the same conclusion holds when $q$ is even.

\subsubsection{Orthogonal groups}\label{sss:orthogonal}

\begin{prop}\label{p:pso}
Let $G={\rm P\O}_{n}^{\e}(q)$ and let $H \in \C(G)$ be a geometric subgroup of $G$. Assume $n \geqs 7$. Then $H$ is large if and only if one of the following holds:
\begin{itemize}\addtolength{\itemsep}{0.2\baselineskip}
\item[{\rm (i)}] $H \in \C_1$;
\item[{\rm (ii)}] $H$ is a $\C_2$-subgroup of type $O_{n/t}^{\e'}(q)\wr S_t$ and one of the following holds:
\begin{itemize}\addtolength{\itemsep}{0.2\baselineskip}
\item[{\rm (a)}] $t=2$;
\item[{\rm (b)}] $(n,t,q,\e,\e') = (12,3,2,-,-), (10,5,2,-,-)$ or $(8,4,2,+,-)$;
\item[{\rm (c)}] $n=t$ and either $(n,q) = (7,5)$, or $7 \leqs n \leqs 13$ and $q=3$.
\end{itemize}
\item[{\rm (iii)}] $H$ is a $\C_2$-subgroup of type ${\rm GL}_{n/2}(q)$;
\item[{\rm (iv)}] $H$ is a $\C_3$-subgroup of type $O_{n/2}^{\e'}(q^2)$ or ${\rm GU}_{n/2}(q)$; 
\item[{\rm (v)}] $H$ is a $\C_4$-subgroup of type ${\rm Sp}_{n/2}(q) \otimes {\rm Sp}_{2}(q)$  and $(n,\e) = (12,+)$ or $(8,+)$; 
\item[{\rm (vi)}] $H$ is a $\C_5$-subgroup of type $O_{n}^{\e'}(q_0)$ with $q=q_0^2$;
\item[{\rm (vii)}] $H$ is a $\C_6$-subgroup with $(G,H) = ({\rm P\O}_{8}^{+}(3), 2^6.\O_{6}^{+}(2))$;
\item[{\rm (viii)}] $H$ is a $\C_7$-subgroup with $G=\O_{8}^{+}(q)$, $H={\rm Sp}_{2}(q) \wr S_3$ and $q \leqs 2^7$ is even. 
\end{itemize}
\end{prop}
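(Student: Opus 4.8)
The plan is to follow the template of the previous three subsections, running through the Aschbacher collections $\C_1,\dots,\C_7$ in turn (an orthogonal group has no $\C_8$-subgroups) and, for each candidate $H$, comparing $|H|^3$ with the bounds $\frac{1}{4\b}q^{\frac12 n(n-1)}<|G|<q^{\frac12 n(n-1)}$ of Corollary \ref{l:ord}(iv), where $\b=(2,n)$. Here $|H|$ is read off from the relevant results in \cite[Chapter 4]{KL}, and we adopt the conventions of Remark \ref{r:max} (so in $\C_1$ we take $H$ to be a maximal parabolic $P_i$ or the stabiliser of a non-degenerate subspace, ignoring novelties). The orthogonal order estimates of Lemma \ref{l:ord4}(iii), the elementary factorial bounds of Lemma \ref{l:ord3}, and (for $n\leqs 12$) the data in \cite{BHR} are the main technical inputs.

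The ``generic'' regime is dealt with by crude powers of $q$, exactly as in Lemmas \ref{l:c1}--\ref{l:c8}. Every $\C_1$-subgroup has order at least roughly $q^{\frac12(n-1)(n-2)}$, whose cube exceeds $q^{\frac12 n(n-1)}$ for $n\geqs 7$; this gives part (i). A $\C_2$-subgroup of type $O_{n/2}^{\e'}(q)\wr S_2$ or ${\rm GL}_{n/2}(q)$, and a $\C_3$-subgroup of type $O_{n/2}^{\e'}(q^2)$ or ${\rm GU}_{n/2}(q)$, all have order $\gtrsim q^{\frac14 n^2-\frac12 n}$, which is large for $n\geqs 7$; together with the always-large $\C_5$-subfield subgroups of type $O_n^{\e'}(q_0)$ with $q=q_0^2$, this accounts for parts (ii)(a), (iii), (iv) and (vi). At the other extreme, many-block subgroups are eliminated by Lemma \ref{l:ord3}: a $\C_2$-subgroup of type $O_{n/t}^{\e'}(q)\wr S_t$ with $4\leqs t\leqs n/2$, or a $\C_7$-subgroup with $t\geqs 2$ tensor factors each of dimension $a\geqs 3$, has $|H|^3<|G|$ apart from a short, explicit list of small cases to be checked directly; likewise a $\C_3$-subgroup of degree $k\geqs 5$ and a $\C_5$-subgroup of index $k\geqs 5$ are never large. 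The remaining collections each reduce, via a single comparison, to a short list. In $\C_4$ the largest relevant subgroup is of type ${\rm Sp}_{n/2}(q)\otimes{\rm Sp}_2(q)$, and $|H|^3\geqs|G|$ forces $n^2\leqs 10n+72$, hence $n\in\{8,12\}$ (necessarily with $\e=+$); this gives part (v), the other $\C_4$-subgroups being ruled out the same way. In $\C_6$ we have $n=2^m$ with $m\geqs 3$, $q=p$ odd and $|H|\leqs 2^{2m^2+m+2}$, which against $|G|>\frac18 q^{\frac12 n(n-1)}\geqs\frac18\cdot 3^{\frac12 n(n-1)}$ forces $m=3$ and then $q=3$, leaving the single case $({\rm P\O}_8^+(3),2^6.\O_6^+(2))$ of part (vii). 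In $\C_7$, once $t\geqs 4$ and $a\geqs 3$ are excluded, the only surviving family is $G=\O_8^+(q)>{\rm Sp}_2(q)\wr S_3$ with $q$ even, and comparing $|H|^3\approx 6^3 q^{27}(1-q^{-2})^9$ with $|{\rm P\O}_8^+(q)|$ shows that $H$ is large precisely when $q\leqs 2^7$, giving part (viii).

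The genuinely delicate work, foreshadowed in Remark \ref{r:special}, concerns the near-equalities $|H|^3\approx|G|$: the $\C_2$-subgroups of type $O_{n/3}^{\e'}(q)\wr S_3$, the monomial subgroups $O_1(q)\wr S_n$ (the case $t=n$), and the $\C_5$-subfield subgroups of index $3$. (For degree-$3$ subgroups in $\C_3$ the margin is comfortably $q^{n}$ in favour of $|G|$, so these are easily non-large, which is why no such entry appears.) For the three remaining families crude powers of $q$ do not settle the matter, and one must instead use the sharp inequalities of Lemma \ref{l:ord4}(iii), keep careful track of the $\gcd$-factors relating $|{\rm O}_n^\e(q)|$, $|{\rm SO}_n^\e(q)|$, $|\O_n^\e(q)|$ and $|{\rm P\O}_n^\e(q)|$ (in particular whether $-I\in\O_n^\e(q)$, and the spinor-norm contribution of the permutation part of a monomial element), and note that $O_1(q)\wr S_n$ lies in $\O_n^{\e}(q)$ only for the sign $\e$ determined by $n$ and $q$ modulo $4$. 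Arguing as in Lemmas \ref{l:c3}, \ref{l:c5}, \ref{l:c3_u} and \ref{l:c5_u} one finds that the index-$3$ $\C_5$-subgroups are never large (the leading exponents agree, but the product corrections $\prod(1\mp q^{-i})$ and the $\gcd$-factors always force $|H|^3/|G|<1$). For the two $\C_2$-families the analysis is ultimately arithmetic: after the estimates, only finitely many $(n,q)$ remain open, and a direct check --- using \cite{BHR} in low dimensions and {\sc Magma} \cite{magma} where convenient --- produces the lists in parts (ii)(b) and (ii)(c). A few $q=2$ coincidences among the small wreath products (for instance $O_2^{-}(2)\wr S_2=O_4^+(2)$) require a little extra care about maximality.

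I expect the monomial subgroups to be the main obstacle. Here $|H|^3$ and $|G|$ differ only by a bounded multiplicative factor assembled from $[\,{\rm O}_n^\e(q):\O_n^\e(q)\,]$, the order of $Z(\O_n^\e(q))$, and a product of terms $1-q^{-i}$, so there is essentially no slack: one has to prove, for instance, that $O_1(3)\wr S_n<\O_n^\e(3)$ is large precisely for $7\leqs n\leqs 13$ --- it fails at $n=14$ by the narrowest of margins --- and that $O_1(5)\wr S_n$ is large only for $n=7$. Pinning down these exact cutoffs, rather than the routine disposal of the ``large'' and ``small'' regimes, is where the real care lies.
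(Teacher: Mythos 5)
Your proposal follows the same strategy as the paper: run through the Aschbacher collections $\C_1,\ldots,\C_7$, dispose of the generic cases via the crude $q$-power bounds of Corollary~\ref{l:ord} and Lemma~\ref{l:ord4}(iii), control the wreath-product and tensor collections with the factorial bounds of Lemma~\ref{l:ord3}, and then reduce the delicate borderline families (the $\C_2$-subgroups with $t=3$ or $t=n$, and the index-$3$ subfield subgroups) to a short list of explicit $(n,q)$-cases checked against \cite{KL} and \cite{BHR}. This is exactly what the paper does in Lemmas~\ref{l:c2_o} and~\ref{l:c5_o} and in the analogous (omitted) arguments for the other collections, so your approach and the paper's are essentially identical.
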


\begin{remk}\label{r:k}
By Kleidman \cite{K}, the $\C_6$-subgroup $2^6.\O_{6}^{+}(2)<{\rm P\O}_{8}^{+}(3)$ in case (vii), and also the $\C_2$-subgroup in (ii)(c) with $(n,\e,q)=(8,+,3)$, is non-maximal. Similarly, the $\C_7$-subgroup ${\rm Sp}_{2}(q) \wr S_3 < \O_{8}^{+}(q)$ in (viii) is also non-maximal.
\end{remk}

The proof of Proposition \ref{p:pso} is entirely similar to the proofs in the linear, unitary and symplectic cases. We give details for subgroups in the $\C_2$ and $\C_5$ collections. 

\begin{lem}\label{l:c2_o}
The conclusion to Proposition \ref{p:pso} holds if $H \in \C_2$.
\end{lem}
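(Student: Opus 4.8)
By \cite[Section~4.2]{KL} (and the conditions in column~IV of \cite[Table~3.5.F]{KL}), a geometric subgroup $H\in\C_2$ of $G={\rm P\O}_n^{\e}(q)$ is one of the following: (a) an \emph{imprimitive} subgroup of type $O_m^{\e'}(q)\wr S_t$, the stabiliser of an orthogonal decomposition $V=V_1\perp\cdots\perp V_t$ with $\dim V_i=m$ and $mt=n$ (the case $m=1$, where $O_1(q)=\langle-1\rangle$ and so $n=t$, requires $q$ odd); or (b) a subgroup of type ${\rm GL}_{n/2}(q)$, the stabiliser of a pair of complementary maximal totally singular subspaces (so $n$ is even and $\e=+$). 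The plan is to treat (b) first: here \cite[Section~4.2]{KL} gives $|H|$ as $|{\rm GL}_{n/2}(q)|$ times a factor lying between two absolute constants, so Lemma~\ref{l:ord4}(i) yields $|H|>\tfrac12 q^{n^2/4-1}$, and combining this with $|G|<q^{n(n-1)/2}$ from Corollary~\ref{l:ord}(iv)---together with $\tfrac34 n^2-3>\tfrac12 n(n-1)$, valid for all $n\geqs 7$---we get $|H|^3>q^{3n^2/4-3}>|G|$. So every such subgroup is large, as recorded in Proposition~\ref{p:pso}(iii).

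Now suppose $H$ is imprimitive of type $O_m^{\e'}(q)\wr S_t$ with $t\geqs 2$. From \cite[Section~4.2]{KL} we have $|H|\leqs|O_m^{\e'}(q)|^t\,t!$, and when $m\geqs 2$ also $|H|\geqs c_0\,|{\rm SO}_m^{\e'}(q)|^t\,t!$ for an absolute constant $c_0>0$. By Lemma~\ref{l:ord4}(iii), if $m\geqs 5$ then $\tfrac12 q^{m(m-1)/2}<|{\rm SO}_m^{\e'}(q)|<(2,q)\,q^{m(m-1)/2}\leqs 2q^{m(m-1)/2}$, using that $(1-q^{-2})(1-q^{-4})(1+q^{-m/2})<1$ for $m\geqs 5$; for $m\leqs 4$ one substitutes the exact values of $|{\rm SO}_m^{\e'}(q)|$. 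Together with $|G|>\tfrac18 q^{n(n-1)/2}$ from Corollary~\ref{l:ord}(iv), the condition $|H|^3<|G|$ then reduces, after absorbing bounded multiplicative corrections, to an inequality of the shape
\begin{equation}\label{e:c2o}
C^{t}(t!)^{3}<q^{E},\qquad E=\tfrac12 tm\bigl(m(t-3)+2\bigr),
\end{equation}
with $C$ absolute; the sign and growth rate of $E$ will govern the case analysis.

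If $t=2$ then $m=n/2\geqs 4$ and $E=m(2-m)\leqs 0$, so \eqref{e:c2o} is uninformative; instead I argue directly, using $|H|\geqs c_0|{\rm SO}_m^{\e'}(q)|^2$ and (for $m\geqs 5$) Lemma~\ref{l:ord4}(iii) to obtain $|H|^3>c'q^{3m(m-1)}=c'q^{3n(n-2)/4}$ for an absolute $c'>0$, which exceeds $|G|$ for all $n\geqs 10$ since $\tfrac34 n(n-2)>\tfrac12 n(n-1)$; the one remaining case $m=4$ (that is, $n=8$) is settled by computing $|H|$ exactly. Hence every $\C_2$-subgroup with $t=2$ is large, giving Proposition~\ref{p:pso}(ii)(a). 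Next suppose $t\geqs 4$; since $m=1$ forces $n=t$ (treated below), we may assume $m\geqs 2$, so $n=mt\geqs 2t$. Now $E$ grows quadratically in $t$, and feeding in $(t!)^3<2^{4t(t-3)}$ from Lemma~\ref{l:ord3}(ii) shows that \eqref{e:c2o} holds for all but finitely many triples $(t,m,q)$ with $q$ small; these are checked directly with exact orders---bearing in mind the maximality conditions of \cite[Table~3.5.F]{KL}; cf.\ Remark~\ref{r:k}---and the only large examples are $(n,t,q,\e,\e')=(8,4,2,+,-)$ and $(10,5,2,-,-)$, as in Proposition~\ref{p:pso}(ii)(b). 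Finally, when $m=1$, $t=n$ (so $q$ is odd) we have $|H|\leqs 2^{n}n!$; using $n!<n^n$ and $|G|>\tfrac18 q^{n(n-1)/2}$, one checks $|H|^3<|G|$ once $q$ or $n$ is large enough, and a direct inspection of the finitely many remaining pairs shows that $H$ is large precisely when $(n,q)=(7,5)$ or $q=3$ and $7\leqs n\leqs 13$, as in Proposition~\ref{p:pso}(ii)(c).

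It remains to handle $t=3$, the delicate case flagged in Remark~\ref{r:special}(i): here $E=3m$ is only linear in $m$, so \eqref{e:c2o} does not separate large from non-large subgroups and one must argue with the precise orders. Writing $|H|^3<|G|$ as $|{\rm SO}_m^{\e'}(q)|^9/|{\rm P\O}_{3m}^{\e}(q)|<(\text{an explicit small factor})$ and substituting the bounds of Lemma~\ref{l:ord4}(iii), the left-hand side becomes a decreasing function of $q$ times a bounded quantity, so $H$ is non-large once $q$ exceeds an explicit threshold; since $3m=n\geqs 7$ forces $m\in\{3,4\}$, the finitely many surviving cases (small $m$ and small $q$) are settled by hand, the unique large example being $(n,t,q,\e,\e')=(12,3,2,-,-)$. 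The main obstacle throughout is precisely this $t=3$ analysis, together with the borderline cases $q=2$ with $t\in\{4,5\}$ and $q=2$ with $n=8$, where $|H|^3$ is close to $|G|$ and the outcome genuinely depends on the small parameters, on the Witt signs $\e$ and $\e'$, and on the maximality conditions of \cite[Table~3.5.F]{KL} (in particular the triality phenomena in $\O_8^{+}(q)$; cf.\ \cite{K}); everything else follows from routine comparisons of exponents using Lemma~\ref{l:ord4} and Corollary~\ref{l:ord}.
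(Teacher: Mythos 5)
Your proof follows essentially the same structure as the paper's: the type-${\rm GL}_{n/2}(q)$ subgroups first, then a case analysis of the imprimitive subgroups by $t$, with the delicate cases at $t=3$, at small $q$ with $t\in\{4,5\}$, and at $m=1$. Most of your estimates match the paper's (which uses $|H|>\tfrac14 q^{n(n-2)/4}$ for $t=2$, Lemma~\ref{l:ord3} for large $t$, and direct checks for $m=1$), and your conclusions agree.

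However, your $t=3$ analysis contains a genuine logical slip: you write ``since $3m=n\geqs 7$ forces $m\in\{3,4\}$.'' That implication is false --- $n\geqs 7$ and $n=3m$ only give $m\geqs 3$, and there is no a priori upper bound on $m$ (for instance $n=18$, $m=6$, $q=2$ is a legitimate $\C_2$-subgroup that must be ruled out). The upper bound on $m$ has to come from the quantitative estimate itself: the ratio $|{\rm SO}_m^{\e'}(q)|^9/|{\rm P\O}_{3m}^{\e}(q)|$ decays roughly like $q^{-3m}$, so for each fixed small $q$ it falls below the threshold once $m$ is large. Your phrase ``a decreasing function of $q$ times a bounded quantity'' captures the $q$-dependence but not this $m$-dependence, and the subsequent assertion about $m\in\{3,4\}$ does not recover it. The paper's version is cleaner: from $|H|<12q^{n(n-3)/6}$ together with $|G|>\tfrac18 q^{n(n-1)/2}$, the condition $|H|^3<|G|$ reduces to a single inequality of the form $q^n>c$ for an absolute constant $c$, which eliminates both large $q$ and large $n=3m$ at once, leaving $(n,q)=(12,2)$ as the sole borderline case. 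Your conclusion is correct, but you should replace the incorrect justification with an explicit bound in $m$ (or with the paper's uniform $q^n>c$ estimate).
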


\begin{proof}
If $H$ is of type ${\rm GL}_{n/2}(q)$ (in which case $\e=+$ and $n/2$ is even) then \cite[Proposition 4.2.7]{KL} implies that $|H|>\frac{1}{8}q^{n^2/4}$, so $|H|^3>q^{n(n-1)/2}>|G|$ and thus $H$ is large. For the remainder, let us assume $H$ is of type $O_{n/t}^{\e'}(q) \wr S_t$ (or type $O_{n/2}(q)^2$ with $n/2$ odd). 

If $t=2$ then a combination of \cite[Propositions 4.2.11, 4.2.14, 4.2.16]{KL} yields $|H|>\frac{1}{4}q^{n(n-2)/4}$, and it quickly follows that $H$ is large. 

Next suppose $t=3$. If $n/3$ is odd (in which case $nq$ is odd and $n \geqs 9$) then $|H|<12q^{n(n-3)/6}$, and it is easy to check that $|H|^3<\frac{1}{4}q^{n(n-1)/2}<|G|$, so $H$ is non-large. Similarly, if $n/3$ is even then $|H|<12q^{n(n-3)/6}$, $|G|>\frac{1}{8}q^{n(n-1)/2}$ and we deduce that $H$ is non-large if $(n,q) \neq (12,2)$. In this exceptional case, we have $|H|=24|\O_{4}^{\e}(2)|^3$, and it is easy to check that $H$ is large if and only if $\e=-$. This case is recorded in part (ii)(b) of Proposition \ref{p:pso}.

To complete the proof, we may assume that $t \geqs 4$. First assume $n=t$, in which case $q=p \geqs 3$ (see \cite[Tables 3.5.D -- 3.5.F]{KL}). Then $|H|\leqs 2^{n-1}n!$ and $|G|>\frac{1}{8}q^{n(n-1)/2}$, so $H$ is non-large if $n!<2^{-n}q^{n(n-1)/6}$. It is straightforward to check that this bound holds unless $q=5$ and $n \leqs 8$, or $q=3$ and $n \leqs 14$. By using  \cite[Proposition 4.2.15]{KL} to compute the precise order of $H$, we obtain the list of large subgroups given in part (ii)(c) of Proposition \ref{p:pso}.

Finally, let us assume $t \geqs 4$ and $n \geqs 2t$. From \cite[Propositions 4.2.11 and 4.2.14]{KL} we deduce that $|H|<2^{t-1}q^{n(n/t-1)/2}t!$, so $|H|^3<|G|$ if
\begin{equation}\label{e:tf}
t!<2^{-t}q^{\frac{1}{6}n^2\left(1-\frac{3}{t}\right)+\frac{1}{3}n}.
\end{equation}
Since $n \geqs 2t$ and $q \geqs 2$, it follows that this inequality holds if $t!<2^{t(2t-7)/3}$, which is true if $t \geqs 6$. Finally, if $t=4$ or $5$ then \eqref{e:tf} holds unless $(n,q) = (10,2),(8,3)$ or $(8,2)$. If $(n,q)=(10,2)$ then $\e=-$, $H$ is of type $O_{2}^{-}(2) \wr S_5$ (see \cite[Tables 3.5.E and 3.5.F]{KL}) and it is easy to check that $H$ is large. Similarly, if $n=8$ and $q \leqs 3$ then $\e=+$, $H$ is of type $O_{2}^{\pm}(q) \wr S_4$ and we find that $H$ is large if and only if $G=\O_{8}^{+}(2)$ and $H$ is of type $O_{2}^{-}(2) \wr S_4$. We record these cases in part (ii)(b) of Proposition \ref{p:pso}. 
\end{proof}

\begin{lem}\label{l:c5_o}
The conclusion to Proposition \ref{p:pso} holds if $H \in \C_5$.
\end{lem}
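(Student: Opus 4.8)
The plan is to follow the same template as in the proofs of Lemmas \ref{l:c5} (linear case) and \ref{l:c5_u} (unitary case). Write $q=q_0^k$ with $k$ a prime, so that $H$ is a subfield subgroup of type $O_{n}^{\e'}(q_0)$. By the relevant parts of \cite[Section 4.5]{KL} we have
$$|{\rm P\O}_{n}^{\e'}(q_0)| \leqs |H| \leqs c\,|{\rm P\O}_{n}^{\e'}(q_0)|,$$
where $c$ is an explicit constant dividing $4$; its exact value, and the precise structure of $H$, depends on the parities of $q_0$ and $n$ and on the sign $\e'$, but these distinctions are harmless for the estimates below. Throughout I use Corollary \ref{l:ord}(iv), which gives $\frac{1}{4\b}q^{\frac{1}{2}n(n-1)} < |G| < q^{\frac{1}{2}n(n-1)}$ with $\b=(2,n)$, together with the sharper bounds of Lemma \ref{l:ord4}(iii).

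First suppose $k\geqs 5$. Then $|H|\leqs c\,|{\rm SO}_{n}^{\e'}(q_0)| < q_0^{\frac{1}{2}n(n-1)+2}$, so $|H|^3 < q_0^{\frac{3}{2}n(n-1)+6}\leqs q^{\frac{1}{5}(\frac{3}{2}n(n-1)+6)}$, and a comparison of exponents shows this is smaller than $q^{\frac{1}{2}n(n-1)-1} < \frac{1}{8}q^{\frac{1}{2}n(n-1)} < |G|$ for all $n\geqs 7$ (recall $q=q_0^k\geqs 32$); hence $H$ is non-large. Next suppose $k=2$. Then $|H|\geqs |{\rm P\O}_{n}^{\e'}(q_0)| > \frac{1}{4\b}q_0^{\frac{1}{2}n(n-1)}$, so $|H|^3 > (4\b)^{-3}q^{\frac{3}{4}n(n-1)}$; since $q=q_0^2\geqs 4$ and $n\geqs 7$, the factor $q^{\frac{1}{4}n(n-1)}$ dwarfs $(4\b)^3\leqs 512$, and therefore $|H|^3 > q^{\frac{1}{2}n(n-1)} > |G|$, so $H$ is large, as required.

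The remaining case $k=3$ is the crux. Here $|H|^3$ and $|G|$ both have leading term $q^{\frac{1}{2}n(n-1)}=q_0^{\frac{3}{2}n(n-1)}$, so crude bounds no longer suffice and one must show that the sub-leading factors force $|H|^3<|G|$. Using the exact order formulas of \cite[Section 4.5]{KL} together with Lemma \ref{l:ord4}(iii), I would reduce the inequality $|H|^3<|G|$ to an explicit estimate of the shape
$$\frac{c^3}{(2,q_0-1)^3|Z_0|^3}\left(\prod_{i}(1 - q_0^{-2i})(1+q_0^{-n/2})\right)^{3} \;<\; \frac{1}{\b\,(2,q-1)\,|Z|}\prod_{i}(1 - q^{-2i})(1-q^{-n/2}),$$
with the obvious simplifications in odd dimension (where the $(1\mp q^{-n/2})$ factors disappear and one works with $\O_{n}^{\circ}$). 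Since $q=q_0^3$, the right-hand product is extremely close to $1$ whereas the cube of the left-hand product is bounded away from it (for instance $\prod_{i\geqs 1}(1-4^{-i})^3 < 1/3$), and the ratio of the remaining integer factors is $O(1)$; so the inequality holds with room to spare for all $q_0\geqs 2$ and $n\geqs 7$, with possibly the smallest pairs $(q_0,n)$ (e.g.\ $q_0=2$, $n\in\{7,8\}$) checked by a direct computation of $|H|$ and $|G|$. The upshot is that no $\C_5$-subgroup with $k=3$ is large, which matches Proposition \ref{p:pso}(vi).

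The main obstacle is precisely this $k=3$ analysis: because the two sides agree to leading order, the argument is sensitive to the exact index of $H$ in ${\rm P\O}_{n}^{\e'}(q_0)$ and to the precise constants relating $|{\rm SO}|$, $|{\rm P\O}|$ and $|H|$. The care required is analogous to the degree-three subfield analysis in Lemmas \ref{l:c5} and \ref{l:c5_u}; the essential difference is that the diagonal and outer factors of the orthogonal groups are bounded independently of $n$ and $q$, which is why — unlike in the linear and unitary cases — there are no large degree-three subfield subgroups at all.
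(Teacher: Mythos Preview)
Your approach is essentially the paper's: crude bounds for $k=2$ and $k\geqs 5$, and a comparison of sub-leading factors for $k=3$. Your closing remark that the orthogonal case is easier than the linear and unitary cases because the outer/diagonal factors are bounded independently of $n$ and $q$ is exactly the point, and it is correct.

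That said, your $k=3$ analysis is a sketch rather than a proof. You write down a schematic inequality involving unspecified constants $c,|Z|,|Z_0|$ and then argue heuristically; the claim that ``the ratio of the remaining integer factors is $O(1)$'' does not by itself show the inequality goes the right way. The paper's execution is cleaner and avoids this vagueness: using \cite[Propositions 4.5.8, 4.5.10]{KL} one has, for instance when $nq$ is odd, $|H|=|\O_n(q_0)|$ exactly, and Lemma~\ref{l:ord4}(iii) gives
\[
|H| < \tfrac{1}{2}q_0^{n(n-1)/2}(1-q_0^{-2})(1-q_0^{-4}),\qquad |G|>\tfrac{1}{2}q^{n(n-1)/2}(1-q^{-2}-q^{-4}).
\]
Cubing the first and dividing by the second, the leading powers cancel and the residual factor $(1/2)^3/(1/2)=1/4$ makes $|H|^3<|G|$ immediate; the even-$n$ cases go the same way with the obvious $(1\mp q^{-n/2})$ adjustments. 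No separate small-case computations are needed. If you replace your schematic inequality with these explicit bounds, your argument becomes complete.
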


\begin{proof}
Let $H$ be a $\C_5$-subgroup of type $O_{n}^{\e'}(q_0)$, where $q=q_0^k$ for a prime $k$. If $k=2$ then $|H|>\frac{1}{4}q^{n(n-1)/4}$ (see \cite[Propositions 4.5.8, 4.5.10]{KL}) and thus $|H|^3>q^{n(n-1)/2}>|G|$. Similarly, if $k \geqs 5$ then $|H|<q^{n(n-1)/10}$ and thus $H$ is non-large since $|H|^3<\frac{1}{8}q^{n(n-1)/2}<|G|$. 

To complete the proof, we may assume that $k=3$. Suppose $nq$ is odd. Then
$$|H|=|\O_{n}(q_0)| <\frac{1}{2}q_0^{n(n-1)/2}(1-q_0^{-2})(1-q_0^{-4}),\;\; |G|>\frac{1}{2}q^{n(n-1)/2}(1-q^{-2}-q^{-4})$$
(see Lemma \ref{l:ord4}(iii)) and we quickly deduce that $H$ is non-large. The remaining cases are similar. For example, suppose $n$ is even, $\e=+$ and $q$ is odd. Then \cite[Proposition 4.5.10]{KL} implies that
$$|H|<\frac{1}{2}q_0^{n(n-1)/2}(1-q_0^{-2})(1-q_0^{-4}),\;\; |G|>\frac{1}{4}q^{n(n-1)/2}(1-q^{-2}-q^{-4})(1-q^{-n/2})$$
and once again we deduce that $H$ is non-large.
\end{proof}

\subsection{Almost simple irreducible subgroups}\label{ss:nongeom}

Let $G$ be a finite simple classical group over $\F$ with natural module $V$. Write $q=p^a$ with $p$ a prime, and set $n=\dim V$. Recall that Aschbacher's main theorem in \cite{asch} states that if $H$ is a maximal subgroup of $G$ then either $H$ belongs to one of the geometric $\C_i$ collections, or $H$ is almost simple with socle $H_0$, and the full covering group of $H_0$ acts absolutely irreducibly on $V$. We write $\mathcal{S}(G)$ to denote the latter collection of subgroups, and in this section we determine the large subgroups in $\mathcal{S}(G)$. The following definition, taken from \cite[p.3]{KL}, gives a more detailed description of the subgroups in $\mathcal{S}(G)$ (note that conditions (ii) -- (vii) ensure that a subgroup in $\mathcal{S}(G)$ is not contained in one of the geometric $\C_i$ collections).

\begin{defn}\label{sdef}
Let $G$ be a finite simple classical group over $\F$ with natural module $V$.
A subgroup $H$ of $G$ is in the collection $\mathcal{S}(G)$ if and only if the following hold:
\begin{itemize}\addtolength{\itemsep}{0.2\baselineskip}
\item[(i)] The socle $H_{0}$ of $H$ is a nonabelian simple group, and $H_0 \not\cong G$.
\item[(ii)] If $\what{H}_{0}$ is the full covering group of $H_{0}$, and if $\rho : \what{H}_{0} \to {\rm GL}(V)$ is a representation of $\what{H}_{0}$ such that $\rho(\what{H}_{0})=H_{0}$ (modulo scalars) then $\rho$ is absolutely irreducible.
\item[(iii)] $\rho(\what{H}_{0})$ cannot be realized over a proper subfield of $\mathbb{F}$, where $\mathbb{F}=\mathbb{F}_{q^{2}}$ if  $G$ is unitary, otherwise $\mathbb{F}=\F$.
\item[(iv)] If $\rho(\what{H}_{0})$ fixes a non-degenerate quadratic form on $V$ then $G={\rm P\O}_{n}^{\e}(q)$.
\item[(v)] If $\rho(\what{H}_{0})$ fixes a non-degenerate symplectic form on $V$, but no non-degenerate quadratic form, then $G={\rm PSp}_{n}(q)$.
\item[(vi)] If $\rho(\what{H}_{0})$ fixes a non-degenerate unitary form on $V$ then $G={\rm U}_{n}(q)$.
\item[(vii)] If $\rho(\what{H}_{0})$ does not satisfy the conditions in (iv), (v) or (vi) then $G={\rm L}_{n}(q)$.
\end{itemize}
\end{defn}

Our main result is the following:

\begin{prop}\label{p:scoll}
Let $G$ be a finite simple classical group, and let $H$ be a maximal subgroup of $G$ in the collection $\mathcal{S}(G)$. Then $H$ is large if and only if $(G,H)$ is one of the cases recorded in Table \ref{tab:sg}.
\end{prop}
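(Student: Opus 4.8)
The plan is to combine Liebeck's bound (Theorem~\ref{lieb}) with the uniform estimate $|G| > \frac{1}{8}q^{\frac{1}{2}n(n-1)}$ from Corollary~\ref{l:ord} to reduce at once to classical groups of small dimension, and then to work through the known lists of $\mathcal{S}$-subgroups in low dimensions.

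First I would dispose of the exceptional configuration in Theorem~\ref{lieb}, in which $H_0 = A_m$ and $V$ is the fully deleted permutation module for $H_0$ (so that $n \in \{m-1,m-2\}$, and $G$ is a symplectic or orthogonal group according to $p$). Here $|H| \leqs 2\cdot m!$, whereas $|G| > \frac{1}{8}q^{\frac{1}{2}n(n-1)} \geqs \frac{1}{8}\cdot 2^{\frac{1}{2}(m-2)(m-3)}$, and the factorial bounds of Lemma~\ref{l:ord3} show that $|H|^3 < |G|$ once $m$ is large. The finitely many remaining values of $m$ are settled by direct computation, and these account for the alternating-group rows of Table~\ref{tab:sg} (for instance $A_7, A_8 < {\rm Sp}_{6}(2)$).

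In the remaining (generic) case Theorem~\ref{lieb} gives $|H| < q^{3n}$, so if $H$ is large then
$$q^{9n} > |H|^3 \geqs |G| > \tfrac{1}{8}q^{\frac{1}{2}n(n-1)};$$
since $q \geqs 2$ we have $q^3 \geqs 8$, and this forces $n^2 - 19n - 6 < 0$, hence $n \leqs 19$. It then remains to examine the maximal subgroups in $\mathcal{S}(G)$ when $G$ is a classical group of dimension at most $19$. For $n \leqs 12$ a complete list of such subgroups is available in \cite{BHR}: for each one reads off $|H|$ and compares $|H|^3$ with $|G|$ --- routine but lengthy, with a handful of borderline cases (where $|H|^3 \approx |G|$) requiring exact arithmetic. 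For $13 \leqs n \leqs 19$ the socle $H_0$ must admit a quasisimple cover with an absolutely irreducible representation of dimension exactly $n$; the tables of L\"{u}beck \cite{Lu} (defining characteristic) and Hiss--Malle \cite{HM} (cross characteristic), supplemented by the Atlas for the finitely many small exceptions, give a short list of possibilities for $H_0$, and a direct order comparison settles largeness in each case.

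The main obstacle is this low-dimensional analysis. When $H_0 \in {\rm Lie}(p)$ the order $|H|$ is a polynomial in $q$ whose degree can be comparable to that of $|G|$, so largeness genuinely depends on the arithmetic of the various parameters, and the borderline cases must be resolved by exact computation rather than by crude estimates. By contrast, when $H_0 \not\in {\rm Lie}(p)$ the order $|H|$ is bounded independently of $q$, so $H$ is large only for finitely many small values of $q$, which are readily enumerated. Collecting the surviving cases yields exactly the list recorded in Table~\ref{tab:sg}.
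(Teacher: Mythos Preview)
Your overall strategy is the same as the paper's: deal with the fully-deleted permutation module case $\mathcal{A}$ directly, use Theorem~\ref{lieb} together with the lower bound on $|G|$ to reduce to small $n$, and then run through the low-dimensional possibilities via \cite{BHR}, \cite{Lu} and \cite{HM}. The paper further partitions the non-$\mathcal{A}$ subgroups according to whether $H_0 \in {\rm Lie}(p)$, which streamlines the case analysis, but this is a matter of bookkeeping rather than a different idea.

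There is one slip you should fix. Theorem~\ref{lieb} gives $|H| < q^{3un}$ with $u=2$ when $G = {\rm U}_n(q)$, not $|H| < q^{3n}$. Feeding $|H| < q^{6n}$ into your uniform lower bound $|G| > \frac{1}{8}q^{n(n-1)/2}$ (which is very slack for linear and unitary groups, whose order is roughly $q^{n^2}$) would only yield $n \leqs 37$, far too large for the remaining case-check. You need to treat the unitary case separately, using the sharper estimate $|G| > (1-q^{-1})q^{n^2-2}$ from Corollary~\ref{l:ord}(ii); then $q^{18n} > |H|^3 \geqs |G|$ forces $n \leqs 19$, as in the paper's Corollary~\ref{c:1}.

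A minor factual point: your illustrative example ``$A_7, A_8 < {\rm Sp}_6(2)$'' is wrong. Neither appears in Table~\ref{tab:sg}. For $p=2$ and small $d$ the $\mathcal{A}$-collection lands in orthogonal groups (e.g.\ $A_7 < \O_6^{+}(2) \cong {\rm L}_4(2)$, recorded in Table~\ref{tab:sg} as $({\rm L}_4(2),A_7)$), and $A_8 \cong \O_6^{+}(2)$ sits inside ${\rm Sp}_6(2)$ as a $\C_8$-subgroup, not an $\mathcal{S}$-subgroup.
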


\begin{table}
$$\begin{array}{lll} \hline
G & H &  \mbox{Conditions} \\ \hline
\O_{22}^{+}(2) &  A_{24}   & \\
{\rm Sp}_{20}(2) & S_{22}  & \\
\O_{20}^{-}(2) & A_{21}  & \\
\O_{18}^{-}(2) & A_{20}  & \\
{\rm Sp}_{16}(2) & S_{18}  & \\
\O_{16}^{+}(2) & A_{17}  & \\
\O_{14}^{+}(2) & A_{16}  & \\
{\rm Sp}_{12}(2) & S_{14}  & \\
\O_{12}^{-}(2) & A_{13}  & \\
{\rm P\O}_{10}^{+}(3) & A_{12}  & \\
\O_{10}^{-}(2) & A_{12}  & \\
\O_9(3) & A_{10}  & \\
{\rm Sp}_{8}(2) & S_{10}  & \\
{\rm P\O}_{8}^{+}(q) & \O_7(q)  & p \neq 2 \\
& {\rm Sp}_{6}(q)  & p=2 \\
& {\rm P\O}_{8}^{-}(q_0)  & q=q_0^2 \\
& {}^3D_4(q_0)  & q=q_0^3, \, p \neq 2 \\
& \O_{8}^{+}(2)  & q=3,5,7 \\
 & A_{9}  & q=2 \\
\O_7(q) & G_2(q)  &  \\
& {\rm Sp}_{6}(2)  & q=3,5,7 \\
 & S_9  & q=3 \\
{\rm U}_{6}(q) & {\rm U}_{4}(3).2  & q=2 \\
& {\rm M}_{22}  & q=2 \\
{\rm PSp}_{6}(q) & G_2(q)  & p = 2 \\
& {\rm U}_{3}(3).2  & q=2 \\
& {\rm J}_{2}  & q=5 \\
{\rm L}_{5}(q) & {\rm M}_{11}  & q=3 \\
{\rm U}_{5}(q) & {\rm L}_{2}(11)  & q=2 \\
{\rm L}_{4}(q) & A_7  & q=2 \\
& {\rm U}_{4}(2)  & q=7 \\
{\rm U}_{4}(q) & A_7  & q=3,5 \\
& {\rm L}_{3}(4)  & q=3 \\
& {\rm U}_{4}(2)  & q=5 \\
{\rm PSp}_{4}(q)' & {}^2B_2(q)  & \mbox{$p=2$, $\log_2q>1$ odd} \\ 
& A_7 & q= 7 \\
& A_6  & q=5 \\
& A_5  & q=2 \\
{\rm L}_{3}(q) & A_6  & q= 4 \\
{\rm U}_{3}(q) & A_7  & q=5 \\
& {\rm M}_{10}  & q=5 \\
& {\rm L}_{2}(7)  & q=3 \\
{\rm L}_{2}(q) & A_5  & \mbox{See Remark \ref{r:cond}} \\ \hline 
\end{array}$$
\caption{The large maximal subgroups in $\mathcal{S}(G)$}
\label{tab:sg}
\end{table}

\begin{remk}\label{r:cond}
For the case $(G,H) = ({\rm L}_{2}(q),A_5)$ in Table \ref{tab:sg}, we require 
$$q \in \{9,11,19,29,31,41,49,59,61,71\}.$$
\end{remk}

We start by recalling a special case of the main theorem of \cite{L}. The subcollection 
$\mathcal{A} \subseteq \mathcal{S}(G)$ is defined in Table \ref{atab}; here $H_0=A_d$ is an alternating group and $V$ is the fully deleted permutation module for $A_d$ over $\mathbb{F}_{p}$ (see \cite[pp.185--187]{KL}). Also recall the lower bound
\begin{equation}\label{e:lbd}
|G|>\frac{1}{8}q^{\frac{1}{2}n(n-1)}
\end{equation}
given in Corollary \ref{l:ord}.

\begin{table}
$$\begin{array}{lll} \hline
 d & p & \hspace{5mm} G \\ \hline
 d \equiv 2 \imod{4} & 2 & \hspace{5mm} {\rm Sp}_{d-2}(2)\\
 d \equiv 0 \imod{4} & 2 & \left\{ \begin{array}{ll} \O_{d-2}^{+}(2) & \hspace{2.8mm} \mbox{if $d \equiv 0 \imod{8}$}\\ \O_{d-2}^{-}(2) & \hspace{2.8mm} \mbox{if $d \equiv 4 \imod{8}$} \end{array} \right. \\
 \mbox{odd} & 2 & \left\{ \begin{array}{ll} \O_{d-1}^{+}(2) & \hspace{2.8mm} \mbox{if $d \equiv \pm 1 \imod{8}$}\\ 
\O_{d-1}^{-}(2) & \hspace{2.8mm} \mbox{if $d \equiv \pm 3 \imod{8}$} 
\end{array}\right. \\ 
\mbox{arbitrary} & \mbox{odd} & \left\{ \begin{array}{ll}
{\rm P\O}_{d-1}^{\e}(p) & \mbox{if $(d,p)=1$}\\
{\rm P\O}_{d-2}^{\e}(p) & \mbox{otherwise} \end{array} \right. \\ \hline
\end{array}$$
\caption{The collection $\mathcal{A}$ from Theorem \ref{lieb} (with $H_{0}=A_{d}$)}
\label{atab}
\end{table}

\begin{thm}\label{lieb}
If $H \in \mathcal{S}(G) \setminus \mathcal{A}$ then $|H|<q^{3un}$, where $u=2$ if $G = {\rm U}_{n}(q)$, otherwise $u=1$.
\end{thm}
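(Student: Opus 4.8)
The plan is to deduce Theorem~\ref{lieb} directly from the main theorem of Liebeck~\cite{L}, of which it is essentially a restatement. That theorem shows that a maximal subgroup $H$ of a finite simple classical group $G$ with natural module $V$ of dimension $n$ either lies in one of the geometric families $\mathcal{C}_i$, or is almost simple with socle $H_0$ such that $\what{H}_0$ acts absolutely irreducibly on $V$; in the latter case $|H| < q_V^{3n}$ unless $H_0 \cong A_d$ and $V$ is the fully deleted permutation module for $A_d$ over the prime field. Here $q_V$ denotes the order of the field of definition of $V$, so that $q_V = q$ when $G$ is linear, symplectic or orthogonal and $q_V = q^2$ when $G$ is unitary; with $u$ as in the statement this reads $q_V^{3n} = q^{3un}$. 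Since $\mathcal{S}(G)$ is by definition (see Definition~\ref{sdef}) the collection of almost simple irreducible maximal subgroups of $G$ not contained in any $\mathcal{C}_i$, the excluded second alternative is precisely $H \in \mathcal{A}$, and the theorem follows.

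Concretely I would first extract the precise bound from~\cite{L} and rephrase it in the conventions of~\cite{KL} adopted here. The one point requiring genuine care is the field normalisation in the unitary case: the bound of~\cite{L} is stated relative to $q_V$, and for $G = {\rm U}_n(q) = {\rm PSU}_n(q)$ one has $q_V = q^2$, so the correct exponent is $3un = 6n$ rather than $3n$ — conflating $q$ with $q^2$ here is the obvious trap. I would also confirm against~\cite{L} that, apart from the alternating-group family, there are no subgroups $H \in \mathcal{S}(G)$ with $|H| \geqs q_V^{3n}$, so that no further small cases need separate treatment. Matching the data in Table~\ref{atab} — dimension $d-1$ or $d-2$ according as $p \nmid d$ or $p \mid d$, and type $\pm$ or $\circ$ according to $d$ modulo $8$ in characteristic $2$, modulo $4$ (together with the discriminant) otherwise — with the structure of the fully deleted permutation module is routine and is recorded in~\cite[pp.~185--187]{KL}.

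An alternative, self-contained route would be to combine the Landazuri--Seitz--Zalesskii lower bounds on the minimal dimension of a faithful absolutely irreducible representation of a quasisimple group with the elementary estimates $|H| \leqs |{\rm Out}(H_0)|\,|H_0|$ and $n \geqs R_p(H_0)$, treating the alternating, sporadic and Lie-type possibilities for $H_0$ in turn; but this essentially reproves Liebeck's theorem, so I would not pursue it. The only real obstacle is the bookkeeping around the unitary field and checking that the exceptional family in~\cite{L} is captured exactly by Table~\ref{atab}.
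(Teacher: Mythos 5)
Your proposal matches the paper's proof exactly: the theorem is cited as a special case of \cite[Theorem 4.1]{L}, and the only subtlety worth flagging is the one you identify, namely that the bound in \cite{L} is phrased in terms of the field of definition $q_V$ of $V$, which equals $q^2$ in the unitary case and accounts for $u=2$. The remarks about matching the exceptional family with Table~\ref{atab} and the \cite{KL} conventions are the right checks, and the paper leaves them implicit.
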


\begin{proof}
This is a special case of \cite[Theorem 4.1]{L}.
\end{proof}

\begin{cor}\label{c:1}
If $n \geqslant 23$ then there are no large subgroups in $\mathcal{S}(G)$.
\end{cor}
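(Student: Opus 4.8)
The plan is to distinguish the two cases $H \in \mathcal{S}(G) \setminus \mathcal{A}$ and $H \in \mathcal{A}$, using Theorem \ref{lieb} to control $|H|$ in the former and the explicit list in Table \ref{atab} in the latter.

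Suppose first that $H \in \mathcal{S}(G) \setminus \mathcal{A}$. Then Theorem \ref{lieb} gives $|H| < q^{3un}$, so $|H|^{3} < q^{9un}$, and it is enough to verify $q^{9un} \leqslant |G|$. If $G$ is not unitary then $u = 1$, and combining with \eqref{e:lbd} reduces this to the inequality $8 \leqslant q^{n(n-19)/2}$, which holds comfortably whenever $n \geqslant 23$ since $q \geqslant 2$. If $G = {\rm U}_{n}(q)$ then $u = 2$, and \eqref{e:lbd} is too weak to absorb the term $q^{18n}$; here I would instead appeal to the sharper bound $|{\rm U}_{n}(q)| > (1-q^{-1})q^{n^{2}-2} \geqslant \frac{1}{2}q^{n^{2}-2}$ from Corollary \ref{l:ord}(ii), which reduces matters to $2 \leqslant q^{\,n^{2}-18n-2}$, again evident for $n \geqslant 23$.

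Now suppose $H \in \mathcal{A}$, so the socle of $H$ is an alternating group $A_{d}$ and $V$ is the fully deleted permutation module for $A_{d}$ over $\mathbb{F}_{p}$. Since this module is realized over the prime field, Table \ref{atab} (equivalently, condition (iii) of Definition \ref{sdef}) gives $q = p$, and moreover $n = \dim V \in \{d-1,d-2\}$, so $d \leqslant n+2$. As $d \geqslant n+1 \geqslant 24$, the group $H$ is either $A_{d}$ or $S_{d}$, whence $|H| \leqslant d! \leqslant (n+2)!$. If $p$ is odd then \eqref{e:lbd} gives $|G| > \frac{1}{8}\cdot 3^{n(n-1)/2}$, which comfortably exceeds $((n+2)!)^{3}$ for all $n \geqslant 23$. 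If $p = 2$ then Table \ref{atab} shows that $G$ is symplectic or orthogonal over $\mathbb{F}_{2}$, so $n$ is even and therefore $n \geqslant 24$; combining \eqref{e:lbd} with a Stirling-type bound on $(n+2)!$ (the elementary inequalities of Lemma \ref{l:ord3} are not sharp enough here) one checks that $((n+2)!)^{3} < \frac{1}{8}\cdot 2^{n(n-1)/2} \leqslant |G|$ at $n = 24$, and the margin only widens for $n > 24$, since the exponent on the right grows quadratically in $n$ while $3\log_{2}((n+2)!)$ grows like $3n\log_{2}n$.

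The main obstacle is precisely this $p = 2$ subcase of $H \in \mathcal{A}$: it sits essentially on the boundary $|H|^{3} \approx |G|$ for $n$ in the low twenties — indeed, Table \ref{tab:sg} records $A_{24} < \O_{22}^{+}(2)$ as \emph{large} — so the argument genuinely needs both the parity observation that forces $n$ to be even (hence $\geqslant 24$) when $q = 2$, and a factorial estimate tighter than anything proved earlier in the paper. The unitary case is a minor complication of the same flavour, resolved by using the precise order bound in place of \eqref{e:lbd}. All remaining steps are routine estimates with the order formulae of Corollary \ref{l:ord} and Lemma \ref{l:ord4}.
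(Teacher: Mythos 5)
Your proposal is correct and follows essentially the same route as the paper: split $\mathcal{S}(G)$ into $\mathcal{A}$ and its complement, use Theorem \ref{lieb} together with the order bounds of Corollary \ref{l:ord} for the latter (invoking the sharper bound $|{\rm U}_n(q)| > (1-q^{-1})q^{n^2-2}$ in the unitary case), and treat $\mathcal{A}$ by bounding $d!$ against $|G|$, separating $p$ odd from $p=2$. The only departure is organizational: where the paper's proof of Corollary \ref{c:1} splits the $p=2$ case by the residue of $d$ modulo $4$, you instead observe that $n$ is necessarily even over $\mathbb{F}_2$ (hence $n \geqslant 24$) and then verify a single factorial estimate at $n=24$, which is a slightly tidier route to the same conclusion.
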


\begin{proof}
First assume $H \in \mathcal{S}(G) \setminus \mathcal{A}$. If $G = {\rm U}_{n}(q)$ then Theorem \ref{lieb} implies that $|H|<q^{6n}$ and it is easy to check that $|H|^3<(1-q^{-1})q^{n^2-2}<|G|$ if $n \geqslant 20$. Similarly, if $G \neq {\rm U}_{n}(q)$ then $|H|<q^{3n}$ and by combining this bound with \eqref{e:lbd} we deduce that $|H|^3<|G|$ if $n \geqslant 20$. Now assume $H \in \mathcal{A}$, so $H_0 = A_d$ and $n \in \{d-2,d-1\}$. If $p$ is odd then it is straightforward to check that 
\begin{equation}\label{e:1}
|H|^3 \leqslant |S_d|^3 = (d!)^3< \frac{1}{8}p^{\frac{1}{2}(d-2)(d-3)}<|G|
\end{equation}
for all $d \geqslant 16$, so we may assume $p=2$. If $d \equiv 0 \imod{4}$ then $n=d-2$  and $(d!)^3<2^{(d-2)(d-3)/2-1}$ for all $d \geqslant 28$. Similarly, if $d \not\equiv 0 \imod{4}$ then $|G|>2^{(d-1)(d-2)/2-1}$ (note that $G={\rm Sp}_{d-2}(2)$ if $d \equiv 2 \imod{4}$) and the result quickly follows. 
\end{proof}

For the remainder we may assume that $n \leqslant 22$. First we classify the large  maximal subgroups in the subcollection $\mathcal{A}$.

\begin{prop}\label{p:ad}
Suppose $H \in \mathcal{A}$ is a maximal subgroup of $G$. Then $H$ is large if and only if $(G,H)$ is one of the cases recorded in Table \ref{tab:ad}.
\end{prop}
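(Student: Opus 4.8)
The plan is to reduce the analysis to a finite, explicit computation. Since $H \in \mathcal{A}$, the socle $H_0 = A_d$ is alternating and $V$ is the fully deleted permutation module of dimension $n \in \{d-1, d-2\}$ over $\mathbb{F}_p$; by Corollary \ref{c:1} we already know $n \leqslant 22$, so only finitely many pairs $(d,p)$ survive, and for each of them the group $G$ is determined by Table \ref{atab}. The first step is therefore to tabulate, for each $d$ with $d-2 \leqslant 22$ (so $d \leqslant 24$) and each relevant prime $p$, the precise identity of $G$, of $V$, and of $H$ (which is $S_d \cap G$ or $A_d$, up to the usual subtleties about which almost simple extension actually embeds in the simple classical group — this is governed by whether the module extends to $S_d$ and by the determinant/spinor-norm considerations, and can be read off from \cite[pp.185--187]{KL}).

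The second step is to compare orders. For each candidate one computes $|H|$ (a divisor of $d!$, with the exact value depending on the extension realised inside $G$) and $|G|$ (from the order formulae in \cite[pp.170--171]{KL}, or the lower bounds of Corollary \ref{l:ord}), and checks the inequality $|H|^3 \geqslant |G|$. In practice one first uses the crude bound $|H|^3 \leqslant (d!)^3$ against the lower bound $|G| > \tfrac18 q^{n(n-1)/2}$ to eliminate all but the largest few values of $d$ in each characteristic — essentially the same factorial-versus-power-of-$q$ estimate used in the proof of Corollary \ref{c:1}, but now pushed down into the range $n \leqslant 22$ where it is no longer decisive. For the handful of surviving borderline cases (the ones with $d$ close to the cutoff, e.g. $A_{24}$ in $\O_{22}^+(2)$, $S_{22}$ in ${\rm Sp}_{20}(2)$, and their smaller analogues down to $A_5$ in ${\rm L}_2(q)$ and $S_{10}$ in ${\rm Sp}_8(2)$) one simply evaluates $|H|^3$ and $|G|$ exactly and records those that satisfy the bound; these are exactly the entries that will appear in Table \ref{tab:ad}.

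A subtlety worth flagging: when $p \mid d$ the deleted permutation module has dimension $d-2$ rather than $d-1$, and moreover in characteristic $2$ with $d$ even this module carries a quadratic form, so $G$ is orthogonal and the type ($+$ or $-$) depends on $d \bmod 8$; in characteristic $2$ with $d$ odd or $d \equiv 2 \pmod 4$ one lands in $\O_{d-1}^{\pm}(2)$ or ${\rm Sp}_{d-2}(2)$ respectively. One must also check maximality of $H$ in $G$: occasionally $A_d$ or $S_d$ acting on the deleted module is contained in a larger classical-type subgroup (for instance via a further isomorphism of small groups), and such non-maximal cases must be discarded — this accounts for why the table lists specific small values rather than a clean congruence range.

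The main obstacle is not any single hard estimate but rather the bookkeeping: correctly identifying, for every small $(d,p)$, which version of the almost simple group $A_d \leqslant G \leqslant S_d$ genuinely sits inside the simple classical group $G$ (as opposed to its conjugate in ${\rm PGL}$ or the full similarity group), since the factor of $2$ between $A_d$ and $S_d$ is exactly what decides largeness in the tightest cases (e.g. distinguishing when $S_d \cap G = A_d$ versus $S_d$). This is where one leans most heavily on the detailed module-theoretic information in \cite[pp.185--187]{KL} and, for the very small-dimensional cases, on \cite{BHR}; the inequality $|H|^3 \geqslant |G|$ itself is then routine arithmetic.
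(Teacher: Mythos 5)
Your proposal is correct and follows essentially the same route as the paper: reduce to finitely many pairs $(d,p)$ via Corollary \ref{c:1}, eliminate most of them with a factorial-versus-$q$-power estimate (the paper's inequality \eqref{e:1} and its characteristic-$2$ analogues), then settle the borderline cases by exact order computations while attending to the module-theoretic identification of $G$ and to maximality of $A_d$ or $S_d$ inside $G$. The subtleties you flag — $\dim V = d-2$ when $p \mid d$, the $\pm$ type of the orthogonal form in characteristic $2$ depending on $d \bmod 8$, and non-maximality in small cases — are precisely the points the paper handles case by case.
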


\begin{table}
$$\begin{array}{ll} \hline
 d & (G,H) \\ \hline
 5 & ({\rm P\O}_{4}^{-}(3), A_5), \;  ({\rm P\O}_{4}^{-}(7), A_5) \\
 6 & (\O_5(5),A_6) \\
 7 & (\O_5(7),A_7),\; (\O_{6}^{+}(2),A_7),\; ({\rm P\O}_{6}^{-}(3),A_7),\; ({\rm P\O}_{6}^{-}(5),A_7) \\
 9 & (\O_7(3),S_9),\; (\O_{8}^{+}(2),A_9) \\
 10 & ({\rm Sp}_{8}(2),S_{10}),\; (\O_9(3),A_{10}) \\
 12 & (\O_{10}^{-}(2),A_{12}),\; ({\rm P\O}_{10}^{+}(3),A_{12}) \\
 13 &  (\O_{12}^{-}(2),A_{13}) \\
 14 & ({\rm Sp}_{12}(2),S_{14}) \\
 16 & (\O_{14}^{+}(2),A_{16}) \\
 17 & (\O_{16}^{+}(2),A_{17}) \\
 18 & ({\rm Sp}_{16}(2), S_{18}) \\
 20 & (\O_{18}^{-}(2),A_{20}) \\
 21 & (\O_{20}^{-}(2),A_{21}) \\
 22 & ({\rm Sp}_{20}(2),S_{22}) \\
 24 & (\O_{22}^{+}(2),A_{24}) \\ \hline
\end{array}$$
\caption{The large maximal subgroups in $\mathcal{A}$, $H_0=A_d$}
\label{tab:ad}
\end{table}

\begin{proof}
Here $H_0=A_d$, and we will start by assuming $p$ is odd. First assume $d=5$. If $p=5$ then $G = \O_3(5) \cong A_5$, so we may assume that $p \neq 5$ and $G={\rm P\O}_{4}^{-}(p) \cong {\rm L}_{2}(p^2)$, in which case $H=A_5$ and $p \equiv \pm 3 \imod{10}$ (see \cite[Table 8.2]{BHR}, for example). It is easy to check that $H$ is large if and only if $p=3$ or $7$. Next suppose $d=6$. The case $p=3$ can be discarded since ${\rm P\O}_{4}^{-}(3) \cong A_6$. If $p=5$ then $A_6<\O_5(5)$ is large, and we note that $H$ is non-maximal if $p=7$. If $p \geqslant 11$ then $|{\rm Aut}(A_6)|^3<|G|$. The case $d=7$ is similar. Here the only large examples are $A_7<\O_5(7)$, $A_7<{\rm P\O}_{6}^{-}(3)$ and $A_7<{\rm P\O}_{6}^{-}(5)$.

Next suppose $p$ is odd and $d \geqslant 8$. Here \eqref{e:1} holds unless $(p,d) = (7,8)$, or $p=5$ and $8 \leqslant d \leqslant 10$, or $p=3$ and $8 \leqslant d \leqslant 15$. If $d=8$ and $p=5$ or $7$ then $G=\O_7(p)$ and $(8!)^3<|G|$. Similarly, if $(p,d)=(5,9)$ then $|G| \geqslant |{\rm P\O}_{8}^{+}(5)|>(9!)^3$, and if $(d,p)=(10,5)$ we have $G={\rm P\O}_{8}^{+}(5)$ and $H=A_{10}$ (see \cite[Section 8.2]{BHR}),
so $|H|^3<|G|$. 

To complete the analysis when $p$ is odd, let us assume $p=3$. If $d=8$ then $G=\O_7(3)$ and $H$ is non-maximal; indeed, $S_9$ is a large maximal subgroup of $\O_7(3)$. Similarly, if $d=10$ then $H=A_{10}$ is a large maximal subgroup of $G=\O_9(3)$.
If $d=11$ then $H$ is non-maximal in $G={\rm P\O}_{10}^{+}(3)$, but $A_{12}$ is a large maximal subgroup of ${\rm P\O}_{10}^{+}(3)$ (see \cite[Table 8.67]{BHR}). Finally, if $d \geqslant 13$ then it is easy to check that $(d!)^3 <|G|$.

For the remainder of the proof, let us assume $p=2$. If $d \equiv 2 \imod{4}$ then $d \geqslant 10$ (since $S_6 \cong {\rm Sp}_{4}(2)$), $H=S_{d}$ and $G={\rm Sp}_{d-2}(2)$, so $|G|>2^{(d-1)(d-2)/2-1}$. It follows that $H$ is large if and only if $d=10,14,18$ or $22$. Similarly, if $d \equiv 0 \imod{4}$ then $d \geqslant 12$ (since $A_8 \cong \O_{6}^{+}(2)$), $H=A_{d}$ and $G=\O_{d-2}^{\e}(2)$, so $|G|>2^{(d-2)(d-3)/2-1}$ and $H$ is large if and only if $d=12,16,20$ or $24$. Finally, suppose $p=2$ and $d$ is odd, so $H=A_{d}$, $d \geqslant 7$ (since $A_5 \cong \O_{4}^{-}(2)$) and $|G|>2^{(d-1)(d-2)/2-1}$. It follows that $H$ is large if and only if $7 \leqslant d \leqslant 21$ (with $d$ odd), and we note that $H$ is non-maximal if 
$d=11,15$ or $19$.
\end{proof}

\begin{remk}\label{r:s}
Note that in the first three rows of Table \ref{tab:ad} we have $G={\rm P\O}_{n}^{\e}(q)$ with $n<7$. In each of these cases, in Table \ref{tab:sg}, we replace $G$ by an isomorphic linear, unitary or symplectic group. For example, the case $(G,H) = ({\rm P\O}_{4}^{-}(3),A_5)$ is recorded as $({\rm L}_{2}(9), A_5)$.
\end{remk}

We now partition the remaining subgroups in $\mathcal{S}(G)\setminus \mathcal{A}$ into two subcollections. Let ${\rm Lie}(p)$ be the set of simple groups of Lie type defined over a field  of characteristic $p$. Let $\mathcal{B}$ be the subgroups $H \in \mathcal{S}(G)\setminus \mathcal{A}$ such that $H_0 \not\in {\rm Lie}(p)$, and let $\mathcal{C}$ be the remaining subgroups in $\mathcal{S}(G)\setminus \mathcal{A}$. Note that in the next proposition we avoid repeating any cases that have already been included in Table \ref{tab:ad}. For example, the large subgroup ${\rm L}_{2}(9)<{\rm PSp}_{4}(5)$ is recorded in Table \ref{tab:ad} as $A_6<\O_5(5)$.

\begin{prop}\label{p:b}
Suppose $H \in \mathcal{B}$ is a maximal subgroup of $G$. Then $H$ is large if and only if $(G,H)$ is one of the cases recorded in Table \ref{tab:b}.
\end{prop}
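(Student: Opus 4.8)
The plan is to combine the bound on $|H|$ coming from Theorem \ref{lieb} with the fact that in each small dimension the quasisimple groups not of Lie type in characteristic $p$ that admit a faithful irreducible representation of that dimension form a short, explicitly known list.

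Since $H\in\mathcal{B}$, the subgroup $H$ is almost simple with socle $H_0\notin{\rm Lie}(p)$ --- so $H_0$ is an alternating group, a sporadic group, or a group of Lie type in characteristic $\ne p$ --- and the full covering group $\what{H}_0$ acts absolutely irreducibly on $V=\mathbb{F}^n$, where $\mathbb{F}=\mathbb{F}_{q^2}$ if $G$ is unitary and $\mathbb{F}=\F$ otherwise, subject to conditions (iii)--(vii) of Definition \ref{sdef}. By Corollary \ref{c:1} we may assume $n\leqs 22$, and since $\mathcal{B}\subseteq\mathcal{S}(G)\setminus\mathcal{A}$, Theorem \ref{lieb} gives $|H|<q^{3un}$, with $u=2$ if $G={\rm U}_n(q)$ and $u=1$ otherwise. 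Combined with $|G|>\frac{1}{8}q^{\frac{1}{2}n(n-1)}$ from Corollary \ref{l:ord}, the inequality $|H|^3\geqs|G|$ forces $q^{9un}>\frac{1}{8}q^{\frac{1}{2}n(n-1)}$; but the essential point is that, for each $n\leqs 22$, there are only finitely many possibilities for $H_0$, so that $|H|$ is bounded by a function of $n$ alone while $|G|$ grows with $q$.

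Working through the values $n\leqs 22$, I would list the relevant quasisimple groups together with the ambient field, the type of form preserved, the precise structure of $N_G(H_0)$, and the congruence conditions on $q$ under which the representation exists and is not realisable over a proper subfield. For $n\leqs 12$ this is essentially a lookup in the book \cite{BHR} on the low-dimensional classical groups; for $13\leqs n\leqs 22$ one uses the tables of Hiss and Malle \cite{HM} on cross-characteristic representations of quasisimple groups, together with the description of alternating socles and fully deleted permutation modules in \cite{KL} (this last point is also where the cases belonging to $\mathcal{A}$, already recorded in Table \ref{tab:ad}, are set aside). For each candidate, conditions (iv)--(vii) of Definition \ref{sdef} determine which classical group $G$ contains $H$, according to whether $\what{H}_0$ preserves a non-degenerate unitary, symplectic or quadratic form, or none of these, and condition (iii) fixes $\mathbb{F}$. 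Since $|H|$ is bounded independently of $q$ while $|G|>\frac{1}{8}q^{\frac{1}{2}n(n-1)}$, the condition $|H|^3\geqs|G|$ fails for all large $q$, leaving a finite set of pairs $(G,H)$ to test directly: one computes $|H|=|N_G(H_0)|$ and $|G|$, retains exactly those with $|H|^3\geqs|G|$, discards any $H$ that fails to be maximal (because it lies in a larger member of $\mathcal{C}(G)\cup\mathcal{S}(G)$), and, as in Remark \ref{r:s}, replaces $G$ by an isomorphic classical group of standard type whenever a small-dimensional exceptional isomorphism intervenes. The survivors are exactly the entries of Table \ref{tab:b}.

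The argument is almost entirely bookkeeping, and the main obstacle is organisational rather than conceptual: for each of the numerous low-dimensional representations one must correctly identify the ambient classical group via the form-type analysis, determine the exact order of $N_G(H_0)$ including its outer part, settle maximality, avoid double-counting cases already in Table \ref{tab:ad}, and keep track of the exceptional isomorphisms among small classical groups (which affect whether a given $H_0$ lies in ${\rm Lie}(p)$ at all, hence whether it belongs to $\mathcal{B}$ or $\mathcal{C}$). For $n\leqs 12$ this is a careful reading of \cite{BHR}; for $13\leqs n\leqs 22$ the candidate socles are so restricted --- essentially symmetric and alternating groups, most of which have already been absorbed into $\mathcal{A}$, together with a handful of sporadic groups and small cross-characteristic groups of Lie type --- that the remaining cases can be handled one at a time.
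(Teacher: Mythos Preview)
Your proposal is correct and follows essentially the same approach as the paper: reduce to small $n$ via Theorem \ref{lieb} and Corollary \ref{l:ord}, then enumerate candidate socles using \cite{HM} and \cite{BHR}, checking each against the largeness bound and maximality. Two small refinements the paper makes are worth noting: first, the proof of Corollary \ref{c:1} already gives $n\leqs 19$ for $H\in\mathcal{S}(G)\setminus\mathcal{A}$ (not just $n\leqs 22$), which trims the case list slightly; second, the paper explicitly separates the case $H_0\cong{\rm L}_2(t)$ from all other socles, handling it via the systematic relations $n\in\{(t\pm1)/2,\,t,\,t\pm1\}$ from \cite[Table 2]{HM2}, since here $t$ (and hence $|H|$) is determined by $n$ in a uniform way rather than by a table lookup.
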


\begin{table}
$$\begin{array}{lll} \hline
G & H & \mbox{Conditions} \\ \hline
{\rm P\O}_{8}^{+}(q) & \O_8^{+}(2) & q=3,5,7 \\
\O_7(q) & {\rm Sp}_{6}(2) & q=3,5,7 \\
{\rm U}_{6}(q) & {\rm U}_{4}(3).2 & q=2 \\
& {\rm M}_{22} & q=2 \\
{\rm PSp}_{6}(q) & {\rm U}_{3}(3).2 & q=2 \\
& {\rm J}_{2} & q=5 \\
{\rm L}_{5}(q) & {\rm M}_{11} & q=3 \\
{\rm U}_{5}(q) & {\rm L}_{2}(11) & q=2 \\
{\rm L}_{4}(q) & A_7 & q=2 \\
& {\rm U}_{4}(2) & q=7 \\
{\rm U}_{4}(q) & A_7 & q=3,5 \\
& {\rm L}_{3}(4) & q=3 \\
& {\rm U}_{4}(2) & q=5 \\
{\rm PSp}_{4}(q)' 
& A_5 & q=2 \\
{\rm L}_{3}(q) & A_6 & q= 4 \\
{\rm U}_{3}(q) & A_7 & q=5 \\
& {\rm M}_{10} & q=5 \\
& {\rm L}_{2}(7) & q=3 \\
{\rm L}_{2}(q) & A_5 & \mbox{See Remark \ref{r:cond}} \\ \hline 
\end{array}$$
\caption{The large maximal subgroups in $\mathcal{B}$}
\label{tab:b}
\end{table}

\begin{proof}
In view of the proof of Corollary \ref{c:1}, we may assume that $n \leqslant 19$. To begin with, let us assume $H_0 \not\cong {\rm L}_{2}(t)$ for any $t$, in which case the possibilities for $(G,H_0)$ can be read off from the information provided in \cite[Table 2]{HM} and the tables in \cite[Section 8.2]{BHR} (for $n \leqs 12$).

If $11 \leqslant n \leqslant 19$ then it is easy to check that no large examples arise. For example, suppose $n=11$. By inspecting \cite{HM}, we deduce that 
$$H_0 \in \{{\rm L}_{3}(3), {\rm U}_{5}(2), {\rm M}_{11}, {\rm M}_{12}, {\rm M}_{23}, {\rm M}_{24}\}.$$
Therefore $|H| \leqslant |{\rm M}_{24}|$ and by applying the lower bound in \eqref{e:lbd} we deduce that $|H|^3<|G|$ if $q \neq 2$. Finally, if $q=2$ then $G={\rm L}_{11}(2)$ and $H={\rm M}_{24}$ (see \cite{BHR}), whence 
$|H|^3<|G|$. 

Next suppose $n=10$.  By inspecting \cite{HM} (or \cite{BHR}), we deduce that $|H| \leqslant 2|{\rm U}_{5}(2)|$, so \eqref{e:lbd} implies that $|H|^3<|G|$ if $q>3$. Now assume $q=2$ or $3$. If $H_0={\rm U}_{5}(2)$ then $G={\rm PSp}_{10}(3)$ and thus $|H|^3<|G|$. The other cases with $n=10$ are handled similarly. Note that if $H_0={\rm M}_{12}$ and $q=2$ then $G=\O_{10}^{-}(2)$ and $H$ is non-maximal (indeed,  
${\rm M}_{12}<A_{12}<\O_{10}^{-}(2)$). Similarly, if $(n,q)=(9,2)$ and $H_0={\rm J}_{3}$ then $G={\rm U}_{9}(2)$ and $H={\rm J}_{3}$ is non-large. Finally, if $n \leqslant 8$ then the possibilities for $H$ can be read off from the relevant tables in \cite{BHR}, and the desired result quickly follows. 

To complete the proof, let us assume $H_0 = {\rm L}_{2}(t)$, where $t$ is an $r$-power for some prime $r \neq p$. The various cases that arise are recorded in \cite[Table 2]{HM2} (see cases (b) -- (d) therein), which includes the Frobenius-Schur indicator of the corresponding representation of $\what{H}_{0}$, and information on the field size $t$. We will give details for the case $t\equiv 1\imod{4}$ (corresponding to case (b) in \cite[Table 2]{HM2}); the other cases are very similar.

Suppose $t\equiv 1\imod{4}$. There are several cases to consider:
$$\begin{array}{ll}
G = {\rm PSp}_{n}(q)': & n \in \{(t - 1)/2,  t \pm 1\} \\
G = {\rm P\O}_{n}^{\e}(q): &  n \in \{ (t+1)/2, t \pm 1, t \} 
\end{array}$$

First assume $n=(t-1)/2$, so $G={\rm PSp}_{n}(q)'$ and
\begin{equation}\label{e:hbd}
|H| \leqs |{\rm Aut}({\rm L}_{2}(t))| = \log_{r}t\cdot t(t^2-1).
\end{equation}
Since $|G|>\frac{1}{4}q^{n(n+1)/2}$, we quickly deduce that $|H|^3<|G|$ if $t>17$. If $t=17$ then $n=8$, and the above bounds imply that $H$ is non-large if $q>2$. Similarly, if $q=2$ then $G={\rm PSp}_{8}(2)$ and $H={\rm L}_{2}(17)$ is non-large. For $t=13$ we have $n=6$ and we are left with the cases $q=2,3$. If $q=2$ then $H$ is non-maximal, and if $q=3$ we calculate that $H$ is non-large since $G={\rm PSp}_{6}(3)$ and $H={\rm L}_{2}(13)$ (see \cite[Table 8.29]{BHR}). Next assume $t=9$, so $H_0 \cong A_6$, $n=4$ and the usual bounds are sufficient if $q>9$. The cases with $q \leqs 9$ can be checked directly, and we find that ${\rm L}_{2}(9)<{\rm PSp}_{4}(5)$ is the only large subgroup (this corresponds to the $\mathcal{A}$ collection subgroup $A_6 < \O_5(5)$ in Table \ref{tab:ad}). Note that $H$ is non-maximal if $q=7$, and the case $q=2$ is excluded since ${\rm L}_{2}(9) \cong {\rm PSp}_{4}(2)'$. Finally, suppose $t=5$, so $G={\rm L}_{2}(q)$ and $H \cong A_5$, with $q \geqs 7$. It is easy to check that $H$ is large if and only if $q \leqs 73$ (and $q \neq 64$). In fact, $H$ is maximal in $G$ if and only if $q=p$ and $q \equiv \pm 1\imod{10}$, or if $q=p^2$ and $p \equiv \pm 3 \imod{10}$, which explains the particular values of $q$ listed in Remark \ref{r:cond}.

The other cases with $t\equiv 1\imod{4}$ are very similar. For example, suppose $n=(t+1)/2$, so $t \geqs 5$, $nq$ is odd and $G=\O_{n}(q)$.  Using \eqref{e:hbd} and the lower bound $|G|>\frac{1}{4}q^{n(n-1)/2}$ we deduce that $H$ is non-large if $t \geqs 17$. If $t=13$ then it remains to deal with the case $q=3$. Here $G=\O_7(3)$ and $H = {\rm L}_{2}(13)$ is non-maximal. Similarly, if $t=9$ then the previous bounds imply that $H$ is non-large if $q>9$; if $q=7$ or $9$ then $H$ is non-maximal, and if $q=5$ we have $G=\O_5(5)$ and $H={\rm L}_{2}(9)$ is large (this is an $\mathcal{A}$ collection subgroup $A_6<\O_5(5)$). Finally, if $t=5$ then $G=\O_3(q) \cong {\rm L}_{2}(q)$ and $H={\rm L}_{2}(5) \cong A_5$, so as before we calculate that $H$ is large if and only if $q \leqs 73$.
\end{proof}

\begin{prop}\label{p:c}
Suppose $H \in \mathcal{C}$ is a maximal subgroup of $G$. Then $H$ is large if and only if $(G,H)$ is one of the cases recorded in Table \ref{tab:c}.
\end{prop}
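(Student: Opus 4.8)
The plan is to exploit the fact that, since $H \in \mathcal{C}$ has socle $H_0 \in {\rm Lie}(p)$ and $V$ is an absolutely irreducible $\what{H}_{0}$-module realised in the \emph{defining} characteristic, the dimension $n = \dim V$ is tightly constrained by $H_0$, and conversely $H_0$ must be a group of Lie type of small rank over a small field once $n$ is bounded. First I would record the reduction to small $n$: since $\mathcal{C} \subseteq \mathcal{S}(G)\setminus\mathcal{A}$, Theorem \ref{lieb} gives $|H| < q^{3un}$ with $u \leqs 2$, and combining this with the lower bound \eqref{e:lbd} (or with Corollary \ref{l:ord}, together with the sharper estimate for $|{\rm U}_{n}(q)|$) one finds $|H|^3 < |G|$ unless $n$ is fairly small; in any case Corollary \ref{c:1} already forces $n \leqs 22$, leaving only finitely many dimensions to examine.

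Next I would enumerate the candidates $(G,H_0,V)$. For each fixed $n \leqs 22$, the quasisimple groups $H_0 \in {\rm Lie}(p)$ admitting a faithful absolutely irreducible representation of dimension $n$ in characteristic $p$ are listed by L\"{u}beck \cite{Lu}, and for $n \leqs 12$ one may also read off the relevant subgroups directly from the tables in \cite[Section 8.2]{BHR}; this forces $H_0$ to have bounded untwisted rank and to be defined over a field $\mathbb{F}_{q_0}$ of bounded size. Condition (iii) of Definition \ref{sdef} then pins down the field $\mathbb{F}_q$ of $G$: it is the minimal field over which the given representation of $\what{H}_{0}$ can be realised, which ties $q$ to $q_0$ --- for most modules $q = q_0$, but in the subfield-type and twisted configurations (for instance ${}^3D_4(q_0) < {\rm P\O}_{8}^{+}(q)$ with $q = q_0^3$, or ${\rm P\O}_{8}^{-}(q_0) < {\rm P\O}_{8}^{+}(q)$ with $q = q_0^2$) one has $q$ a proper power of $q_0$. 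For each admissible module one also records the form (if any) fixed by $\rho(\what{H}_{0})$, which by conditions (iv)--(vii) of Definition \ref{sdef} determines inside which classical group $G$ the subgroup $H$ sits.

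For each surviving pair I would then test the inequality $|H|^3 \geqs |G|$. Here $|H|$ is controlled by $H_0 \leqs H \leqs N_G(H_0)$ and $|H/H_0|$ dividing $|{\rm Out}(H_0)|$, so $|H| \leqs |{\rm Aut}(H_0)|$, while $|G|$ is given by Corollary \ref{l:ord}. For the configurations persisting for all $q$ in a fixed characteristic --- the principal examples being $G_2(q) < \O_7(q)$, $G_2(q) < {\rm PSp}_{6}(q)$ ($p=2$), ${\rm Sp}_{6}(q) < {\rm P\O}_{8}^{+}(q)$ ($p=2$), $\O_7(q) < {\rm P\O}_{8}^{+}(q)$ ($p\neq 2$), ${}^3D_4(q_0) < {\rm P\O}_{8}^{+}(q)$, ${\rm P\O}_{8}^{-}(q_0) < {\rm P\O}_{8}^{+}(q)$ and ${}^2B_2(q) < {\rm PSp}_{4}(q)'$ --- one compares the leading powers of $q$ in $|H|^3$ and in $|G|$ and then checks the small-$q$ boundary cases by hand. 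The remaining candidates survive for only finitely many small $q$, where the inequality is a direct numerical check, as is the verification (via \cite[Section 8.2]{BHR} for $n \leqs 12$, and the appropriate Aschbacher-class analysis otherwise) that $H$ is genuinely maximal and not contained in a geometric subgroup or in a larger $\mathcal{S}$-subgroup. Finally one discards any pair already recorded in Table \ref{tab:ad} (for example subgroups of type ${\rm L}_{2}(9)$ that were counted there via an exceptional isomorphism), arriving at Table \ref{tab:c}.

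The main obstacle is bookkeeping rather than any isolated hard argument: one must run systematically through all the families of groups of Lie type, determine their small-dimensional irreducibles in each characteristic together with the associated minimal fields of definition, and track carefully the twisted groups and the subfield-type subgroups, where the precise relation between $q$ and the natural field of $H_0$ --- and hence the exact "Conditions" column of Table \ref{tab:c} --- is the delicate point.
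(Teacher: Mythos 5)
Your proposal is correct and follows essentially the same strategy as the paper: bound $n$ via Theorem~\ref{lieb} and Corollary~\ref{c:1}, enumerate the candidate pairs $(H_0,V)$ with $H_0\in{\rm Lie}(p)$ using L\"ubeck's tables (and \cite[Section 8.2]{BHR} for small $n$), use the conditions in Definition~\ref{sdef} together with results such as \cite[Proposition~5.4.6]{KL} to pin down the field and form and hence $G$, then test $|H|^3\geqs|G|$ case by case while discarding non-maximal embeddings and overlaps with Table~\ref{tab:ad}. The only small discrepancy is that the paper refines the dimension bound to $n\leqs 19$ (from the estimate inside the proof of Corollary~\ref{c:1}), whereas you work with $n\leqs 22$; this is harmless since it merely adds a few extra finite cases to the bookkeeping.
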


\begin{table}
$$\begin{array}{lll} \hline
G & H & \mbox{Conditions} \\ \hline
{\rm P\O}_{8}^{+}(q) & \O_{7}(q) & p \neq 2  \\
 & {\rm Sp}_{6}(q) & p = 2 \\
 & {\rm P\O}_{8}^{-}(q_0) & q=q_0^2 \\
 & {}^3D_4(q_0) & q=q_0^3,\, p \neq 2 \\
\O_7(q) & G_2(q) & p \neq 2 \\
{\rm Sp}_{6}(q) & G_2(q) & p = 2 \\
{\rm PSp}_{4}(q)' & {}^2B_2(q) & \mbox{$p=2$, $\log_2q>1$ odd} \\ \hline 
\end{array}$$
\caption{The large maximal subgroups in $\mathcal{C}$}
\label{tab:c}
\end{table}

\begin{proof}
As in the proof of the previous proposition, we may assume that $n \leqslant 19$. We consider each possibility for $H_0$ in turn; the analysis is similar in each case. 

To illustrate the general approach, consider the case $H_0 = {\rm L}_{m}(q^i)$, where $m \geqslant 2$ and $i \geqslant 1$. Recall that $V$ is not the natural $H_0$-module, nor its dual. Therefore $n \geqslant \frac{1}{2}m(m-1)$ (see \cite{Lu}, for example) so we may assume that $m \leqslant 6$ (since $n \leqslant 19$). If $i \geqslant 2$ then \cite[Proposition 5.4.6(i)]{KL} implies that $n = k^i \geqslant m^i$, where $k$ is the dimension of an irreducible ${\rm SL}_m(K)$-module (here $K$ denotes the algebraic closure of $\mathbb{F}_{q}$), so $m \leqslant 4$ and it is easy to check that no large examples arise. For example, if $m=3$ then the only possibility is $(k,i)=(3,2)$, so $G = {\rm L}_{9}(q)$ (since the relevant representation of $\what{H}_0$ is not self-dual) and clearly $|{\rm Aut}({\rm L}_{3}(q^2))|^3<|G|$. 

Now assume $i=1$. The possibilities for $n$ can be read off from the relevant tables in \cite{Lu}, and again we quickly deduce that there are no large examples. For example, suppose $m=4$, in which case $n \in \{6,10,14,15,16,19\}$ (see \cite[Table A.7]{Lu}). We can immediately rule out the case $n=6$ since $G={\rm P\O}_{6}^{+}(q)$ and the corresponding representation of $\what{H}_0$ induces an isomorphism $H_0 \cong G$, so this does not correspond to a subgroup in $\mathcal{S}(G)$. If $n=10$ then $G={\rm L}_{10}(q)$ (the representation is not self-dual, so $G$ is neither symplectic nor orthogonal), whence $|H|^3<|G|$. Finally, if $n \geqslant 14$ then the lower bound on $|G|$ given in \eqref{e:lbd} is good enough to show that $H$ is non-large. The remaining cases with $m \leqslant 6$ are similar. Note that ${\rm L}_{3}(q).2<\O_7(q)$ is non-maximal if $p=3$ (since ${\rm L}_{3}(q).2<G_2(q)<\O_7(q)$).

The other cases are very similar and we leave the reader to check the details. Note that the maximal subgroup ${}^3D_4(q_0)<{\rm P\O}_{8}^{+}(q)$ (where $q=q_0^3$) is large if and only if $q$ is odd.
\end{proof}

\section{Finite exceptional groups}\label{s:ex}

In this section we will assume $G$ is a finite simple group of exceptional Lie type; our aim is to prove Theorem \ref{t:main4}. Note that the
order of $G$ is given in \cite[Table 5.1.B]{KL}. 

It will be convenient to adopt the Lie notation for groups of Lie type, so for example, we will write $A_1(q)$ in place of ${\rm L}_{2}(q)$, and so on. Also recall that the \emph{type} of a subgroup $H$ of $G$ provides an approximate description of the group-theoretic structure of $H$. As before, we write ${\rm Lie}(p)$ for the set of finite simple groups of Lie type defined over fields of characteristic $p$. Finally, if $L$ is a group of Lie type then the \emph{untwisted Lie rank} of $L$, denoted ${\rm rk}(L)$, is the rank of the ambient algebraic group. For instance, $A_2(q) = {\rm L}_{3}(q)$ and $A_2^{-}(q) = {\rm U}_{3}(q)$ both have untwisted Lie rank $2$.

In this section, for integers $c \geqs 2$ and $d \geqs 3$, let $c_d$ be the largest \emph{primitive prime divisor} of $c^d-1$. That is, $c_d$ is the largest prime number with the property that $c_d$ divides $c^d-1$, and $c_d$ does not divide $c^i-1$ if $i<d$. By Zsigmondy \cite{Zsig}, such a prime $c_d$ exists unless $(c,d)=(2,6)$.

\begin{prop}\label{p:ex1}
The conclusion to Theorem \ref{t:main4} holds when $G$ is one of the following:
$$E_{6}^{\e}(2),\, F_4(2), \, {}^2F_4(q), \, {}^3D_4(q), \, G_2(q), \, {}^2G_2(q), \, {}^2B_2(q).$$
\end{prop}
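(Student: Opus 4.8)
The plan is to handle each of the small-rank exceptional groups in Proposition~\ref{p:ex1} by exploiting the fact that, for each of these families, a complete and explicit list of maximal subgroups (up to conjugacy) is available in the literature. For $G={}^2B_2(q)$ this is due to Suzuki; for $G={}^2G_2(q)$ to Kleidman; for $G={}^3D_4(q)$ and ${}^2F_4(q)$ to Kleidman and Malle respectively; for $G=G_2(q)$ to Kleidman and Cooperstein; and for the two sporadic-looking base cases $E_6^{\pm}(2)$ and $F_4(2)$ to the Atlas and its electronic companion. So the proof reduces to a finite check in each case: for every maximal subgroup $H$ on the list, compare $|H|^3$ with $|G|$, where $|G|$ is read off from \cite[Table~5.1.B]{KL}.

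The first step would be to record, for each $G$ in the list, the order formula for $|G|$ and then run through its maximal subgroups. The bulk of these fall into easily handled ranges: parabolic subgroups (always large, as claimed), subsystem subgroups of the form $A_1(q)D_6(q)$-analogues scaled down to these small ranks, subfield subgroups $G(q_0)$ with $q=q_0^k$, and the ``twisted'' subgroups such as ${}^2B_2(q)^2 < {}^2F_4(q)$, $A_2^{\pm}(q) < G_2(q)$, $G_2(q)<{}^3D_4(q)$, etc. For each generic family one computes the dominant power of $q$ in $|H|$ and checks the inequality $3\deg_q|H| \geqs \deg_q|G|$ (with attention to the multiplicative constants when the degree comparison is tight), exactly as in the classical-group lemmas of Section~\ref{s:class}. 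This isolates a short list of generic large subgroups — these are precisely the ``$A_1(q)^2$'', ``$A_2^\pm(q)$'', ``$G_2(q^{1/2})$'', ``${}^2G_2(q)$'', ``$C_2(q)$'', ``${}^2B_2(q)^2$'', ``$A_1(q^3)A_1(q)$'', ``$(q^2+\e q+1)A_2^\e(q)$'', ``${}^3D_4(q^{1/2})$'', ``$G_2(q)$'' entries appearing in Table~\ref{tab:fin_ex}.

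The second step is to deal with the small ``generic-exceptional'' maximal subgroups — the almost simple subgroups with socle not of the form $G$ over a subfield, e.g. $\mathrm{J}_1<G_2(11)$, $G_2(2)<G_2(5),G_2(7)$, $\mathrm{J}_2<G_2(4)$, $A_1(13)<G_2(3),G_2(4)$, $2^3.\mathrm{SL}_3(2)<G_2(3)$, $13{:}4 < {}^2B_2(8)$, $7^2{:}\mathrm{SL}_2(3)<{}^3D_4(2)$ — and the fixed-$q$ cases ${}^3D_4(2)<F_4(3)$, $A_3(3)<F_4(2)$, $\mathrm{Fi}_{22}<{}^2E_6(2)$. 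Since $q$ is bounded in each of these, this is a purely numerical verification: list $|H|$ and $|G|$ and check the cube inequality. The base cases $G=E_6^{\pm}(2)$ and $F_4(2)$ are handled the same way directly from the Atlas list of maximals (for $F_4(2)$ note $G_2(2)'\cong \mathrm{U}_3(3)$ and that the relevant maximal is $A_3(3)$; for $E_6^{-}(2)={}^2E_6(2)$ the large ones include $\mathrm{Fi}_{22}$, $(q^2+q+1){}^3D_4(q)$-type and ${}^3D_4(2)$, $B_3(3)$, recovering the entries with condition $(\e,q)=(-,2)$ in Table~\ref{tab:fin_ex}, and noting that $B_3(3)$ belongs to the almost-simple list of Table~\ref{tab:almost} rather than here).

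The main obstacle, such as it is, will not be conceptual but bookkeeping: the degree comparison is genuinely tight for several families (for instance $A_2^\pm(q) < G_2(q)$ has $|H|\approx q^8$ against $|G|\approx q^{14}$, so $3\cdot 8 = 24 > 14$ is comfortable, but subfield subgroups $G(q^{1/2})$ have $\dim$ exactly half, forcing a constant-sensitive argument just as in Lemma~\ref{l:c5}), and one must be careful to exclude non-maximal or non-simple degenerate configurations (e.g. $G_2(2)'\cong \mathrm{U}_3(3)$ so $q>2$ for $G_2(q)$; ${}^2B_2(2)$, ${}^2G_2(3)'$, ${}^2F_4(2)'$ are excluded; ${}^3D_4(q^{1/2})$ and ${}^2G_2(q)\le G_2(q)$ only when the relevant root of $q$ is integral, etc.) and to cross-reference each surviving case against the exact structure and conditions recorded in Table~\ref{tab:fin_ex}. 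No single step should be hard; the work is in being exhaustive over the (finitely many) maximal-subgroup classes of these seven families.
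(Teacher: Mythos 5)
Your proposal matches the paper's proof in both strategy and substance: the paper likewise cites Suzuki, Kleidman, Malle, Cooperstein and the Atlas for complete lists of maximal subgroups of these seven families, then verifies $|H|^3 \geqslant |G|$ directly case by case (reading $|G|$ off \cite[Table 5.1.B]{KL}), noting in particular that all maximal parabolics are large. One small slip: ${}^3D_4(2)<F_4(3)$ is not in scope here since the proposition covers only $F_4(2)$, not $F_4(q)$ in general --- that case belongs to Lemma~\ref{l:f4} --- but this does not affect your argument.
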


\begin{proof}
In each of these cases, the maximal subgroups of $G$ have been determined; the relevant references are listed below (also see \cite[Chapter 4]{W_book}). Note that the list of maximal subgroups of ${}^2E_6(2)$ presented in the Atlas \cite{Atlas} is complete (see \cite[p.304]{Modat}).

\vs

\begin{center}
\begin{tabular}{lccccccc} \hline
$G$ & $E_{6}^{\e}(2)$ & $F_4(2)$ & ${}^2F_4(q)$ & ${}^3D_4(q)$ & $G_2(q)$ & ${}^2G_2(q)$ & ${}^2B_2(q)$ \\ \hline
\mbox{Ref.} & \cite{Atlas, KW} & \cite{NW} & \cite{Mal} & \cite{Kl3} & \cite{Coop, Kl2} & \cite{Kl2} & \cite{Suz} \\ \hline
\end{tabular}
\end{center}

\vs

Armed with this information, it is straightforward to verify Theorem \ref{t:main4} for these groups. In particular, we note that every maximal parabolic subgroup of $G$ is large.
\end{proof}

Let us now turn our attention to the remaining cases: 
$$G \in \{E_8(q), E_7(q),  E_6^{\e}(q),  F_4(q)\}$$
where $q=p^a$ and $p$ is a prime (and $q>2$ if $G=E_6^{\e}(q)$ or $F_4(q)$). Let $H$ be a maximal subgroup of $G$. If $H$ is a maximal parabolic subgroup then it is easy to check that $|H|^3 \geqs |G|$, so for the remainder, we will assume that $H$ is non-parabolic.

Let $\G$ be the ambient simple algebraic group defined over the algebraic closure $K$ of $\F$, and let $\s$ be a Frobenius morphism of $\G$ such that $G = (\G_{\s})'$. We will apply the following reduction theorem of Liebeck and Seitz (see \cite[Theorem 2]{LS10}).

\begin{thm}\label{t:ls}
Let $G = (\G_{\s})'$ be a finite simple group of exceptional Lie type, and let $H$ be a maximal non-parabolic subgroup of $G$. Then one of the following holds:
\begin{itemize}\addtolength{\itemsep}{0.2\baselineskip}
\item[(i)] $H=N_{G}(\M_{\s})$, where $\M$ is a $\s$-stable closed subgroup of $\G$ of positive dimension;
\item[(ii)] $H$ is an exotic local subgroup (as recorded in \cite[Table 1]{CLSS}); 
\item[(iii)] $G=E_8(q)$, $p>5$ and $H=(A_{5}\times A_{6}).2^2$;
\item[(iv)] $H$ is of the same type as $G$ over a subfield of $\F$ of prime index;
\item[(v)] $H$ is almost simple, and not of type (i) or (iv). 
\end{itemize}
\end{thm}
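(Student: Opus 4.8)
The plan is essentially trivial: Theorem \ref{t:ls} is a verbatim restatement of \cite[Theorem 2]{LS10}, so I would simply cite that result — there is nothing new to establish here. What follows is a sketch, purely for orientation, of how the argument in \cite{LS10} is organised; I would not attempt to reproduce any of it in the present paper.

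The proof in \cite{LS10} rests on the dichotomy between maximal subgroups that are contained in a proper positive-dimensional $\s$-stable closed subgroup of $\G$ and those that are not. First I would observe that in the former case, choosing $\M$ maximal among the proper positive-dimensional $\s$-stable closed subgroups containing $H$, the maximality of $H$ forces $H = N_G(\M_\s)$, which is conclusion (i); if instead $H$ sits inside $\G_\s$ only via a subfield subgroup one lands in conclusion (iv). The deep input making this case tractable is the classification of the maximal positive-dimensional closed subgroups of the exceptional simple algebraic groups (the reductive ones from \cite{LS2}, together with the parabolics, which are discarded here since $H$ is assumed non-parabolic).

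In the complementary case $H$ is \emph{Lie primitive}, lying in no proper positive-dimensional closed $\s$-stable subgroup. The next step is to show that such an $H$ is either a local subgroup or almost simple. The Lie primitive local subgroups are, apart from finitely many configurations, normalisers of elementary abelian groups, and are classified by Cohen, Liebeck, Saxl and Seitz in \cite{CLSS}, giving conclusion (ii); the single extra configuration $(A_5 \times A_6).2^2 < E_8(q)$ with $p>5$ gives conclusion (iii); and everything else is almost simple, conclusion (v).

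The hard part — and the reason \cite{LS10} is a substantial paper resting on a long sequence of earlier work — is twofold: controlling the embeddings of positive-dimensional subgroups so that case (i) becomes a genuinely finite, explicit list, and bounding and then enumerating the Lie primitive almost simple subgroups, which requires detailed information on the representation theory and subgroup structure of the quasisimple groups. Since all of this is already available in the literature, the only real work in this section of the present paper is the verification in Proposition \ref{p:ex1} for the low-rank groups, together with the case analysis built on top of Theorem \ref{t:ls} for $E_8(q)$, $E_7(q)$, $E_6^{\e}(q)$ and $F_4(q)$.
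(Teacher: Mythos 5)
Your reading is correct: the paper does not prove this result, it simply cites \cite[Theorem 2]{LS10}, exactly as you propose. The sketch you give of the internal structure of the Liebeck--Seitz argument is accurate but not required, and the paper's own presentation is just the citation.
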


\begin{remk}\label{r:as}
Suppose $H$ is almost simple with socle $H_0$, as in part (v) of Theorem \ref{t:ls}. There are two possibilities:
\begin{itemize}\addtolength{\itemsep}{0.2\baselineskip}
\item[(i)] If $H_0 \not\in {\rm Lie}(p)$ then the possibilities for $H_0$ have been determined up to  isomorphism; see \cite[Tables 10.1--10.4]{LS3}.
\item[(ii)] Suppose $H_0 \in {\rm Lie}(p)$ is defined over $\mathbb{F}_{s}$ for some $p$-power $s$. By applying \cite[Theorem 1.1]{LS6} and \cite[Theorem 1]{LS11} we deduce that ${\rm rk}(H_0) \leqs \frac{1}{2}{\rm rk}(G)$ and one of the following holds (see \cite[Theorem 2]{Lawther} for the values of $u(G)$ in part (c)):
\begin{itemize}\addtolength{\itemsep}{0.2\baselineskip}
\item[(a)] $s \leqs 9$;
\item[(b)] $H_0 = A_2^{\e}(16)$;
\item[(c)] $H_0 \in \{A_1(s), {}^2B_2(s), {}^2G_2(s)\}$ and $s \leqs (2,p-1)\cdot u(G)$, where $u(G)$ is defined as follows: 
$$\begin{array}{c|ccccc}
G & G_2 & F_4 & E_6 & E_7 & E_8 \\ \hline
u(G) & 12 & 68 & 124 & 388 & 1312
\end{array}$$
\end{itemize}
\end{itemize}
\end{remk}

\begin{lem}\label{l:e8}
Let $H$ be a maximal non-parabolic subgroup of $G=E_8(q)$. Then $H$ is large if and only if $H$ is of type $A_1(q)E_7(q)$, $D_8(q)$, $A_2^{\e}(q)E_{6}^{\e}(q)$ or $E_8(q_0)$ with $q=q_0^2$.
\end{lem}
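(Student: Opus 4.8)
The plan is to work through the five cases of the reduction theorem \ref{t:ls}, in each one bounding $|H|$ and comparing with $|G|=|E_8(q)|$. Recall from \cite[Table 5.1.B]{KL} that $|G| = q^{120}\prod_{i}(q^{d_i}-1) = q^{248}\prod_{i}(1-q^{-d_i})$, where $d_i$ runs over $\{2,8,12,14,18,20,24,30\}$ (so $\sum_i d_i=128$ and $\dim\G=248$); in particular $\frac12 q^{248}<|G|<q^{248}$. Cases (ii) and (iii) are immediate: the exotic local subgroups of \cite[Table 1]{CLSS} and the subgroup $(A_5\times A_6).2^2$ (which occurs only when $p>5$, hence $q\geqs 7$) have order bounded by an absolute constant, so $|H|^3<\frac12 q^{248}\leqs|G|$.

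For case (i) I would appeal to the classification by Liebeck and Seitz \cite{LS2} of the maximal positive-dimensional closed subgroups of $\G=E_8$ -- the same input used in the proof of Theorem \ref{t:main5}. Writing $d=\dim\M^{\circ}$, the order $|H|$ is bounded above and below by constant multiples of $q^{d}$, the constants depending only on the isogeny type of $\M^{\circ}$ and on how $\s$ acts (the factor $|H/\M_\s|$ being bounded). Going through the list, the only $\M^{\circ}$ with $d\geqs 83$ are $A_1E_7$ ($d=136$), $D_8$ ($d=120$) and $A_2E_6$ ($d=86$); since $3d>248$ with a comfortable margin in each case, these give large subgroups, namely those of type $A_1(q)E_7(q)$, $D_8(q)$ and $A_2^{\e}(q)E_6^{\e}(q)$ (the sign $\e$ recording whether $\s$ induces a graph twist on the $A_2E_6$ subsystem), and I would confirm $|H|^3>|G|$ by a direct estimate, the tightest instance being $q=2$. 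Every remaining $\M^{\circ}$ -- the maximal-rank subgroups $A_8$ and $A_4A_4$ and the normalizers of maximal tori and of $A_1^8$, $A_2^4$, $D_4^2$, together with the non-maximal-rank subgroups $G_2F_4$, $B_2$, $A_1A_2$ and the three classes of $A_1$ -- has $d\leqs 80$, so $|H|^3<c_1 q^{240}<\frac12 q^{248}\leqs|G|$ for all $q\geqs 2$, and $H$ is not large (for $q=2$, e.g. $H$ of type $A_8(q)={\rm L}_{9}(q)$, the bound is checked by hand).

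In case (iv), $H$ is a subfield subgroup of type $E_8(q_0)$ with $q=q_0^k$ for a prime $k$. If $k=2$ then $|H|^3 = q_0^{744}\prod_i(1-q_0^{-d_i})^3 > q_0^{496} > |G|$, so $H$ is large; if $k\geqs 5$ then $|H|^3 < q_0^{744} = q^{744/k} \leqs q^{744/5} < \frac12 q^{248}\leqs|G|$, so $H$ is not large. The delicate case is $k=3$, where $|H|^3 = q_0^{744}\prod_i(1-q_0^{-d_i})^3$ must be compared with $|E_8(q_0^3)| = q_0^{744}\prod_i(1-q_0^{-3d_i})$; since $\frac{(1-x)^3}{1-x^3}=\frac{(1-x)^2}{1+x+x^2}<1$ for $0<x<1$, taking the product over the $d_i$ gives $|H|^3<|G|$, so the cubic subfield subgroup is (narrowly) not large. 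Finally, for case (v) I would use Remark \ref{r:as}: if $H_0\not\in{\rm Lie}(p)$ the possibilities from \cite[Tables 10.1--10.4]{LS3} all have $|H|$ bounded by an absolute constant; if $H_0\in{\rm Lie}(p)$ then ${\rm rk}(H_0)\leqs 4$ and $H_0$ is defined over a field of bounded order, and the bound on $|H|$ in \cite[Theorem 1.2]{LSh4} forces $|H|^3<|G|$. Thus case (v) contributes nothing, and collecting the subgroups from cases (i) and (iv) gives exactly the list in the statement. The main obstacles are the near-equality in case (iv) with $k=3$, and the bookkeeping in case (i): one must be certain, from \cite{LS2}, that no maximal positive-dimensional subgroup of $E_8$ has dimension in the range $(80,136]$ other than $A_1E_7$ ($136$), $D_8$ ($120$) and $A_2E_6$ ($86$).
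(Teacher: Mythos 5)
Your proof is correct and follows essentially the same route as the paper: both go through the five alternatives of Theorem \ref{t:ls}, use the \cite{LS2} classification in case (i), the \cite{LS3} tables and \cite[Theorem 1.2]{LSh4} in case (v), and a direct order comparison in case (iv). The one structural difference is that the paper opens by invoking \cite[Lemma 4.2]{BLS}, which already says that any $H$ with $|H|>q^{88}$ is of type $E_8(q^{1/2})$, $A_1(q)E_7(q)$ or $D_8(q)$; this shrinks the remaining work to the narrow window $q^{82}<|H|\leqs q^{88}$, so the paper need only check that $\M=A_2E_6.2$ is the unique positive-dimensional candidate there, rather than surveying all dimensions as you do. What your version buys is self-containedness and an explicit treatment of the $k=3$ subfield case via the inequality $(1-x)^3<1-x^3$ for $0<x<1$, which the paper leaves as ``a straightforward calculation''; what the paper's shortcut buys is that it offloads the dimension bookkeeping to the already-published \cite{BLS} lemma. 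Your dimension count for case (i) is accurate: the only non-parabolic positive-dimensional maximal $\M^{\circ}$ with $\dim\M^{\circ}>80$ are $A_1E_7$ (136), $D_8$ (120) and $A_2E_6$ (86), and $A_8$ at dimension 80 just fails since $3\cdot 80<248$.
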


\begin{proof}
If $|H| >q^{88}$ then \cite[Lemma 4.2]{BLS} implies that $H$ is of type $E_8(q^{1/2})$, $A_1(q)E_7(q)$ or $D_8(q)$, and of course, $H$ is large in each of these cases. Clearly, if $|H| \leqs q^{82}$ then $H$ is non-large, so it remains to determine the maximal subgroups $H$ that satisfy the bounds $q^{82} < |H| \leqs q^{88}$. By Theorem \ref{t:ls}, $H$ is of type (i) -- (v).

First assume $H$ is of type (i). The possibilities for $\M$ are listed in \cite{LS2}, and it is easy to see that the only possibility with $|H|$ in the desired range is the case $\M = A_2E_6.2$. Here $$|H| \geqs |{\rm SL}_{3}(q)||E_6(q)|2>\frac{1}{2}q^{86}$$
(see \cite[Table 5.1]{LSS}) and thus $H$ is large. 

The possibilities in (ii) are recorded in \cite[Table 1]{CLSS}; either $|H|=2^{15}|{\rm SL}_{5}(2)|$, or $|H| = 5^3|{\rm SL}_{3}(5)|$ (both with $q$ odd). In both cases, $H$ is non-large. Clearly, we can eliminate subgroups of type (iii), and a straightforward calculation shows that a subfield subgroup $H=E_8(q_0)$ (with $q=q_0^k$, $k$ prime) is large if and only if $k=2$. 

Finally, let us assume $H$ is almost simple, and not of type (i) or (iv). Let $H_0$ denote the socle of $H$, and recall that ${\rm Lie}(p)$ is the set of finite simple groups of Lie type in characteristic $p$. First assume that $H_0 \not\in {\rm Lie}(p)$, in which case the  possibilities for $H_0$ are listed in \cite[Tables 10.1--10.4]{LS3}. If $H_0$ is an alternating or sporadic group then $|H| \leqs |{\rm Th}|$ and thus $H$ is non-large. Similarly, if $H$ is a group of Lie type then we get $|H| \leqs |{\rm PGL}_{4}(5)|2$, and again we deduce that $H$ is non-large. Finally, suppose $H_0 \in {\rm Lie}(p)$. By Remark \ref{r:as}(ii) we have ${\rm rk}(H_0) \leqs 4$, so $|H|<12q^{56}\log_pq$ by \cite[Theorem 1.2(i)]{LSh4}, and thus $H$ is non-large. 
\end{proof}

\begin{lem}\label{l:e7}
Let $H$ be a maximal non-parabolic subgroup of $G=E_7(q)$. Then $H$ is large if and only if $H$ is of type $(q-\e)E_{6}^{\e}(q)$, $A_1(q)D_6(q)$, $A_7^{\e}(q)$, $A_1(q)F_4(q)$ or $E_7(q_0)$ with $q=q_0^2$.
\end{lem}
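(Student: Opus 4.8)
The plan is to follow the proof of Lemma~\ref{l:e8}, now working with $\dim E_7 = 133$ and the order formula for $G$ in \cite[Table 5.1.B]{KL}, from which $q^{132} < |G| < q^{133}$. First, the $E_7$ analogue of \cite[Lemma 4.2]{BLS} shows that if $|H| > q^{57}$ then $H$ is of type $(q-\e)E_{6}^{\e}(q)$, $A_1(q)D_6(q)$, $A_7^{\e}(q)$ or $E_7(q^{1/2})$; each of these indeed has $|H| > q^{57}$, so $|H|^3 > q^{171} > |G|$ and $H$ is large. Conversely, if $|H| \leqs q^{44}$ then $|H|^3 \leqs q^{132} < |G|$, so $H$ is non-large. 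It therefore remains to treat the maximal subgroups with $q^{44} < |H| \leqs q^{57}$, which by Theorem~\ref{t:ls} are of one of the types (i)--(v) recorded there.

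Suppose first that $H = N_G(\M_{\s})$ with $\M$ a $\s$-stable closed subgroup of $\G$ of positive dimension (case (i)); the possibilities for $\M$ are listed in \cite{LS2}. The subgroups of type $(q-\e)E_{6}^{\e}(q)$, $A_1(q)D_6(q)$ and $A_7^{\e}(q)$ have order greater than $q^{57}$ and so have already been accounted for. Of the others, those of type $A_1(q)F_4(q)$ are large: here $|H| \geqs |{\rm SL}_{2}(q)||F_4(q)| > q^{53}$, so $|H|^3 > q^{159} > |G|$. Every remaining positive-dimensional maximal subgroup of $E_7$ --- of type $A_2^{\e}A_5^{\e}(q)$, $G_2(q)C_3(q)$, $A_1(q)G_2(q)$, $A_1(q)^2$, $A_2(q)$ or $A_1(q)$, or else a maximal torus, its normaliser, or the normaliser of an $A_1^7$ subsystem --- has $|H| < q^{44}$ (the largest being of type $A_2^{\e}A_5^{\e}(q)$) and is therefore non-large.

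An exotic local subgroup (case (ii), recorded in \cite[Table 1]{CLSS}) is a finite group of order independent of $q$, so it is non-large for all but finitely many $q$, which are checked directly; case (iii) does not arise for $E_7$. If $H = E_7(q_0)$ is a subfield subgroup with $q = q_0^k$, $k$ prime (case (iv)), then $H$ is large when $k = 2$ (as $|H| > q_0^{132} = q^{66}$) and non-large when $k \geqs 5$ (as $|H| < q_0^{133} \leqs q^{133/5}$). The borderline case is $k = 3$, where $|H|^3$ and $|E_7(q_0^3)| = |G|$ have the same leading order: here one verifies $|H|^3 < |G|$ from the order formula $|E_7(q_0)| = (2,q_0-1)^{-1}q_0^{133}\prod_d(1-q_0^{-d})$, the product being over the fundamental degrees $d \in \{2,6,8,10,12,14,18\}$, using $(1-x)^3 < 1-x^3$ for $0<x<1$ (with $x = q_0^{-d}$) together with $(2,q_0^3-1) \leqs (2,q_0-1)^3$; here it matters that, $k$ being odd, the subfield subgroup of the simple group $G$ carries no diagonal extension. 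Finally, let $H$ be almost simple with socle $H_0$ (case (v)). If $H_0 \not\in {\rm Lie}(p)$ then $|H|$ is bounded independently of $q$ by \cite[Tables 10.1--10.4]{LS3}, so $H$ is non-large for all but finitely many $q$; and if $H_0 \in {\rm Lie}(p)$ then Remark~\ref{r:as}(ii) gives ${\rm rk}(H_0) \leqs \frac{1}{2}{\rm rk}(E_7) < 4$, hence ${\rm rk}(H_0) \leqs 3$, so \cite[Theorem 1.2(i)]{LSh4} bounds $|H|$ and, combined with $|G| > q^{132}$, shows $H$ is non-large (the finitely many small $q$ again being treated directly).

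The argument is largely routine once Theorem~\ref{t:ls} and the structural input of \cite{LS2, CLSS, LS3} are available. The one genuinely delicate point is the $k = 3$ subfield subgroup, where $|H|^3$ and $|G|$ coincide to leading order, so that the precise factor-by-factor comparison --- and, for $q_0$ odd, the fact that no diagonal contribution appears --- is what forces non-largeness. A secondary concern is completeness in the window $q^{44} < |H| \leqs q^{57}$: one must work carefully through the list in \cite{LS2} and dispose of the finitely many small values of $q$ in the exotic-local and almost-simple cases, where the $q$-independent order bounds are not by themselves conclusive.
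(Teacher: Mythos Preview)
Your argument is correct and follows the same template as the paper's proof. A few minor points of comparison: the paper invokes \cite[Lemma 4.7]{BLS} (not an analogue of Lemma 4.2), whose threshold is $q^{46}$ rather than your $q^{57}$, so $A_1(q)F_4(q)$ is already captured there and the residual window $q^{44}\leqs|H|\leqs q^{46}$ is narrower; your wider window still works since you correctly pick up $A_1F_4$ by hand. For case (v) with $H_0\in{\rm Lie}(p)$ the relevant bound is \cite[Theorem 1.2(ii)]{LSh4} (part (i) is for $E_8$), giving $|H|<4q^{30}\log_pq$, which is below $q^{44}$ for every $q$, so no small-$q$ check is needed; likewise for $H_0\notin{\rm Lie}(p)$ the paper notes $|H|\leqs|{\rm Ru}|$, which suffices uniformly. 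Your treatment of the $k=3$ subfield case is exactly the delicate point, and your reasoning via $(1-x)^3<1-x^3$ together with the observation that $H$ carries no diagonal extension (equivalently, as the paper puts it, a Cartan subgroup of order $(q_0-1)^7$ in ${\rm Inndiag}(E_7(q_0))$ does not lie in $G$) is precisely what is required.
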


\begin{proof}
We proceed as in the proof of the previous lemma. By \cite[Lemma 4.7]{BLS}, if $|H|>q^{46}$ then $H$ is of type $E_7(q^{1/2})$ or $N_G(\M_{\s})$, with $\M = T_1E_6.2$, $A_1D_6.2$, $A_7.2$ or $A_1F_4$. In each of these cases, $H$ is large. If $|H|<q^{44}$ then $|H|^3<|G|$, so to complete the analysis of this case we may assume that $q^{44} \leqs |H| \leqs q^{46}$. We now apply Theorem \ref{t:ls}.

By inspecting \cite{LS2} and \cite[Table 1]{CLSS}, it is easy to check that there are no examples of type (i) or (ii), and case (iii) does not arise. Next suppose $H$ is a subfield subgroup of type $E_7(q_0)$ with $q=q_0^k$. If $k \geqs 5$ then $|H|<q^{44}$, so let us assume $k=3$. If $q$ is even then 
$|G|=f(q)$ and $|H|=f(q_0)$, where
$$f(x) = x^{63}(x^2-1)(x^6-1)(x^8-1)(x^{10}-1)(x^{12}-1)(x^{14}-1)(x^{18}-1),$$
and we calculate that $|H|^3<|G|$. If $q$ is odd then $H$ is simple (note that a Cartan subgroup of order $(q_0-1)^7$ in ${\rm Inndiag}(E_7(q_0))$ does not lie in $G$), so $|G|=\frac{1}{2}f(q)$, $|H|=\frac{1}{2}f(q_0)$ and $H$ is non-large. 

To complete the analysis, let us assume $H$ is almost simple, and not of type (i) or (iv). Let $H_0$ denote the socle of $H$. If $H_0 \not\in {\rm Lie}(p)$ then by inspecting \cite[Tables 10.1--10.4]{LS3} we deduce that $|H| \leqs |{\rm Ru}|$, so $H$ is non-large. Finally, we may assume $H_0 \in {\rm Lie}(p)$ and ${\rm rk}(H_0) \leqs 3$ (see Remark \ref{r:as}(ii)). 
Here \cite[Theorem 1.2(ii)]{LSh4} states that $|H| < 4q^{30}\log_pq$, and thus $H$ is non-large.
\end{proof}

\begin{lem}\label{l:e6}
Let $H$ be a maximal non-parabolic subgroup of $G=E_6^{\e}(q)$, where $q>2$. Then $H$ is large if and only if $H$ is of type $(q-\e)D_{5}^{\e}(q)$ ($\e=-$), $A_1(q)A_5^{\e}(q)$, $F_4(q)$, $(q-\e)^2.D_4(q)$, $(q^2+\e q+1).{}^3D_4(q)$, $C_4(q)$ ($p \neq 2$), $E_6^{\pm}(q^{1/2})$ ($\e=+$), or $E_6^{\e}(q_0)$, $q=q_0^3$ and $q_0 \equiv \e \imod{3}$. 
\end{lem}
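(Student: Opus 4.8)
The plan is to follow the strategy of the proofs of Lemmas \ref{l:e8} and \ref{l:e7}, using the order of $G$ given in \cite[Table 5.1.B]{KL} (which is roughly $q^{78}$, so $H$ is large precisely when $|H|$ is, very roughly, at least $q^{26}$). First I would apply the $E_6$ analogue of \cite[Lemmas 4.2 and 4.7]{BLS}: if $H$ is a maximal non-parabolic subgroup with $|H| > q^{27}$, then $H$ is of type $A_1(q)A_5^{\e}(q)$, $(q-\e)D_5^{\e}(q)$, $(q-\e)^2.D_4(q)$, $(q^2+\e q+1).{}^3D_4(q)$, $F_4(q)$, $C_4(q)$ (for $p \neq 2$), or it is the subfield subgroup $E_6^{\pm}(q^{1/2})$ (for $\e = +$). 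Each of these is at once seen to be large from the order formulae in \cite[Table 5.1]{LSS}, with one caveat: when $\e = +$ a subgroup of type $(q-1)D_5(q)$ lies inside a parabolic subgroup, so it is not a maximal non-parabolic subgroup, which is the source of the restriction $\e = -$ in the statement. Conversely, if $|H| \leqs q^{24}$ then $|H|^3 < |G|$, so $H$ is non-large. It therefore remains to examine the maximal non-parabolic subgroups $H$ with $q^{24} < |H| \leqs q^{27}$, which I would do by running through cases (i)--(v) of Theorem \ref{t:ls}.

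In case (i), $H = N_G(\M_\s)$ with $\M$ a $\s$-stable positive-dimensional closed subgroup of $\G$; consulting \cite{LS2}, every such $\M$ not already accounted for in the first paragraph satisfies $|H| \leqs q^{24}$ or thereabouts — the largest being of type $A_2^{\e}(q)^3.S_3$ — so $H$ is non-large, the $A_2^3$ case sitting just below the threshold and needing a short direct estimate for the smallest $q$. Case (ii): the exotic local subgroups of $E_6^\e(q)$ recorded in \cite[Table 1]{CLSS} have order equal to a fixed power of a small prime times a bounded cofactor, and so are non-large; case (iii) (relevant only to $E_8$) does not occur. In case (iv), a subfield subgroup of type $E_6^\e(q_0)$ with $q = q_0^k$ ($k$ prime) is treated as in Lemma \ref{l:e7}: if $k \geqs 5$ then $|H| \leqs q^{24}$ and $H$ is non-large; if $k = 2$ then necessarily $\e = +$ (a subfield subgroup of ${}^2E_6(q)$ of the same type requires $k$ odd), and $H$ is the subgroup $E_6^{\pm}(q^{1/2})$ already identified in the first paragraph; and if $k = 3$ --- the crucial case --- one compares $|G|$ and $|H|$ directly, keeping careful track of the index $(3, q - \e)$ of diagonal automorphisms, to conclude that $H$ is large if and only if $q_0 \equiv \e \imod{3}$.

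Finally, in case (v), $H$ is almost simple with socle $H_0$ not of any of the preceding types. If $H_0 \not\in {\rm Lie}(p)$, then \cite[Tables 10.1--10.4]{LS3} bound $|H|$ by an absolute constant, the largest candidate being $|{\rm Fi}_{22}|$ --- which arises only for $(\e, q) = (-, 2)$ and is therefore excluded here, since $q > 2$ --- so $H$ is non-large. If $H_0 \in {\rm Lie}(p)$, then Remark \ref{r:as}(ii) gives ${\rm rk}(H_0) \leqs 3$ together with a bound on the field of definition (namely $s \leqs 9$, or $H_0 = A_2^{\e}(16)$, or $H_0 \in \{A_1(s),\, {}^2B_2(s),\, {}^2G_2(s)\}$ with $s$ bounded by a constant), and in every such case the bound in \cite[Theorem 1.2]{LSh4}, supplemented by a direct order estimate in the handful of remaining instances, shows that $|H|$ is far too small for $H$ to be large. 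I expect the main obstacle to be the borderline bookkeeping: pinning down the behaviour of the maximal-rank subgroups $(q-\e)^2.D_4(q)$ and $(q^2+\e q+1).{}^3D_4(q)$ (the $q = 2$ exceptions recorded in Table \ref{tab:fin_ex} lie outside the present hypotheses, which helps), and above all carrying out the exact order comparison for the cubic subfield subgroups to extract the congruence $q_0 \equiv \e \imod{3}$ --- this is the one place where honest computation, rather than crude size estimates, is unavoidable.
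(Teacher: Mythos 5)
Your overall strategy matches the paper's exactly: invoke the $E_6$ analogue of the \cite{BLS} threshold lemma to dispose of the very large subgroups, use the crude bound $|G| \approx q^{78}$ to dismiss the very small ones, and run the intermediate range through the five cases of Theorem \ref{t:ls}. Two things, however, deserve correction.

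First, a bookkeeping issue. The relevant \cite[Lemma 4.14]{BLS} bound for $E_6^\e(q)$ kicks in at $|H| > q^{32}$, not $q^{27}$. So the residual range you must pass through Theorem \ref{t:ls} is $q^{25} < |H| \leqs q^{32}$, not $q^{24} < |H| \leqs q^{27}$. Your citation is misstated, and while running cases (i)--(v) carefully for the larger range would presumably still close the argument, you should not claim the $q^{27}$ threshold as a consequence of BLS.

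Second, and more seriously, your treatment of case (v) with $H_0 \in {\rm Lie}(p)$ is not merely compressed but wrong as stated: for $E_6$, \cite[Theorem 1.2(iii)]{LSh4} gives $|H| \leqs 4q^{28}\log_p q$, which is \emph{not} "far too small" --- it comfortably exceeds the largeness threshold $\approx q^{26}$. The paper has to do genuine extra work here: pinning down the possible ratios $b/a$ (where $H_0$ is defined over $\mathbb{F}_{p^b}$ and $q=p^a$) via primitive prime divisors of $|H_0|$ and $|G|$, for each of the small-rank classical types $A_3, A_2^\pm, C_2, B_2$; and then, for $H_0 = G_2(s)$, the only surviving possibility is $G_2(q^2)$ with $q=3$, which cannot be eliminated by order estimates at all. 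Ruling it out requires a structural argument --- computing the action of involutions of a putative $G_2(9) < E_6^\e(3)$ on the $27$-dimensional module and comparing with the known involution classes of $E_6^\e(3)$ from \cite[Table 4]{LS3} --- which is exactly the sort of step a generic "direct order estimate" would not catch. Your plan needs to flag this; as written it would leave the $G_2(9)$ possibility unresolved and the lemma unproved.

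Everything else --- the identification of $A_2^3.S_3$ as the borderline maximal-rank case, the dismissal of the exotic local subgroup $3^6.{\rm SL}_3(3)$, the reduction of subfield subgroups to $k=3$ with the diagonal-automorphism bookkeeping giving the congruence $q_0 \equiv \e \imod{3}$ (the paper phrases this via the Cartan subgroup of order $(q_0-1)^6$ lying in $G$), and the exclusion of $(q-1)D_5(q)$ for $\e=+$ on non-maximality grounds --- tracks the paper faithfully.
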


\begin{proof} 
If $|H| > q^{32}$ then \cite[Lemma 4.14]{BLS} implies that one of the following holds:
\begin{itemize}\addtolength{\itemsep}{0.2\baselineskip}
\item[(a)] $H=N_{G}(\M_{\s})$, where $\M= T_1D_5$, $A_{1}A_{5}$, $F_{4}$, $T_{2}D_{4}.S_{3}$ or $C_{4}$ ($p \neq 2$);
\item[(b)] $\e=+$ and $H$ is of type $E_{6}^{\pm}(q^{1/2})$.
\end{itemize}
Clearly, $H$ is large in each of these cases (note that the maximality of $H$ implies that $\e=-$ if $H$ is of type $(q-\e)D_5^{\e}(q)$; see \cite[Table 5.1]{LSS}). Of course, if $|H| \leqs q^{25}$ then $H$ is non-large, so it remains to determine the possibilities for $H$ such that 
$q^{25}<|H| \leqs q^{32}$. As before, we consider the five cases arising in Theorem \ref{t:ls}.

Suppose $H$ is of type (i). Here $\M=A_2^3.S_3$ is the only possibility, in which case $|H| \leqs |A_2^{-}(q)|^36$ (see \cite[Table 5.1]{LSS}) and we deduce that 
$H$ is non-large.
If $H$ is of type (ii) then $H=3^6.{\rm SL}_{3}(3)$ (with $p \geqs 5$) is the only possibility, and $H$ is non-large. Case (iii) does not apply here. Now assume $H$ is a subfield subgroup of type $E_6^{\e}(q_0)$ with $q=q_0^k$ and $k \geqs 3$. If $k \geqs 5$ then $|H|^3<|G|$, so let us assume $k=3$. A straightforward calculation reveals that $H$ is non-large if $q_0 \not\equiv \e \imod{3}$. On the other hand, if $q_0 \equiv \e \imod{3}$ then $H={\rm Inndiag}(E_6^{\e}(q_0))$ (a Cartan subgroup of order $(q_0-1)^6$ in ${\rm Inndiag}(E_6^{\e}(q_0))$ is contained in $G$) and we deduce that $H$ is large. 

Finally, let us assume $H$ is almost simple and not of type (i) or (iv). Let $H_0$ denote the socle of $H$. First assume $H_0 \not\in {\rm Lie}(p)$. Here the possibilities for $H_0$ can be read off from \cite[Tables 10.1--10.4]{LS3}; it is straightforward to check that no large subgroups of this type arise. 
Therefore, to complete the proof of the lemma we may assume that $H_0 \in {\rm Lie}(p)$ and ${\rm rk}(H_0) \leqs 3$. 
Here \cite[Theorem 1.2(iii)]{LSh4} gives $|H| \leqs 4q^{28}\log_pq$, so some additional work is required. 
 
We proceed as in the proof of \cite[Theorem 1.2]{LSh4}, using the method described in \cite[Step 3, p.310]{LieSax}. Write $q=p^a$ and $H_0 = X_{r}^{\e'}(s)$, where $r={\rm rk}(H_0)$ and $s=p^b$. We consider the various possibilities for $X_r$ (with $r \leqs 3$) in turn. Recall that if $c \geqs 2$ and $d \geqs 3$ are integers (and $(c,d) \neq (2,6)$), then $c_d$ denotes the largest primitive prime divisor of $c^d-1$.

To illustrate the general approach, consider the case $H_0 = A_3(s)$ with $\e=+$. Here 
$$q^{25} < |H| \leqs |{\rm Aut}(H_0)|<s^{17}$$
and thus $b/a \geqs 25/17$. Now $p_{4b}$ divides $|H|$, and thus $|G|$, so $4b$ divides one of the numbers $6a,8a,9a,12a$, whence $b/a \in \{3,9/4,2,3/2\}$. Moreover, since $p_{3b}$ divides $|G|$ (note that $(p,b) \neq (2,2)$ since we are assuming that $q>2$) we deduce that $b/a \in \{3,2,3/2\}$. However, $H_0 \neq A_3(q^2)$ by the proof of \cite[Theorem 1.2]{LSh4}, and we have $|H|<q^{25}$ if $H_0 = A_3(q^{3/2})$, and $|H|>q^{32}$ if $H_0 = A_3(q^3)$. This eliminates the case $H_0 = A_3(s)$. 

The other cases are similar. For example, if $H_0 = A_2(s)$ and $\e=-$ then $b/a \geqs 5/2$ and $p_{3b}$ divides $|G|$, so $3b$ divides $18a,12a,10a$ or $8a$, and thus $b/a \in \{6,4,10/3,3,8/3\}$. Further, by considering $p_{2b}$, we deduce that $b/a \in \{6,4,3\}$. If $H_0=A_2(q^6)$ then $|H|>q^{32}$, and it is easy to check that $|H|^3<|G|$ if  $H_0 = A_2(q^3)$. Finally, the case $H_0=A_2(q^4)$ is eliminated in the proof of \cite[Theorem 1.2]{LSh4}. 

The case $H_0=G_2(s)$ requires special attention. Here $|H|<s^{16}$, so $b/a \geqs 25/16$ and by considering $p_{6b}$ we deduce that $H_0 = G_2(q^2)$ is the only possibility.  Moreover, as noted in Remark \ref{r:as}(ii), we may assume that $q=3$.
However, we claim that $G_2(9)$ is not a subgroup of $E_{6}^{\e}(3)$, so this case does not arise. To see this, suppose it is a subgroup and consider the composition factors in the restriction of $V_{27}$ to $G_2(9)$, where $V_{27}$ is one of the irreducible $27$-dimensional modules for $E_{6}^{\e}(3)$ over $\mathbb{F}_3$. Let $W$ be a non-trivial irreducible module for $G_2(9)$ over $\mathbb{F}_3$ with $\dim W \leqs 27$. Since $\dim W = 14$ is the only possibility, we quickly deduce that each involution in $G_2(9)$ acts on $V_{27}$ as $[-I_{8},I_{19}]$ (up to conjugacy). However, the involutions in $E_{6}^{\e}(3)$ act on $V_{27}$ as $[-I_{12},I_{15}]$ or $[-I_{16},I_{11}]$ (see \cite[Table 4]{LS3}), so we have reached a contradiction.
\end{proof}

\begin{lem}\label{l:f4}
Let $H$ be a maximal non-parabolic subgroup of $G=F_4(q)$, where $q>2$. Then $H$ is large if and only if $H$ is of type $B_{4}(q)$, $D_4(q)$, ${}^3D_4(q)$, $A_1(q)C_3(q)$ ($p \ne 2$), $C_4(q)$ ($p=2$), ${}^2F_4(q)$ ($p=2$, $\log_2q$ odd), $F_4(q_0)$ with $q=q_0^2$, or ${}^3D_4(2)$ and $q=3$.
\end{lem}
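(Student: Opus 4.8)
The plan is to follow the strategy of Lemmas \ref{l:e8}, \ref{l:e7} and \ref{l:e6}. Since $\dim F_4 = 52$ we have
$$|G| = |F_4(q)| = q^{24}(q^2-1)(q^6-1)(q^8-1)(q^{12}-1),$$
so $q^{51} < |G| < q^{52}$ for all $q \geqs 2$; in particular, a maximal subgroup $H$ with $|H| \leqs q^{17}$ is non-large. At the other extreme, the $F_4$ analogue of the order bounds \cite[Lemmas 4.2, 4.7 and 4.14]{BLS} shows that if $|H|$ exceeds a suitable bound $q^{N}$ (with $17 < N < 24$) then $H$ is of type $B_4(q)$, $C_4(q)$ ($p=2$), $D_4(q)$, ${}^3D_4(q)$, $A_1(q)C_3(q)$ ($p \neq 2$), ${}^2F_4(q)$ ($p=2$, $\log_2 q$ odd) or $F_4(q_0)$ with $q = q_0^2$; each of these has order greater than $q^{22}$, so $|H|^3 > q^{66} > |G|$ and $H$ is large. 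It therefore remains to run through the five cases of Theorem \ref{t:ls} for the maximal non-parabolic subgroups $H$ with $q^{17} < |H| \leqs q^{N}$.

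Cases (i)--(iv) should be disposed of quickly. In case (i), inspection of the list of positive-dimensional maximal closed subgroups of the ambient algebraic group in \cite{LS2} shows that the only reductive subgroups not already accounted for above are of type $A_2^{\e}(q)\tilde A_2^{\e}(q)$ ($\e = \pm$), $A_1(q)G_2(q)$ ($p \neq 2$), $G_2(q)$ ($p = 7$) and $A_1(q)$ ($p \geqs 13$); each of these has order bounded by a small multiple of $q^{16}$, hence (using $q>2$) is non-large. Case (ii) contributes only the exotic local subgroup recorded in \cite[Table 1]{CLSS}, which is visibly non-large, and case (iii) does not arise for $G=F_4(q)$. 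In case (iv) we have $H = F_4(q_0)$ with $q = q_0^k$ for a prime $k$; here there are no centre or diagonal complications, since $F_4$ is both simply connected and adjoint, and a direct comparison of orders shows that $|H|^3 > |G|$ precisely when $k = 2$ (the case $k = 3$, where the leading powers $q^{52}$ agree, following from the elementary inequality $(1-y)^3 < 1-y^3$ for $0 < y < 1$), so only the subfield subgroup $F_4(q^{1/2})$ in the statement arises.

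Finally, suppose $H$ is almost simple with socle $H_0$, as in case (v). If $H_0 \not\in {\rm Lie}(p)$ then the possibilities for $H_0$ are listed in \cite[Tables 10.1--10.4]{LS3}, and comparing $|{\rm Aut}(H_0)|^3$ with $|G|$ shows that the only large example is ${}^3D_4(2) < F_4(3)$, where a short direct computation confirms $|{}^3D_4(2)|^3 > |F_4(3)|$; this case appears in the statement. So assume $H_0 \in {\rm Lie}(p)$. By Remark \ref{r:as}(ii) we have ${\rm rk}(H_0) \leqs 2$, the relevant bound in \cite[Theorem 1.2]{LSh4} gives $|H| \leqs cq^{M}\log_p q$ for an explicit constant $c$ and integer $M$, and a rank-one socle is further constrained by $s \leqs (2,p-1)\cdot u(F_4) = 68(2,p-1)$. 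This leaves finitely many candidates $H_0 = X_r^{\e'}(s)$ with $r \leqs 2$, namely
$$H_0 \in \{A_1(s),\ A_2^{\pm}(s),\ C_2(s),\ {}^2B_2(s)\ (p=2),\ G_2(s),\ {}^2G_2(s)\ (p=3)\},\qquad s = p^b,$$
and these are eliminated exactly as in the proof of Lemma \ref{l:e6}, following \cite[Step 3, p.310]{LieSax}: the bound $|H| > q^{17}$ forces $b/a$ to be reasonably large, while, by Zsigmondy \cite{Zsig}, each primitive prime divisor $p_{kb}$ of $p^{kb}-1$ (as $k$ runs over the degrees $d$ with $\Phi_d$ dividing $|H_0|$) must divide
$$|G| = q^{24}\Phi_1(q)^4\Phi_2(q)^4\Phi_3(q)^2\Phi_4(q)^2\Phi_6(q)^2\Phi_8(q)\Phi_{12}(q),$$
so $kb$ divides $da$ for some $d \in \{1,2,3,4,6,8,12\}$; together these pin $b/a$ down to a short list of values, each of which is then ruled out — either because $|H|^3 < |G|$ by a direct estimate, or because $|H| > q^{N}$ (contradicting that $H$ is not one of the subgroups already identified, some cases such as $H_0 = A_2^{\pm}(q^2)$ being excluded by the proof of \cite[Theorem 1.2]{LSh4}, exactly as in Lemma \ref{l:e6}). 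I expect the bulk of the work to lie in this last step: the bookkeeping over the possibilities for $X_r$ and the values of $b/a$, together with the few genuinely borderline order comparisons noted above — the degree-$3$ subfield subgroup, the subgroup ${}^3D_4(2) < F_4(3)$, and the small socles in the case $q = 3$ — which must be settled using the precise order formulae rather than crude powers of $q$.
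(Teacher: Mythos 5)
Your overall plan — use the order bound from \cite{BLS} to deal with large-order $H$, and run through the five cases of Theorem \ref{t:ls} for the remaining window $q^{17}<|H|\leqs q^{22}$ — is exactly what the paper does (the threshold from \cite[Lemma 4.23]{BLS} is $q^{22}$). The handling of cases (i)--(iv) and the non-${\rm Lie}(p)$ part of (v) is essentially right. However, there is a genuine gap in your treatment of the ${\rm Lie}(p)$ socles: after the primitive-prime-divisor analysis pins down $b/a$, the surviving near-miss cases are \emph{not} disposed of either by an order comparison or by the exclusions already present in \cite[Theorem 1.2]{LSh4}, contrary to what you claim. Concretely, for $H_0=A_2^{\e}(s)$ the proof of \cite[Theorem 1.2]{LSh4} eliminates $b/a=4$, not $b/a=2$; the case $H_0=A_2^{\e}(q^2)$ survives, and when $q=4$ one checks that $|{\rm Aut}(A_2^{\e}(16))|^3>|F_4(4)|$, so it would be a genuine large example if it were a subgroup. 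Likewise for $H_0 = C_2(s)$ (and $B_2(s)$), the surviving case $C_2(q^2)$ with $q=3$ has $|{\rm PSp}_4(9)|^3 > |F_4(3)|$, so again it is not excluded by order alone. The paper resolves these by showing that $A_2^{\e}(16)$ is not a subgroup of $F_4(4)$ and $C_2(9)$ is not a subgroup of $F_4(3)$: for $\e=-$ one cites \cite[Lemma 4.5]{LSS}, and in the other cases one compares the Jordan form of an involution of $H_0$ on the $26$- (respectively $25$-) dimensional module with the classification of involution actions in $G$ (using \cite[Table 3]{Lawther}, \cite[Table 4]{LS3}, and {\sc Magma}). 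Without this non-containment step your proof would incorrectly retain these subgroups, or stall at the order comparison.

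A minor secondary point: for $H_0={}^3D_4(2)$ with $q=3$ the paper does not merely check that the order is large; it also verifies, via Lie primitivity from \cite{LS3}, the modular character table, and the identification of the $52$-dimensional module with the Lie algebra, that ${}^3D_4(2).3$ really is a maximal subgroup of $F_4(3)$. Your write-up implicitly assumes this, which is reasonable given the cited tables, but it is worth noting that some care is taken in the paper to confirm the embedding before recording the example.
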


\begin{proof}
By \cite[Lemma 4.23]{BLS}, if $|H|>q^{22}$ then either $H=N_G(\M_{\s})$ with 
$$\M^0 \in \{ B_4, D_4, A_1C_3 \, (p \neq 2), C_4 \, (p=2)\},$$ 
or $H$ is of type $F_4(q^{1/2})$ or ${}^2F_4(q)$. Clearly, $H$ is large in each of these cases, and we also note that $H$ is non-large if $|H|<q^{17}$. Therefore, we may assume that $q^{17} \leqs |H| \leqs q^{22}$. As usual, we consider the possibilities for $H$ labelled (i) -- (v) in Theorem \ref{t:ls}, and we quickly reduce to case (v) (in particular, subfield subgroups of type $F_4(q^{1/3})$ are non-large, and maximality rules out the maximal rank subgroups of type $C_2(q)^2$ and $C_2(q^2)$ (with $p=2$)).

Suppose case (v) holds, so $H$ is almost simple, and not of type (i) or (iv). Let $H_0$ denote the socle of $H$. If $H_0 \not\in {\rm Lie}(p)$ then the possibilities for $H$ are recorded in \cite[Tables 10.1--10.4]{LS3}, and it is easy to check that no large examples arise if $q>3$. However, if $q=3$ then $H_0 = {}^3D_4(2)$ is a possibility. Indeed, the main theorem of \cite{LS3} implies that ${}^3D_4(2)$ is a \emph{Lie primitive} subgroup of the algebraic group $\bar{G}=F_4(\bar{\mathbb{F}}_{3})$ (that is, $H_0$ is not contained in a proper closed subgroup of positive dimension in $\bar{G}$). Now, the modular character table of $H_0$ (see \cite[p.251]{Modat}) shows that ${}^3D_4(2).3$ has a $52$-dimensional irreducible module $V$ over $\mathbb{F}_{3}$, which can be identified with the Lie algebra of $F_4$ (see \cite[p.489]{Norton}). We deduce that $H={}^3D_4(2).3$ is a large maximal subgroup of $G=F_4(3)$.
 
Now assume $H_0 \in {\rm Lie}(p)$ and $r={\rm rk}(H_0) \leqs 2$. Here \cite[Theorem 1.2(iv)]{LSh4} gives $|H| < 4q^{20}\log_pq$, so some additional work is required. There are several cases to consider, and we proceed as in the proof of the previous lemma.

Write $q=p^a$ and $H_0 = X_{r}^{\e}(s)$, where $s=p^b$. First assume $H_0 = A_2(s)$, so $q^{17} \leqs |H|<s^{10}$ and thus $b/a \geqs 17/10$. By considering the primitive prime divisor $p_{3b}$ of $|H|$ we deduce that $b/a \in \{4,2,8/3\}$. The case $b/a=4$ is ruled out in the proof of \cite[Theorem 1.2]{LSh4}, and Remark \ref{r:as}(ii) rules out the case $b/a=8/3$. Therefore $H_0 = A_{2}(q^2)$ is the only possibility, and we calculate that $|H|^3<|G|$ unless $q=4$ and $H={\rm Aut}(A_2(16))$. Similarly, if $H_0 = A_2^{-}(s)$ then $H$ is large if and only if $q=4$ and $H={\rm Aut}(A_2^{-}(16))$. However, we claim that $A_{2}^{\e}(16)$ is not a subgroup of $F_4(4)$. 

If $\e=-$, this follows from \cite[Lemma 4.5]{LSS}, so we may assume $\e=+$. Suppose $H_0=A_2(16)$ is a subgroup of $G=F_4(4)$ and consider the restriction to $H_0$ of the irreducible $26$-dimensional module $V_{26}$ for $G$. Let $x \in A_2(16)$ be an involution. With the aid of {\sc Magma} \cite{magma}, we can construct the irreducible modules for $H_0$ over $\mathbb{F}_4$, and we can calculate the Jordan form of $x$ on each of these modules. In this way, we deduce that $x$ has Jordan form $[J_2^8,J_{1}^{10}]$ on $V_{26}$ (where $J_i$ denotes a standard unipotent Jordan block of size $i$), but no involution in $F_4(4)$ acts on $V_{26}$ in this way (see \cite[Table 3]{Lawther}). This is a contradiction, and we conclude that $A_2(16)$ is not a subgroup of $F_4(4)$.

Next, let us assume $H_0 = C_2(s)$. Here $b/a \geqs 17/11$ since $|H|<s^{11}$, and by considering $p_{4b}$ we deduce that $b/a \in \{2,3\}$. The case $b/a=3$ is eliminated in the proof of \cite[Theorem 1.2]{LSh4}, so we can assume $H_0 = C_2(q^2)$.  
As noted in Remark \ref{r:as}(ii), such a subgroup is non-maximal if $q>3$, so let us assume $q=3$. We claim that $H_0=C_2(9)$ is not a subgroup of $G=F_4(3)$. Seeking a contradiction, suppose otherwise and let $V_{25}$ be the irreducible $25$-dimensional module for $G$. Using {\sc Magma} \cite{magma}, we can construct all the irreducible modules for $H_0$ over $\mathbb{F}_3$, and we deduce that there exists an involution $x \in H_0$ which acts on $V_{25}$ as $[-I_{8},I_{17}]$ (up to conjugacy). But no involutions in $F_4(3)$ act on $V_{25}$ in this way (see \cite[Table 4]{LS3}, for example), so we have reached 
a contradiction and thus $C_2(9)$ is not a subgroup of $F_4(3)$. The case $H_0 = B_2(s)$ is entirely similar. Finally, the remaining possibilities for $H_0$ are ruled out in the usual manner.
\end{proof}

\section{Almost simple groups}\label{s:almost}

In this section, we extend our study of large subgroups to almost simple groups. 
Let $G$ be an almost simple group with socle $G_0$,  
and let $H$ be a maximal subgroup of $G$ such that $G = HG_0$. Throughout this section, we will assume that $G \neq G_0$. 

\begin{prop}\label{p:almostan}
If $G_0 = A_n$ then $H$ is large only if one of the following holds:
\begin{itemize}\addtolength{\itemsep}{0.2\baselineskip}
\item[{\rm (i)}] $H \cap G_0$ is either intransitive or imprimitive on $\{1, \ldots, n\}$;
\item[{\rm (ii)}] $G=S_n$ and $(n,H)$ is one of the following:
$$\begin{array}{llll}
(5,{\rm AGL}_{1}(5)), & (6,{\rm PGL}_{2}(5)), & (7,{\rm AGL}_{1}(7)), & (8,{\rm PGL}_{2}(7)), \\
(9,{\rm AGL}_{2}(3)), & (10,A_6.2^2), & (12,{\rm PGL}_{2}(11)); & 
\end{array}$$
\item[{\rm (iii)}] $G=A_6.2 = {\rm PGL}_{2}(9)$ and $H = D_{20}$ or $[16]$;
\item[{\rm (iv)}] $G=A_6.2 = {\rm M}_{10}$ and $H = {\rm AGL}_{1}(5)$ or $[16]$;
\item[{\rm (v)}] $G = A_6.2^2$ and $H = {\rm AGL}_{1}(5) \times 2$ or $[32]$.
\end{itemize}
\end{prop}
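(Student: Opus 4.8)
The plan is to run the argument of Section~\ref{s:alt} for the symmetric group, and then to treat the remaining almost simple extensions of $A_6$ by direct inspection of their maximal subgroups. First suppose $n \neq 6$. Then ${\rm Out}(A_n) \cong C_2$, so the hypotheses force $G = S_n$ with $H$ a maximal subgroup not contained in $A_n$; equivalently $H \cap A_n$ has index $2$ in $H$, so $|H| = 2|H \cap A_n|$ and $H$ is large in $S_n$ if and only if $4|H \cap A_n|^3 \geqs |A_n|$. By the O'Nan--Scott theorem $H$ is intransitive, imprimitive, or primitive on $\{1,\ldots,n\}$. If $H = S_k \times S_{n-k}$ ($1 \leqs k < n/2$) or $H = S_k \wr S_{n/k}$ ($k \mid n$, $2 \leqs k \leqs n/2$), then the factorial estimates of Section~\ref{s:alt} --- now without the factor $\tfrac12$ --- give $|H|^3 > n^n > n! = |G|$ for all $n \geqs 17$, and $5 \leqs n \leqs 16$ is checked directly; in every such case $H \cap A_n$ is the corresponding maximal intransitive or imprimitive subgroup of $A_n$, so we land in part~(i).

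Now let $H$ be primitive, and apply the theorem of Mar\'oti \cite{Maroti} exactly as in Proposition~\ref{p:maroti} and Corollary~\ref{c:alt}. If $|H| < n^{1+\lfloor \log_2 n\rfloor}$, the estimate in the proof of Corollary~\ref{c:alt} gives $|H|^3 < |S_n|$ for all but small $n$; the two remaining families ($H = S_a \wr S_b$ in product action with $n = a^b$, or $H = S_a$ acting on $b$-subsets with $n = \binom{a}{b}$) are eliminated for all but finitely many parameters by the same order estimates as in Corollary~\ref{c:alt}, while a Mathieu group is simple, hence contained in $A_n$, and so never satisfies $G = HG_0$. Thus a large primitive $H$ not contained in $A_n$ occurs only for bounded $n$, and for each such $n$ one reads off the maximal subgroups of $S_n$ directly (for example using {\sc Magma} \cite{magma}); the resulting list of $H \not\leqs A_n$ with $|H|^3 \geqs n!$ is precisely part~(ii). (For instance ${\rm AGL}_{3}(2) < S_8$ and $3^2{:}{\rm SL}_{2}(3) < S_9$ are large but lie in $A_n$, so do not appear; their overgroups ${\rm PGL}_{2}(7)$ and ${\rm AGL}_{2}(3)$ do.)

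Finally suppose $n = 6$. Since $G \neq G_0$ and $G = HG_0$, either $G = S_6$ (already covered) or $G \in \{{\rm M}_{10},\, {\rm PGL}_{2}(9),\, {\rm P\Gamma L}_{2}(9)\}$, and the condition $G = HG_0$ means $[H : H \cap A_6] = [G : A_6]$. For each of these three groups the complete list of maximal subgroups is available in the {\sc Atlas} \cite{Atlas}; we keep only the $H$ with $G = HG_0$, compare $|H|^3$ with $|G| \in \{720, 1440\}$, and observe that when $H \cap A_6$ is a maximal intransitive or imprimitive subgroup of $A_6$ (such as $3^2{:}4 = (S_3 \wr S_2) \cap A_6$ sitting inside the Borel subgroup of ${\rm PGL}_{2}(9)$) we are in part~(i), while the remaining large cases are exactly those listed in parts~(iii), (iv) and~(v). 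This is a short finite check.

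The argument presents no genuine obstacle. The only points requiring care are the transfer of Mar\'oti's bound and of the O'Nan--Scott classification from $A_n$ to $S_n$ --- handled cleanly by the identity $|H| = 2|H \cap A_n|$, which also absorbs the usual ``novelty'' phenomena --- and, for $n = 6$, the bookkeeping needed to retain only the maximal subgroups with $G = HG_0$ and to sort them correctly between part~(i) and parts~(iii)--(v).
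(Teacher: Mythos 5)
Your proposal is correct and follows essentially the same strategy as the paper, whose entire proof of this proposition is a one-line remark that the argument of Section~\ref{s:alt} carries over (together with a note on the meaning of $[16]$ and $[32]$). Your reduction to $G=S_n$ for $n\neq 6$ via $|H|=2|H\cap A_n|$, the reuse of the intransitive/imprimitive factorial estimates, the appeal to Mar\'oti's bound for the primitive case (together with the observation that the simple Mathieu groups in Mar\'oti's list cannot satisfy $G=HG_0$), and the direct {\sc Atlas} check for the three remaining extensions of $A_6$ are exactly the ``straightforward calculation'' the authors have in mind. The only cosmetic point worth noting is that your sorting between part~(i) and parts~(iii)--(v) need not be a disjoint partition: in each of the listed cases for $n=6$ one has $H\cap A_6\cong D_{10}$ or $D_8$, which by order considerations alone cannot act transitively on $6$ points and hence already satisfies (i); but since the proposition is stated as an ``only if'' the overlap is harmless, and your proof (like the paper's) establishes the statement as written.
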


\begin{proof}
This is an entirely straightforward calculation, arguing as in Section \ref{s:alt}. Note that the subgroup denoted by $[16]$ in parts (iii) and (iv) (and also $[32]$ in (v)) is a Sylow $2$-subgroup of $G$.
\end{proof}

Similarly, the following result for sporadic groups is easily obtained  by inspecting 
the complete list of maximal subgroups in \cite{WebAt} (recall that in this paper, we regard the Tits group ${}^2F_4(2)'$ as a sporadic simple group):

\begin{prop}\label{p:almostspor}
If $G_0$ is a sporadic simple group, then $H$ is large unless $(G,H)$ is one of the following:
$$\begin{array}{llll}
({\rm Suz}.2, S_7), & ({\rm O'N}.2, 31{:}30), & ({\rm O'N}.2, A_6.2), & ({\rm O'N}.2, {\rm L}_{2}(7).2), \\
({\rm J}_{3}.2,19{:}18), & 
({\rm Fi}_{24}, S_9 \times S_5), & ({\rm Fi}_{24}, {\rm L}_{2}(8){:}3 \times S_6), &
({\rm Fi}_{24}, S_7 \times 7{:}6), \\
({\rm Fi}_{24}, 7^{1+2}{:}(6 \times S_3).2), &
({\rm Fi}_{24}, 29{:}28), & ({}^2F_4(2), 13{:}12). & 
\end{array}$$
\end{prop}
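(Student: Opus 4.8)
The plan is to reduce the statement to a finite verification against the known lists of maximal subgroups. Since $G$ is almost simple with socle $G_0$ and $G \neq G_0$, the quotient $G/G_0$ is a nontrivial subgroup of ${\rm Out}(G_0)$; as $G_0$ is sporadic this forces $G_0$ to be one of the thirteen sporadic groups with ${\rm Out}(G_0) \neq 1$, namely
$${\rm M}_{12},\ {\rm M}_{22},\ {\rm HS},\ {\rm J}_{2},\ {\rm J}_{3},\ {\rm McL},\ {\rm He},\ {\rm Suz},\ {\rm O'N},\ {\rm Fi}_{22},\ {\rm HN},\ {\rm Fi}_{24}',$$
together with the Tits group ${}^2F_4(2)'$ (which we regard as sporadic in this paper); in every case ${\rm Out}(G_0) = 2$, so $G = G_0.2$. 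In particular, since the Monster and the Baby Monster have trivial outer automorphism group, the incompleteness issue affecting the proof of Theorem \ref{t:main2} does not arise here: the maximal subgroups of all thirteen groups $G_0.2$ are completely known, and are recorded in the Atlas \cite{Atlas} and the online Atlas \cite{WebAt}.

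First I would list, for each such $G = G_0.2$, the maximal subgroups $H$ of $G$ satisfying $G = HG_0$ --- equivalently, those maximal $H$ with $H \not\leq G_0$, so that $H \cap G_0$ has index $2$ in $H$. Note that $H \cap G_0$ need not be maximal in $G_0$: among these $H$ there are the \emph{novelties}, whose intersections with $G_0$ are certain non-maximal subgroups, and these must not be overlooked (they are flagged explicitly in the sources cited above). For each such $H$ one then computes $|H| = 2\,|H \cap G_0|$ and compares $|H|^3$ with $|G| = 2\,|G_0|$, reading off the orders from \cite{Atlas}. Every $H$ with $|H|^3 \geqs |G|$ is large, and the finitely many remaining cases are precisely those recorded in the statement; this also confirms the forward direction, that each listed pair $(G,H)$ indeed fails the bound.

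Since every step is a direct numerical comparison against an explicit finite list, there is no genuine obstacle, and the proof is ``an entirely straightforward exercise'' in the same spirit as Theorem \ref{t:main2}. The only points demanding a little care are (a) remembering to include the novelty maximal subgroups of $G_0.2$ among the candidates, and (b) handling the arithmetic for the larger groups --- above all ${\rm Fi}_{24} = {\rm Fi}_{24}'.2$, where the subgroups $S_9 \times S_5$, ${\rm L}_{2}(8){:}3 \times S_6$, $S_7 \times 7{:}6$, $7^{1+2}{:}(6 \times S_3).2$ and $29{:}28$ are found to be the non-large ones, while ${\rm Suz}.2$, ${\rm O'N}.2$, ${\rm J}_{3}.2$ and ${}^2F_4(2)$ contribute the rest of the list (for instance $|S_7|^3 = 5040^3 < |{\rm Suz}.2|$ and $|19{:}18|^3 = 342^3 < |{\rm J}_{3}.2|$). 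Carrying out the analogous elementary check for each of the thirteen groups then completes the proof.
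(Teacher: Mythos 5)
Your proposal is correct and follows the same approach as the paper: the paper's proof is a one-line appeal to inspection of the complete lists of maximal subgroups of the almost simple groups $G_0.2$, and your write-up simply makes explicit the reduction to the thirteen groups with nontrivial outer automorphism group, the need to include the novelty maximal subgroups, and the direct comparison $|H|^3$ versus $|G|$.
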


Now let us assume $G$ is an almost simple group of Lie type. In the context of Theorem \ref{t:almost}, we are interested in the inequality
\begin{equation}\label{e:hg0}
|H \cap G_0|^3 \geqs |G_0|.
\end{equation}
In order to prove Theorem \ref{t:almost}, we may assume that $H$ is a \emph{novelty} subgroup of $G$, that is, $H \cap G_0$ is a non-maximal subgroup of $G_0$. In addition, we may assume that $H \cap G_0$ is non-parabolic (it is easy to check that $H$ is large if $H \cap G_0$ is a parabolic subgroup of $G_0$).

\begin{prop}\label{p:almostc}
The conclusion to Theorem \ref{t:almost} holds if $G_0$ is a classical group.
\end{prop}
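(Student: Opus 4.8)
The plan is to combine Aschbacher's theorem, applied now at the level of the almost simple group $G$, with the order estimates already developed in Section \ref{s:class}. Write $M = H \cap G_0$. By the remarks preceding the proposition we may assume that $M$ is a \emph{novelty}, so that $M$ is non-maximal in $G_0$, and that $M$ is non-parabolic. Since $M$ is non-maximal it lies in some maximal subgroup $K$ of $G_0$; if $K$ is not large then $|M|^3 \leqs |K|^3 < |G_0|$, so we may assume $K$ is large, i.e. $K$ is one of the subgroups classified in Theorems \ref{t:main3} and \ref{t:main4}. By Aschbacher's theorem \cite{asch}, $M$ itself belongs to $\C(G_0) \cup \ms(G_0)$, and the novelties arising in this way are precisely those recorded in column VI of \cite[Tables 3.5.A--3.5.F]{KL} when $n = \dim V \geqs 13$, together with the additional novelties listed in the tables of \cite{BHR} when $n \leqs 12$ (and, for the $\ms$-collection, the almost simple irreducible subgroups whose socle is non-maximal in $G_0$). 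The proof then amounts to running through this finite list, computing $|M|$ from the structural data in \cite[Chapter 4]{KL} or \cite{BHR}, and comparing $|M|^3$ with the lower bounds for $|G_0|$ in Corollary \ref{l:ord}, exactly as in the proofs of Propositions \ref{p:psl}--\ref{p:scoll}.

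For the geometric novelties this is routine: each such $M$ is a proper subgroup of the genuine maximal subgroup of $G_0$ from which it arises, so the crude exponent estimates used for the $\C_i$ collections in Section \ref{ss:geom} eliminate almost every case. The survivors are as follows. First, in $G_0 = {\rm L}_{n}(q)$, the $\C_1$-novelty of type ${\rm GL}_{m}(q) \oplus {\rm GL}_{n-m}(q)$ with $1 \leqs m < n/2$: since $m^2 + (n-m)^2 \geqs n^2/2$, the computation of Lemma \ref{l:c2} for the balanced case $t=2$ applies a fortiori to show that every such $M$ is large, giving the first row of Table \ref{tab:almost}; the remaining $\C_1$-novelties, of type $P_{m,n-m}$, are parabolic and so fall under case (ii) of Theorem \ref{t:almost}. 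Secondly, in $G_0 = {\rm P\O}_{8}^{+}(q)$, the triality novelties of type ${\rm GL}_{3}^{\e}(q) \times {\rm GL}_{1}^{\e}(q)$ (large for all $q$, and maximal in a suitable extension precisely when $(q,\e) \neq (3,+)$) and the $2$-local novelty $[2^9].{\rm SL}_{3}(2)$ at $q=3$. Thirdly, in $G_0 = {\rm Sp}_{4}(q)$ with $q$ even, the graph-automorphism novelty of type $O_2^{-}(q) \wr S_2$, which a direct computation shows is large only for $q=4$. Together these give all the geometric entries of Table \ref{tab:almost}.

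For the $\ms$-collection, observe that the proof of Corollary \ref{c:1} makes no use of maximality of $M$ in $G_0$: it shows that no subgroup of $\ms(G_0)$ is large once $n \geqs 23$. So we may assume $n \leqs 22$ and re-run the analysis of Propositions \ref{p:ad}, \ref{p:b} and \ref{p:c} with the hypothesis ``$M$ maximal in $G_0$'' dropped. The only large examples gained in this way are the non-maximal irreducible subgroups $G_2(q) < \O_7(q) < {\rm P\O}_{8}^{+}(q)$, ${\rm M}_{12} < A_{12} < \O_{10}^{-}(2)$, and ${\rm L}_{2}(7) < A_7 < {\rm U}_{3}(5)$, each of which is a genuine novelty (its normaliser becomes maximal in $G_0.3$, in $G_0.2$, respectively in ${\rm PGU}_{3}(5)$); these are the remaining entries of Table \ref{tab:almost}. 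Assembling the three parts completes the proof.

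I expect the main obstacle to lie in the triality novelties of $D_4 = {\rm P\O}_{8}^{+}(q)$ and the small-rank exceptional behaviour (${\rm Sp}_{4}(q)$ with $q$ even, and ${\rm U}_{3}(q)$), where the list of novelty maximal subgroups is not simply column VI of the Kleidman--Liebeck tables but requires the detailed treatment in \cite{BHR} and \cite{K} --- in particular one must pin down which triality-stable subgroups of $\O_{8}^{+}(q)$ actually occur and fix the precise structure, hence the order, of $2$-local novelties such as $[2^9].{\rm SL}_{3}(2)$. Since Theorem \ref{t:almost} asserts only an ``only if'' statement, it suffices to exhibit an upper bound for the possible pairs $(G_0,M)$: we do not need to verify that each candidate $H = N_G(M)$ is actually maximal, which removes the most delicate part of the analysis.
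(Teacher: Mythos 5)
Your proposal is correct and follows essentially the same route as the paper: reduce to novelty subgroups, invoke the extended form of Aschbacher's theorem for the almost simple group $G$ (with the three special cases — graph/graph-field automorphisms for ${\rm L}_n(q)$, graph automorphisms for ${\rm Sp}_4(q)$ with $q$ even, and triality for ${\rm P\O}_8^+(q)$ via Kleidman), recycle the order estimates from Section~\ref{s:class} for the geometric novelties, and for the $\mathcal{S}$-collection reduce to $n \leqs 22$ via the maximality-free reading of Corollary~\ref{c:1} and then consult \cite{BHR}. The only cosmetic divergence is organisational: you place $G_2(q) < {\rm P\O}_8^+(q)$ under the $\mathcal{S}$-analysis, whereas the paper absorbs all ${\rm P\O}_8^+(q)$ novelties into the triality case (iii) read off from Kleidman's Table III; both routes recover the same entries of Table~\ref{tab:almost}. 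Two minor points worth tightening: for ${\rm Sp}_4(q)$ even one should a priori allow both $O_2^{+}(q) \wr S_2$ and $O_2^{-}(q) \wr S_2$ before concluding only $(q,\e)=(4,-)$ survives, and the phrase ``precisely those in column VI of \cite[Tables 3.5.A--3.5.F]{KL}'' is not quite right for the small-rank and triality cases (as you yourself note, \cite{BHR} and \cite{K} are needed there), but these do not affect the substance of the argument.
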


\begin{proof}
As described in Section \ref{ss:substr}, Aschbacher's subgroup structure theorem \cite{asch} extends to all almost simple classical groups, with some suitable modifications. The eight geometric subgroup collections (denoted $\C_1, \ldots, \C_8$) can be defined as before, and once again we write $\mathcal{S}(G)$ for the additional family of almost simple irreducible subgroups. However, if $G$ contains certain automorphisms then we must consider some additional novelty subgroups that arise in the following three cases:
\begin{itemize}\addtolength{\itemsep}{0.2\baselineskip}
\item[{\rm (i)}] $G_0 = {\rm L}_{n}(q)$, $n \geqs 3$: If $G$ contains graph or graph-field automorphisms, then there is an extra family of geometric subgroups of $G$. This is denoted by 
$\C_1'$ in \cite[Section 13]{asch} (in \cite{BHR} and \cite{KL}, the $\C_1$ collection is redefined so as to include $\C_1'$). 
\item[{\rm (ii)}] $G_0 = {\rm PSp}_{4}(q)$, $q \geqs 4$ even: Here $G_0$ admits graph automorphisms, and the $\C_i$ families can be suitably modified in such a way that a version of Aschbacher's theorem still applies when $G$ contains such elements (see \cite[Section 14]{asch} and \cite[Table 8.14]{BHR}).
\item[{\rm (iii)}] $G_0 = {\rm P\O}_{8}^{+}(q)$: Aschbacher's theorem does not apply if $G$ contains triality automorphisms. Some partial information is given in \cite[Section 15]{asch}, and  a complete description of the maximal subgroups of $G$ was obtained by Kleidman \cite{K}. 
\end{itemize}

In each of these cases, it is straightforward to determine the possibilities for $H$ that satisfy the inequality in \eqref{e:hg0}. For instance, in (i) we may assume that $H$ is of type ${\rm GL}_{m}(q) \times {\rm GL}_{n-m}(q)$ (with $1 \leqs m < n/2$). Then \cite[Proposition 4.1.4]{KL} states that
$$|H \cap G_0| = d^{-1}|{\rm GL}_{m}(q)||{\rm SL}_{n-m}(q)|$$
with $d=(n,q-1)$, and we quickly deduce that \eqref{e:hg0} holds. Similarly, if $G_0 = {\rm PSp}_{4}(q)$ (with $q \geqs 4$ even) and $H$ is of type $O_{2}^{\e}(q) \wr S_2$ then $|H \cap G_0| = 8(q-\e)^2$ and we see that \eqref{e:hg0} holds if and only if $(q,\e) = (4,-)$. A convenient list of the cases that arise in (iii) is given in \cite[Table III]{K} (also see \cite[Table 8.50]{BHR}), and it is easy to read off the subgroups $H$ such that \eqref{e:hg0} holds. 

To complete the analysis of classical groups, it remains for us to consider the irreducible almost simple subgroups in $\mathcal{S}(G)$ (we may also assume that we are not in one of the cases labelled (i), (ii) and (iii) above). Let $n$ be the dimension of the natural module for $G_0$. If $n \leqs 12$ then we inspect the tables of maximal subgroups in \cite[Chapter 8]{BHR}, recalling that we may assume $H$ is a novelty subgroup. In this way, we find that there are precisely two cases that satisfy the bound in \eqref{e:hg0}, but which are not listed in Table \ref{tab:sg}:
$$(G_0, H \cap G_0) = ({\rm U}_{3}(5), {\rm L}_{2}(7)),\; (\O_{10}^{-}(2), {\rm M}_{12}).$$ 
Finally, suppose $n>12$. By applying Liebeck's upper bound on $|H|$ (see Theorem \ref{lieb}), we may assume that $n \leqs 22$, and then the desired result quickly follows from our earlier analysis in Section \ref{ss:nongeom}.
\end{proof}

\begin{prop}\label{p:almostex}
The conclusion to Theorem \ref{t:almost} holds if $G_0$ is an exceptional group.
\end{prop}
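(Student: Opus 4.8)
The plan is to combine the known structure of the maximal subgroups of (almost simple) exceptional groups with Theorem~\ref{t:main4}. Write $M=H\cap G_0$; by the reductions recorded just before the statement we may assume that $M$ is a non-parabolic, non-maximal subgroup of $G_0$, and also that $|M|^3\geqs|G_0|$ (otherwise there is nothing to prove). The first observation is that if $M<K$ with $K$ maximal in $G_0$, then $|K|^3\geqs|M|^3\geqs|G_0|$, so $K$ is a \emph{large} maximal subgroup of $G_0$, and hence by Theorem~\ref{t:main4} is either a parabolic subgroup of $G_0$ or one of the subgroups in Table~\ref{tab:fin_ex}. Since $H=N_G(M)$ is maximal in $G$ while $M$ is non-maximal in $G_0$, the subgroup $M$ cannot be normalised by every outer automorphism induced by $G$. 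If $G$ induces on $G_0$ only field automorphisms (together with the diagonal automorphism when $G_0=E_7(q)$) — the situation for $G_0\in\{E_8(q),E_7(q),{}^3D_4(q),{}^2B_2(q),{}^2G_2(q),{}^2F_4(q)\}$, for $F_4(q)$ with $p$ odd, and for $G_2(q)$ with $p\neq 3$ — then a short check shows that no non-parabolic novelty is large: a subfield subgroup $X(q_0)<X(q)$ (with $q=q_0^k$) that is non-maximal in $G_0$ lies inside a larger subfield subgroup, so its normaliser in $G$ is not maximal, while the diagonal automorphism of $E_7(q)$ refines only a handful of non-large configurations. This leaves precisely the cases in which $G_0$ admits a graph or graph-field automorphism.

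It remains to treat $G_0=E_6^{\e}(q)$, $G_0=F_4(q)$ with $p=2$, and $G_0=G_2(q)$ with $p=3$. A large non-parabolic novelty must then be a proper subgroup of some large maximal $K$ as above that is stabilised by the graph automorphism but by no single $G_0$-conjugate of $N_{G_0}(K)$; using the description of the positive-dimensional maximal subgroups of the ambient algebraic group in \cite{LS2} (with \cite{LSS} for maximal-rank subgroups and \cite{LS3} for the almost simple ones) together with the action of the graph automorphism on the resulting $G_0$-classes, one reads off the relevant configurations. For $G_0=E_6^{\e}(q)$ the graph automorphism fuses the two end-node parabolics, and the common subgroup of type $(q-\e)D_5^{\e}(q)$ — a $D_5$-Levi together with a central torus — is the resulting novelty; it is large, and for $\e=-$ it is already maximal in $G_0$ (so it appears in Theorem~\ref{t:main4}), whereas for $\e=+$ it is a genuine novelty of $E_6(q)$, giving the first exceptional row of Table~\ref{tab:almost}. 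Over $\mathbb{F}_2$ the almost simple groups with socle ${}^2E_6(2)$ contain two further non-parabolic novelties, ${}^3D_4(2)$ and $B_3(3)=\O_7(3)$ (both contained in $F_4(2)$ or $\O_8^{+}(3).S_3$ inside ${}^2E_6(2)$, but not maximal there); their orders, read off from \cite[Table 5.1.B]{KL}, satisfy the largeness bound, accounting for the remaining ${}^2E_6$ rows. For $G_0=F_4(q)$ with $p=2$ the graph automorphism interchanges the long- and short-root subsystems, so the maximal-rank subgroups of type $C_2(q)^2$ and $C_2(q^2)$ — that is, ${\rm Sp}_{4}(q)\times{\rm Sp}_{4}(q)$ and ${\rm Sp}_{4}(q^2)$ — are non-maximal in $F_4(q)$ but maximal in $F_4(q).2$; both are large, giving the two $F_4$ rows. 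Finally, for $G_0=G_2(q)$ with $p=3$ the candidate novelties (from the long/short $A_2$- and $A_1\tilde{A}_1$-type subgroups and a few small local subgroups) all fail the bound $|M|^3\geqs|G_0|$, so $G_2$ contributes nothing. Discarding the cases already recorded in Theorems~\ref{t:main3} and~\ref{t:main4}, we are left with exactly the exceptional rows of Table~\ref{tab:almost}.

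The main obstacle is the bookkeeping in the graph-automorphism cases: one has to pin down precisely which $G_0$-classes of large maximal subgroups are permuted by the graph automorphism, and then identify the structure and order of the relevant common intersections before applying $|M|^3\geqs|G_0|$. This is most delicate for $E_6^{\e}(q)$ over small fields — where the extra ${}^2E_6(2)$ examples must be extracted from \cite{Atlas}, \cite{Modat} and \cite{LS3} rather than from a generic argument — and for $F_4(q)$ in characteristic $2$, where the long/short symmetry is responsible for the extra structure; once the configurations are identified, the order estimates are entirely routine.
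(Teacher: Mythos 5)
Your proof reaches the correct conclusion and rests on the same underlying tools as the paper's, but you organise the case analysis differently. The paper first disposes of the groups for which a complete list of maximal subgroups of every almost simple extension is available (those in Proposition~\ref{p:ex1}), where it reads off the four low-field examples directly, and then for the remaining groups reruns the analysis of Section~\ref{s:ex} via Theorem~\ref{t:ls}, noting that the only new feature is the inclusion of certain maximal rank subgroups that fail to be maximal in $G_0$ alone (type $(q-1)D_5(q)$ in $E_6(q)$ and types $C_2(q)^2$, $C_2(q^2)$ in $F_4(q)$, $p=2$). You instead partition by the kind of outer automorphism induced on $G_0$ and argue that graph or graph-field automorphisms are the only source of large non-parabolic novelties, using the observation that any novelty $M$ with $|M|^3\geqs|G_0|$ must sit inside a large maximal subgroup $K$ of $G_0$, so that Theorem~\ref{t:main4} pins down the overgroup. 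That observation is a pleasant structural shortcut that the paper uses only implicitly, and the split by automorphism type is a reasonable alternative organising principle; the two routes are close in spirit and effort, since both ultimately rely on \cite{LS2,LSS,LS3} and on the explicit maximal subgroup data for the small-rank and small-field groups.

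One factual slip in a parenthetical: you say that ${}^3D_4(2)$ and $\O_7(3)$ are ``both contained in $F_4(2)$ or $\O_8^{+}(3).S_3$ inside ${}^2E_6(2)$''. The first containment ${}^3D_4(2)<F_4(2)<{}^2E_6(2)$ is correct, but $\O_8^{+}(3)$ is not a subgroup of ${}^2E_6(2)$ (its order is divisible by $3^{12}$ whereas $|{}^2E_6(2)|_3=3^9$). The relevant proper overgroup of $\O_7(3)$ inside ${}^2E_6(2)$ is ${\rm Fi}_{22}$. This does not affect the final list, since the largeness check is on $\O_7(3)$ itself, but it is worth correcting. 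You also write ${}^3D_4(2)$ where the paper's proof records $H\cap G_0={}^3D_4(2).3$; Table~\ref{tab:almost} only records types, so this is a notational rather than substantive discrepancy, and in any case ${}^3D_4(2)$ already satisfies the bound. Finally, the passage asserting that field and diagonal automorphisms produce no large novelties is stated rather tersely (a subfield subgroup argument plus ``only a handful of non-large configurations'' for $E_7$); the paper is similarly brief here, deferring to a rerun of the Section~\ref{s:ex} analysis, so this is not a gap so much as a shared reliance on earlier bookkeeping.
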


\begin{proof}
First let us assume $G_0$ is one of the groups in the statement of Proposition \ref{p:ex1}. In these cases, the maximal subgroups of $G$ have been completely determined, and we can immediately determine the subgroups $H$ that satisfy the bound in \eqref{e:hg0}. Here it is helpful to note that there are no novelty subgroups if 
$$G_0 \in \{{}^2F_4(q), {}^3D_4(q), G_2(q) \, (p \neq 3), {}^2G_2(q), {}^2B_2(q)\}.$$
We deduce that there are precisely four cases that satisfy the bound in \eqref{e:hg0}:
$$(G_0, H \cap G_0) = ({}^2E_6(2), {}^3D_4(2).3),\; ({}^2E_6(2), \O_7(3)),\; (F_4(2), S_6 \wr S_2),\; (F_4(2), {\rm Sp}_{4}(4).2).$$
Note that in the latter two cases, $H$ is of type $C_2(2)^2$ and $C_2(2^2)$, respectively (as recorded in Table \ref{tab:almost}).

In each of the remaining cases, Theorem \ref{t:ls} describes the structure of $H$, and the rest of the analysis in Section \ref{s:ex} goes through essentially unchanged. The only difference is that we now include some maximal rank subgroups that were excluded in our earlier analysis (since they are only maximal when $G$ contains certain automorphisms). These are the cases $G_0 = E_6(q)$ with $H$ of type $(q-1)D_5(q)$, and $G_0 = F_4(q)$ ($p=2$) with $H$ of type $C_2(q)^2$ or $C_2(q^2)$ (see \cite[Table 5.1]{LSS}).  The result follows.
\end{proof}

\vs

This completes the proof of Theorem \ref{t:almost}.

\section{Algebraic groups}\label{s:alg}

In this final section we prove Theorem \ref{t:main5}, extending our earlier work from finite to algebraic groups. Let $G$ be a linear algebraic group over an algebraically closed field, and let $A$ and $B$ be closed subgroups of $G$. As before, a factorisation of the form $G=ABA$ is called a \emph{triple factorisation} of $G$. 

\begin{prop}\label{p:ta}
Let $G$ be a connected linear algebraic group over an algebraically closed field. Let $A,B$  be closed subgroups of $G$ such that $G=ABA$. Then $\dim G \leqs 2\dim A+\dim B$. 
\end{prop}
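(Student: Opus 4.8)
The plan is to bound the dimension of $G$ by studying the morphism that realises the factorisation $G = ABA$. First I would consider the morphism of varieties
$$\mu \colon A \times B \times A \longrightarrow G, \qquad (a,b,a') \longmapsto aba',$$
which is surjective by hypothesis. Since $A \times B \times A$ is an irreducible variety (each of $A$, $B$ is a closed subgroup of the connected group $G$, hence each irreducible component has the same dimension; one passes to identity components, which suffices for a dimension count) and $\mu$ is a dominant morphism onto the irreducible variety $G$, the fibre dimension theorem gives $\dim(A\times B\times A) \geqs \dim G + \dim \mu^{-1}(g)$ for $g$ in a dense open subset, and in particular $\dim G \leqs 2\dim A + \dim B$.

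The one point that needs a little care, and which I expect to be the only real obstacle, is the irreducibility (or at least the "equidimensionality") of the source: a closed subgroup need not be connected, so $A\times B\times A$ may be reducible. The clean fix is to replace $A$ and $B$ by their identity components $A^\circ$ and $B^\circ$: then $A^\circ\times B^\circ\times A^\circ$ is irreducible, its image $A^\circ B^\circ A^\circ$ is a constructible subset of $G$ of dimension $\dim \overline{A^\circ B^\circ A^\circ} = \dim(A^\circ B^\circ A^\circ)$, and since $A$, $B$ have only finitely many cosets of their identity components, $G = ABA$ is a finite union of translates (on both sides) of sets of the form $A^\circ B^\circ A^\circ$, so
$$\dim G = \dim(A^\circ B^\circ A^\circ) \leqs \dim(A^\circ\times B^\circ\times A^\circ) = 2\dim A^\circ + \dim B^\circ = 2\dim A + \dim B,$$
using $\dim A = \dim A^\circ$ and $\dim B = \dim B^\circ$. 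The inequality in the middle is exactly the statement that the image of a morphism of varieties has dimension at most that of the source.

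Alternatively, and perhaps more in keeping with the spirit of the rest of the paper, one can avoid explicit mention of fibre dimension by a counting/stratification argument: stratify $G$ by the dimension of the fibres of $\mu$ and observe that the generic fibre over the dense image contains, for each $g = aba'$ in the image, the set $\{(ax, b', x^{-1}a') : x \in A^\circ \cap bB^\circ b^{-1},\ b' \text{ adjusted}\}$ — but this is more fiddly than the direct appeal to the theorem on the dimension of fibres of a dominant morphism, so I would present the argument above. I would also remark that connectedness of $G$ itself is only used to guarantee that $G$ is irreducible (so that "dominant" forces "surjective onto a dense open" in the clean form), and that the inequality $3\dim A \geqs \dim G$ or $3\dim B \geqs \dim G$ claimed in the introduction follows immediately by taking the larger of $\dim A$, $\dim B$.
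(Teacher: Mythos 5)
Your proposal is correct and takes essentially the same route as the paper: both pass to the connected component of $A\times B\times A$ (equivalently, to $A^{\circ}\times B^{\circ}\times A^{\circ}$), observe that the finitely many pieces of the image have irreducible closures of dimension at most $2\dim A+\dim B$, and invoke the irreducibility of $G$ to conclude that $G$ equals one such closure. One small imprecision: the pieces $A^{\circ}a_iB^{\circ}b_jA^{\circ}a_k$ covering $ABA$ are not literally two-sided translates of $A^{\circ}B^{\circ}A^{\circ}$ (conjugation by the coset representatives intervenes), but this does not affect the argument, since each piece is still the image of $A^{\circ}\times B^{\circ}\times A^{\circ}$ under a morphism and hence has dimension at most $2\dim A+\dim B$; the paper avoids the issue by phrasing everything in terms of $\phi(C^{0}c_i)$ where $C=A\times B\times A$ and $c_i$ runs over coset representatives of $C^{0}$.
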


\begin{proof}
Let $C=A \times B \times A$ and let $\phi:C \to G$ be the surjective morphism of affine varieties defined by $\phi: (x,y,z) \mapsto xyz$. Let $\{c_1, \ldots, c_r\}$ be a set of (right) coset representatives for the connected component $C^0$ of $C$. For each $i$, the closure $\overline{\phi(C^0c_i)}$ is irreducible and has dimension at most $\dim C^0$ (see \cite[Lemma 1.9.1(iii)]{Springer}, for example). Since $G = \bigcup_{i} \overline{\phi(C^0c_i)}$ is connected, and thus irreducible, it follows that $G=\overline{\phi(C^0c_i)}$ for some $i$, whence
$$\dim G = \dim \overline{\phi(C^0c_i)} \leqs \dim C^0 = 2\dim A+\dim B$$
as required.
\end{proof}

As an immediate corollary, if $G=ABA$ for closed subgroups $A$ and $B$, then 
$$\max\{3\dim A, 3\dim B\} \geqs \dim G.$$
This observation motivates the following definition, which is a natural algebraic group analogue of Definition \ref{d:large}.

\begin{defn}\label{d:large2}
Let $G$ be a linear algebraic group. A proper closed subgroup $H$ of $G$ is said to be \emph{large} if $3\dim H \geqs \dim G$.
\end{defn}

Our aim is to prove Theorem \ref{t:main5}, which describes all the large maximal closed subgroups of simple algebraic groups. We will consider the classical groups and exceptional groups separately. Note that our results do not depend on the isogeny type of $G$.

\subsection{Classical algebraic groups}

Let $G$ be a simple classical algebraic group over an algebraically closed field $K$ of characteristic $p \geqs 0$, with natural module $V$. Set $n=\dim V$. In \cite{LS4}, Liebeck and Seitz define six natural, or \emph{geometric}, subgroup collections, denoted $\C_i$, $1 \leqs i \leqs 6$. As a special case of their main theorem, \cite[Theorem 1]{LS4}, it follows that if $H$ is a maximal closed positive-dimensional subgroup of $G$ then either $H$ is contained in one of the $\C_i$ collections (with $i \neq 5$), or $H$ is almost simple (modulo scalars), $H^0 \not\cong G$ and $H^0$ acts irreducibly and tensor-indecomposably on $V$ (we write $\mathcal{S}$ for this additional collection of subgroups). This is a natural algebraic group analogue of Aschbacher's subgroup structure theorem \cite{asch} for finite classical groups. Note that if $H^0$ is a classical group then $V$ is not the natural module for $H^0$, nor its dual.

The structure of the geometric subgroups comprising the $\C_i$ collections is described in \cite[Section 1]{LS4}, and it is straightforward to compute the dimension of these subgroups, and then check whether or not the bound $3\dim H \geqs \dim G$ is satisfied. 
Note that the subgroups in $\C_5$ are finite, so this collection can be discarded.

The next result provides an algebraic group analogue of Theorem \ref{lieb}.

\begin{prop}\label{p:lie}
If $H \in \mathcal{S}$ is a maximal closed subgroup of $G$, then $\dim H \leqs 3n$.
\end{prop}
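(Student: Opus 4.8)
The plan is to reduce the statement to a representation-theoretic estimate. Since $H\in\mathcal{S}$, the connected component $S:=H^0$ is a simple algebraic group with $\dim H=\dim S$, and $S$ acts irreducibly and tensor-indecomposably on $V$, say with highest weight $\lambda$; by Steinberg's tensor product theorem we may assume $\lambda\neq 0$ is $p$-restricted (twisting by a power of the Frobenius morphism does not change $\dim V$). By the definition of $\mathcal{S}$ we have $S\not\cong G$, and when $S$ is of classical type $V=V_\lambda$ is not the natural module or its dual. It therefore suffices to prove $\dim S\leqs 3n$ for every such pair $(S,\lambda)$, where $n=\dim V_\lambda$.

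Suppose first that $S$ is of exceptional type, so $\dim S\in\{14,52,78,133,248\}$ according as $S$ has type $G_2,F_4,E_6,E_7,E_8$. By the tables of L\"ubeck \cite{Lu}, every nontrivial irreducible $KS$-module has dimension at least $6,25,27,56,248$ respectively, and in each case $\dim S\leqs 3n$; the tightest instance is $S=E_6$, where $78\leqs 81=3\cdot 27$.

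Now assume $S$ is of classical type of rank $r$, so $\dim S\leqs 2r^2+r$ (with equality only for $B_r$ and $C_r$). For $r$ at most an explicit bound (say $r\leqs 8$) I would verify $\dim S\leqs 3n$ by hand, reading off from \cite{Lu} the least dimension of a nontrivial $p$-restricted irreducible module that is neither the natural module nor its dual and whose image is not isomorphic to $G$. For $r>8$, the point is that any admissible $\lambda$ forces $n$ to grow quadratically in $r$: for type $A_r$ the smallest such module is $\wedge^2$ of the natural module, of dimension $\binom{r+1}{2}$, and $\dim A_r=r(r+2)\leqs 3\binom{r+1}{2}$ reduces to $2(r+2)\leqs 3(r+1)$; for types $B_r,C_r,D_r$ the only candidates of dimension below $r^2$ would be the module $V_{\omega_2}$, of dimension of order $2r^2$, and (for $B_r,D_r$) the spin modules, of dimension $2^r$ or $2^{r-1}$, all of which exceed $\tfrac12 r(r+1)$ for $r>8$, so $\dim S\leqs 2r^2+r\leqs 3n$ with room to spare.

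The only delicate points, all in low rank, are the occasional coincidences where the would-be second-smallest module is in fact isomorphic to the natural module of $G$ — for instance $\wedge^2$ of the natural $\mathrm{SL}_4$-module is the natural $\mathrm{SO}_6$-module, so $S\cong D_3\cong G$ and that module is inadmissible. Such cases are handled by passing to the next available module and checking the inequality directly, again using \cite{Lu}. I do not expect any genuine obstacle beyond this bookkeeping; the one substantive ingredient, lower bounds for $\dim V_\lambda$, is supplied in full by \cite{Lu}.
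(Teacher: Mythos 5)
Your overall approach coincides with the paper's: reduce to $H^0$ simple, invoke L\"ubeck's bounds on the minimal dimensions of nontrivial irreducibles (excluding the natural module and its dual) to bound $n$ from below, and compare with $\dim H$; for exceptional $H^0$ both proofs use the same table of minimal degrees, and for classical $H^0$ both split by rank and appeal to L\"ubeck's Theorem~5.1 in high rank. Your remark about reducing to $p$-restricted highest weights via Steinberg is a valid but optional refinement that the paper omits.

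There is an arithmetic slip in your $r>8$ classical step. You assert that since all admissible modules for $B_r, C_r, D_r$ have $n > \tfrac12 r(r+1)$, the bound $\dim S \leqs 2r^2+r \leqs 3n$ follows ``with room to spare.'' But $3\cdot\tfrac12 r(r+1) = \tfrac32 r^2 + \tfrac32 r$ is \emph{less} than $2r^2+r$ for every $r\geqs 2$, so the threshold $n>\tfrac12 r(r+1)$ is not sufficient; what you actually need is $n \geqs (2r^2+r)/3 \approx \tfrac23 r^2$. The facts you cite do deliver this --- $V_{\omega_2}$ has dimension of order $2r^2$, and the spin modules have dimension $2^{r-1}\geqs 256$ for $r>8$, both comfortably exceeding $\tfrac23 r^2$ --- so the conclusion stands and the fix is trivial, but as written the inequality chain does not close. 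The paper sidesteps this by quoting L\"ubeck's explicit lower bounds (for instance $n \geqs 2m^2-m-2$ when $H^0={\rm Sp}_{2m}$, $m\geqs 6$), which directly dominate $\tfrac13\dim H$ without any intermediate threshold.
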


\begin{proof}
We may assume that $H^0$ is a simple algebraic group. Let $\rho:H^0 \to {\rm SL}(V)$ be the corresponding irreducible representation, where $\dim V = n$.

The result is clear if $H^0$ is an exceptional group. Indeed, the minimal dimension of a non-trivial irreducible $KH^0$-module is well-known (see \cite[Table 5.4.B]{KL}, for example) and we deduce that $n \geqs f(H^0)$, where $f$ is defined as follows:
$$\begin{array}{llllll} \hline
H^0 & E_8 & E_7 & E_6 & F_4 & G_2 \\ \hline
\dim H & 248 & 133 & 78 & 52 & 14 \\
f(H^0) & 248 & 56 & 27 & 25 & 6 \\ \hline
\end{array}$$

Now assume $H^0$ is a classical group, and recall that $V$ is not the natural module for $H^0$, nor its dual. Suppose $H^0 = {\rm SL}_{m}$ with $m \geqs 2$, so   $\dim H = m^2-1$. If $m \geqs 13$ then \cite[Theorem 5.1]{Lu} implies that $n \geqs m(m-1)/2$ and the desired bound follows. For $3 \leqs m \leqs 12$, the result follows by inspecting \cite[Tables A.6 -- A.15]{Lu}, and of course the case $m=2$ is trivial. Next suppose $H^0 = {\rm Sp}_{2m}$, with $m \geqs 2$. Then $\dim H = m(2m+1)$ and $n \geqs 2m^2-m-2$ if $m \geqs 6$ (see \cite[Theorem 5.1]{Lu} and \cite[Tables A.35--A.40]{Lu}), which is sufficient. Finally, if $2 \leqs m \leqs 5$ then the relevant tables in \cite{Lu} indicate that $n \geqs 2^m$ and the result follows. The remaining cases $H^0={\rm SO}_{2m+1}$ (with $m \geqs 3$) and $H^0={\rm SO}_{2m}$ ($m \geqs 4$) are entirely similar. Note that if $H^0=D_4$ then $n \geqs 26$ because the $8$-dimensional irreducible modules for $H^0$ afford isomorphisms $H^0 \cong D_4$.
\end{proof}

\begin{lem}\label{l:psla}
The conclusion to Theorem \ref{t:main5} holds if $G$ is a classical algebraic group. 
\end{lem}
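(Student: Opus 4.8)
The plan is to apply the structure theorem \cite[Theorem 1]{LS4} and to treat the reducible subgroups, the remaining geometric subgroups, and the $\mathcal{S}$-subgroups in turn. Since $\dim G \geqs 3$, a finite subgroup of $G$ is never large, so we may assume $H$ is positive-dimensional; then either $H$ lies in one of the geometric collections $\C_i$ with $i \neq 5$ (the collection $\C_5$ being finite), or $H \in \mathcal{S}$, where $H^0$ is simple, acts irreducibly and tensor-indecomposably on $V$, and does not afford an isomorphism $H^0 \cong G$. First I would dispose of the reducible maximal subgroups: such an $H$ is the stabiliser of a proper nonzero subspace which is totally singular (so $H$ is a maximal parabolic), or nondegenerate (so $H^0$ has the shape $Cl_k \times Cl_{n-k}$), or, in characteristic $2$ with $G$ orthogonal, nonsingular. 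In every case the standard dimension formula for a maximal parabolic or for a Levi-type subgroup gives $\dim H \geqs \tfrac13\dim G$ by an elementary computation, so all reducible maximal subgroups are large; this is the first alternative in Theorem \ref{t:main5}(i).

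Next I would run through the irreducible geometric subgroups. Besides the reducible ones already handled, the positive-dimensional geometric subgroups comprise the imprimitive subgroups of type $Cl_{n/t} \wr S_t$, the subgroups ${\rm GL}_{n/2}$ stabilising a pair of complementary totally singular subspaces (for $G$ symplectic or orthogonal), the tensor subgroups of type $Cl_a \otimes Cl_b$, and the classical subgroups ${\rm Sp}_n,\,{\rm SO}_n \leqs {\rm SL}_n$ and ${\rm O}_n \leqs {\rm Sp}_n$; for each one reads off the precise structure of $H^0$ from \cite[Section 1]{LS4}, computes $\dim H$, and compares with $\dim G$ (which is $n^2-1$, $\tfrac12 n(n+1)$ or $\tfrac12 n(n-1)$ according to the type of $G$). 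This reduces to a short list of elementary inequalities. For an imprimitive subgroup $Cl_{n/t} \wr S_t$ one has $\dim H \approx \tfrac1t \dim G$, so largeness forces $t \leqs 3$, with the single extra possibility $(n,t) = (8,4)$ when $G$ is symplectic and only $t = 2$ when $G$ is linear or orthogonal; for a tensor subgroup $Cl_a \otimes Cl_b$ with $2 \leqs a \leqs b$ one has $\dim H \approx a^2 + b^2$, far below $\dim G$ unless $a = 2$ and $b$ is small, the only survivors being ${\rm Sp}_2 \otimes {\rm Sp}_{n/2} \leqs {\rm SO}_n$ with $n \in \{8,12\}$; and the subgroups ${\rm GL}_{n/2}$, ${\rm GL}_{n/2} \wr S_2$, ${\rm Sp}_n$, ${\rm SO}_n$ and ${\rm O}_n$ are large whenever they occur. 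Respecting the maximality conditions of \cite[Section 1]{LS4} (in particular discarding those $Cl_a \otimes Cl_b$ contained in a larger classical group) yields exactly the geometric entries of Table \ref{tab:alg_class}.

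The main case is $H \in \mathcal{S}$. By Proposition \ref{p:lie} we have $\dim H \leqs 3n$, so largeness forces $9n \geqs \dim G \geqs \tfrac12 n(n-1)$, whence $n \leqs 19$. For $n \leqs 19$ I would consult L\"{u}beck's classification \cite{Lu} of the nontrivial irreducible representations of simple algebraic groups of dimension at most $19$. For each candidate $(H^0,V)$ one uses the self-duality data in \cite{Lu} to determine which classical group $G$ contains $H^0$, discards the non-maximal configurations — most importantly the three $8$-dimensional representations of $D_4$, which afford isomorphisms $D_4 \cong {\rm SO}_8$, together with any $H^0$ lying inside a larger reductive subgroup already contained in $G$ — and then tests $3\dim H^0 \geqs \dim G$. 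The only survivors are $G_2 \leqs {\rm SO}_7$ ($p \neq 2$), $G_2 \leqs {\rm Sp}_6$ ($p = 2$), $B_3 \leqs {\rm SO}_8$ ($p \neq 2$) and $C_3 \leqs {\rm SO}_8$ ($p = 2$), the last two via the $8$-dimensional spin representation, which are precisely the remaining entries of Table \ref{tab:alg_class}.

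I expect this $\mathcal{S}$-analysis to be the main obstacle. Unlike the geometric case, which is pure dimension bookkeeping, it requires both a complete enumeration of the small-dimensional irreducible representations of simple algebraic groups and, more delicately, a maximality check — ruling out every representation whose image lies in a geometric subgroup or in another member of $\mathcal{S}$ — before the crude dimension count can be applied.
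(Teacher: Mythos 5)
Your approach is essentially the same as the paper's: apply the Liebeck--Seitz structure theorem \cite[Theorem 1]{LS4} to split into geometric and $\mathcal{S}$-subgroups, handle the geometric collections by direct dimension computations, and bound the $\mathcal{S}$-case using Proposition \ref{p:lie} ($\dim H \leqs 3n$, hence $n \leqs 19$) followed by inspection of L\"{u}beck's tables. One small slip in your final $\mathcal{S}$-enumeration: you list $B_3 < {\rm SO}_8$ only for $p \neq 2$, but Table \ref{tab:alg_class} records $B_3 < {\rm SO}_8$ for all $p$. In characteristic $2$ the spin embedding $B_3 < D_4$ still exists and is maximal, and it is \emph{not} the same subgroup as $C_3 < D_4$ (they are isogenous but distinct algebraic groups, giving two non-conjugate large maximal subgroups of ${\rm SO}_8$ when $p = 2$). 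With that characteristic condition on $B_3$ dropped, your argument matches the paper's.
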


\begin{proof}
As above, let $V$ be the natural $G$-module and set $n = \dim V$. It is straightforward to calculate the dimensions of the geometric subgroups of $G$ from their descriptions in  
\cite[Section 1]{LS4}, and so we can read off the large subgroups that arise. For example, if $G = {\rm SL}_{n}$ then the positive-dimensional subgroups in the various $\C_i$ collections are listed in Table \ref{tab:psl} (note that the $\C_3$ collection is empty, and we can ignore the finite subgroups comprising the $\C_5$ collection). It is easy to check that $H \in \C_i$ is large if and only if $i=1$ (in which case $H=P_k$ is a maximal parabolic subgroup, that is, $H$ is the stabiliser of a $k$-dimensional subspace of $V$), or $H$ is a $\C_2$-subgroup of type ${\rm GL}_{n/2} \wr S_2$, or $H$ is a $\C_6$-subgroup of type ${\rm Sp}_{n}$ or $O_n$.

For the remainder, let us assume $H \in \mathcal{S}$ is a maximal positive-dimensional subgroup of $G$. Let $\rho:H^0 \to {\rm SL}(V)$ be the corresponding irreducible representation. Since $\dim G \geqs n(n-1)/2$ and $\dim H \leqs 3n$ (by Proposition \ref{p:lie}), we may assume that $n \leqs 19$.

First assume $G = {\rm SL}_{n}$, so $\dim G = n^2-1$ and Proposition \ref{p:lie} implies that $n \leqs 9$. By maximality, $\rho(H^0)$ does not fix a nondegenerate form on $V$, so $V$ is not self-dual (as a $KH^0$-module). By inspecting \cite{Lu}, we deduce that $H^0 =A_2$ with $n=6$ is the only possibility, but $\dim H = 8$ and $\dim G = 35$, so this is not a large subgroup.

Next suppose $n \geqs 4$ is even and $G = {\rm Sp}_{n}$. Here $\dim G = n(n+1)/2$ so we may assume that $n \leqs 16$. Using \cite{Lu} we deduce that
$$H^0 \in \{A_1, A_2, A_3, A_4, B_2, B_3, B_4, C_2, C_3, C_4, G_2\}.$$
If $H^0=G_2$ then $\dim H = 14$, so $n \leqs 8$ and \cite[Table A.49]{Lu} implies that $(n,p)=(6,2)$ is the only possibility. Here $G_2<{\rm Sp}_{6}$ (with $p=2$) is a large subgroup, which we record in Table \ref{tab:alg_class}. If $H^0=B_2$ or $C_2$ then $\dim H = 10$ and $H$ is non-large since $n \geqs 10$ (see \cite[Table A.22]{Lu}). If $H^0=B_3$ or $C_3$ then $n=8$ is the only possibility (see \cite[Tables A.23 and A.32]{Lu}) and $\rho$ embeds $H^0$ in a group of type $D_4$, rather than type $C_4$. Similarly, if $H^0=B_4$ or $C_4$ then it is easy to check that $H$ is non-large. The cases with $H^0=A_k$ are quickly eliminated in a similar fashion. 

A similar argument applies if $G$ is an orthogonal group; the reader can readily verify that the only large maximal subgroups in $\mathcal{S}$ are as follows (see Table \ref{tab:alg_class}):
$$G_2<B_3 \, (p \neq 2),\, B_3<D_4,\, C_3<D_4 \, (p=2).$$
Note that $A_2.2<G_2<B_3$ if $p=3$.
\end{proof}

\begin{table}
$$\begin{array}{clll} \hline
\mbox{Collection} & \mbox{Type of $H$} & \mbox{Conditions} & \dim H \\ \hline
\C_1 & P_k & 1 \leqs k < n & n^2-1-k(n-k) \\
\C_2 & {\rm GL}_{n/t}\wr S_t & t \geqs 2 & n^2/t-1 \\
\C_4 & {\rm SL}_{a} \otimes {\rm SL}_{n/a} & 2 \leqs a < n/a & a^2+(n/a)^2-2 \\ 
& \bigotimes {\rm SL}_{a}^t & a,t \geqs 2,\,  n = a^t,\, (a,t) \neq (2,2) & (a^2-1)t \\
\C_6 & {\rm Sp}_{n} & \mbox{$n \geqs 4$ even} & n(n+1)/2 \\
& O_n & n \geqs 2,\, p \neq 2 & n(n-1)/2 \\ \hline
\end{array}$$
\caption{The maximal positive-dimensional geometric subgroups of ${\rm SL}_{n}$}
\label{tab:psl}
\end{table}

\subsection{Exceptional algebraic groups}

To complete the proof of Theorem \ref{t:main5}, let $G$ be a simple algebraic group of exceptional type $G_2$, $F_4$, $E_6$, $E_7$ or $E_8$ over an algebraically closed field $K$ of characteristic $p \geqs 0$. The main subgroup structure theorem for such groups is due to Liebeck and Seitz \cite{LS2}. Indeed, a complete list of the maximal closed subgroups of positive dimension in $G$ is given in \cite[Corollary 2(i)]{LS2}, and we can easily read off the large subgroups. For the reader's convenience we record the dimensions of the maximal parabolic subgroups in Table \ref{t:parab}, and we immediately deduce that all such subgroups are large. The large non-parabolic maximal subgroups are recorded in Table \ref{tab:alg_ex}.

\begin{table}
$$\begin{array}{r|rlllllll}
 & H=P_{1} & P_{2} & P_{3} & P_{4} & P_{5} & P_{6} & P_{7} & P_{8} \\ \hline
G=E_{8} & 170 & 156 & 150 & 142 & 144 & 151 & 165 &  191 \\
E_{7} & \hspace{1.7mm} 100 & 91 & 86 & 80 & 83 & 91 & 106 & \\
E_{6} & 62  & 57 & 53 & 49 & 53 & 62 & & \\
F_{4} & 37 & 32 & 32 & 37 & & & & \\
G_{2} & 9 & 9 & & & & & & \\
\end{array}$$
\caption{$G$ exceptional, $\dim P_i$}
\label{t:parab}
\end{table}

\appendix
\section{Large maximal subgroups of ${\rm GL}_{n}(q)$}\label{a:gln}

Let $G$ be the finite general linear group ${\rm GL}(V) = {\rm GL}_{n}(q)$. Triple factorisations $G=ABA$ are studied in \cite{ABP}, where $A$ and $B$ are maximal geometric subgroups that either stabilise a subspace of $V$ (so they are in the collection $\C_1$), or stabilise a decomposition $V=V_1 \oplus V_2$ with $\dim V_1 = \dim V_2$ (so they are 
$\C_2$-subgroups of type ${\rm GL}_{n/2}(q) \wr S_2$). With a view towards a more general  study of triple factorisations of ${\rm GL}_{n}(q)$, in this short appendix we determine the large maximal subgroups $H$ of ${\rm GL}_{n}(q)$. Of course, if ${\rm SL}_{n}(q) \leqs H$ then $H$ is large, and it is maximal if and only if it has prime index. Therefore, in the statement of our main result we will assume that ${\rm SL}_{n}(q) \not\leqs H$.

\begin{thm}\label{t:gln}
Let $H$ be a maximal subgroup of ${\rm GL}_{n}(q)$, where $n,q \geqs 2$ and ${\rm SL}_{n}(q) \not\leqs H$. Then $H$ is large if and only if $H$ is one of the subgroups recorded in Table \ref{tab:gln}.
\end{thm}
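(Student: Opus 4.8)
The plan is to follow exactly the strategy of Section \ref{s:class}, now applied to $G={\rm GL}_n(q)$. First I would invoke Aschbacher's theorem in the form used there: since ${\rm SL}_n(q)\not\leqs H$, the maximal subgroup $H$ does not arise from a subgroup of the cyclic quotient ${\rm GL}_n(q)/{\rm SL}_n(q)$, and hence (see \cite{asch} and \cite[Ch.~1]{KL}) $H$ either lies in one of the geometric collections $\C_i$ ($1\leqs i\leqs 8$) or $H$ is almost simple modulo scalars and lies in the irreducible collection $\mathcal{S}(G)$. In either case the precise structure of $H$, including the contribution of the scalar group $Z=Z({\rm GL}_n(q))$ of order $q-1$, is recorded in \cite[Ch.~4]{KL} and \cite[Ch.~2]{BHR}, and I would compare $|H|$ with $|{\rm GL}_n(q)|=q^{n^2}\prod_{i=1}^{n}(1-q^{-i})$, for which Lemma \ref{l:ord4}(i) gives convenient bounds. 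It is worth recording at the outset that $|{\rm GL}_n(q)|=d(q-1)\,|{\rm L}_n(q)|$ with $d=(n,q-1)$, so the estimates of Corollary \ref{l:ord}(i) transfer with an extra factor of roughly $q$.

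For the geometric subgroups the argument runs in parallel with the proof of Proposition \ref{p:psl}. Working through $\C_1,\dots,\C_8$ in turn, the \emph{generic} families — maximal parabolics ($\C_1$), the imprimitive subgroups ${\rm GL}_{n/2}(q)\wr S_2$ ($\C_2$, $t=2$), the extension-field subgroups ${\rm GL}_{n/2}(q^2)$ ($\C_3$, $k=2$), the subfield subgroups ${\rm GL}_n(q_0)$ with $q=q_0^2$ ($\C_5$), and the form-stabilising subgroups in $\C_8$ — are large by the same elementary order comparisons, while the tensor collections $\C_4,\C_7$ and the normalisers of symplectic-type $r$-groups in $\C_6$ (apart from a short list of small $(n,q)$) are not. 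I would then treat the three delicate families flagged in Remark \ref{r:special} — the $\C_2$-subgroups of type ${\rm GL}_{n/3}(q)\wr S_3$, the $\C_3$-subgroups ${\rm GL}_{n/3}(q^3)$, and the $\C_5$-subgroups ${\rm GL}_n(q_0)$ with $q=q_0^3$ — exactly as in Lemmas \ref{l:c2}, \ref{l:c3} and \ref{l:c5}, carefully tracking the powers of $q-1$ and the relevant greatest common divisors; here the bookkeeping is slightly cleaner than for ${\rm L}_n(q)$, since the index-$(q-1)$ factor $e$ appearing in those lemmas no longer intervenes.

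For $H\in\mathcal{S}(G)$ I would bound $|H|$ by observing that $H\cap{\rm SL}_n(q)$ is contained in a conjugate of an $\mathcal{S}({\rm L}_n(q))$-subgroup (or an element of the alternating-group family $\mathcal{A}$), so Theorem \ref{lieb} gives $|H|<(q-1)q^{3n}$ outside $\mathcal{A}$. Comparing with $|{\rm GL}_n(q)|>q^{n^2-1}$ forces $n$ to be bounded; as in Corollary \ref{c:1} one may take $n\leqs 22$ once the members of $\mathcal{A}$ are included. The remaining cases are then read off the tables of \cite[Ch.~8]{BHR} for $n\leqs 12$, and from \cite{Lu}, \cite{HM} for the finitely many larger $n$; the subgroups in $\mathcal{A}$ are handled precisely as in Proposition \ref{p:ad}, after which one assembles the surviving cases into Table \ref{tab:gln}.

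The main obstacle, just as in the simple-group analysis, is the $t=3$ and $k=3$ families in parts (ii)--(iv) of Remark \ref{r:special}, where $|H|^3\approx|{\rm GL}_n(q)|$ and largeness is governed by a delicate numerical condition in terms of $(n,q-1)$ and $(n,q^3-1)$. The new point to watch is that passing from ${\rm L}_n(q)$ to ${\rm GL}_n(q)$ multiplies both sides of the key inequalities by controlled powers of $q-1$; consequently the set of surviving parameters — and hence the exact content of Table \ref{tab:gln} — differs from the corresponding entries of Proposition \ref{p:psl} and must be recomputed directly rather than quoted.
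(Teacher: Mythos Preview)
Your approach is essentially the same as the paper's: reduce to $H/Z$ in ${\rm PGL}_n(q)$, treat the geometric families $\C_1,\ldots,\C_8$ by direct order comparison as in Section~\ref{sss:linear} (paying attention to the maximality conditions recorded in \cite[Ch.~4]{KL}, which now depend on gcd conditions such as $(n,q_0-1)=1$ for the $\C_8$ unitary case), and handle $\mathcal{S}(G)$ via Liebeck's bound.

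One small inefficiency: the collection $\mathcal{A}$ of Table~\ref{atab} consists entirely of alternating-group subgroups of \emph{orthogonal or symplectic} groups over the prime field, so for $G={\rm GL}_n(q)$ it is empty. Thus Theorem~\ref{lieb} gives $|H|<(q-1)q^{3n}$ for \emph{every} $H\in\mathcal{S}(G)$, and comparing with $|{\rm GL}_n(q)|>(1-q^{-1}-q^{-2})q^{n^2}$ forces $n\leqs 9$ directly, rather than $n\leqs 22$. The paper exploits this and then reads off the three surviving $\mathcal{S}$-examples from \cite[Ch.~8]{BHR}; your detour through \cite{Lu}, \cite{HM} and Proposition~\ref{p:ad} is unnecessary here.
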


\begin{proof}
Let $G={\rm GL}_{n}(q)$ and let $Z \cong C_{q-1}$ be the centre of $G$. As before, we have $H \in \mathcal{C}(G) \cup \mathcal{S}(G)$ and we note that $H/Z$ belongs to the corresponding subgroup collection of $G/Z = {\rm PGL}_{n}(q)$. Moreover, the maximality of $H$ in $G$ implies that $H/Z<G/Z$ is maximal, so we can use the information in \cite[Chapter 4]{KL} and \cite[Section 8.2]{BHR} to determine the possibilities for $H$. For geometric subgroups $H \in \mathcal{C}(G)$ we proceed as before (see Section \ref{sss:linear}), and we omit the details. Note that the conditions listed in the final column of Table \ref{tab:gln} guarantee both the maximality and largeness of $H$. For example, $H$ is large if $n \geqs 3$, $q=q_0^2$ and $H$ is a $\C_8$-subgroup of type ${\rm GU}_{n}(q_0)$, and \cite[Proposition 4.8.5(I)]{KL} indicates that $H/Z<G/Z$ is maximal if and only if 
$$\frac{q-1}{[q_0+1,(q-1)/(q-1,n)]} = 1,$$
where $[a,b]$ denotes the lowest common multiple of $a$ and $b$. It is easy to check that this is equivalent to the condition $(n,q_0-1)=1$, which is recorded in Table \ref{tab:gln}.

Finally, if $H \in \mathcal{S}(G)$ then $|H| < (q-1)q^{3n}$ by Theorem \ref{lieb}, so we immediately deduce that $n \leqs 9$ and we can then inspect the relevant tables in \cite[Section 8.2]{BHR}. The result quickly follows. 
\end{proof}

\begin{table}
$$\begin{array}{cll} \hline
\mbox{Collection} & \mbox{Type of $H$} & \mbox{Conditions} \\ \hline
\C_1 & P_i & 1 \leqs i < n \\
\C_2 & {\rm GL}_{n/t}(q) \wr S_t & \mbox{See Remark \ref{r:c2max}} \\
\C_3 & {\rm GL}_{n/k}(q^k) & k \in \{2,3\} \\
\C_5 & {\rm GL}_{n}(q_0),\, q=q_0^k & \mbox{See Remark \ref{r:c5max}} \\ 
\C_6 & 3^2.{\rm Sp}_{2}(3) & \mbox{$n=3$ and $q \in \{4,7,13\}$} \\
& 2^{1+2}_{-}.O_{2}^{-}(2) & \mbox{$n=2$, $q \leqs 13789$ prime, $q \equiv \pm 3 \imod{8}$} \\ 
\C_8 & {\rm Sp}_{n}(q) & \mbox{$n \geqs 4$ even, $(n/2,q-1)=1$} \\
& O_{n}^{\e}(q) & \mbox{$n \geqs 3$, $q$ odd, $(n,q-1)=(n,2)$} \\
& {\rm GU}_{n}(q_0) & \mbox{$n \geqs 3$, $q=q_0^2$, $(n,q_0-1)=1$} \\
\mathcal{S} & 2 \times {\rm M}_{11} & (n,q) = (5,3) \\ 
& A_7 & (n,q) = (4,2) \\ 
& 10 \times {\rm L}_{2}(7) & (n,q) = (3,11) \\
\hline
\end{array}$$
\caption{Large maximal subgroups $H<{\rm GL}_{n}(q)$, ${\rm SL}_{n}(q) \not\leqs H$}
\label{tab:gln}
\end{table}

\begin{remk}\label{r:c2max}
Let $H = {\rm GL}_{n/t}(q) \wr S_t$ be a $\C_2$-subgroup. Then $H$ is large and maximal if and only if one of the following holds:
\begin{itemize}\addtolength{\itemsep}{0.2\baselineskip}
\item[{\rm (i)}] $t=2$ and $(n,q) \not\in \{(2,3),(2,5),(4,2)\}$;
\item[{\rm (ii)}] $t=3$, $q \geqs 3$ and $(n,q) \neq (3,4)$;
\item[{\rm (iii)}] $t=n=4$ and $q \in \{5,7\}$.
\end{itemize}
\end{remk}

\begin{remk}\label{r:c5max}
Let $H$ be a $\C_5$-subgroup of type ${\rm GL}_{n}(q_0)$, where $q=q_0^k$ and $k$ is a prime. Then 
$$|H|=|{\rm GL}_{n}(q_0)|\cdot \left(\frac{q-1}{q_0-1}\right)$$
and \cite[Proposition 4.5.3(I)]{KL} implies that $H$ is maximal if and only if
$$(n,(q-1)/(q_0-1))=1.$$
Therefore, $H$ is large and maximal if and only if one of the following holds:
\begin{itemize}\addtolength{\itemsep}{0.2\baselineskip}
\item[{\rm (i)}] $k \in \{2,3\}$ and $(n,(q-1)/(q_0-1))=1$; or
\item[{\rm (ii)}] $n=2$, $q \geqs 3$ and $k \in \{5,7\}$.
\end{itemize}
\end{remk}

\begin{remk}\label{r:tf}
Consider a triple factorisation of the form 
\begin{equation}\label{e:aba}
{\rm GL}_{n}(q) = ABA = BAB,
\end{equation}
where $A$ and $B$ are maximal subgroups, with $B \in \C_2$. Assume that $q$ is large ($q \geqs 13$ is sufficient). By Theorem \ref{t:gln}, we may assume that $B = {\rm GL}_{n/2}(q) \wr S_2$ or ${\rm GL}_{n/3}(q) \wr S_3$. In fact, the case $B={\rm GL}_{n/3}(q) \wr S_3$ does not arise.  Indeed, if $Z$ denotes the centre of ${\rm GL}_{n}(q)$ then $Z \leqs A \cap B$, so $\bar{G} = \bar{A}\bar{B}\bar{A} = \bar{B}\bar{A}\bar{B}$ (where $\bar{G} = G/Z$, etc.) and thus $|\bar{B}|^3 \geqs |\bar{G}|$. However, it is straightforward to check that if $B = {\rm GL}_{n/3}(q) \wr S_3$ then $|\bar{B}|^3 < |\bar{G}|$ for all $n \geqs 3$ and $q \geqs 13$. Triple factorisations as in \eqref{e:aba} are studied in \cite{ABP}, where $A \in \C_1$ and $B = {\rm GL}_{n/2}(q) \wr S_2$ is a $\C_2$-subgroup.
\end{remk}


\begin{thebibliography}{999}

\bibitem{ABP}
S.H. Alavi, J. Bamberg and C.E. Praeger, \emph{Triple factorisations of the general linear group and their associated geometries}, preprint (arXiv:1405.5276).

\bibitem{AP}
S.H. Alavi and C.E. Praeger, \emph{On triple factorisations of finite groups}, J. Group Theory \textbf{14} (2011), 341--360.

\bibitem{asch}
M. Aschbacher, \emph{On the maximal subgroups of the finite classical groups},
  Invent. Math. \textbf{76} (1984), 469--514.

\bibitem{magma} W. Bosma, J. Cannon, and C. Playoust,
\emph{The {\sc Magma} algebra system I: The user language},
J. Symbolic Comput. \textbf{24} (1997), 235--265.
  
 \bibitem{BHR} J.N. Bray, D.F. Holt and C.M. Roney-Dougal, \emph{The {M}aximal {S}ubgroups of the {L}ow-dimensional {F}inite {C}lassical {G}roups}, London Math. Soc. Lecture Note Series, vol. 407, Cambridge University Press, 2013.
  
\bibitem{BG_book} T.C. Burness and M. Giudici, \emph{Classical groups, derangements and primes}, to appear in the Lecture Notes Series of the Aust. Math. Soc., Cambridge University Press.
  
\bibitem{BLS}
  T.C. Burness, M.W. Liebeck and A. Shalev, \emph{Base sizes for simple groups and a conjecture of {C}ameron}, Proc. London Math. Soc. \textbf{98} (2009), 116--162.
  
  \bibitem{CNT}
  P.J. Cameron, P.M. Neumann and D.N. Teague, \emph{On the degrees of primitive permutation groups}, Math. Z. \textbf{180} (1982), 141--149.
  
  \bibitem{CLSS}
A.M. Cohen, M.W. Liebeck, J.~Saxl and G.M. Seitz, \emph{The local maximal
  subgroups of exceptional groups of {L}ie type}, Proc. London Math. Soc.
  \textbf{64} (1992), 21--48.

\bibitem{Atlas}
J.H. Conway, R.T. Curtis, S.P. Norton, R.A. Parker, and R.A. Wilson, \emph{Atlas of finite groups}, Oxford University Press, 1985.

\bibitem{Coop}
B.N. Cooperstein, \emph{Maximal subgroups of {$G_{2}(2^{n})$}}, J. Algebra \textbf{70} (1981), 23--36.

\bibitem{GJ}
M. Giudici and J.P. James, \emph{Factorisations of groups into three conjugate subgroups}, in preparation.

\bibitem{HigmanM}
D.G. Higman and J.E. McLaughlin, \emph{Geometric {$ABA$}-groups}, Illinois J. Math. \textbf{5} (1961), 382--397.

  \bibitem{HM2}
G. Hiss and G. Malle, \emph{Low dimensional representations of quasi-simple
  groups}, LMS J. Comput. Math. \textbf{4} (2001), 22--63.

\bibitem{HM}
G. Hiss and G. Malle, \emph{Corrigenda: {L}ow dimensional representations of quasi-simple
  groups}, LMS J. Comput. Math. \textbf{5} (2002), 95--126.

\bibitem{Modat}
C. Jansen, K. Lux, R. Parker, and R. Wilson, \emph{An {A}tlas of {B}rauer
  {C}haracters}, LMS Monographs, no. 11, Oxford University Press, 1995.
  
\bibitem{K}
P.B. Kleidman, \emph{The maximal subgroups of the finite 8-dimensional
  orthogonal groups {${\rm P\O}_{8}^{+}(q)$} and of their automorphism groups},
  J. Algebra \textbf{110} (1987), 173--242.
  
  \bibitem{Kl2}
P.B. Kleidman, \emph{The maximal subgroups of the {C}hevalley groups
  {$G_{2}(q)$} with {$q$} odd, of the {R}ee groups {$^{2}G_{2}(q)$}, and of
  their automorphism groups}, J. Algebra \textbf{117} (1988), 30--71.
  
  \bibitem{Kl3}
P.B. Kleidman, \emph{The maximal subgroups of the {S}teinberg triality groups {$^{3}D_{4}(q)$} and of their automorphism groups}, J. Algebra \textbf{115} (1988), 182--199.
  
\bibitem{KL}
P.B. Kleidman and M.W. Liebeck, \emph{The {S}ubgroup {S}tructure of the
  {F}inite {C}lassical {G}roups}, London Math. Soc. Lecture Note Series, vol.
  129, Cambridge University Press, 1990.

\bibitem{KW}
P.B. Kleidman and R.A. Wilson, \emph{The maximal subgroups of {$E_{6}(2)$} and {{\rm Aut}$(E_{6}(2))$}}, Proc. London Math. Soc. \textbf{60} (1990), 266--294.

\bibitem{Lawther}
R. Lawther, \emph{Sublattices generated by root differences}, preprint.

\bibitem{L}
M.W. Liebeck, \emph{On the orders of maximal subgroups of the finite classical
  groups}, Proc. London Math. Soc. \textbf{50} (1985), 426--446.
  
\bibitem{LPS}
M.W. Liebeck, C.E. Praeger and J. Saxl, \emph{A classification of the maximal subgroups of the finite alternating and symmetric groups}, J. Algebra \textbf{111} (1987) 365--383.  
  
  
\bibitem{LSr3}
M.W. Liebeck and J. Saxl, \emph{The finite primitive permutation groups of rank three}, Bull.  London Math. Soc. \textbf{18}
  (1986), 165--172.
  
    \bibitem{LieSax}
M.W. Liebeck and J. Saxl, \emph{On the orders of maximal subgroups of the finite exceptional groups of {L}ie type}, Proc. London Math. Soc. \textbf{55}
  (1987), 299--330.
  
  \bibitem{LSS}
M.W. Liebeck, J. Saxl, and G.M. Seitz, \emph{Subgroups of maximal rank in
  finite exceptional groups of {L}ie type}, Proc. London Math. Soc. \textbf{65}
  (1992), 297--325.
  
  \bibitem{LS10}
M.W. Liebeck and G.M. Seitz, \emph{Maximal subgroups of exceptional groups of
  {L}ie type, finite and algebraic}, Geom. Dedicata \textbf{36} (1990),
  353--387.
  
    \bibitem{LS4}
M.W. Liebeck and G.M. Seitz, \emph{On finite subgroup structure of classical groups}, Invent. Math. \textbf{134} (1998), 427--453.
  
    \bibitem{LS11}
M.W. Liebeck and G.M. Seitz, \emph{On the subgroup structure of exceptional groups of 
{L}ie type}, Trans. Amer. Math. Soc. \textbf{350} (1998), 3409--3482.

  \bibitem{LS3}
M.W. Liebeck and G.M. Seitz, \emph{On finite subgroups of exceptional algebraic groups}, J. reine angew. Math. \textbf{515} (1999), 25--72.

\bibitem{LS2}
M.W. Liebeck and G.M. Seitz, \emph{The maximal subgroups of positive dimension in exceptional algebraic groups}, Mem. Amer. Math. Soc. \textbf{802} (2004).  

\bibitem{LS6}
M.W. Liebeck and G.M. Seitz, \emph{Maximal subgroups of large rank in exceptional groups of {L}ie type}, J. London Math. Soc. \textbf{71} (2005), 345--361.

  
\bibitem{LSh4}
M.W. Liebeck and A. Shalev, \emph{The probability of generating a finite simple group}, Geom. Dedicata \textbf{56} (1995), 103--113.

\bibitem{Lu}
 F. L{\"{u}}beck, \emph{Small degree representations of finite {C}hevalley
  groups in defining characteristic}, LMS J. Comput. Math. \textbf{4} (2001), 135--169.
  
  \bibitem{Mal}
G. Malle, \emph{The maximal subgroups of {$\,^{2}F_{4}(q^{2})$}}, J. Algebra \textbf{139} (1991), 52--69.

\bibitem{Maroti} A. Mar\'{o}ti, \emph{On the order of primitive groups}, J. Algebra \textbf{258} (2002), 631--640.

\bibitem{NP} P.M. Neumann and C.E. Praeger, \emph{Cyclic matrices over finite fields}, J. London Math. Soc. \textbf{52} (1995), 263--284.
  
\bibitem{Norton}
S.P. Norton, \emph{On the group ${\rm Fi}_{24}$},  Geom. Dedicata \textbf{25} (1988), 483--501.
  
   \bibitem{NW}
S.P. Norton and R.A. Wilson, 
\emph{The maximal subgroups of {$F_4(2)$} and its automorphism group}, Comm. Algebra \textbf{17} (1989), 2809--2824.

  \bibitem{N2}
S.P. Norton and R.A. Wilson, 
\emph{A correction to the $41$-structure of the Monster, a construction of a new maximal subgroup {${\rm L}_{2}(41)$}, and a new Moonshine phenomenon}, J. London Math. Soc. \textbf{87} (2013), 943--962. 
  
\bibitem{Springer}
T.A. Springer, \emph{Linear {A}lgebraic {G}roups}, Progress in Math., vol. 9, Birkh\"{a}user, 1981.   
  
  \bibitem{Suz}
M. Suzuki, \emph{On a class of doubly transitive groups}, Annals of Math. \textbf{75} (1962), 105--145.

  \bibitem{WebAt}
R.A. Wilson et al., \emph{A {W}orld-{W}ide-{W}eb {A}tlas of finite group
  representations}, {\texttt{http://brauer.maths.qmul.ac.uk/Atlas/v3/}}.
  
  \bibitem{W_book}
  R.A. Wilson, \emph{The {F}inite {S}imple {G}roups}, Graduate Texts in Mathematics, vol. 251, Springer-Verlag London, 2009.

\bibitem{Zsig}
K. Zsigmondy, \emph{Zur {T}heorie der {P}otenzreste}, Monatsh. Math. Phys.
  \textbf{3} (1892), 265--284.
  
\end{thebibliography}
\end{document}